\newtheorem{thm}{Theorem}[section]
\newtheorem{prp}[thm]{Proposition}
\newtheorem{cor}[thm]{Corollary}
\newtheorem{lma}[thm]{Lemma}
\theoremstyle{definition}
\newtheorem{rmk}[thm]{Remark}
\newenvironment{pf}{\begin{proof}}{\end{proof}}
\numberwithin{equation}{section}
\newcommand{\K}{{\mathbb{K}}}
\newcommand{\bbH}{{\mathbb{H}}}
\newcommand{\Cc}{{\mathcal{C}}}
\newcommand{\bq}{{\mathbf q}}
\newcommand{\M}{{\mathcal{M}}}
\newcommand{\ev}{\operatorname{ev}}
\newcommand{\la}{\langle}
\newcommand{\ra}{\rangle}
\newcommand{\pa}{\partial}
\newcommand{\ind}{\operatorname{index}}
\newcommand{\img}{\operatorname{im}}
\newcommand{\Morse}{\operatorname{Morse}}
\newcommand{\tb}{\operatorname{tb}}
\newcommand{\pr}{\operatorname{pr}}
\newcommand{\wt}{\widetilde}
\newcommand{\wh}{\widehat}
\newcommand{\ol}{\overline}
\newcommand{\ul}{\underline}
\newcommand{\p}{\partial}
\newcommand{\om}{\omega}
\newcommand{\eps}{\varepsilon}
\newcommand{\oL}{\overline{L}}
\newcommand{\oX}{\overline{X}}
\newcommand{\oW}{\overline{W}}
\newcommand{\ug}{{\underline{\gamma}}}
 \newcommand{\bh}{{\mathbf{h}}} 
\newcommand{\q}{{\bf q}}
\newcommand{\A}{{\bf A}}
\newcommand{\B}{{\bf B}}
\newcommand{\bQ}{{\bf Q}}
\newcommand{\im}{{\rm im }}        
\newcommand{\st}{{\rm st}}
\newcommand{\const}{{\rm const}}
\newcommand{\Int}{{\rm int\,}} 
\renewcommand{\min}{{\rm min}}
\renewcommand{\max}{{\rm max}}
\newcommand{\codim}{\operatorname{codim}}
\newcommand{\dist}{{\rm dist}}
\renewcommand{\Im}{{\rm Im\,}}
\newcommand{\Id}{\mathrm{id}}
\newcommand{\Ker}{\mathrm{Ker}}
\newcommand{\good}{\mathrm{good}}
\newcommand{\bad}{\mathrm{bad}}
\newcommand{\Ho}{\mathrm{Ho}}
\newcommand{\Core}{\mathrm{Core}}
\newcommand{\MM}{\mathcal{M}}
\newcommand{\CC}{\mathcal{C}}
\newcommand{\PP}{\mathcal{P}}
\newcommand{\II}{\mathcal{I}}
\newcommand{\raq}{\overrightarrow{q}}
\newcommand{\Raq}{\overrightarrow{\bf{q}}}
\newcommand{\laq}{\overleftarrow{q}}
\newcommand{\Laq}{\overleftarrow{\bf{q}}}
\def\lr{\leftrightarrow}
\def\ainf{A_\infty}
\def\ot{\otimes}
\def\op{\oplus}
\def\hom{\mathrm{Hom}}
\def\td{T(\mathcal{D}_+[1])}
\def\ccd{(\mathcal{D} \otimes \td)^{\mathrm{diag}}}
\def\L{\Lambda}
\def\d{\delta}
\def\lhc{LHO}
\def\lhcc{\widecheck{LHO}(\L)}
\def\lhch{\widehat{LHO}(\L)}
\def\lhwt{LH^{\mathrm{Ho}}(\L)}
\def\A{\mathcal{A}}
\def\B{\mathcal{B}}
\def\D{\mathcal{D}}
\def\ZZ{\mathcal{Z}}
\DeclareRobustCommand\widecheck[1]{{\mathpalette\@widecheck{#1}}}
\def\@widecheck#1#2{%
    \setbox\z@\hbox{\m@th$#1#2$}%
    \setbox\tw@\hbox{\m@th$#1%
       \widehat{%
          \vrule\@width\z@\@height\ht\z@
          \vrule\@height\z@\@width\wd\z@}$}%
    \dp\tw@-\ht\z@
    \@tempdima\ht\z@ \advance\@tempdima2\ht\tw@ \divide\@tempdima\thr@@
    \setbox\tw@\hbox{%
       \raise\@tempdima\hbox{\scalebox{1}[-1]{\lower\@tempdima\box
\tw@}}}%
    {\ooalign{\box\tw@ \cr \box\z@}}}
\title
{Effect of Legendrian Surgery}
\author{Fr{\'e}d{\'e}ric Bourgeois}
\address{Universit\'e Libre de Bruxelles }
\email{fbourgeo@ulb.ac.be}
\author{Tobias Ekholm}
\address{Uppsala University}
\email{tobias@math.uu.se}
\author{Yakov Eliashberg}
\address{Stanford University}
\email{eliash@math.stanford.edu}
\begin{document}
\begin{abstract}
The paper is a summary of the results of the authors concerning computations of symplectic invariants of Weinstein manifolds and contains some examples and applications. Proofs are sketched.  The detailed proofs will appear in our forthcoming paper \cite{BEE}.

In the  Appendix written by S.~Ganatra  and M.~Maydanskiy  it is shown that the results of this paper imply P.~Seidel's conjecture from \cite{shhh}.
\end{abstract}
\maketitle
\section*{Introduction}
We study in this paper how attaching of a Lagrangian handle in the sense of \cite{We,Eli-pluri,CE} to a symplectic manifold with contact boundary affects symplectic invariants of the manifold and contact invariants of its boundary. We establish several surgery exact triangles for these invariants.  As explained in Section \ref{sec:Lefschetz} below, symplectic handlebody presentations and Lefschetz fibration presentations of Liouville symplectic manifolds are closely related. In this sense our results can be viewed as generalizations of P.~Seidel's exact triangles for symplectic Dehn twists, see \cite{Se-mut,Se-more-mut,Se-long}. In particular, as shown in the Appendix written by S.~Ganatra  and M.~Maydanskiy, our results imply  P.~Seidel's conjecture from \cite{shhh}.

This is the first paper in a series devoted to this subject. In order to make the  results more accessible and  the algebraic formalism not too heavy,   we make here  a number of simplifying assumptions (like vanishing of the first Chern class). Though proofs are only sketched, we indicate the main ideas and provide some details which should help  specialists to reconstruct the proofs. The general setup and detailed proofs will appear in the forthcoming paper \cite{BEE}.

{\em Plan of the paper.} In Section \ref{sec:Weinstein} we review the notions related to Liouville and Weinstein  manifolds and domains. In Section \ref{Sec:mdli}, various moduli spaces of holomorphic curves needed for our algebraic constructions are defined. In Section \ref{sec:invariants} we define the symplectic and contact invariants that are computed in this paper, namely linearized contact homology, and reduced and full symplectic homology. The definitions we use are due to F.~Bourgeois and A.~Oancea, see \cite{BO}. In Section \ref{sec:algebra} we recall the definition of  Legendrian homology algebra,  see \cite{Chek,SFT, EES}, and define several constructions derived from it in the spirit of cyclic and Hochschild homology.  In Section \ref{sec:triangles} we formulate and sketch the proof of our main results. These are: \begin{itemize}
\item[--] Theorems \ref{thm:f-ch}, and \ref{thm:f-sh1} respectively \ref{thm:SH}, which describe surgery exact triangles for linearized contact homology, and reduced respectively full symplectic homology;
\item[--] Corollary \ref{cor:SH-subcrit}, which gives a closed form formula for symplectic homology;
\item[--] Theorem \ref{thm:LHA}, which  relates the Legendrian homology algebras of a Legendrian submanifold before and after surgery; and
\item[--] Theorem \ref{thm:LH}, which provides a formula for the linearized Legendrian homology of the so-called co-core (the meridian of the handle) Legendrian sphere after surgery.
\end{itemize}

Section  \ref{sec:examples} is devoted to  first examples and applications.  It is worthwhile  to point out that already quite primitive computations yield interesting geometric applications. In particular, we show that Legendrian  surgery on Y.~Chekanov's two famous Legendrian $(5,2)$-knots in $S^3$, see \cite{Chek}, give non-contactomorphic $3$-manifolds, and that attaching a Lagrangian handle to the ball along stabilized trivial Legendrian knots  produce examples of exotic Weinstein symplectic structures on $T^*S^n$ (exotic structures on $T^*S^n$ were first constructed by  M. McLean in \cite{McL} and also in M.~Maydanskiy and P.~Seidel in \cite{MS}).

In Section \ref{sec:Lefschetz} we explain the relation between the Weinstein handlebody and the Lefschetz fibration formalisms. In the Appendix, written by M.~Maydanskiy and S.~Ganatra, this description is used to deduce P.~Seidel's conjecture \cite{shhh} from the results of the current paper.
\medskip

This paper was conceived several years ago and over this period we benefited a lot from discussions with several mathematicians. We are especially thankful   (in alphabetical order) to Mohammed Abouzaid, Denis Auroux, Cheolhyun Cho, Kai Cieliebak, Jian He, Helmut  Hofer, Janko Latschev, Lenny Ng,  Alexandru Oancea, Josh Sabloff, Eric Schoenfeld,
Paul Seidel, Ivan Smith, and Alexander Voronov.
We are very grateful to Maxim Maydanskiy and Sheel Ganatra for writing the Appendix of this paper.

\section{Weinstein manifolds}\label{sec:Weinstein}
Let $(X,\omega)$ be an exact symplectic manifold of dimension $2n$. A primitive of $\omega$, i.e.~a $1$-form $\lambda$ such that $d\lambda=\omega$, is called a {\em Liouville form} 
on $X$ and the vector field $Z$ that is $\omega$-dual to $\lambda$, i.e.~such that the 
contraction $i_{Z}\omega$ satisfies $i_Z\omega=\lambda$, is called  the {\em Liouville vector field}. Note that the equation $d\lambda=\omega$ is equivalent to $L_Z\omega=\omega$, 
where $L$ denotes the Lie-derivative. Hence, if $Z$ integrates to a flow $\Phi_Z^t:X\to X$ then $(\Phi_Z^t)^*\omega=e^t\omega$, i.e.~the Liouville vector field $Z$ is (symplectically) 
{\em expanding}, while $-Z$ is {\em contracting}. By a {\em Liouville manifold} we will mean a  
pair $(X,\lambda)$, or equivalently a triple $(X,\om,Z)$ where $Z$ is an expanding vector field for $\om$. Note that
\begin{equation}\label{eq:lambda}
i_Z\lambda=0,\qquad i_Zd\lambda=\lambda,\qquad\text{and}\qquad
L_Z\lambda=\lambda,
\end{equation}
and hence the flow of $Z$ expands also the Liouville form: $(\Phi_Z^t)^*\lambda=e^t\lambda$.

We say that a Liouville manifold $(X,\omega,Z)$ has a (positive) {\em cylindrical end} if there exists a compact domain $\oX\subset X$ with a smooth  boundary $Y=\p\oX$ transverse to 
$Z$, and such that $X\setminus \oX =\bigcup_{t=0}^\infty \Phi_Z^t(Y)$.  Then $X\setminus\Int\oX$ is diffeomorphic to $[0,\infty)\times Y$ in such a way that the Liouville vector field $Z$ corresponds to $\frac{\p}{\p s}$ and the Liouville form $\lambda=i_Z\omega$ can be written as $e^s\alpha$, where $s\in[0,\infty)$ is the parameter of the flow and $\alpha=\lambda|_{Y}$. 
The form $\alpha$ is contact, and thus $(X\setminus\Int\oX,\om)$ can be identified with the 
positive half  $[0,\infty)\times Y$ of the symplectization of the contact manifold $(Y,\xi=\ker\alpha)$. In fact, the whole symplectization of $(Y,\xi)$ sits in $X$ as  $\bigcup_{t\in\R}\Phi_Z^t(Y)$ and this embedding is canonical in the sense that the image is independent of 
the choice of $Y$. Its complement $X\setminus \bigcup_{t\in\R}\Phi_Z^t(Y)$ is equal to $\bigcup_K\bigcap_{t>0} \Phi_Z^{-t}(K)$, where the union is taken over all compact subset $K\subset X$. It is called    the {\em core} of the Liouville manifold $(X,\om, Z)$ and is denoted by $\Core(X,\om, Z)$. The Liouville manifold $(X,\om,Z)$ defines the contact manifold $(Y,\xi)$ canonically. We will write $(Y,\xi)=\partial(X,\om, Z)$ and call it the {\em ideal contact 
boundary} of the Liouville manifold $(X,\omega,Z)$ with cylindrical end. Equivalently, we can view $(Y,\xi)$ as the contact boundary of the compact  symplectic manifold $\oX$. We will refer to $\oX$ as a {\em Liouville domain with contact boundary}. Contact manifolds which 
arise as ideal boundaries of Liouville symplectic manifolds with cylindrical ends are called {\em strongly symplectically fillable}.

Given a Liouville domain $(\oX,\lambda)$ one can always {\em complete} it by attaching the cylindrical end $\left([0,\infty)\times Y, d(e^s\alpha)\right)$, $Y=\pa\oX$ to get a Liouville 
manifold $X$. We will keep the  notation $\lambda$ and $Z$ for the extended Liouville form and    vector field. {\em All Liouville manifolds considered  in this paper will be assumed  to have cylindrical ends}.

A map $\psi\colon (X_0,\lambda_0)\to (X_1,\lambda_1)$ between Liouville manifolds is called {\em exact symplectic} if $\psi^*\lambda_1-\lambda_0$ is exact.

\begin{lma}\label{l:exactsympl}
Given a symplectomorphism  $\psi:(X_0,\lambda_0)\to (X_1,\lambda_1)$ between Liouville manifolds with cylindrical ends,  there exists a symplectic isotopy $\phi_t: (X_0,\lambda_0)\to (X_0,\lambda_0)$, $t\in[0,1]$, $\phi_0=\Id$ such that the  symplectomorphism $\psi\circ\phi_1$ is exact, and $\dist((x,s),\phi_1(x,s))\leq Ce^{-s}$ for $(x,s)\in E$ and any cylindrical metric on $E=[0,\infty)\times Y$.
\end{lma}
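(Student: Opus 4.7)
The plan is to build $\phi_t$ as the flow of a suitable symplectic vector field $V$ on $X_0$ designed to absorb the failure of $\psi$ to be exact. Set $\beta := \psi^*\lambda_1 - \lambda_0$. Since $d\beta = \psi^*\omega_1 - \omega_0 = 0$, the $1$-form $\beta$ is closed, and its cohomology class $[\beta]\in H^1(X_0)$ is precisely the obstruction to exactness of $\psi$.

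Let $V$ be the unique vector field on $X_0$ satisfying $i_V\omega_0 = -\beta$; closedness of $\beta$ makes $V$ symplectic. I would take $\phi_t := \Phi_V^t$. To see why $V$ has the required decay on the end, write $\lambda_0 = e^s\alpha$, so $\omega_0 = e^s(ds\wedge\alpha + d\alpha)$. A direct calculation in the splitting $TE = \R\langle\partial_s\rangle \oplus \R\langle R_\alpha\rangle \oplus \xi$ shows that inverting $\omega_0$ on $1$-forms contracts norms by a factor of $e^{-s}$ in the cylindrical metric $ds^2 + g_Y$. Hence, provided $\beta$ is bounded on $E$, one obtains $|V|\leq Ce^{-s}$, which gives completeness of the flow on $[0,1]$ and the estimate $\dist((x,s),\phi_1(x,s))\leq Ce^{-s}$.

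To verify exactness of $\psi\circ\phi_1$, note that because $V$ is time-independent and symplectic, $\phi_{t*}V = V$ and $\phi_t^*\omega_0 = \omega_0$, so $\phi_t^*(i_V\omega_0) = i_V\omega_0$ for every $t$. Integrating Cartan's formula,
$$\phi_1^*\lambda_0 - \lambda_0 = \int_0^1 \phi_t^*(i_V\omega_0)\,dt + d\int_0^1 \phi_t^*(i_V\lambda_0)\,dt = -\beta + dh,$$
where $h$ denotes the second integral. Since $\beta$ is closed, the same argument applied to $\beta$ in place of $\lambda_0$ shows $\phi_1^*\beta - \beta = dh'$ is exact. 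Combining,
$$(\psi\circ\phi_1)^*\lambda_1 - \lambda_0 = (\phi_1^*\lambda_0 - \lambda_0) + \phi_1^*\beta = -\beta + dh + \beta + dh' = d(h+h'),$$
so $\psi\circ\phi_1$ is exact.

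The main obstacle lies in ensuring that $\beta$ is bounded on the cylindrical end, which is not automatic for a general symplectomorphism. Here one uses that $\psi$ must eventually map the end of $X_0$ into that of $X_1$ and preserves the symplectic volume $\omega^n$, which grows as $e^{ns}$; this forces $s\circ\psi$ to differ from $s$ by a bounded function for large $s$, and combined with $\psi^*\omega_1 = \omega_0$ yields the required bound on $\beta$. In practice it is cleanest to first apply a small auxiliary isotopy, itself bounded by $Ce^{-s}$, that arranges $\psi$ to intertwine the Liouville vector fields on the end --- after which $\beta$ is identically zero on $E$ and the remainder of the argument proceeds unchanged.
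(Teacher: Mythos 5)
Your overall strategy is the same as the paper's — take a symplectic vector field $V$ dual (via $\omega_0$) to a closed $1$-form representing the obstruction, flow by it, and verify exactness by Cartan's formula. Your exactness computation is correct. The problem is the estimate $|V|\leq Ce^{-s}$, which rests on an unjustified claim that $\beta=\psi^*\lambda_1-\lambda_0$ is bounded on $E$ in the cylindrical metric, and this claim is false in general.

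On $E=[0,\infty)\times Y$, a closed $1$-form decomposes as $\beta|_E=\pi^*\overline\beta+dH$ with $\overline\beta=\beta|_Y$ and $H(s,y)=\int_0^s \beta(\partial_\sigma)\,d\sigma$. The first term is bounded, but $dH$ has no a priori bound: $\beta(\partial_s)$ can grow, so $H$ and its transverse derivatives can grow. Your heuristic that volume growth bounds $s\circ\psi-s$ controls only the positional part of $\psi$, not $D\psi$, and hence says nothing about $|\beta|$. Your second suggested patch — an auxiliary isotopy (itself $O(e^{-s})$) that intertwines the Liouville vector fields, after which $\beta\equiv 0$ on $E$ — is exactly the open problem recorded in the Remark immediately following this Lemma: such a $\psi\circ\phi_1$ would be Liouville at infinity, which is not known to be achievable.

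The paper's fix is both simpler and different from either of your suggestions: replace $\beta=\theta$ by the cohomologous form $\wt\theta=\pi^*\overline\theta+d(\zeta(s)H)$ where $\zeta$ is a cutoff that kills $dH$ for $s\geq 1$. The difference $\theta-\wt\theta=d\wt H$ is exact (with $\wt H=(1-\zeta)H$), so the exactness argument still closes up: $(\psi\circ\phi_1)^*\lambda_1=\phi_1^*\lambda_0+\phi_1^*\wt\theta+d(\wt H\circ\phi_1)$, and the first two terms sum to $\lambda_0+d(\cdots)$ exactly as in your Cartan computation. Meanwhile, since $\wt\theta=\pi^*\overline\theta$ for $s\geq 1$, the dual field $V$ is $e^{-s}\wt V$ with $\wt V$ translation-invariant — which gives both integrability on $[0,1]$ and the $Ce^{-s}$ estimate without any boundedness hypothesis on the original $\theta$. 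You should incorporate this cutoff modification; without it the decay estimate (and hence completeness of the flow) is not established.
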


\begin{proof}
Let $\theta:= \psi^*\lambda_1-\lambda_0$. The restriction of the closed 1-form $\theta$ to the end $E=[0,\infty)\times Y$ can be written as $\pi^*\overline\theta+dH$, where $\overline\theta=\theta|_Y$, $\pi:E\to Y$ is the projection and $H:E\to\R$ is a smooth function. Take a cut-off function $\zeta:[0,\infty)\to[0,1]$ which is equal to $1$ near $0$ and to $0$ on $[1,\infty)$. Set $\wt\theta:=\pi^*\overline\theta+d(\zeta(s) H)$ and $\wt H:= (1-\zeta(s))H$.

Let $V$ be the symplectic vector field dual to the closed 1-form $-\wt\theta $ with respect to $d\lambda_0$. Computing $V$ on $[1,\infty) \times Y$ we observe that $V$ has the form $e^{-s}\wt V$, where $\wt V$ is independent of $s$. Hence, $V$ is integrable and integrates to an isotopy $\phi_t$ such that $\dist((x,s),\phi_1(x,s))\leq Ce^{-s}$ for any cylindrical metric on $E$.
Computing the Lie derivative $L_{V}\lambda_0$, we get
$L_V\lambda_0=-\wt\theta+d(\lambda_0(V))$. Hence
$$(\psi\circ\phi_1)^*\lambda_1=\phi_1^*\lambda_0+\phi_1^*\wt\theta + d\wt H\circ\phi_1=\lambda_0+dG$$
for some smooth function $G$.
\end{proof}

\begin{rmk}
It is an open problem whether  the isotopy in the above lemma can be chosen in such a way that
$\psi\circ\phi_1$ is Liouville at infinity, i.e.~so that $(\psi\circ\phi_1)^*\lambda_1-\lambda_0=dH$ for some function $H$ with compact support.
\end{rmk}

Let $(X,\om,Z)$ be a Liouville manifold with cylindrical end. It is called
{\em Weinstein}\footnote{Weinstein manifolds are symplectic counterparts of Stein (or affine) complex manifolds, see \cite{EliGrom-convex, Eli-pluri, CE}.}, see \cite{EliGrom-convex}, if  the vector field $Z$ is gradient-like for an exhausting (i.e.~proper and bounded below) Morse function $H\colon X\to \R$.

Let us recall that  $Z$ is called  {\em gradient-like} for a smooth Morse function $H\colon X\to\R$, and  $H\colon X\to\R$ is called a {\em Lyapunov function} for $Z$  if $dH(Z)\geq\delta|Z|^2$ for some $\delta>0$, where $|Z|$ is the norm with respect to some Riemannian metric on $X$.  We will always assume that $H$ restricted to the end $E=[0,\infty)\times  Y$ depends only on the coordinate $s$. The corresponding compact Liouville domain $\oX$ will be called in this case a {\em Weinstein domain}. Note that critical points of a Lyapunov Morse  function for a Liouville vector field $Z$ have indices $\leq n=\frac12\dim X$, and that the stable manifold $L_p$ of  the vector field $Z$ for any critical point $p$  of $H$ is isotropic  of dimension equal to the Morse  index of $p$. The intersection of these stable manifolds with regular levels $H^{-1}(c)$ of the function $H$ are isotropic for the induced contact structure $\ker(\lambda|_{H^{-1}(c)})$ on $H^{-1}(c)$.

A Weinstein manifold is called {\em subcritical} if all critical points of $H$ have index $<n$. Symplectic topology of subcritical manifolds is not very interesting. According to a theorem of K.~Cieliebak, see \cite{Ciel1}, any subcritical Weinstein manifold $X$ of dimension $2n$  is symplectomorphic to a product $X'\times\R^2$, where $X'$ is a Weinstein manifold of dimension $2n-2$. A theorem from \cite{EliGrom-convex} states that any  symplectic tangential homotopy equivalence between two subcritical Weinstein manifolds is homotopic to a symplectomorphism.

A {\em Liouville cobordism} is a pair $(\oW,\lambda)$, where $\oW$ is a compact manifold  with boundary $\p\oW$ partitioned into the union of two open-closed subsets, $\p\oW=\p_-\oW\sqcup\p_+\oW$, and where $\lambda$ is a Liouville form such that the corresponding Liouville vector field $Z$ is transverse to $\p\oW$, inward along $\p_-\oW$ and outward along $\p_+\oW$.
A Liouville cobordism is called {\em Weinstein} if there exists a Morse function $H:\oW\to\R$ which is  constant on $\p_-\oW $ and on $\p_+\oW$, which has no boundary critical points, and which is Lyapunov for the corresponding  Liouville vector field $Z$.

The {\em completion} of a Liouville cobordism $\oW$ is the manifold
\[
W=(-\infty,0]\times\p_-\oW\;\;\cup\;\; \oW\;\;\cup\;\; [0,\infty)\times\p_+\oW
\]
obtained by attaching  to $W$ along $\p_-\oW$ the negative part of the  symplectization of the contact manifold $\p_-\oW$ with contact form $\alpha_-=\lambda|_{\p_-\oW}$, and attaching  along $\p_+\oW$ the positive  part of the  symplectization of the contact manifold $\p_+\oW$ with contact form $\alpha_+=\lambda|_{\p_+\oW}$.

Important examples of Weinstein cobordisms are provided by domains $H^{-1}([c,C])\subset X$ in a Weinstein manifold $(X,\lambda,H)$ with all structures induced from $X$. Here $c$ and $C$ are two regular values of the Lyapunov function $H$.
\section{Moduli spaces of holomorphic curves}\label{Sec:mdli}
\subsection{Preliminaries}
In this section we will discuss the moduli spaces of holomorphic curves in a Liouville manifold $X$, in the symplectization $\R\times Y$ of its ideal  contact boundary  $Y$, and in completed symplectic cobordisms, which will be used in the algebraic constructions of later sections. We will pretend that we are in a ``transverse'' situation, i.e.~that all the Fredholm operators involved are regular at their zeroes, that all evaluations maps are transverse to the chosen targets, and in particular, that all moduli spaces have dimension as predicted by the appropriate index formula.
It is well known that this ideal situation rarely can be achieved, and that one needs to work with objects more general than holomorphic curves to be able to achieve the required transversality. However for the purposes of this exposition we ignore this difficulty and claim that the correct version can be built according to the scheme presented here using one of the, for this purpose, currently developed technologies, notably, the polyfold theory of H.~Hofer, K.~Wysocki, and E.~Zehnder, see \cite{HWZ, HWZ2, HWZ3}.

Given a contact manifold $Y$ with a fixed contact form $\alpha$, we say that an almost complex structure $J$ on the symplectization $(\R\times Y,d(e^s\alpha))$ is adjusted to $\alpha$ if it is independent of the coordinate $s$ on $\R$, if it is compatible with $d\alpha$ on the contact hyperplanes $\ker\alpha$ of the slices $\{s\}\times Y$, and  if $J\frac{\p}{\p s}=R_\alpha$, where $R_\alpha$ is the Reeb vector field of the contact form $\alpha$.

Given a Liouville manifold $(X,\om,Z)$ with a cylindrical end, we call an almost complex structure $J$ on $X$ {\it adjusted}  to the Liouville structure if it is compatible with $\om$  everywhere, and if on the end $E$, which is identified with the positive part of the  symplectization of $(Y,\ker\alpha)$,  it is adjusted to $\alpha$ in the above sense. Adjusted almost complex structures on a completed Liouville cobordism are defined in a similar way.

Throughout this paper we will assume, for the sake of simplicity, that the first Chern class of all symplectic manifolds considered is trivial, and moreover, that the canonical bundle  is trivialized. Note that at the cylindrical end the complex tangent bundle splits: $TX=\xi\oplus\eps^1$, where $\eps^1$ is the trivial complex line bundle generated by the vector field $Z$. We will assume that the trivialization of the canonical bundle at the end is compatible with this splitting.
Similarly, for each Lagrangian submanifold $L\subset X$ considered (e.g. the symplectization  of a  Legendrian manifold) we will assume that its relative Maslov class vanishes. For Lagrangian $L\subset X$, the $1$-dimensional determinant of the cotangent  bundle of $L$ is a subbundle of the trivialized complex canonical bundle. The triviality of the relative Maslov class first implies that this subbundle is trivial, and hence it gives rise to an $S^1=\R/2\pi\Z$-valued function, and second allows us to lift this function to a {\em phase function}  $\phi_L:L\to\R$ which is unique up to additive constant.

Given a contact manifold $(Y,\xi)$ with a fixed contact form $\alpha$ we will denote by $\PP(Y)$  the set of periodic orbits of the Reeb field $R_\alpha$, including multiples. We assume all orbits to be non-degenerate. For $\gamma\in\PP(Y)$, let $\kappa(\gamma)$ denote its multiplicity. If $\gamma\in\PP(Y)$ then we write $\ug$ for the simple geometric orbit underlying $\gamma$. For each $\gamma\in P(Y)$ fix a point $p_{\gamma}\in\ug$ and note that $p_\gamma$ determines a unique parameterization of $\gamma$: use $p_\gamma$ as starting point and then follow the flow of $R_\alpha$ $\kappa(\gamma)$ times around $\ug$.


The chosen trivialization of the canonical bundle   allows one to canonically assign an  integer Conley-Zehnder index  $CZ(\gamma)$ to any orbit $\gamma\in\PP(Y)$.  We will grade the orbits by this index,
$|\gamma|:=CZ(\gamma)$.
The parity of $  CZ(\gamma)+n-3$ is called the {\it parity} of the orbit $\gamma$.\footnote{It is customary in the contact homology theory to grade orbits by $CZ(\gamma)+n-3$. However, for the  linearized version of contact homology considered in this paper, and its relation with symplectic homology the  grading by $CZ(\gamma)$ seems to be more natural.}
The parity coincides with the sign of the
determinant $\det(I-A_\gamma)$, where $A_\gamma$ is the linearized Poincar\'e return map around the orbit
$\gamma$.
An orbit $\gamma\in\PP(Y)$ is called {\it bad} if it is an even multiple    of another orbit $\overline\gamma$ whose parity differs from the parity of $\gamma$. The set of bad orbits is denoted by $\PP_\bad(Y)$.
Non-bad orbits are called {\it good}  and the set of good orbits is denoted by $\PP_\good(Y)$.

Consider a Legendrian submanifold $\Lambda\subset Y$. Assuming it is in general position with respect to the  Reeb flow of $\alpha$ we have a discrete set $\CC(\Lambda)$ of non-degenerate chords of the Reeb foliation with ends on $\Lambda$.
One can assign, see \cite{EES},  an integer grading $|c|$ to each chord $c\in\CC(\Lambda)$.  Note that with our assumptions this grading can be defined as follows. Let $x_0,x_1$  be the  beginning and the end points  of the chord $c$, respectively.  The linearized Reeb flow along the chord allows us to transport the tangent space   $T_{x_0}\Lambda$  to all points of $c$.  Let us denote by  $\Phi_c$  the transport operator along the chord $c$. We can assume that for each chord $c$ we have $JT_{x_1} \Lambda=\Phi_cT_{x_0}\Lambda$. Indeed, this can be achieved by a homotopy  of the almost complex structure $J$ in a neighborhood of the chords, while the defined grading depends only on the homotopy class of $J$. Using the linearized Reeb flow we can also  extend by continuity the phase function $\phi_\Lambda$ to $c$ beginning from the point $x_0$. Let $\phi_-$ be the value of  the extended  phase function at the top end $x_1$ of the chord $c$. Then we define
$$
|c|:= \frac{\phi_\Lambda(x_1)-\phi_-}\pi+\frac{n-3}2.
$$
See \cite{EES} for the details.

\subsection{Holomorphic curves anchored in a Weinstein manifold}\label{sec:anchored}

Let $(\oX,\om,Z)$ be a Liouville domain and $\oX_0\subset\Int\oX$ be a subdomain such that the Liouville vector field $Z$ is outward transverse to $Y_0=\p X_0$. Then $\oW=\oX\setminus \Int \oX_0$ is a Liouville cobordism with $\p_-\oW=Y_0$ and $\p_+\oW=Y=\p\oX$. Let $W$ and $X_0$ be completions of $\oW$ and $\oX_0$. Let $\oL$ be  an exact Lagrangian submanifold $(\oL,\p\oL)\subset(W,\p W)$ which intersect $\p W_\pm$ along the Legendrian submanifolds $\Lambda_\pm=\oL\cap\p_\pm \oW$, and which is tangent to $Z$ along $\Lambda_\pm$. We complete $\oL$ to $L$ by adding cylindrical ends:
\begin{align*}
L&=(-\infty,0]\times\Lambda_-\;\;\cup\;\;\oL\;\;\cup\;\;[0,\infty)\times\Lambda_+\\
&\subset (-\infty,0]\times Y_0\;\;\cup\;\; \oW\;\;\cup\;\; [0,\infty)\times Y\;\;=\;\;W.
\end{align*}
We will refer to $L$ as an {\em exact Lagrangian cobordism} between the Legendrian submanifolds $\Lambda_-$ and $\Lambda_+$, and to $\Lambda_{-}$ and $\Lambda_+$ as the {\em negative} and {\em positive} ends of $L$.

Let $\Sigma$ be a Riemann surface with conformal structure $\mathsf{j}$. A set of punctures on $\Sigma$ is a finite collection $\{z_1, \ldots, z_k\} \subset \Sigma$ of distinct points. An {\em asymptotic marker} for an interior puncture $z$ of $\Sigma$ is a half-line $\ell \subset T_z\Sigma$. The pair $(z,\ell)$ is called a {\em decorated puncture}. Given a Riemann surface $\Sigma$ with decorated punctures $\{ (z_1, \ell_1), \ldots, (z_m,\ell_m)\}$ and boundary punctures $z'_1, \ldots, z'_{m'}$, the corresponding {\em punctured decorated Riemann surface} is the Riemann surface $\Sigma \setminus \{ z_1, \ldots, z_m, z'_1, \ldots, z'_{m'} \}$ where the interior punctures are equipped with the asymptotic markers $\ell_1, \dots, \ell_m$. A map
\[
f\colon \Sigma \setminus \{ z_1, \ldots, z_m, z'_1, \ldots, z'_{m'} \} \to W
\]
is holomorphic if $df \circ \mathsf{j} = J \circ df$. Holomorphic maps $f$ of Riemann surfaces $\Sigma$ with non-empty boundary will be required to satisfy the Lagrangian boundary condition $f(\partial \Sigma \setminus \{ z'_1, \ldots, z'_{m'}\}) \subset L$. We say that $f$ is asymptotic to the orbit $\gamma_+ \in \PP(Y)$ of period $T_+$ at the interior puncture $z_+$ decorated with the marker $\ell_+$ at $+\infty$ if
\begin{itemize}
\item $f$ maps a pointed neighborhood $U_+$ of $z_+$ into $[0, \infty) \times Y$, so that $f(z) = (a(z), u(z))$ for all $z \in U_+$;
\item  $\lim_{z \to z_+} a(z) = + \infty$;
\item in holomorphic polar coordinates $(\rho, \theta)$ centered at $z_+$ and such that $\theta=0$ along $\ell_+$, $\lim_{\rho \to 0} u(\rho, \theta) = \gamma_+\left(-\frac{T_+}{2\pi} \theta\right)$ for the parametrization $\gamma_{+}\colon[0,T_+]\to Y$ of the Reeb orbit $\gamma_+$ determined by $p_{\gamma_+}\in\ug_+$.
\end{itemize}
Similarly, we say that $f$ is asymptotic to the orbit $\gamma_- \in \PP(Y_0)$ of period $T_-$ at the interior puncture $z_-$ decorated with the marker $\ell_-$ at $-\infty$ if
\begin{itemize}
\item $f$ maps a pointed neighborhood $U_-$ of $z_-$ into $(-\infty,0] \times Y_0$, so that
$f(z) = (a(z), u(z))$ for all $z \in U_-$;
\item  $\lim_{z \to z_-} a(z) = - \infty$;
\item in holomorphic polar coordinates $(\rho, \theta)$ centered at $z_-$ such that $\theta = 0$ along $\ell_-$, $\lim_{\rho \to 0} u(\rho, \theta) = \gamma_-\left(\frac{T_-}{2\pi} \theta\right)$ for the parametrization $\gamma_{-}\colon[0,T_-]\to Y_0$ of the Reeb orbit $\gamma_-$ determined by $p_{\gamma_-}\in\ug_-$.
\end{itemize}
We say that $f$ is asymptotic to the chord $c_+ \in \CC(\Lambda)$ of length $T_+$ at the boundary puncture $z'_+$ at $+\infty$ if
\begin{itemize}
\item $f$ maps a pointed neighborhood $U_+$ of $z'_+$ into $[0, \infty) \times Y$, so that $f(z) = (a(z), u(z))$
for all $z \in U_+$;
\item  $\lim_{z \to z'_+} a(z) = + \infty$;
\item in holomorphic polar coordinates $(\rho, \theta)$, $\theta\in[-\pi,0]$ centered at $z'_+$ (where $\theta\in\{-\pi, 0\}$ along $\partial \Sigma$), $\lim_{\rho \to 0} u(\rho, \theta) = c_+\left(-\frac{T_+}{\pi} \theta\right)$.
\end{itemize}
Similarly, we say that $f$ is asymptotic to the chord $c_- \in \CC(\Lambda_0)$ of length $T_-$ at the boundary puncture $z'_-$ at $-\infty$ if
\begin{itemize}
\item $f$ maps a pointed neighborhood $U_-$ of $z'_-$ into $[0, \infty) \times Y$, so that $f(z) = (a(z), u(z))$
for all $z \in U_-$;
\item  $\lim_{z \to z'_-} a(z) = - \infty$;
\item in holomorphic polar coordinates $(\rho, \theta)$, $\theta\in[0,\pi]$ centered at $z'_-$ (where $\theta\in\{0,\pi\}$ along $\partial \Sigma$), $\lim_{\rho \to 0} u(\rho, \theta) = c_-\left(\frac{T_-}{\pi} \theta\right)$.
\end{itemize}

Suppose now that there exists an exact Lagrangian cobordism $L_0\subset X_0$ with positive end $\Lambda_-$ and empty negative end.   We fix adjusted almost complex
structures on $W$ and $X_0$ which agree on the common part.
Consider a punctured decorated  Riemann surface $\Sigma$ with (possibly empty) boundary.
A holomorphic map $f : \Sigma \to W$ {\em anchored in $(X_0, L_0)$}, satisfying some specified conditions on the boundary (e.g.~a Lagrangian boundary condition) and at the punctures of $\Sigma$ (e.g.~being asymptotic to specified Reeb orbit cylinders), consists of the following objects:
\begin{description}
\item{(i)} a holomorphic map $f : (\Sigma \setminus \{ x_1, \ldots, x_l \}, \pa\Sigma \setminus \{ y_1, \ldots, y_m \}) \to W$ satisfying the specified conditions on the boundary and at the punctures of $\Sigma$, which is asymptotic to orbits $\delta_1,\dots\delta_l \in\PP(Y_0)$ at the additional interior, decorated punctures $x_1,\dots, x_l$ at $-\infty$, and which is asymptotic to chords $c'_1,\dots c'_m  \in\CC(\Lambda_-)$ at the additional boundary punctures $y_1,\dots, y_m$ at $-\infty$;
\item{(ii)} holomorphic planes $h_j:\C\to X_0$ asymptotic to the orbits $\delta_j$, $j=1,\dots, l$, at $+\infty$
(here the puncture is $z=\infty$ and the asymptotic marker is the positive real axis);
\item{(iii)} holomorphic half-planes $h'_j:(D^2,\pa D^2 \setminus \{ x\}) \to (X_0, L_0)$ asymptotic to the chords $c'_j$, $j=1, \ldots, m$, at $+\infty$.
\end{description}
%
%
When there are no additional boundary punctures and no holomorphic half-planes, we say that the holomorphic map $f$ is {\em anchored in $X_0$}. See Figure \ref{fig:anchor}.
\medskip

At one occasion, we will also consider a slightly extended  class of admissible anchors. In addition to (ii) and (iii) above, we also allow
\begin{description}
\item{(iv)}   holomorphic strips $h''_j:(D^2,\pa D^2 \setminus \{ x,y\}) \to (X_0, L_0)$ asymptotic at $+\infty$ to the chords $c'_j$, $c'_i$ $i,j=1, \ldots, m$, $i\neq j$, provided that the punctures $y_i$ and $y_j$ belong to different connected components of the holomorphic curve $f$.
\end{description}
When such anchors are present we say that the holomorphic curve  $f$ has {\em generalized anchors} in $(X_0,L_0)$. We use disks with generalized anchors only   in 
the definition of the moduli space $\MM^Y_{\Lambda, L}(c_1,c_2;b)$ in Section \ref{sec:mYLambda}.  In turn, this moduli space is only needed at one point in Section \ref{sec:LH-surgery} for the definition of the product on linearized Legendrian homology.
 
 %
%
Let $f : \Sigma \to W$ and $\wt f : \wt\Sigma \to W$ be two holomorphic maps anchored in $(X_0, L_0)$.
We say that $f$ and $\tilde f$ are equivalent if $\Sigma$ and $\wt\Sigma$ have the same number
of additional internal and boundary punctures, and if there exist:
\begin{itemize}
\item a biholomorphism $\varphi : \Sigma \to \wt\Sigma$ such that $\varphi$ maps the punctures of $\Sigma$
with their asymptotic markers to the punctures of $\wt\Sigma$ with their asymptotic markers, and such that 
$\wt f = f \circ \varphi$;
\item biholomorphisms $\varphi_j : \C \to \C$, $j=1,\ldots,l$ preserving the direction of the positive real axis 
such that $\wt h_j = h_j \circ \varphi_j$;
\item biholomorphisms  $\varphi'_j : D^2 \to D^2$, $j=1,\ldots,m$ such that $\wt h'_j = h'_j \circ \varphi'_j$.
\end{itemize}
The various moduli spaces of holomorphic curves considered in this paper will always consist of
equivalence classes of holomorphic maps.

Let us note that when the positive asymptotics of a holomorphic curve is fixed then the total action of the orbits and  chords at the positive end bounds by Stokes' formula the area of the curve, and hence provides an a priori upper bound on the possible number of anchors.

\begin{figure}
\labellist
\small\hair 2pt
\pinlabel $\gamma$   at 40 302
\pinlabel $\beta$   at 56 41
\pinlabel $\text{in $W$}$ at 220 198
\pinlabel $\text{in $X_0$}$ at 220 15
\pinlabel $\text{in $(W,L)$}$ at 640 198
\pinlabel $\text{in $(X_0,L_0)$}$ at 640 15
\pinlabel $c$ at 347 303
\pinlabel $b$ at 369 35
\endlabellist
\centering
\includegraphics[width=.6\linewidth]{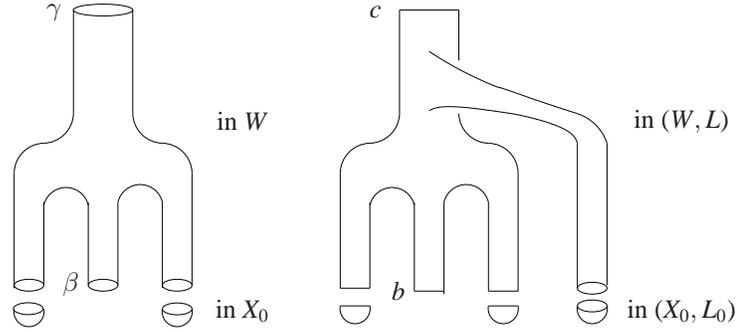}
\caption{On the left, a cylinder connecting $\gamma$ and $\beta$ anchored in $X_0$. On the right, a strip connecting $c$ to $b$ anchored in $(X_0,L_0)$.}
\label{fig:anchor}
\end{figure}

\subsection{\texorpdfstring{Moduli spaces $\MM^Y(\gamma;\beta)$ and ${\widecheck\MM}{}^Y(\gamma;\beta)$ } {Moduli spaces MY(gamma+,gamma-) and checkMY(gamma+,gamma-) }}\label{sec:holcyl+MBcyl}
Let $X$ be a Weinstein manifold with ideal contact boundary $Y$. Assume that $X$, $\R\times Y$, and the completion of any Weinstein cobordism in $X$ are endowed with almost complex structures adjusted to all considered symplectic objects, and that all these almost complex structures agree in the common parts of the spaces considered.

We denote by $\MM^Y(\gamma,\beta)$ the moduli space of  holomorphic  cylinders  in $\R \times Y$ anchored in $X$, asymptotic to orbits $\gamma$ at $+\infty$ and $\beta$ at $-\infty$. The dimension of $\MM^Y(\gamma;\beta)$ is equal to $|\gamma|-|\beta|$ and can be expressed as the sum of the dimensions of all the moduli spaces of holomorphic curves involved in the anchored curve. In particular, when the total dimension is equal to $1$, all the moduli spaces of holomorphic planes in $X$ which are involved are necessarily $0$-dimensional. The moduli space of punctured cylinders in $\R\times Y$ is then $1$-dimensional, and hence consists of holomorphic curves which are rigid up to translations.
In what follows all holomorphic cylinders in $\R\times Y$ will be allowed to anchor in $X$, and we will not always explicitly mention this.

The asymptotic marker $\ell_+$ at $+\infty$ fixes a choice of parametrization for the $S^1$ factor of the domain $\R \times S^1$, where $\ell_+$ corresponds to $0\in S^{1}=\R/(2\pi\Z)$, for elements of $\MM^Y(\gamma;\beta)$. Evaluation at $0\in S^{1}=\R/(2\pi\Z)$ at $-\infty$ then defines an evaluation map ${\ev}_0\colon\MM^Y(\gamma;\beta)\to \ul{\beta}$.
Define
\begin{equation}\label{eq:constr-cyl1}
\widecheck\MM{}^{\prime Y}(\gamma;\beta)={\ev}_0^{-1}(p_{\beta}).
\end{equation}
Then $\widecheck\MM{}^{\prime Y}(\gamma;\beta)$
can be interpreted as the moduli space of holomorphic cylinders such that the points $p_{\beta}$ and $p_{\gamma}$ are aligned along a single line $\R\times \{t\}\subset\R\times S^1$. In particular, one may equivalently describe the space $\widecheck\MM{}^{\prime Y}(\gamma;\beta)$ by using a different interpretation of \eqref{eq:constr-cyl1} where the marker at $-\infty$ determines the parametrization of $S^{1}$ and we use the preimage of $p_{\gamma}$ under the induced evaluation map ${\ev}_0\colon \MM^Y(\gamma;\beta)\to \ul{\gamma}$ instead.

Later we will use the moduli space $\widecheck\MM{}^{\prime Y}(\gamma;\beta)$ to describe parts of the differential in a chain complex which computes symplectic homology in Morse-Bott terms following \cite{BO} (see also \cite{F} in the case of Morse theory).
In these terms elements of $\widecheck\MM{}^{\prime Y}(\gamma;\beta)$ correspond to degenerate ``Morse-Bott'' curves with one holomorphic level only. There are similar contributions to the differential also from Morse-Bott curves with two holomorphic levels connected by a Morse flow level. We next define the moduli space $\widecheck\MM{}^{\prime \prime Y}(\gamma;\beta)$ relevant for describing such objects.

Let $\gamma,\gamma_0,\beta\in\PP(Y)$. Let $f_+\in\MM^{Y}(\gamma;\gamma_0)$ and $f_-\in\MM^{Y}(\gamma_0;\beta)$. Then, as discussed above, the markers $\ell_{+}$ at $+\infty$ in the domain of $f_+$ and  $\ell_{-}$ at $-\infty$ in the domain of $f_-$ give evaluation maps $\ev_0^{+}\colon \MM^{Y}(\gamma;\gamma_0)\to\ul{\gamma_0}$ and $\ev_0^{-}\colon\MM^{Y}(\gamma_0;\beta)\to\ul{\gamma_0}$, respectively. Let
\[
\Delta(\gamma,\gamma_0,\beta)=
(\MM^{Y}(\gamma;\gamma_0)/\R)\times\MM^{Y}(\gamma_0;\beta)
\]
and define 
$\widecheck\MM{}^{\prime \prime Y}(\gamma;\beta)$ as the subset of
pairs $(f_+, f_-) \in \Delta(\gamma,\gamma_0,\beta)$, for some $\gamma_0 \in \PP(Y)$, such that the points $p_{\gamma_0}$, $\ev_0^{+}(f_+)$, and $\ev_0^{-}(f_-)$ lie in the cyclic ordering
\[
\bigl(\,p_{\gamma_0}\,,\, \ev_0^{+}(f_+)\,,\, \ev_0^{-}(f_-)\,\bigr)
\]
on $\ul{\gamma_0}$ with orientation induced by the Reeb field $R_\alpha$.

Finally, we define the moduli space
$$
\widecheck\MM{}^Y(\gamma;\beta)=\widecheck\MM{}^{\prime Y}(\gamma;\beta) \cup
\widecheck\MM{}^{\prime \prime Y}(\gamma;\beta).
$$
We have $\dim  \widecheck\MM{}^Y(\gamma;\beta)=|\gamma|-|\beta|-1.$ See Figure \ref{fig:morsebott}.

\begin{figure}
\centering
\includegraphics[width=.6\linewidth]{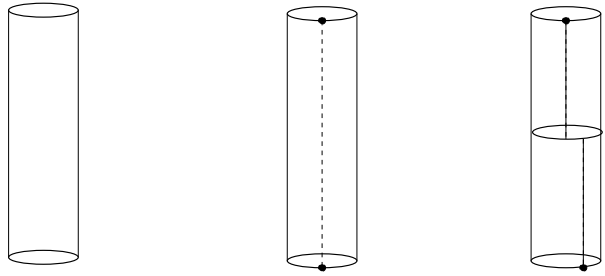}
\caption{From left to right, cylinders in $\MM$, in $\MM'$, and in $\MM''$.}
\label{fig:morsebott}
\end{figure}

\subsection{\texorpdfstring{Moduli spaces $\MM^X(\gamma)$ and $\MM^X(\gamma;p)$}{Moduli spaces MX(gamma) and MX(gamma,p)}}\label{sec:holplanes}
Let $H:X\to\R$ be any exhausting Morse function which in the end depends only on the coordinate $s$, and let $\wt  Z$  be a vector field which is gradient-like with respect to it and which at infinity coincides with $\frac{\p}{\p s}$.
If $X$ is Weinstein, we will assume that  $\wt Z$ is the Liouville vector field $Z$ and that  the function $H$
is the Lyapunov function for $Z$, which is part of the Weinstein structure. Let $L_p$ be  the $\wt Z$-stable manifold for a critical point $p\in X$. We have
$\codim L_p=2n-\ind p$.

Consider  the moduli space
$\MM^X(\gamma)$ of holomorphic maps $\C\to X$  with a marked point at $0\in\C$, which are asymptotic to an orbit $\gamma\in\PP(Y)$ at $+\infty$.
We have $\dim\MM^X(\gamma)=|\gamma|+n-1$ (taking into account the quotient by 
reparameterizations preserving $0$). Consider the evaluation map at $0\in\C$,
$\ev_0:\MM^{X}(\gamma)\to X$.
Define the moduli space
$$\MM^X(\gamma;p):=\ev^{-1}_0(L_p)\subset\MM^{X}(\gamma),$$
see Figure \ref{fig:intomorse}.
We have $$\dim\MM^{X}(\gamma;p)=|\gamma|+n-1+\ind p -2n=|\gamma|-|p|-1,$$ where we let $|p|:=n-\ind p$. Thus $|p|$ is the Morse index of $p$ as a critical point of the function $-H$, decreased by $n$.
In particular, the space $\MM^X(\gamma;p) $ consists of curves that are rigid up to reparameterization if and only if $|\gamma|-|p|=1$.

\begin{figure}
\labellist
\small\hair 2pt
\pinlabel $\gamma$   at 95 187
\pinlabel $p$   at 0 37
\pinlabel $\nabla H$ at 73 45
\endlabellist
\centering
\includegraphics[width=.4\linewidth]{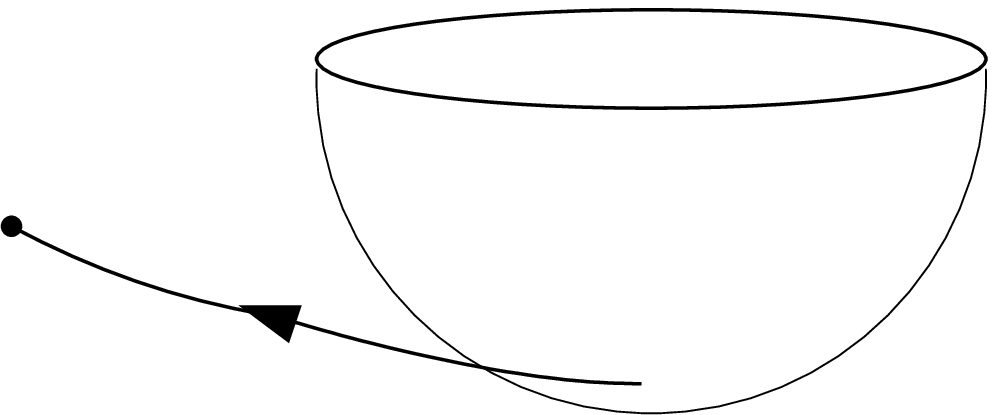}
\caption{A curve in $\MM^{X}(\gamma,p)$.}
\label{fig:intomorse}
\end{figure}

\subsection{\texorpdfstring{Moduli spaces $\MM^Y_\Lambda(c;b_1,\dots, b_m)$, $\MM^Y_{\Lambda,L}(c;b)$ and  $\MM^Y_{\Lambda, L}(c_1,c_2;b)$}{Moduli spaces MYLambda(c+;c1,..., cm) and MYLambda,L(c+,c-)}}\label{sec:mYLambda}
Consider a Legendrian submanifold $\Lambda\subset Y$. Let $D$ denote the unit disk in the complex plane. A {\em holomorphic tree} in $(\R\times Y,\R\times\Lambda)$, connecting a chord $c\in\CC(\Lambda)$ and chords
$b_1,\dots, b_m\in\CC(\Lambda)$, $m\geq0$, anchored into $X$ is a holomorphic map
$$
f\colon\left(D,\p D\setminus\{z^+, z_1,\dots, z_m\}\right)\to (\R\times Y,\R\times\Lambda),
$$
anchored in $X$, where  $z^+,z_1,\dots, z_m$, $m\geq 0$, are boundary punctures ordered counter-clockwise, such that
at $z^+$ the map is asymptotic to the chord $c$ at $+\infty$, and at the punctures $z_1,\dots, z_m$ the map is asymptotic to the chords $b_1,\dots, b_m$ at $-\infty$.
The moduli space  of holomorphic trees in $(\R\times Y,\R\times\Lambda)$, connecting a chord $c\in\CC(\Lambda)$ and chords
$b_1,\dots, b_m\in\CC(\Lambda)$, $m\geq0$, anchored in $X$, will be denoted by
$\MM^Y_\Lambda(c;b_1,\dots, b_m)$.
We have $$\dim\MM^Y_\Lambda(c;b_1,\dots,b_m)=|c|-\sum\limits_{j=1}^m |b_j|.$$

We will also consider the  moduli spaces $\MM^Y_\Lambda(c;b_1,\dots,b_m;k)$, $k=0,1,\dots$,
of holomorphic trees in $(\R\times Y,\R\times\Lambda)$, connecting a chord $c\in\CC(\Lambda)$ to chords $b_1,\dots, b_m\in\CC(\Lambda)$, $m\geq0$, anchored in $X$, and {\em  with $k$ additional marked points on the boundary}, intertwined in an arbitrary way with the punctures $z_j$. Hence,
the moduli space $\MM^Y_\Lambda(c;b_1,\dots, b_m;k)$ can be further specialized as
\begin{align}\label{eq:deform}
\MM^Y_\Lambda(c;b_1,\dots, b_m;k)=\coprod\limits_{\sum k_j=k} \MM^Y_\Lambda(c;b_1,\dots, b_m;k_0,\dots, k_m),
\end{align}
where $ \MM^Y_\Lambda(c;b_1,\dots, b_m;k_0,\dots, k_m)$ consists  of the trees with exactly $k_0$ marked points preceding $b_1$ and exactly $k_j$ marked points following $b_j$, $j=1,\dots,m$.
Note that in this notation
$\MM^Y_\Lambda(c;b_1,\dots, b_m;0)= \MM^Y_\Lambda(c;b_1,\dots, b_m)$. Evaluation at the marked points give evaluation maps
\[
\ev_k:\MM^Y_\Lambda(c;b_1,\dots, b_m;k)\to\underbrace{\Lambda\times\dots\times\Lambda}_k.
\]

Suppose now that there exists an exact Lagrangian cobordism $L\subset X$ with positive end $\Lambda\subset Y$. Then we can consider the moduli space
$\MM^Y_{\Lambda,L}(c;b)$   of holomorphic strips in $(\R\times Y,\R\times\Lambda)$ anchored in $(X,L)$, with positive puncture at $c$ and negative puncture at $b$, see Section \ref{sec:anchored} for the definition of holomorphic curves anchored in $(X,L)$. We have
$$
\dim \MM^Y_{\Lambda,L}(c;b) =|c|-|b|.
$$

We also define the moduli spaces $\MM^Y_{\Lambda, L}(c_1,c_2;b)$ in the case when $\Lambda$ is a union of spheres. This moduli space is itself the union of two moduli spaces:
$$
\MM^Y_{\Lambda, L}(c_1,c_2;b)={}'\!\MM^Y_{\Lambda, L}(c_1,c_2;b)\cup{}''\!\MM^Y_{\Lambda, L}(c_1, c_2;b).
$$ 
Here ${}'\!\MM^Y_{\Lambda, L}(c_1, c_2;b)$ is the moduli space  of holomorphic disks in  $(\R\times Y,\R\times\Lambda)$ with {\em generalized} anchors in $(X,L)$ (see  Section
\ref{sec:anchored}), which have three boundary punctures: two positive punctures mapped to the chords $c_1$ and  $c_2$, and one negative puncture mapped to $b$. Our notation is such that the cyclic order or of the punctures induced by the boundary orientation is $(b,c_1,c_2)$. 

To define ${}''\!\MM^Y_{\Lambda, L}(c_1, c_2;b)$, we fix an auxiliary Morse function  $\wt g\colon\Lambda\to \R$,  with one minimum, one maximum, and no other critical points on each component. Define the function  $g\colon\R\times\Lambda \to\R$ as $g=\wt g+s$, $s\in\R$. We also consider a Morse function $L\to\R$, also denoted by $g$, which at infinity coincides with $\wt g+s$.  Define 
\begin{equation}\label{e:extramorse}
{}''\!\MM^Y_{\Lambda, L}(c_1, c_2;b):=\MM^{Y}_\Lambda(c_1;b)\times_g \MM^{Y}_\Lambda(c_2)\cup
\MM^{Y}_\Lambda(c_1)\times_g \MM^{Y}_\Lambda(c_2;b), 
\end{equation}
where an element  in $\MM^{Y}_{\Lambda}(c_1;b)\times_g \MM^{Y}_{\Lambda}(c_2)$ consists  of a pair of holomorphic disks $(u,v)\in\MM^{Y}_\Lambda(c_1;b)\times\MM^{Y}_\Lambda(c_2)$ and a flow line of $\nabla g$ which connects the boundary of $u$ to the boundary of $v$, and where elements in $\MM^{Y}_\Lambda(c_1)\times_g \MM^{Y}_\Lambda(c_2;b)$ are analogous configurations where the negative puncture is in the second disk instead.
\footnote{Let us point out that the gradient flow line may lie in $L$ and connect points on two anchor disks.}
We have
$$
\dim \MM^Y_{\Lambda,L}(c_1, c_2;b) =|c_1|+|c_2|-|b|-(n-3).
$$
\subsection{\texorpdfstring{Moduli spaces $\MM^Y_\Lambda(\gamma;b_1,\dots, b_m)$, $\widecheck\MM{}^Y_\Lambda(\gamma;b_1,\dots, b_m)$, \\ and
$\widehat\MM^Y_\Lambda(\gamma;b_1,\dots, b_m)$}{Moduli spaces MYLambda(gamma;c1,..., cm), checkMYLambda(gamma;c1,..., cm), and hatMYLambda(gamma;c1,..., cm)}}\label{sec:mLambda}
We denote by $\MM^Y_\Lambda(\gamma;b_1,\dots, b_m)$ the moduli space of holomorphic maps
$$
f\colon (D\setminus \{0\}, \p D\setminus\{z_1,\dots, z_m\})\to(\R\times Y,\R\times\Lambda)
$$
anchored in $X$, where $z_1,\dots, z_m\in\p D$ are boundary punctures ordered counter-clockwise, which are asymptotic to the orbit $\gamma \in \PP_\good(Y)$ at the decorated puncture $0$ at $+\infty$, and  to chords $b_1,\dots, b_m \in \CC(\Lambda)$ at punctures $z_1,\dots, z_m$ at $-\infty$. When $m=0$, the moduli space $\MM^Y_\Lambda(\gamma;\varnothing)$ will simply be denoted
by $\MM^Y_\Lambda(\gamma)$.

The interior puncture $0$ and one boundary puncture, which we take to be $z_1$, determines a coordinate system on $D$ and gives, in particular, an identification $\pa D=S^{1}=\R/(2\pi\Z)$ where we think of $S^{1}$ as the polar coordinate circle at $0$ and where $z_1$ corresponds to $0\in\R$. The asymptotic marker $\ell_+$ at $0$ then induces an evaluation map
$$
\ev_0\colon\MM^Y_\Lambda(\gamma;b_1,\dots,b_m) \to \pa D=S^{1},
$$
which associates to a map $u$ the coordinate of $\ell_+$ in the polar $S^{1}$.
Define
$$
\widecheck\MM{}^{\prime Y}_\Lambda(\gamma;b_1,\dots, b_m):=\ev^{-1}_0(z_1)=\ev_0^{-1}(0)
$$
and
$$
\widehat\MM^Y_\Lambda(\gamma;b_1,\dots, b_m):= \ev^{-1}\left((z_1,z_2)\right),
$$
where $(z_1,z_2)$ denotes the open arc in the boundary $\pa D$ which starts at $z_1$ and ends at $z_2$.
Thus $\widecheck\MM{}^{\prime Y}_\Lambda(\gamma;b_1,\dots, b_m)$ (respectively $\widehat\MM^Y_\Lambda(\gamma;b_1,\dots, b_m)$) is a subspace of  the moduli space $\MM^Y_\Lambda(\gamma;b_1,\dots, b_m)$ which is distinguished by the condition that the ray of asymptotic marker mapped to $p_\gamma$ intersects $\pa D$ at the point $z_1$ (respectively at a point of the open arc $(z_1,z_2)$). See Figure \ref{fig:isocurve}.

Note that the moduli spaces $\MM^Y_\Lambda(\gamma;b_1,\dots, b_m)$ and $\MM^Y_\Lambda(\gamma;b'_1,\dots, b'_m)$ coincide
if the chords $b_1,\dots, b_m$ and $b'_1,\dots, b'_m$ differ by a cyclic permutation. We can, therefore, limit ourselves to the study of the above moduli spaces with the specific boundary puncture $z_1$ and boundary arc $(z_1, z_2)$. Note also that $\widecheck\MM{}^{Y}_{\Lambda}(\gamma;b_1,\dots,b_m)$ could be defined by instead using the asymptotic direction $\ell_+$ at $0$ to define a coordinate system on $D$ and then letting $z_1$ induce an evaluation map $\ev_1\colon \MM^{Y}_{\Lambda}(\gamma;b_1,\dots,b_m)\to S^{1} $ which maps $f$ to the coordinate of the puncture $z_1$ and taking the preimage of $0$ under $\ev_1$.

As in the definition of the moduli spaces of Morse-Bott curves with two holomorphic levels described in Section \ref{sec:holcyl+MBcyl} we let
\[
\Delta(\gamma,\gamma_0,b_1\dots b_m)= \MM^Y(\gamma;\gamma_0)/\R \times
\MM^Y_\Lambda(\gamma_0;b_1,\dots, b_m).
\]
If $(f_+,f_-)\in\Delta(\gamma,\gamma_0,b_1\dots b_m)$ then the asymptotic marker $\ell_+$ at the positive puncture of $f_+$ determines an evaluation map $\ev_{0}^{+}\colon \MM^Y(\gamma; \gamma_0)/\R \to \ul{\gamma_0}$ and after composition with $f_-$, the evaluation map $\ev_1$ on $\MM^{Y}_\Lambda(\gamma;b_1,\dots,b_m)$ discussed above gives an evaluation map $\ev_1^{-}\colon \MM^Y_\Lambda(\gamma_0;b_1,\dots, b_m)\to\ul{\gamma_0}$.

Let $\widecheck\MM{}^{\prime \prime Y}_\Lambda(\gamma;b_1,\dots,b_m)$ be the moduli space of 
pairs $(f_+, f) \in \Delta(\gamma,\gamma_0,b_1\dots b_m)$,
for some $\gamma_0 \in \PP$, such that the points $p_{\gamma_0}$, $\ev_0^{+}(f_+)$, and
$\ev_1^{-}(f_{-})$ lie in the cyclic ordering
\[
\bigl(\,p_{\gamma_0}\,,\, \ev_0^{+}(f_+)\,,\, \ev_1^{-}(f_{-})\,\bigr)
\]
on $\ul{\gamma_0}$ with orientation induced by the Reeb field $R_\alpha$.

Finally, we define the moduli space
$$
\widecheck\MM{}^Y_\Lambda(\gamma;b_1,\dots, b_m) = \widecheck\MM{}^{\prime Y}_\Lambda(\gamma;b_1,\dots, b_m) \cup \widecheck\MM{}^{\prime \prime Y}_\Lambda(\gamma;b_1,\dots, b_m).
$$

We have $$\dim \MM^Y_\Lambda(\gamma;b_1,\dots, b_m)=\dim \widehat\MM^Y_\Lambda(\gamma;b_1,\dots, b_m)=
|\gamma|-\sum\limits_{j=1}^m |b_j|,$$
and $$\dim \widecheck\MM{}^Y_\Lambda(\gamma;b_1,\dots, b_m)=
|\gamma|-\sum\limits_{j=1}^m |b_j|-1.$$

\begin{figure}
\labellist
\small\hair 2pt
\pinlabel $\ell_+$   at 345 88
\pinlabel $z_1$   at 415 71
\pinlabel $\ell_+$   at 600 100
\pinlabel $z_1$   at 675 71
\pinlabel $z_2$   at 623 155
\endlabellist
\centering
\includegraphics[width=.7\linewidth]{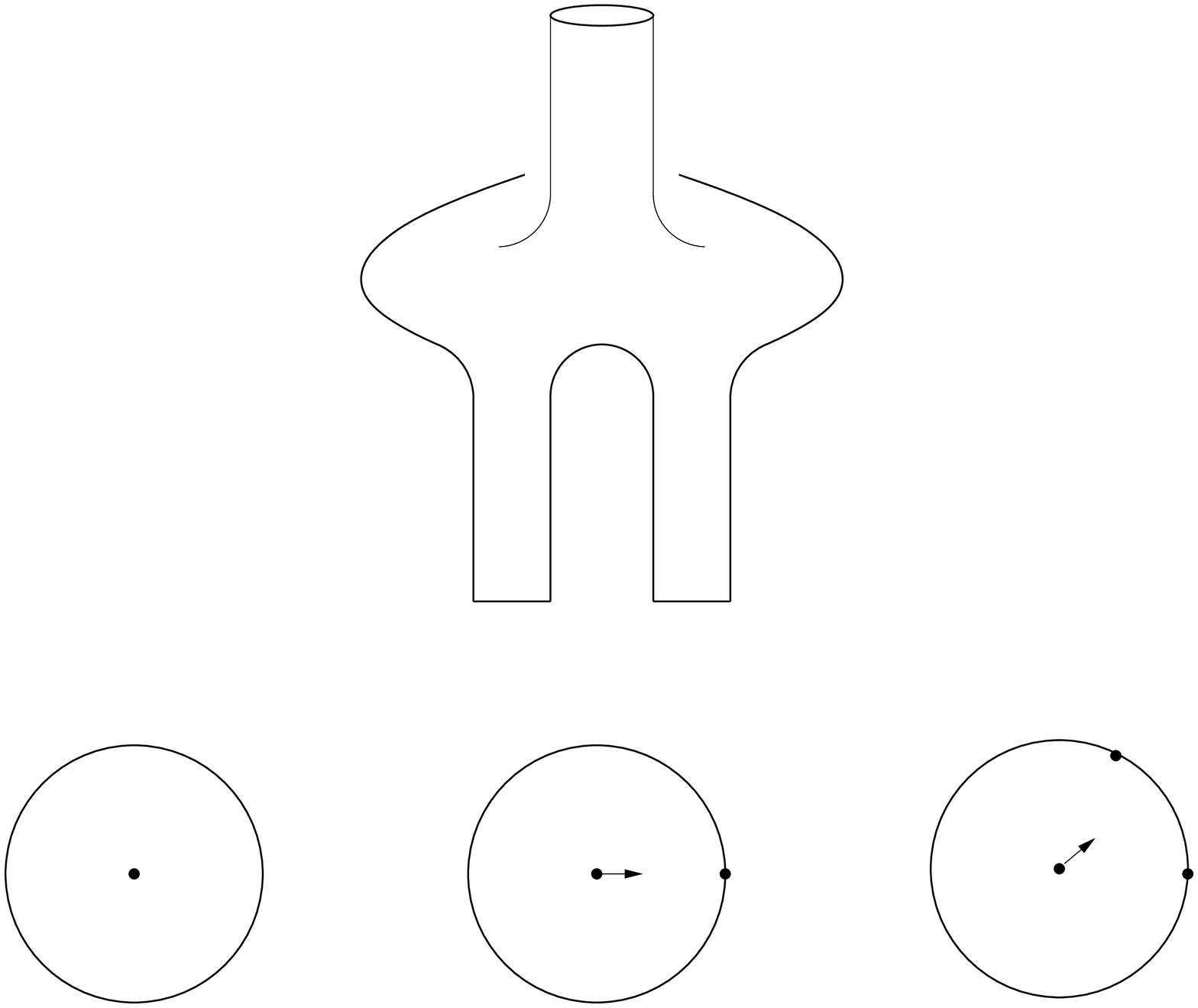}
\caption{A curve in $\MM^{Y}_\Lambda(\gamma;b_1,b_2)$. Lower line shows positions for markers in the source for $\MM^{Y}_\Lambda(\gamma;b_1,b_2)$, $\widecheck\MM{}^{Y}_\Lambda(\gamma;b_1,b_2)$, and $\widehat\MM^{Y}_\Lambda(\gamma;b_1,b_2)$}
\label{fig:isocurve}
\end{figure}

\subsection{\texorpdfstring{Moduli spaces $\MM^W(\gamma;\beta)$ and $\widecheck\MM{}^W(\gamma;\beta)$}{Moduli spaces MW(gamma,gamma0) and checkMW(gamma,gamma0)}}\label{sec:mW}
Let $X,X_0$ and $W$ be as in Section \ref{sec:anchored}. We fix adjusted almost complex structures on $W$ and $X_0$ which agree on the common part.

Given two orbits $\gamma\in \PP(Y)$ and $\beta\in\PP(Y_0)$ we define,  similar to the way it was done  above,   the moduli space $\MM^W(\gamma;\beta)$ of holomorphic cylinders  in $W$, anchored in $X_0$, connecting $\gamma$ at $+\infty$ with $\beta$ at $-\infty$. We next define moduli spaces of Morse-Bott curves (in complete analogy with the corresponding definitions in Section \ref{sec:holcyl+MBcyl}) as follows:

The marker at $+\infty$ fixes a parametrization for the $S^{1}$-factor in the domain $\R\times S^{1}$ of elements in $\MM^{W}(\gamma;\beta)$ which gives an evaluation map $\ev_0\colon \MM^W(\gamma;\beta) \to \ul{\beta}$ and we let
\[
\widecheck\MM{}^{\prime W}(\gamma;\beta)=\ev^{-1}(p_{\beta}).
\]

Further, the markers at $+\infty$ and $-\infty$ of elements in $\MM^{W}(\gamma;\gamma_0)$ and $\MM^{Y_0}(\gamma_0;\beta)$, respectively, give evaluation maps
\[
\ev_0\colon \MM^W(\gamma;\gamma_0)\to \ul{\gamma_0}\quad \text{and}\quad
\ev_0^{-}\colon \MM^{Y_0}(\gamma_0;\beta)\to \ul{\gamma_0}.
\]
We let $\widecheck\MM{}^{\prime \prime W}(\gamma;\beta)$ denote the set of
pairs $(f,f_-) \in \MM^W(\gamma, \gamma_0)  \times \MM^{Y_0}(\gamma_0,\beta)/\R$,
for some $\gamma_0 \in \PP(Y_0)$, such that $\ev_0(f)$, $\ev_0^-(f_-)$ and $p_{\gamma_0}$, lie in the cyclic order $\bigl(p_{\gamma_0}\,,\ev_0(f)\,,\ev_0^{-}(f_-)\bigr)$ on $\ul{\gamma_0}$.

Similarly, we use markers at $+\infty$ and $-\infty$ of elements in $\MM^{Y}(\gamma;\gamma_0)$ and $\MM^{W}(\gamma_0;\beta)$ to define evaluation maps
\[
\ev_0^+\colon \MM^Y(\gamma,\gamma_0)\to \ul{\gamma_0}\quad \text{and}\quad
\ev_0\colon \MM^W(\gamma_0,\beta)\to \ul{\gamma_0}.
\]
We then let $\widecheck\MM{}^{\prime \prime \prime W}(\gamma;\beta)$ consist of pairs
$(f_+,f) \in \MM^Y(\gamma; \gamma_0)/\R \times \MM^W(\gamma_0, \beta)$,
for some $\gamma_0 \in \PP(Y)$, such that $\ev_0^+(f_+)$, $\ev_0(f)$ and $p_{\gamma_0}$ lie in the cyclic order $\bigl(p_{\gamma_0}\,,\ev_0^+(f_+)\,,\ev_0(f)\bigr)$ on $\ul{\gamma_0}$.

Finally, we define the moduli space
\[
\widecheck\MM{}^{W}(\gamma;\beta)=
\widecheck\MM{}^{\prime W}(\gamma;\beta) \,\cup\, \widecheck\MM{}^{\prime \prime W}(\gamma;\beta) \,\cup\,
\widecheck\MM{}^{\prime \prime \prime W}(\gamma;\beta).
\]
We have $\dim(\MM^W(\gamma;\beta))=|\gamma|-|\beta| $ and $\dim(\widecheck\MM{}^W(\gamma;\beta))=|\gamma|-|\beta| -1$.

\subsection{\texorpdfstring{Moduli spaces $\MM^{W}_L(c;b_1,\dots,b_m)$, $\MM^{W}_{L}(c;p)$, and $\MM^{W}_{L}(c_1,c_2;p)$}{Moduli spaces MWL(c+;c1,...,cm) and MWL(c,p)}}\label{sec:mLL}
Let   $ L\subset W$ be  an exact  Lagrangian cobordism  between the Legendrian submanifolds $\Lambda_- \subset Y_0$ and $\Lambda_+\subset Y$.  Given a chord $c\in\CC(\Lambda_+)$ and chords $b_1,\dots,b_m\in\CC(\Lambda_-)$ we denote by
\[
\MM^{W}_L(c;b_1,\dots,b_m)
\]
the moduli space of holomorphic maps
$$
f\colon(D, \p D\setminus\{z^+,z_1,\dots, z_m\})\to(W,L),
$$
anchored in $X_0$, where  $z^+,z_1,\dots, z_m$, $m\geq 0$, are boundary punctures ordered counter-clockwise, such that
at $z^+$ the map is asymptotic to the chord $c$ at $+\infty$, and at the punctures $z_1,\dots, z_m$ the map is asymptotic to the chords $b_1,\dots, b_m$ at $-\infty$. We have
$$\dim\MM^W_L(c;b_1,\dots,b_m)=|c|-\sum\limits_{j=1}^m |b_j|.$$

Suppose that $\Lambda_-=\varnothing$ and assume  that the restriction $ H|_L$ is a Morse function without local maxima. Given a chord $c\in\CC(\Lambda_+)$, let
$\MM^W_L(c)$ denote the moduli space of holomorphic disks
$f\colon (D,\p D\setminus\{z\})\to (W,L)$ anchored in $X_0$, which are asymptotic to the chord $c$ at the puncture $z$. Let $\ev\colon \MM^W_L(c)\to L$ be the evaluation map at the point $-z\in\p D$ opposite to the marked point $z$.
We will assume that $\ev$ is transverse to the stable manifolds $V_p$ of all critical points $p$ of the function $H|_L$ and,
given a critical point $p\in L$ of the function $H|_L$, we let
$$
\MM^W_L(c,p):=\ev^{-1}(V_p).
$$
We have
\begin{equation}\label{eq:dim-Lag-ev}
\dim(\MM^{W}_{L}(c,p))=|c|+1-n+\ind(p),
\end{equation}
where $\ind(p)$ is the index of the critical point $p$ of the function $H|_L$.
In particular, when $\ind(p)=0$ then $\dim(\MM^{W}_{L}(c,p))=|c|-n+1$.

Next, we define the moduli space
$\MM^W_L(c_1,c_2)$ as the union ${}'\!\MM^W_L(c_1,c_2)\cup{}''\!\MM^W_L(c_1,c_2)$, where  
${}'\!\MM^W_L(c_1,c_2)$ denotes the moduli space of holomorphic disks
$$
f\colon(D,\p D\setminus\{z_1,z_2\})\to (W,L)
$$ 
anchored in $X_0$, which are asymptotic at $+\infty$ to the chords $c_1$ and $c_2$ at the punctures $z_1$ and $z_2$, respectively. Let $\ev\colon {}'\!\MM^W_L(c_1,c_2)\to L$ be the evaluation map at point $z\in\p D$ midway between $z_1$ and $z_2$ (or midway between $z_2$ and $z_1$). Under similar transversality conditions as above, we let
$$
{}'\!\MM^W_L(c_1,c_2;p):=\ev^{-1}(V_p).
$$
To define ${}''\!\MM^W_L(c_1,c_2)$ we fix, similarly to the way it was done in Section \ref{sec:mYLambda},  an auxiliary Morse function  $\wt g\colon\Lambda\to \R$  with   one minimum and  one maximum and define  the function  $g\colon L\to\R$     which at the cylindrical end $[0,\infty)\times\Lambda$ of $L$ coincides with $\wt g+s$, $s\in\R$. 
Define
 \begin{equation}\label{e:extramorse2}
{}''\!\MM^Y_{\Lambda, L}(c_1 ,c_{2};p):=\MM^{Y}_\Lambda(c_1;p)\times_g \MM^{Y}_\Lambda(c_2)\cup
\MM^{Y}_\Lambda(c_1 )\times_g \MM^{Y}_\Lambda(c_2;p), 
\end{equation}
where an element  in $\MM^{Y}_{\Lambda}(c_1;p)\times_g \MM^{Y}_{\Lambda}(c_2)$ consists  of a pair of holomorphic disks $(u,v)\in\MM^{Y}_\Lambda(c_1;p)\times\MM^{Y}_\Lambda(c_2)$ and a flow line of $\nabla g$ which connects the boundary of $u$ to the boundary of $v$, and where elements in $\MM^{Y}_{\Lambda}(c_1)\times_g \MM^{Y}_{\Lambda}(c_2;p)$ are analogous configurations where the negative puncture is in the second disk instead.
We have
\begin{equation}\label{eq:dim-Lag-ev-2-pos}
\dim(\MM^{W}_{L}(c_1,c_2;p))=|c_1|+|c_2|-(2n-4)+\ind(p).
\end{equation}

\subsection{\texorpdfstring{Moduli spaces $\MM^W_L(\gamma;b_1,\dots, b_m)$, $\widecheck\MM{}^W_L(\gamma;b_1,\dots, b_m)$,\\ and $\widehat\MM^W_L(\gamma;b_1,\dots, b_m)$}{Moduli spaces MWL(gamma;c1,..., cm), checkMWL(gamma;c1,..., cm), and hatMWL(gamma;c1,..., cm)}}\label{sec:mWL}
Suppose now that $W$ is a Weinstein cobordism,   $p_1,\dots p_k\in\Int\oW $ are critical points  of index $n$ of the Lyapunov function $H$ for the Liouville vector field $Z$, and that there are no other critical points of $H$  in $W$. We assume general position, so that there are no $Z$-trajectory connections between critical points $p_1,\dots, p_k$. Let $L_1,\dots, L_k$ be  Lagrangian stable manifolds of the critical points $p_1,\dots, p_k$, and $\Lambda_j=L_j\cap Y_0$, $j=1,\dots,k$ be the corresponding Legendrian spheres. We write $\Lambda:=\bigcup\limits_{j=1}^k\Lambda_j$ and $L:=\bigcup\limits_{j=1}^k L_j$. Note that $L$ is an exact Lagrangian cobordism between $\Lambda_-=\Lambda$ and $\Lambda_+=\varnothing$.

Given an orbit $\gamma\in\PP(Y)$ and chords $b_1,\dots, b_m\in\CC(\Lambda)$ we define
the moduli space $\MM^W_L(\gamma;b_1,\dots, b_m)$  consisting of holomorphic maps
$$
f\colon(D\setminus \{0\}, \p D\setminus\{z_1,\dots, z_m\})\to(W,L)
$$
anchored in $X_0$, where $z_1,\dots z_m\in\p D$ are boundary punctures,
which are asymptotic to the orbit $\gamma$ at the decorated puncture $0$ at $+\infty$, and to chords $b_1,\dots, b_m$ at
punctures $z_1,\dots, z_m$ at $-\infty$.
Similarly to the construction in Section \ref{sec:mLambda}, using $z_1$ and $0$ to fix coordinates on $D$, the location of the asymptotic marker at 0 induces an evaluation map
$$
\ev_0\colon\MM^W_L(\gamma;b_1,\dots, b_m)\to \p D = S^1.
$$
Define
$$
\widecheck\MM{}^{\prime W}_L(\gamma;b_1,\dots, b_m):=\ev^{-1}_0(z_1)=\ev_0^{-1}(0)
$$
and
$$
\widehat\MM^W_L(\gamma;b_1,\dots, b_m):= \ev^{-1}_0((z_1,z_2)).
$$
Thus $\widecheck\MM{}^{\prime W}_L(\gamma;b_1,\dots, b_m)$ (respectively $\widehat\MM^W_L(\gamma;b_1,\dots, b_m)$) is a subspace of  the moduli space $\MM^W_L(\gamma;b_1,\dots, b_m)$ which is distinguished by the condition that the ray of the asymptotic marker intersects $\pa D$ at $z_1$ (respectively at a point of the open arc $(z_1,z_2)$). Since the chords $b_1, \ldots, b_m$ are listed up to a cyclic permutation, we again restrict ourselves to the boundary puncture $z_1$ and the boundary arc $(z_1,z_2)$.

The asymptotic marker at $+\infty$ of an element in $\MM^{Y}(\gamma;\gamma_0)$ gives an evaluation map $\ev_0^{+}\colon\MM^{Y}(\gamma,\gamma_0)\to\ul{\gamma_0}$. Using the evaluation map $\ev_1\colon\MM^W_L(\gamma_0;b_1,\dots, b_m)\to\ul{\gamma_0}$ discussed above,  
we define the moduli space $\widecheck\MM{}^{\prime \prime W}_L(\gamma;b_1,\dots,b_m)$ as the set of pairs
$(f_+, f) \in\MM^Y(\gamma; \gamma_0)/\R \times \MM^W_L(\gamma_0;b_1,\dots, b_m)$,
for some $\gamma_0 \in \PP(Y)$, such that the points
$p_{\gamma_0}$, $\ev_0^+(f_+)$, and $\ev_1(f)$ lie in the cyclic order $\bigl(p_{\gamma_0}\,,\ev_0^{+}(f_+)\,,\ev_1(f)\bigr)$ on $\ul{\gamma_0}$.

Finally, we define the moduli space
$$
\widecheck\MM{}^W_L(\gamma;b_1,\dots, b_m) = \widecheck\MM{}^{\prime W}_L(\gamma;b_1,\dots, b_m) \cup \widecheck\MM{}^{\prime \prime W}_L(\gamma;b_1,\dots, b_m).
$$
We have
$$
\dim(\MM^W_L(\gamma;b_1,\dots, b_m))=
\dim(\widehat\MM^W_L(\gamma;b_1,\dots, b_m))=
|\gamma|-\sum\limits_{j=1}^m |b_j|,
$$
and
$$
\dim(\widecheck\MM{}^W_L(\gamma;b_1,\dots, b_m))=
|\gamma|-\sum\limits_{j=1}^m |b_j|-1.
$$
\subsection{\texorpdfstring{Moduli spaces $ \MM^W_{L_j}(\gamma)$ and  $\MM^W(\gamma;p)$}{Moduli spaces MWLj(gamma) and  MW(gamma,p)}}\label{sec:evalinW}
We use notation as in Section \ref{sec:mWL}. Take $\gamma\in\PP(Y)$.
Then $\MM^W_{L_j}(\gamma)$ is the moduli space of  holomorphic maps $f\colon (D\setminus\{0\},\p D)\to (W,L_j)$ anchored in $X_0$,
which are asymptotic at $0$ to the orbit $\gamma$ at $+\infty$.

Next, for $\gamma\in\PP(Y)$ we define the moduli space $\MM^W(\gamma)$, consisting of {\em holomorphic planes in the split manifold $W\cup X_0$}, i.e.~holomorphic maps $f:\C \to W$ anchored in $X_0$
which are asymptotic to $\gamma$ at $+\infty$.
Let $ \MM_1^W(\gamma)$ be the moduli space of the same objects with an additional marked point at $0\in\C$.
The  evaluation map at this marked point   is a map
$\ev_0\colon \MM^W_1(\gamma)\to W\cup X_0$.
Given a critical point $p$ of the function $H\colon X_0\to\R$ we
will denote   $$\MM^W(\gamma;p)=\ev_0^{-1}(L_p),$$ where
$L_p$ is $\wt Z$-stable manifold of the critical point $p$.

\subsection{\texorpdfstring{Moduli spaces $\MM^W_{(L,C)}(c;b_1,\dots,b_m)$. }{Moduli spaces MW(L,C)(c;c1,...,cm)}}\label{sec:relmWCL}
As in Section \ref{sec:mWL} let $W$ be Weinstein, $p_1,\dots p_k\in\Int\oW $ be the critical points of index $n$ of the Lyapunov function $H$ for the Liouville vector field $Z$ in general position. Let $L_1,\dots, L_k$ denote the Lagrangian stable manifolds of the critical points $p_1,\dots, p_k$, let  $\Lambda_j=L_j\cap Y_0$, $j=1,\dots,k$ denote the corresponding Legendrian spheres, and write $\Lambda:=\bigcup\limits_{j=1}^k\Lambda_j$, $L:=\bigcup\limits_{j=1}^k L_j$. We also consider the Lagrangian unstable manifolds $C_1,\dots, C_k$ of the critical points $p_1,\dots, p_k$. Let $\Gamma_j=C_j\cap Y$, $j=1,\dots,k$ denote the corresponding Legendrian spheres. Write $\Gamma:=\bigcup\limits_{j=1}^k\Gamma_j$ and $C:=\bigcup\limits_{j=1}^{k} C_j$. Note that $C_i\cap L_j$ is empty if $i\ne j$ and consists of one transverse intersection point $p_i$ if $i=j$.
For $c\in\CC(\Gamma)$ and $b_1,\dots, b_m\in\CC(\Lambda)$
let  us denote by $\MM^W_{(L,C)}(c;b_1,\dots,b_m)$   the moduli space
of holomorphic maps
\[
f\colon (D,\p D\setminus\{z,z_+,z_1,\dots,z_m,z_-\})\to (W,C\cup L),
\]
anchored in $X_0$,  which are asymptotic to the Reeb chord $c$ at the puncture $z$ at $+\infty$,
asymptotic to $b_1,\dots,b_m$  at the punctures $z_1,\dots,z_m$ at $-\infty$, and which maps    $z_\pm$   to intersection points   in $L\cap C$. Here we have
\[
\dim(\MM^W_{(L,C)}(c;b_1,\dots,b_m))=|c|-\sum_{j=1}^{m}|b_j|-(n-2),
\]
see Figure \ref{fig:cocorecurve}.
\begin{figure}
\labellist
\small\hair 2pt
\pinlabel $c$   at 86 362
\pinlabel $C$   at 125 295
\pinlabel $C$ at 157 295
\pinlabel $L$ at 49 263
\pinlabel $b_1$ at 68 -15
\pinlabel $b_2$ at 146 -15
\pinlabel $b_3$ at 216 -15
\endlabellist
\centering
\includegraphics[width=.3\linewidth]{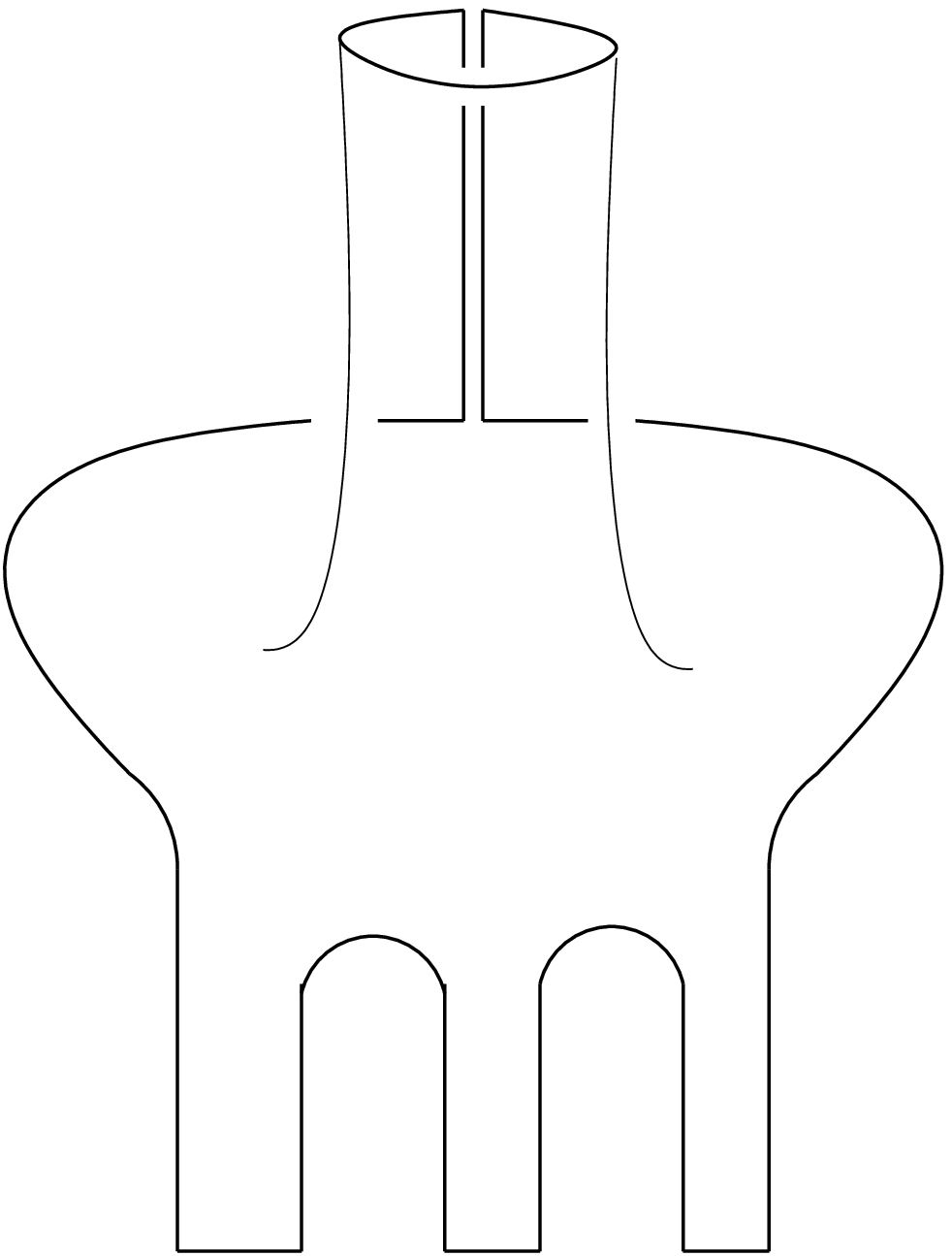}
\caption{A disk in $\MM^{W}_{(L,C)}(c;b_1,b_2,b_3)$.}
\label{fig:cocorecurve}
\end{figure}
   
\begin{rmk}\label{rmk:relativegrading}
Recall that the grading of a Reeb chord connecting a connected Lagrangian submanifold to itself was defined using a phase function, defined only up to additive constant. In order to extend the definition to Lagrangian submanifolds with many components we need to relate the phase functions of distinct components. To this end, we order the components, pick paths connecting each component to its successor in this order, and choose phase functions on the components which agree at the endpoints of the paths. Note that for any holomorphic disk with boundary on a Lagrangian submanifold, punctures mapping to Reeb chords/intersection points between distinct components appear in pairs and hence dimension formulas are insensitive to the choice of paths.
\end{rmk}

\section{Invariants of symplectic manifolds}\label{sec:invariants}

Let $(\oX,\lambda)$ be a Liouville domain with  contact boundary $Y$ and $ X$   its completion. We will consider below three homology theories for the Liouville manifold $X$: linearized contact homology $C\bbH$, reduced symplectic homology $S\bbH_+$, and full symplectic homology $S\bbH$. We use the bold face font letter $\bbH$ for homology, for example $C\bbH(X)$ denotes the linearized contact homology of $X$. For each of the theories, we will also consider the underlying chain complex which we will denote in the same way but using instead the usual $H$, for example $CH(X)$ denotes the chain complex for linearized contact homology. The homologies $C\bbH(X)$, $S\bbH_+(X)$, and $S\bbH(X)$ are invariant under symplectomorphisms of $X$. In the setting introduced below, invariance under symplectomorphism can be established using Lemma \ref{l:exactsympl} and the cobordism maps introduced in Section \ref{sec:cobord} in combination with chain homotopies induced by deformations (which will not be discussed in this paper).

We fix a field $\K$ of characteristic $0$. All complexes and homology theories below are considered over $\K$. Given a set $A$ we denote the vector space over $\K$ generated by the elements of $A$ by $\K\langle A\rangle$ .

Choose adjusted almost complex structures on $X$ and the symplectization $Y\times\R$.
As we already stated above, in this exposition we ignore all transversality problems and assume that regularity is satisfied for all involved moduli spaces. As is also well known, algebraic constructions over $\K$ require a choice of {\em coherent orientation}. For holomorphic curves in $(X,L)$ (i.e.~with Lagrangian boundary condition on the Lagrangian submanifold $L$ in the symplectic manifold $X$) the existence of a system of coherent orientations requires the existence of a relative spin structure on $(X,L)$ and the system of coherent orientations depends on the choice of relative spin structure. In what follows the Lagrangian submanifolds we consider come with distinguished spin structures, see Remarks \ref{rmk:spin} and \ref{rmk:spin2} for details. These spin structures induce a system of coherent orientations which then allows one to assign a sign to each $0$-dimensional component of the moduli spaces
\begin{align*}
&\MM^X(\gamma),\;  \MM^X(\gamma;p),\;
\MM^W(\gamma;\beta), \; \MM^W_L(\gamma;b_1,\dots, b_m), \;
\widecheck\MM{}^W_L(\gamma;b_1,\dots, b_m),
\cr
& \widehat\MM^W_L(\gamma;b_1,\dots, b_m),\; \MM^W_{L_j}(\gamma),\;
\text{and }\MM^W(\gamma;p),
\end{align*}
as well as to each $1$-dimensional component of the moduli spaces
$$\MM^Y_\Lambda(\gamma;b_1,\dots, b_m),\; \MM^Y(\gamma;\beta), \;
\widecheck\MM{}^Y_\Lambda(\gamma;b_1,\dots, b_m), \;\hbox{and}\;
\widehat\MM^Y_\Lambda(\gamma;b_1,\dots, b_m),$$
which were described in Section \ref{Sec:mdli}.

\begin{rmk}\label{rmk:finiteness}
A standard argument from SFT, see \cite{SFT}, shows that  when $\gamma$ is fixed the union of all the above $0$-dimensional   moduli spaces in $X$ and   quotient by the $\R$-action 
$1$-dimensional moduli spaces  in the symplectization of $Y$ {\it with any possible asymptotic at $-\infty$}, are compact. Indeed,  Stokes' theorem provides an upper bound
$\int\limits_\gamma\lambda$  of   the non-negative energy $\int\limits_Cd\lambda$ for any such holomorphic curve $C$, end hence compactness is guaranteed by a version of   Gromov compactness theorem proven in \cite{BEHWZ}.
As a corollary of this observation we note that {\em all the sums which we use below to define the differential in various complexes and chain maps between them are finite.}
\end{rmk}

\subsection{Linearized contact homology}\label{sec:lin-contact}
The {\em linearized contact homology complex} $CH(X)$ \footnote{Traditionally it is called the linearized contact homology of $Y$, but we want to stress its dependence on the symplectic filling $X$.}  is the vector space  $\K\langle\PP_\good(Y)\rangle$ with the differential defined by
\begin{equation}\label{eq:lin-cont-dif}
d_{CH}\gamma=\sum\limits_{|\beta|=|\gamma|-1}\frac{n_{\gamma\beta}}{\kappa(\beta)}\beta,
\end{equation}
where the coefficient $n_{\gamma\beta}$ counts the  algebraic number of $1$-dimensional components of the moduli space $\MM^Y(\gamma;\beta)$, see Section \ref{sec:holcyl+MBcyl}, and $\kappa(\beta)$ is the multiplicity of the orbit $\beta$.

\begin{rmk}[{\bf on the coefficient $n_{\gamma\beta}$}]
The count of components in the moduli space $\MM^{Y}(\gamma;\beta)$ is a count of curves anchored in $X$ and hence a count of broken curves. Explicitly, the contribution to $n_{\gamma\beta}$ from a $1$-dimensional moduli space of spheres in $Y\times\R$ with positive puncture at $\gamma$, $k_j$ negative punctures at $\beta_j$, $j=1,\dots,m$, and a distinguished negative puncture at $\beta$, with anchoring curves in $0$-dimensional moduli spaces of planes with positive punctures at $\beta_j$, $j=1,\dots,m$ is the following:
\begin{equation}\label{eq:anchorcount}
n^{+}\frac{1}{k_1!\,\cdots\, k_m!}\left(\frac{n_1^{-}}{\kappa(\beta_1)}\right)^{k_1}\cdots
\,\,\left(\frac{n_m^{-}}{\kappa(\beta_m)}\right)^{k_m}
\end{equation}
where $n^{+}$ is the algebraic number of components in the moduli space of spheres in $Y\times\R$,   $n^{-}_j$ is the algebraic number of elements in the moduli space of planes in $X$ with positive puncture at $\beta_j$, $j=1,\dots,m$, and where, as usual, $\kappa(\gamma)$ denotes the multiplicity of the Reeb orbit $\gamma$.
\end{rmk}

\begin{prp}\label{prop:cont-homology}
We have  $d_{CH}^2=0.$ The homology
$$
C\bbH(X)=H_*(CH(X), d_{CH})
$$
is independent of all choices and is an invariant of $X$ up to symplectomorphism.
\end{prp}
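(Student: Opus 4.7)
The plan is to prove $d_{CH}^{2}=0$ by analyzing the boundary of a one-parameter family of anchored cylinders, and to obtain invariance by combining Lemma \ref{l:exactsympl} with the cobordism maps of Section \ref{sec:cobord}.

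For $d_{CH}^{2}=0$, fix $\gamma,\delta \in \PP_{\good}(Y)$ with $|\gamma|-|\delta|=2$; the moduli space $\MM^{Y}(\gamma;\delta)$ has expected dimension $2$, and the quotient by the $\R$-translation action has a compactification whose boundary consists of SFT-broken limits. By Remark \ref{rmk:finiteness} and the compactness theorem of \cite{BEHWZ}, these limits fall into three classes: (a) two-level anchored cylinders meeting at an intermediate orbit $\beta$, (b) degenerations in which an anchor plane in $X$ splits into a cylinder in $\R \times Y$ capped by a smaller plane in $X$, and (c) interior bubblings of a plane in $X$ off the main cylinder in $\R \times Y$. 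The signed count of class (a) boundaries equals precisely the coefficient $\sum_{\beta} n_{\gamma\beta} n_{\beta\delta}/(\kappa(\beta)\kappa(\delta))$ of $\delta$ in $d_{CH}^{2}\gamma$, so it suffices to show that the contributions of classes (b) and (c) cancel pairwise.

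The cancellation between (b) and (c) is the algebraic shadow of the fact that the filling $X$ induces an augmentation of the full contact homology DGA of $Y$: in both degenerations the limit configuration is combinatorially a punctured cylinder in $\R \times Y$ carrying one extra anchoring plane in $X$, the two strata differing only in which ``slot'' the extra plane occupies, and the multinomial weights in \eqref{eq:anchorcount}, combined with the coherent orientations, are tuned so that the matching pairs appear with opposite signs. The vanishing of the oriented boundary of a compact one-manifold then yields $d_{CH}^{2}=0$. For invariance, given a symplectomorphism $\psi:(X_{0},\lambda_{0}) \to (X_{1},\lambda_{1})$, Lemma \ref{l:exactsympl} lets us assume $\psi$ is exact at infinity with exponential decay, so that its mapping cylinder inherits the structure of a completed exact Liouville cobordism. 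The cobordism maps of Section \ref{sec:cobord}, defined via counts of anchored curves in $\MM^{W}(\gamma;\beta)$, then produce a chain map $\Psi_{*}: CH(X_{0}) \to CH(X_{1})$; running the same construction for $\psi^{-1}$ and using chain homotopies built from parametric moduli spaces through homotopies of the auxiliary adjusted almost complex structures and Liouville data, one checks that $\Psi_{*}$ and $(\Psi^{-1})_{*}$ are mutually inverse on $C\bbH$.

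The main conceptual obstacle is the sign-coherent identification of strata (b) and (c) in the boundary analysis, which requires careful bookkeeping of the multinomial factors in \eqref{eq:anchorcount} and of the spin structures chosen for the anchoring Lagrangians (cf.\ Remarks \ref{rmk:spin} and \ref{rmk:spin2}). Transversality and regularity are assumed throughout, as announced in Section \ref{Sec:mdli}, and the finiteness of all the sums defining $d_{CH}$ and the chain maps above follows from the Stokes' area bound of Remark \ref{rmk:finiteness}.
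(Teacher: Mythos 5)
The paper does not actually prove Proposition \ref{prop:cont-homology}: it is stated without proof and deferred to the forthcoming \cite{BEE} (and, for the symplectic-homology version, to \cite{BO}), with only a one-sentence hint for invariance appearing just before the proposition. So there is no internal proof to compare your argument against directly; what follows is an assessment of the argument on its own terms.

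Your overall strategy for $d_{CH}^{2}=0$ --- analyze the compactification of the one-dimensional (after $\R$-quotient) moduli space $\MM^{Y}(\gamma;\delta)$ and show the non-$d^{2}$ boundary strata cancel --- is the right one, and your identification of class (a) with $d^{2}$ is correct. But your description of the remaining boundary strata is muddled in a way that, if read literally, breaks the argument. There are no ``interior bubblings of a plane in $X$'': in an exact symplectic manifold with $c_{1}=0$ there are no nonconstant holomorphic spheres, so interior bubbles simply do not form, and with class (c) vacuous your logic would force class (b) to vanish on its own, which is false. What actually pairs with (b) is the SFT breaking of the \emph{main} cylinder into a two-level building in $\R\times Y$ in which the distinguished output $\delta$ stays on the top level and the entire bottom level is capped by anchors. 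The resulting configuration is a \emph{three}-level building (two symplectization levels and one $X$-level), not ``a punctured cylinder carrying one extra anchoring plane'' as you describe. The cancellation happens because this single building is a codimension-one face of two distinct components of $\MM^{Y}(\gamma;\delta)$ --- the one where the main curve has the larger combinatorial type, obtained by gluing the two symplectization levels, and the one where an anchor is more complicated, obtained by gluing the bottom symplectization level to the planes below it. This two-gluings mechanism, together with coherent orientations and the multinomial/$\kappa$ weights in \eqref{eq:anchorcount}, is what makes the signed count vanish. Your ``slot'' phrase gestures at this but misdescribes the combinatorics. You also do not address bad orbits, which is where the restriction to $\PP_{\good}(Y)$ and the $\kappa$-weighted counts become essential.

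For invariance, your appeal to Lemma \ref{l:exactsympl} and cobordism maps follows the paper's hint, but the claim that the ``mapping cylinder'' of $\psi$ ``inherits the structure of a completed exact Liouville cobordism'' is not correct as stated --- the mapping cylinder of a symplectomorphism is a topological construction, not a Liouville cobordism. The actual mechanism is the continuation/deformation argument you allude to afterwards: after making $\psi$ exact, interpolate the Liouville form (and the adjusted $J$) on a trivial cobordism $[0,1]\times Y$, and use the parametric moduli spaces to build chain maps and chain homotopies. That part of your sketch is fine; the ``mapping cylinder'' phrasing is a distraction that should be removed.
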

The homology $H_*(CH(X), d_{CH})$ is denoted by $C\bbH(X)$ and called  the {\em linearized contact homology} of $X$ (or of $Y$).

\begin{rmk}[{\bf on multiplicities}]
One could alternatively define the differential  on  $CH$ by the formula
\begin{equation}\label{eq:lin-cont-dif2}
{\underline{d}}_{CH}\gamma=\sum\limits_{|\beta|=|\gamma|-1}\frac{n_{\gamma\beta}}{\kappa(\gamma)}\beta,
\end{equation}
instead of the differential $d_{CH}$ given by \eqref{eq:lin-cont-dif}. Clearly the complexes $(CH(X), d_{CH})$ and $(CH(X), \underline{d}_{CH})$ are isomorphic via the change of variables $\gamma\mapsto \kappa(\gamma) \gamma$.
\end{rmk}

\begin{rmk}[{\bf on linearizations}]
It is well known that for a general (not necessarily symplectically fillable) contact manifold, contact homology is defined only via a differential graded algebra, rather than  a complex generated by orbits. The {\em linearization scheme} used in the above definition is a special case of a more formal linearization procedure associated with an {\em augmentation}, i.e.~a graded homomorphism of the differential algebra to the ground ring endowed with the trivial differential.  The collection of linearized homologies obtained using all augmentations is a contact invariant.

This procedure was first used by Y.~Chekanov in \cite{Chek} in the context of Legendrian homology algebras of knots in $\R^{3}$ (see Section \ref{sec:Leg-algebra} below). The results of this paper admit a straightforward generalization to this more formal setup.
\end{rmk}

\subsection{Reduced symplectic homology}
We define the {\em reduced symplectic homology} complex as
$$
SH^+(X)=\widecheck{CH}(X)\oplus \widehat{CH}(X),
$$
where
\begin{align*}
\widecheck{CH}(X)&=\K\langle\PP(Y)\rangle,\\
\widehat{CH}(X) &=  \K\langle\PP(Y)\rangle [1],
\end{align*}
i.e.~as a vector space  $\widehat{CH}(X)$  coincides with the complex $\widecheck{CH}(X)$ with grading shifted up by $1$. The differential $d_{SH^+}$ on $SH^{+}(X)$ is given by the block matrix
\begin{equation}\label{eq:d-SHplus}
d_{SH^+}=\left(\begin{matrix}
d_{\widecheck{CH}}&d_{\rm M}\cr
\delta_{SH^+} & d_{\widehat{CH}}\cr
\end{matrix}
\right).
\end{equation}
Here $\delta_{SH^+}:\widecheck{CH}\to\widehat{CH}$ is defined by
\begin{equation}\label{eq:delta}
\delta_{SH^+}\widecheck \gamma=\sum\limits_{|\beta|=|\gamma|-2} m_{\gamma\beta}\widehat\beta,
\end{equation}
where the coefficient $m_{\gamma\beta}$ is the algebraic number of $1$-dimensional components
of the moduli space $\widecheck\MM{}^Y(\gamma;\beta)$, see Section \ref{sec:holcyl+MBcyl}.

The differential  $d_{\widecheck{CH}}\widecheck\gamma$ is  given by  the formula  \eqref{eq:lin-cont-dif}, where $\gamma$ and $\beta$ are replaced by $\widecheck\gamma$ and $\widecheck\beta$, respectively, and where the summation is over all orbits.  The differential  $d_{\widehat{CH}}\widehat\gamma$ is  given by  the formula  \eqref{eq:lin-cont-dif2}, where $\gamma$ and $\beta$ are replaced by $\widehat\gamma$ and $\widehat\beta$, respectively, and where the summation again is over all orbits. The differential $d_{\rm M}\colon\widehat{CH}(X)\to\widecheck{CH}(X)$ vanishes on good orbits and $d_{\rm M}\widehat\gamma= \pm 2 \widecheck \gamma$ if  $\gamma$ is bad.
\begin{rmk}[on good and bad orbits in symplectic homology]\label{rmk:good-bad}
Let us stress the point that the complex $SH^+(X)$ is generated by {\em all} orbits from $\PP(Y)$ and not only by the good orbits as in the case of the contact homology  complex $CH(X)$.
We note that   $\K\langle\PP_{\bad}(Y)\rangle\subset \Ker(d_{\widecheck{CH}})$  and $\Im( d_{\widehat{CH}})\subset \K\langle\PP_{\good}(Y)\rangle[1]$.   
\end{rmk}

\begin{prp}\label{prop:symp-homology-reduced}
We have  $d_{SH^+}^2=0$. The homology
$$
S\bbH^+(X)=H_*(SH^+(X), d_{SH^+})
$$
is independent of all choices and is an invariant of $X$ up to symplectomorphism.
\end{prp}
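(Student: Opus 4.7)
The plan is to verify $d_{SH^+}^2=0$ by identifying each matrix entry of $d_{SH^+}^2$ with the signed count of boundary points of a suitable compactified two-dimensional moduli space of (anchored) holomorphic cylinders in $\R\times Y$, and then to deduce invariance from the cobordism formalism. Writing
$$
d_{SH^+}^2=\begin{pmatrix} d_{\widecheck{CH}}^2+d_{\rm M}\delta_{SH^+} & d_{\widecheck{CH}}d_{\rm M}+d_{\rm M}d_{\widehat{CH}} \\ \delta_{SH^+}d_{\widecheck{CH}}+d_{\widehat{CH}}\delta_{SH^+} & d_{\widehat{CH}}^2+\delta_{SH^+}d_{\rm M}\end{pmatrix},
$$
the diagonal entries $d_{\widecheck{CH}}^2$ and $d_{\widehat{CH}}^2$ will vanish by the same argument as $d_{CH}^2=0$ in Proposition \ref{prop:cont-homology}, up to correction terms coming from even covers of bad orbits. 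These correction terms are precisely cancelled by $d_{\rm M}\delta_{SH^+}$ and $\delta_{SH^+}d_{\rm M}$ respectively, which is the reason for introducing $d_{\rm M}$ with a coefficient $\pm 2$ on bad orbits: the factor $2$ records that a bad orbit is an even cover whose contribution to the naive broken cylinder count is doubled. The off-diagonal entry $d_{\widecheck{CH}}d_{\rm M}+d_{\rm M}d_{\widehat{CH}}$ is handled by the observation recorded in Remark \ref{rmk:good-bad} that $d_{\widecheck{CH}}$ kills bad orbits and $d_{\widehat{CH}}$ lands in good orbits, so one is reduced to checking a combinatorial identity of multiplicities in the formulas \eqref{eq:lin-cont-dif} and \eqref{eq:lin-cont-dif2}.

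The heart of the argument is the vanishing of the off-diagonal entry $\delta_{SH^+}d_{\widecheck{CH}}+d_{\widehat{CH}}\delta_{SH^+}$, which I would prove by analyzing the codimension-one boundary of the two-dimensional components of $\widecheck\MM{}^Y(\gamma;\beta)$. A standard SFT compactness and gluing argument (using Remark \ref{rmk:finiteness} to ensure finiteness) shows that this boundary consists of: broken configurations of one rigid cylinder in $\widecheck\MM{}^Y$ and one rigid cylinder in $\MM^Y/\R$, giving the two summands above; and degenerations coming from the cyclic ordering constraint in the definition of $\widecheck\MM{}^{\prime\prime Y}$, which occur precisely when an evaluation point collides with $p_{\gamma_0}$ or when two evaluation points collide, and which cancel in pairs. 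The only subtlety is to check that the Morse--Bott model of Bourgeois--Oancea \cite{BO} matches the signed count prescribed by the coherent orientations, including the factors $1/\kappa(\beta)$ and $1/\kappa(\gamma)$ in $d_{\widecheck{CH}}$ and $d_{\widehat{CH}}$, which arise from symmetry of multiply-covered cylinders with a marked asymptotic ray at one end.

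For invariance under symplectomorphism, I would first use Lemma \ref{l:exactsympl} to replace a given symplectomorphism $\psi\colon X_0\to X_1$ by an exact one (the extra isotopy decays exponentially on the end and so does not alter the behaviour of Reeb orbits at infinity). Exact symplectomorphisms between completed Liouville domains fit into a one-parameter family of Liouville structures, and such a family can be interpolated by a trivial Liouville cobordism whose completion carries an adjusted almost complex structure. Counting anchored cylinders and Morse--Bott configurations in this cobordism, exactly as in the definition of $d_{SH^+}$ but now in the cobordism moduli spaces of Section \ref{sec:mW}, defines a chain map between the two $SH^+$ complexes; composition of two such cobordisms is chain-homotopic to the single cobordism by a standard neck-stretching/homotopy-of-almost-complex-structures argument (the chain homotopies are alluded to in the paragraph following the proposition and will be carried out in detail in \cite{BEE}).

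The main obstacle, as always for this type of result, is the precise sign and multiplicity bookkeeping at bad orbits: one has to check that the $\pm 2$ in $d_{\rm M}$ has the correct sign so that it really cancels the failure of $d_{\widecheck{CH}}^2$ and $d_{\widehat{CH}}^2$ coming from orientations on multiply covered cylinders. This is carried out systematically in \cite{BO}, and the point here is only that the adaptation to cylinders anchored in $X$ does not affect the bad-orbit contribution, since anchoring planes in $X$ have no symmetry from covers of $\gamma$ at the top.
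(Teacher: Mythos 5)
The paper contains no proof of this proposition. It defers to Bourgeois--Oancea~\cite{BO} for the chain-level statement and the identification of the resulting homology with Floer--Hofer reduced symplectic homology (Remark~\ref{rmk:reduced-SH}), and it says explicitly in the opening paragraph of Section~\ref{sec:invariants} that invariance is to be established via Lemma~\ref{l:exactsympl} and the cobordism maps of Section~\ref{sec:cobord} together with chain homotopies that ``will not be discussed in this paper'' but in~\cite{BEE}. So there is nothing in this paper for your proof to be compared against directly. That said, your proposal is consistent with what the paper points to: the block decomposition of $d_{SH^+}^{2}$, the SFT compactness/gluing analysis of the boundary of $\widecheck{\MM}{}^{Y}(\gamma;\beta)$ for the entry $\delta_{SH^+}d_{\widecheck{CH}}+d_{\widehat{CH}}\delta_{SH^+}$, the special treatment of bad orbits encoded in the $\pm 2$ coefficient of $d_{\rm M}$, and the route to invariance via exactification (Lemma~\ref{l:exactsympl}) followed by cobordism maps and chain homotopies. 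Your closing observation that anchoring in $X$ does not disturb the bad-orbit bookkeeping at the top is also a sensible and necessary point.

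One item is in tension with the paper's own accounting, and you should revisit it. You write that $d_{\widecheck{CH}}^{2}$ and $d_{\widehat{CH}}^{2}$ fail to vanish by correction terms on bad orbits, with these failures cancelled by $d_{\rm M}\delta_{SH^+}$ and $\delta_{SH^+}d_{\rm M}$. But Remark~\ref{rmk:good-bad} records both that $\K\langle\PP_{\bad}(Y)\rangle\subset\Ker(d_{\widecheck{CH}})$ and that $\Im(d_{\widehat{CH}})\subset\K\langle\PP_{\good}(Y)\rangle[1]$. Since $d_{\widecheck{CH}}$ and $d_{\widehat{CH}}$ are built from the \emph{same} coefficients $n_{\gamma\beta}$ (via \eqref{eq:lin-cont-dif} and \eqref{eq:lin-cont-dif2}), these two containments together say that $n_{\gamma\beta}=0$ whenever $\gamma$ or $\beta$ is bad; hence both maps vanish on bad orbits \emph{and} have only good targets, so each restricts to a conjugate of $d_{CH}$ on $\K\langle\PP_{\good}\rangle$ and already squares to zero by Proposition~\ref{prop:cont-homology}. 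The same two facts also make the off-diagonal entry $d_{\widecheck{CH}}d_{\rm M}+d_{\rm M}d_{\widehat{CH}}$ vanish term by term; there is no residual combinatorial identity to check there. The actual bad-orbit content of $d_{SH^+}^{2}=0$ therefore sits in showing $d_{\rm M}\delta_{SH^+}=0$ and $\delta_{SH^+}d_{\rm M}=0$, i.e.\ in establishing $m_{\gamma\beta}=0$ whenever $\gamma$ or $\beta$ is bad. That is a genuinely nontrivial orientation statement about the constrained spaces $\widecheck{\MM}{}^{Y}(\gamma;\beta)$: the deck transformation of the multiple orbit acts on $\MM^{Y}(\gamma;\beta)$ but does not visibly preserve the constraint $\ev_{0}^{-1}(p_{\beta})$, so the familiar ``cancels in pairs'' argument from unmarked contact homology does not apply verbatim and has to be reworked, which is what is done in~\cite{BO}. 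Your overall strategy is the right one, but the location of the bad-orbit subtlety within the block matrix is different from what you wrote.
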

Note that the homomorphisms $d_{\widecheck{CH}}$, $d_{\widehat{CH}}$ and $\delta_{SH^+}$ decrease the filtration which assigns filtration degree $|\gamma|$ to the elements in $\widecheck{CH}(X)$ and $\widehat{CH}(X)$ which correspond to $\gamma$, while $d_{\rm M}$ preserves it. Hence, the term $E^1$ of the corresponding spectral sequence, together with
Proposition \ref{prop:symp-homology-reduced} yield the following
exact triangle relating $C\bbH(X)$ and $S\bbH^+(X)$:
\begin{equation} \label{eq:CH-SH}
\xymatrix{C\bbH( X)\ar[rr]^{\wt \delta_{SH^+}}& &C\bbH(X) . \ar[ld]\\
&S\bbH^+(X)\ar[lu]&}
\end{equation}
Note that the homomorphism $\wt\delta_{SH^+}$ differs from the one induced  on homology by $\delta_{SH^+}$ because of the contribution from bad orbits.

\begin{rmk} \label{rmk:reduced-SH}
F.~Bourgeois and A.~Oancea proved in \cite{BO} that $S\bbH^+(X)$ is isomorphic to the reduced symplectic homology of A.~Floer and H.~Hofer \cite{FH}, see also \cite{V}, and established the exact triangle \eqref{eq:CH-SH}.
\end{rmk}

\subsection{Full symplectic homology}
Let $H\colon X\to\R$ be an exhausting (i.e.~proper and bounded below)  Morse function which at infinity has no critical points and depends only on the parameter $s$ of the symplectization. In the case when $X$ is Weinstein we always  choose as $H$ the corresponding  Lyapunov function. Pick a gradient like vector field $\wt Z$ on $X$ which coincides with $\frac{\p}{\p s}$ at infinity. In the Weinstein case we  assume that $Z$ is the Liouville vector field.

The {\em full symplectic homology} complex is defined as
$$
SH(X)=SH^+(X)\oplus\Morse(-H)[-n],
$$
where $\Morse(-H)[-n]$ is the Morse homology complex of the function $-H$. The grading is given by Morse indices of the function $-H$ shifted down by $-n$.  Equivalently, we can replace $\Morse(-H)[-n]$
by the Morse {\em cohomology} complex of the function $H$ with the grading by $n-\ind_{\Morse}$.
Note that  the top index of  this complex is   $\leq n$,  while the minimal index in the Weinstein case is equal to $0$.

The differential $d_{SH}$ is given by the block matrix
\begin{equation}\label{eq:d-SH}
d_{SH}=\left(\begin{matrix}
d_{SH^+}&0\cr
\delta_{SH} & d_{\Morse}\cr
\end{matrix}
\right),
\end{equation}
where the homomorphism $\delta_{SH}:SH^+=\widecheck{CH}(X)\oplus \widehat{CH}(X)\to \Morse(-H)[-n]$ is equal to $0$ on $\widehat{CH}(X)$ and equal to \begin{equation}\label{eq:theta}
\delta_{SH}\widecheck \gamma=\sum\limits_{|\gamma|-|p|=1}l_{\gamma p}\, p,
\end{equation}
where $l_{\gamma p}$ is  the algebraic  number of    $0$-dimensional  components of the moduli space
$\MM^X(\gamma;p)$, see Section \ref{sec:holplanes}, on $\widecheck{CH}(X)$. We note that $\delta_{SH}$ vanishes on $\K\langle\PP_\bad(Y)\rangle\subset\widecheck{CH}(X)$.
\begin{prp}\label{prop:sympl-homology}
We have  $d_{SH}^2=0.$ The homology
$$
S\bbH(X)=H_*(SH(X), d_{SH})
$$
is
independent of all choices, is a symplectic  invariant of $X$ and coincides with the Floer-Hofer symplectic homology.
\end{prp}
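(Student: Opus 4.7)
The plan is to decompose the verification along the block structure of \eqref{eq:d-SH}. Squaring the differential yields
$$
d_{SH}^2=\begin{pmatrix} d_{SH^+}^2 & 0 \\ d_{\Morse}\circ\delta_{SH}+\delta_{SH}\circ d_{SH^+} & d_{\Morse}^2\end{pmatrix},
$$
so by Proposition \ref{prop:symp-homology-reduced} and classical Morse theory everything reduces to the chain-map identity $d_{\Morse}\circ\delta_{SH}+\delta_{SH}\circ d_{SH^+}=0$. I would establish it by compactifying, for each pair $(\gamma,p)$ with $|\gamma|-|p|=2$, the $1$-dimensional moduli space $\MM^X(\gamma;p)$ of Section \ref{sec:holplanes} and matching its codimension-one boundary strata, with their orientations, against the two compositions in the identity.

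Two geometric types of strata should arise. First, SFT-type breaking at the positive puncture produces a rigid cylinder in $\R\times Y$ anchored in $X$ from $\gamma$ to an intermediate orbit $\beta$ with $|\beta|=|\gamma|-1$, glued to a rigid plane in $\MM^X(\beta;p)$; summed with the multiplicity weights $\kappa(\beta)^{-1}$ of \eqref{eq:lin-cont-dif}, this contributes $\delta_{SH}\circ d_{\widecheck{CH}}$, which is the whole of $\delta_{SH}\circ d_{SH^+}$ because $\delta_{SH}$ vanishes on $\widehat{CH}(X)$ and on bad orbits. Second, the interior marked point escapes along the open stable manifold $L_p$ toward a critical point $p'$ of strictly higher $H$-index, giving a rigid plane in $\MM^X(\gamma;p')$ paired with a rigid $\wt Z$-trajectory from $p'$ to $p$; this contributes $d_{\Morse}\circ\delta_{SH}$. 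Exactness of $\lambda$ precludes sphere bubbling, the anchoring formalism of Section \ref{sec:anchored} already absorbs the planes splitting off in $X$, and Remark \ref{rmk:finiteness} ensures finiteness of all counts. It remains to check that the two families of boundary points cancel in pairs under the coherent orientation system.

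The principal obstacle is the orientation and multiplicity bookkeeping at intermediate \emph{bad} orbits: the asymmetric factors $\kappa(\beta)^{-1}$ in \eqref{eq:lin-cont-dif} versus $\kappa(\gamma)^{-1}$ in \eqref{eq:lin-cont-dif2}, together with the $\pm 2$ contribution of $d_{\rm M}$ on $\widehat{CH}(X)$, must be shown to conspire with the geometric cancellation, and one must verify that the Morse--Bott strata of $\widecheck\MM{}^Y$ type do not enter here because the asymptotic marker at the positive puncture of a plane in $X$ is unconstrained, pushing such degenerations into higher codimension. I would use the parity and sign conventions worked out by Bourgeois and Oancea in \cite{BO} to settle this.

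Once $d_{SH}^2=0$ is established, independence of the auxiliary data $(J,H,\wt Z,\alpha)$ follows from the standard parametric moduli space argument: for a generic path of admissible data one constructs a chain map and, from the corresponding $2$-parameter families, a chain homotopy between any two choices. Invariance under symplectomorphism then follows from Lemma \ref{l:exactsympl} together with the SFT cobordism maps. The identification with Floer--Hofer symplectic homology is obtained by extending the Bourgeois--Oancea comparison isomorphism $S\bbH^+(X)\cong S\bbH^+_{\mathrm{FH}}(X)$ of \cite{BO} to the full theory: the summand $\Morse(-H)[-n]$ represents the Floer complex of a $C^2$-small autonomous Hamiltonian supported in $\oX$, the map $\delta_{SH}$ realizes the Floer--Hofer connecting homomorphism between constant and non-constant orbit contributions, and the five lemma applied to the induced map of mapping cones yields $S\bbH(X)\cong SH_{\mathrm{FH}}(X)$.
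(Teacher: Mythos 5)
The paper itself states Proposition \ref{prop:sympl-homology} without proof (the detailed proofs of the propositions in Section \ref{sec:invariants} are deferred to \cite{BEE}), so there is no internal argument to compare against; I am assessing your proposal on its own. The overall architecture is the expected one and is sound: the block computation of $d_{SH}^2$, the reduction modulo Proposition \ref{prop:symp-homology-reduced} and Morse theory to the off-diagonal identity $d_{\Morse}\circ\delta_{SH}+\delta_{SH}\circ d_{SH^+}=0$, the further reduction to $d_{\Morse}\circ\delta_{SH}+\delta_{SH}\circ d_{\widecheck{CH}}=0$ on $\widecheck{CH}(X)$ using that $\delta_{SH}$ annihilates both $\widehat{CH}(X)$ and bad orbits, and then matching the two compositions against the two families of codimension-one strata of $\MM^X(\gamma;p)$ for $|\gamma|-|p|=2$.

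Two corrections worth flagging. First, the Morse escape stratum involves a critical point $p'$ of \emph{strictly lower} $H$-index (equivalently, strictly higher degree $|p'|=|p|+1$), not strictly higher $H$-index: a $\wt Z$-trajectory from $p'$ to $p$ flows up $H$, so $\mu_H(p')=\mu_H(p)-1$ when the trajectory is rigid, and the closure of $L_p$ contains $L_{p'}$ exactly for such $p'$. You wrote the trajectory direction correctly, so this is an internal inconsistency in the phrasing rather than in the argument. Second, the worry about the asymmetric multiplicity weights $\kappa(\gamma)^{-1}$ from \eqref{eq:lin-cont-dif2} and the $\pm2$ contribution of $d_{\rm M}$ is misplaced for \emph{this} identity: neither $d_{\widehat{CH}}$ nor $d_{\rm M}$ can contribute, since $\delta_{SH}$ vanishes on $\widehat{CH}(X)$ and $d_{\rm M}$ only produces (multiples of) bad orbits, on which $\delta_{SH}$ also vanishes. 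The only multiplicity factor that must be reconciled with the gluing count is $\kappa(\beta)^{-1}$ from \eqref{eq:lin-cont-dif}, and that together with the orientation bookkeeping is indeed where the nontrivial work lies; deferring to \cite{BO} for the conventions is reasonable at the level of rigor of this paper. The remaining steps (parametric moduli argument for independence, Lemma \ref{l:exactsympl} plus cobordism maps for symplectomorphism invariance, and the mapping-cone/five-lemma extension of the Bourgeois--Oancea comparison for identification with Floer--Hofer symplectic homology) are the standard route and are adequately sketched.
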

Proposition \ref{prop:sympl-homology} implies the existence of the following exact homology triangle
\begin{equation} \label{eq:SH-Morse}
\xymatrix{ H^{n-*}(X) \ar[rr]& &S\bbH_*(X) . \ar[ld]\\
&S\bbH_*^+(X)\ar[lu]_{\delta_{SH}}&}
\end{equation}
This triangle is well known for the traditional definition of symplectic homology.
\begin{rmk}
Identifying $S\bbH(X)$ with the Floer-Hofer symplectic homology as in Proposition~\ref{prop:sympl-homology} and $S\bbH^+(X)$ with the reduced Floer-Hofer symplectic homology as in Remark~\ref{rmk:reduced-SH}, the above exact triangle coincides with the tautological exact sequence
obtained via an action filtration argument~\cite{V}.
\end{rmk}

\begin{rmk}[on grading]
In the literature there is no uniform grading convention for symplectic homology. It depends on:
\begin{itemize}
\item different conventions in the definition of Maslov index;
\item different choice of the sign of the action functional ($\int\lambda-Hdt$ versus  $\int Hdt-\lambda$);
\item  symplectic homology versus cohomology.
\end{itemize}
Our symplectic homology $S\bbH(X)$ coincides with the symplectic homology defined by Abbondandolo-Schwarz, see \cite{AS}. In particular, $S\bbH_*(T^*M)=H_*(\Lambda M)$ without any grading shift, where $\Lambda M$ denotes the free loop spaces of a closed spin manifold $M$. Also $S\bbH(X)$ coincides with Seidel's {\em symplectic cohomology} of $X$, see \cite{shhh}, up to sign change of the grading.
\end{rmk}

\subsection{Cobordisms}\label{sec:cobord}
Here we discuss functorial properties of the homology theories $C\bbH$, $S\bbH^+$, and $S\bbH$.
Suppose we are in the framework of Section \ref{sec:mW}.
Let $(\oX,\om,Z)$ be a Liouville domain and $\oX_0\subset\Int\oX$ be a subdomain such that the Liouville vector field $Z$ is outward transverse to $Y_0=\p X_0$. Then $\oW=\oX\setminus \Int \oX_0$ is a Liouville cobordism with $\p_-\oW=Y_0$ and $\p_+\oW=Y=\p\oX$. Let $W$ and $X_0$  be  the completions of $\oW$ and $\oX_0$.
We fix   adjusted   almost complex  structures on $W$ and $X_0$ which agree on the common part.

\subsubsection*{Contact homology}
Define a  homomorphism  $F_{CH}^W\colon CH(X)\to CH(X_0)$ by the formula
$$
F_{CH}^W(\gamma)=\sum\limits_{|\beta|=|\gamma|}\frac{n_{\gamma\beta}}{\kappa(\beta)}\beta,
$$
where $n_{\gamma\beta}$ is the algebraic number of $0$-dimensional components
of the moduli space $\MM^W(\gamma;\beta)$, see Section \ref{sec:mW}, and where the sum ranges over good orbits $\beta$ of grading as indicated. We assume here that the differential on the complex $CH(X)$ is defined by the formula
\eqref{eq:lin-cont-dif}.

\subsubsection*{Reduced symplectic homology}
The  homomorphism $F^W_{SH^+}\colon SH^+(X)\to SH^+(X_0)$ is defined
by the matrix
\begin{equation}\label{eq:map-SHplus}
F_{SH^+}^W=\left(\begin{matrix}
F^W_{\widecheck{CH}}& 0 \cr
\rule{0pt}{3ex} \Psi^W_{SH^+} & F^W_{\widehat{CH}}\cr
\end{matrix}
\right),
\end{equation}
where the entries are the following homomorphisms.
The map $F^W_{\widecheck{CH}}\colon\widecheck{CH}(X)\to\widecheck{CH}(X_0)$ is defined by
\begin{equation}
F_{\widecheck{CH}}^W(\widecheck\gamma)=
\sum\limits_{|\beta|=|\gamma|}\frac{n_{\gamma\beta}}{\kappa(\beta)}\widecheck\beta,
\end{equation}
the map $F^W_{\widecheck{CH}}\colon\widecheck{CH}(X)\to\widecheck{CH}(X_0)$ is defined by
\begin{equation}
F_{\widehat{CH}}^W(\widehat\gamma)=
\sum\limits_{|\beta|=|\gamma|}\frac{n_{\gamma\beta}}{\kappa(\gamma)}\widehat\beta,
\end{equation}
and the map $\Psi^W_{SH^+}\colon\widecheck{CH}(X)\to\widehat{CH}(X_0)$ is defined by
\begin{equation}\label{eq:deltaW}
\Psi^W_{SH^+} \widecheck \gamma=\sum\limits_{|\beta|=|\gamma|-1}m_{\gamma\beta}\widehat\beta,
\end{equation}
where all three sums range over all orbits $\beta$ of grading as indicated, and where $n_{\gamma\beta}$ and $m_{\gamma\beta}$ are the algebraic number of elements of the $0$-dimensional moduli spaces $\MM^W(\gamma;\beta)$ and $\widecheck\MM{}^W(\gamma;\beta)$, respectively, see Section \ref{sec:mW}. We note that $\K\la\PP_{\bad}(Y)\ra\subset\ker(F^W_{\widecheck{CH}})$ and that $\img(F^{W}_{\widehat{CH}})\subset\K\langle\PP_{\good}(Y_0)\rangle[1]\subset\widehat{CH}(X_0)$.

\subsubsection*{Full symplectic homology}
The chain homomorphism $F^W_{SH}\colon SH (X)\to SH (X_0)$ is defined
by the matrix
\begin{equation}\label{eq:map-SH}
F_{SH }^W=\left(\begin{matrix}
F^W_{SH^+}&0\\
\rule{0pt}{3ex}\Psi^W_{SH} & F^W_{\Morse}\\
\end{matrix}
\right).
\end{equation}
Here the map $\Psi^W_{SH}\colon SH^+(X)\to\Morse(-H;X_0)$
vanishes on $\widehat{CH}(X)$ and for a decorated orbit $\widecheck\gamma\in \widecheck{CH}(X)$ it is  defined  by
\begin{equation}\label{eq:thetaW}
\Psi^W_{SH} \widecheck \gamma=\sum\limits_{|\gamma|=|p|}l_{\gamma p} \, p,
\end{equation}
where the coefficient $l_{\gamma p}$ is the algebraic number of $0$-dimensional components of the moduli space
$\MM^W(\gamma;p)$, see Section \ref{sec:evalinW}. Note that $\theta^{W}(\widecheck\gamma)=0$ if $\gamma$ is bad.
The map $$F^W_{\Morse}\colon\Morse(-H;X)\to\Morse(-H,X_0)$$ is the projection.
\begin{prp}
The homomorphisms $F^W_{CH}$, $F^W_{SH^+}$, and $F^W_{SH}$ are chain maps which are independent, up to chain homotopy,  of the choice of gradient like vector field and almost complex structure.
\end{prp}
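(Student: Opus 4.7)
The plan is to establish both halves of the proposition (chain-map property and independence up to chain homotopy) by the standard SFT-style argument: analyse the boundary of the relevant one-dimensional moduli spaces, read off the resulting algebraic identity, and for the homotopy statement repeat the argument on a parameterised family. Compactness of all moduli spaces involved follows from the Stokes-type energy bound recorded in Remark \ref{rmk:finiteness}, so all sums appearing in $F^W\circ d$ and $d\circ F^W$ are finite.

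For $F^W_{CH}$, I would take a one-dimensional component of $\MM^W(\gamma;\beta)$ (so $|\gamma|-|\beta|=1$) and, after absorbing the anchoring contributions into the coefficients $n_{\gamma\beta}$, identify its boundary with the disjoint union of two kinds of broken configurations: (i) a rigid cylinder in $\R\times Y$ from $\gamma$ to some $\gamma'$ with $|\gamma'|=|\gamma|-1$ followed by a rigid cobordism cylinder from $\gamma'$ to $\beta$, and (ii) a rigid cobordism cylinder from $\gamma$ to some $\beta'$ with $|\beta'|=|\beta|+1$ followed by a rigid cylinder in $\R\times Y_0$ from $\beta'$ to $\beta$. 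The vanishing of the algebraic count of boundary points, combined with the division by multiplicities $\kappa(\beta)$ used in both $d_{CH}$ and $F^W_{CH}$, yields $F^W_{CH}\circ d_{CH}(X)=d_{CH}(X_0)\circ F^W_{CH}$.

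For $F^W_{SH^+}$ and $F^W_{SH}$ the same strategy applies, but one must analyse in addition the Morse–Bott spaces $\widecheck\MM{}^W$ and, in the full theory, the spaces $\MM^W(\gamma;p)$ with a Morse flow segment on $X_0$. The new boundary strata are of two kinds: genuine SFT-breakings above or below $W$, and collisions of the evaluation points at an underlying orbit $\ul{\gamma_0}$ that force the cyclic order condition defining $\widecheck\MM{}^{\prime\prime W}$, $\widecheck\MM{}^{\prime\prime\prime W}$ to degenerate. Matching the two kinds of breakings term by term against the matrix entries of $d_{SH^+}$ and $d_{SH}$, while matching the collisions against the off-diagonal entries $\Psi^W_{SH^+}$ and $\Psi^W_{SH}$, produces the desired chain-map identities; the treatment of bad orbits works out because $\delta_{SH^+}$, $\delta_{SH}$, $\Psi^W_{SH^+}$ and $\Psi^W_{SH}$ all vanish on $\K\la\PP_{\bad}\ra$, which lies in the kernel of $d_{\widecheck{CH}}$.

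For the chain-homotopy statement, choose a generic one-parameter family $(\wt Z_t,J_t)$, $t\in[0,1]$, interpolating between two admissible pairs and agreeing with them at infinity. Counting rigid elements of the parameterised moduli spaces $\bigsqcup_{t\in[0,1]}\MM_t$ (with the corresponding Morse–Bott refinements in the symplectic homology cases) defines a degree $+1$ operator $K$ in each of the three theories. The boundary of a one-dimensional parameterised moduli space consists of the same broken configurations as before, now at interior $t$, together with two $t\in\{0,1\}$ contributions which give $F_0-F_1$; this yields $d\circ K+K\circ d=F_1-F_0$. The main difficulty will be the combinatorial bookkeeping of the Morse–Bott strata for $F^W_{SH^+}$ and $F^W_{SH}$: one must check that every degeneration of a cyclic-order condition on $\ul{\gamma_0}$ corresponds to exactly one genuine gluing between a cobordism piece and a symplectization piece, and then ensure the signs coming from the coherent orientations induced by the chosen trivialisation of the canonical bundle (and, where Lagrangian boundary conditions are present, by the chosen spin structures) combine consistently. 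Once this verification is carried out, both the chain-map identity and the chain-homotopy identity drop out of the same boundary analysis.
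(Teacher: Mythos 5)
Your approach matches the paper's own methodology: Section \ref{sec:filtration} states explicitly that the chain-map identities ``follow from a detailed analysis of boundaries in the compactification of the moduli spaces involved,'' which is precisely the SFT boundary analysis you carry out for $\MM^{W}(\gamma;\beta)$, $\widecheck\MM{}^{W}$ and $\MM^{W}(\gamma;p)$, while the invariance statement is obtained from the standard parameterized-moduli-space chain homotopy. The paper defers the full proof of this proposition to \cite{BEE} and so gives no further detail to compare against, but your sketch correctly identifies all the essential ingredients (absorbing anchors into coefficients, the two SFT breaking types, the Morse--Bott strata with degenerating cyclic-order conditions matching the off-diagonal entries, vanishing on bad orbits, and the finiteness coming from Remark \ref{rmk:finiteness}).
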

\section{Legendrian homology algebra, three associated complexes, and linearization}\label{sec:algebra}
\subsection{\texorpdfstring{The Legendrian homology algebra $L\bbH A(\Lambda)$}{The Legendrian homology algebra LHA(Lambda)}}\label{sec:Leg-algebra}
Let $\Lambda_1,\dots,\Lambda_k$ be disjoint embedded Legendrian {\it spheres} in the contact manifold $Y$.
We assume that $\Lambda=\bigcup_{i=1}^{k}\Lambda_i$ is in general position with respect to the Reeb flow on $Y$. Recall that we always assume that the Maslov class of each of these submanifolds is equal to $0$ (for spheres this is an extra assumption only if $\dim(\Lambda_j)=1$), and that the chosen trivialization of the canonical bundle of $\R\times Y$ belongs, along $L=\R\times\Lambda$, to the 1-dimensional real canonical subbundle of $L$. 

For $i\ne j$, we write $\CC_{ij}$ for the set of Reeb chords connecting $\Lambda_i$ to $\Lambda_j$. Note that $\CC_{ij}\neq\CC_{ji}$. We write $\CC_{ii}=\CC_i$ for all Reeb chords connecting $\Lambda_i$ to itself. We will also consider the empty Reeb chord $e_i$ which connects $\Lambda_i$ to itself.  The set of all non-empty chords for $\Lambda$ (there could be countably many of them) is denoted by $\CC$.

Non-empty chords from $\CC$ are graded by the relative Conley-Zehnder index, see Remark \ref{rmk:relativegrading}, and the grading of $e_j$ is $0$ for each $j$. Let $R$ denote the $\K$-algebra with underlying vector space $\K\la e_1,\dots, e_k\ra$ and multiplication defined by
\[
e_i\cdot e_j=\delta_{ij}e_i,\quad 1\le i,j\le k,
\]
where $\delta_{ij}$ is the Kronecker delta. We endow $\K\la\CC\ra$ with the structure of a left-right $R$-module by letting $R$ act as follows
\[
e_i\cdot c=
\begin{cases}
c &\text{if }c\in\CC_{ji} \text{ for some $j$},\\
0 &\text{otherwise}
\end{cases}
\quad\quad
c\cdot e_j=
\begin{cases}
c &\text{if }c\in\CC_{ji} \text{ for some $i$},\\
0 &\text{otherwise}
\end{cases}.
\]

Define the Legendrian homology algebra as
\begin{align*}
LHA(\Lambda)&=\bigoplus_{k=0}^{\infty}\,\,\K\la\CC\ra^{\otimes_R^{k}}\,\,
=\,\,\,R\,\,\oplus\,\,\K\la\CC\ra\oplus\,\, \K\la\CC\ra\otimes_{R}\K\la\CC\ra\,\,\oplus\dots
\end{align*}
Note that the underlying vector space of $LHA(\Lambda)$ is spanned by {\em linearly composable} monomials, i.e.~monomials $c_1\otimes\dots\otimes c_m$ such that the origin of the chord $c_i$ lies on the same Legendrian sphere as the end of $c_{i+1}$, for $i=1,\dots,m-1$. For simpler notation, we will suppress the tensor symbol from the notation writing $c_1\dots c_m$ instead of $c_1\otimes\dots\otimes c_m$.

\begin{rmk}
The algebra $LHA(\Lambda)$ is the path algebra of a quiver~\cite{ASS} associated to the
Legendrian submanifold $\Lambda$. Vertices of the quiver correspond to connected components
$\Lambda_1, \ldots, \Lambda_k$ and edges of the quiver correspond to Reeb chords.
\end{rmk}

The algebra $LHA(\Lambda)$ carries a differential $d_{LHA}\colon LHA(\Lambda)\to LHA(\Lambda)$ which satisfies the
graded Leibniz rule and acts on the generators $c\in\CC$ according to the formula
$$
d_{LHA}c=\sum\limits_{|c|=\sum|b_j|+1} n_{c;b_1\dots b_m} \, b_1\dots b_m,
$$
where $n_{c;b_1\dots b_m}$ is the algebraic number of $1$-dimensional components of the moduli space $\MM^Y_\Lambda(c;b_1,\dots,b_m)$, see Section \ref{sec:mYLambda}.  Note that if $\MM^Y_\Lambda(c;b_1,\dots,b_m)$ is non-empty and $c\in\CC_{ij}$, then the monomial
$b_1 \dots b_m$ is linearly composable. Let us also point out that the differential $d_{LHA}$ acts trivially on all idempotents $e_j$ because there are no holomorphic curves without positive ends.

\begin{rmk}\label{rmk:spin}
The algebraic count of $1$-dimensional components of moduli spaces stems from coherent orientations of moduli spaces. In order to define such a system of orientation one uses a spin structure on $\Lambda$. Since the components of $\Lambda$ are $(n-1)$-spheres each component has a unique spin structure for $n>2$. For $n=2$ any component has two spin structures: the Lie group spin structure which corresponds to the two component double cover of $S^{1}$ and which generates the spin cobordism group of $1$-manifolds, and the null-cobordant spin structure which correspond to the non-trivial double cover of $S^{1}$. Here we will always use the null-cobordant spin structure on any component of $\Lambda$ when $n=2$.  
\end{rmk}

Given a component $\Lambda_i\subset \Lambda$ we denote by $LHA(\Lambda_i;\Lambda)$ the differential subalgebra of $LHA(\Lambda)$ which consists of words which begin and end on $\Lambda_i$.
\begin{prp}\label{prop:leg-homology}
We have  $d_{LHA}^2=0.$ The homologies
$$
L\bbH A(\Lambda)=H_*(LHA(\Lambda), d_{LHA})\;\;\hbox{and}\;\;
L\bbH A(\Lambda_i;\Lambda)=H_*(LHA(\Lambda_i;\Lambda), d_{LHA})
$$
are
independent of all choices, and are Legendrian isotopy  invariants of $\Lambda$.
\end{prp}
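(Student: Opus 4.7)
We treat the three assertions in turn; the subalgebra claim for $LHA(\Lambda_i;\Lambda)$ follows because the oriented boundary of any holomorphic disk is a closed path on $\Lambda$, so the origin/end components of consecutive chords in every monomial that appears are forced to match. In particular the ``endpoint components'' of a word are preserved by every construction below.

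First we show $d_{LHA}^{2}=0$. Fix $c\in\CC$ and a linearly composable monomial $b_{1}\cdots b_{m}$ with $|c|=2+\sum_{j}|b_{j}|$. The moduli space $\MM^{Y}_{\Lambda}(c;b_{1},\dots,b_{m})$ is then two-dimensional; quotienting by the natural $\R$-translation gives a $1$-dimensional moduli space. By Stokes' theorem the $d\lambda$-area of the main tree and of each anchor plane is a priori bounded by $\int_{c}\lambda$, so the SFT compactness theorem of \cite{BEHWZ} applies (cf.~Remark~\ref{rmk:finiteness}). The resulting compactification has boundary consisting of two-level broken configurations: in each, the top level is a rigid anchored tree from $c$ to some intermediate chord $a$ together with a subset of the $b_{j}$'s, and the bottom level is a rigid anchored tree from $a$ to the remaining $b_{j}$'s. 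Counting these boundary points with signs coming from the coherent orientation induced by the spin structures of Remark~\ref{rmk:spin} yields precisely the coefficient of $b_{1}\cdots b_{m}$ in $d_{LHA}^{2}(c)$, and since the signed boundary of a compact oriented $1$-manifold is zero, this coefficient vanishes. The multinomial factors built into the count of anchored curves, analogous to \eqref{eq:anchorcount}, ensure that the redistribution of anchor planes between the top and bottom levels is handled consistently.

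Invariance of $L\bbH A(\Lambda)$ is then proved via the cobordism formalism. Given a change of auxiliary data, or a Legendrian isotopy $\{\Lambda_{t}\}_{t\in[0,1]}$, form the trace Lagrangian cylinder $L$ viewed inside the completion of the trivial Liouville cobordism $[0,1]\times Y$. Counting rigid anchored holomorphic disks in this cobordism setting, with positive punctures at chords of $\Lambda_{1}$ and negative punctures at chords of $\Lambda_{0}$, defines a DGA morphism $\Phi_{L}\colon LHA(\Lambda_{1})\to LHA(\Lambda_{0})$; SFT compactness shows it intertwines the differentials. The reverse cobordism gives $\Phi_{\bar L}$ in the opposite direction, and stacking the two cobordisms produces a parametric moduli space whose boundary analysis yields a DGA chain homotopy between $\Phi_{\bar L}\circ\Phi_{L}$ and the identity, and similarly for the composition in the other order. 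Hence $L\bbH A(\Lambda)$ is invariant, and by the composability remark so is $L\bbH A(\Lambda_{i};\Lambda)$.

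The main obstacles are the usual ones: achieving transversality for all relevant moduli spaces of anchored curves, handled here via the polyfold framework of \cite{HWZ,HWZ2,HWZ3} as indicated in Section~\ref{Sec:mdli}, and verifying that the coherent orientations induced by the (null-cobordant) spin structures of Remark~\ref{rmk:spin} produce exactly the signs required for the boundary cancellations in the $d^{2}=0$ and chain-homotopy arguments to go through. Both points are treated in \cite{Chek,SFT,EES}, adapted here to the anchored setting.
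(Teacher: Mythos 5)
The paper states Proposition \ref{prop:leg-homology} without proof, treating it as a known fact from the Legendrian contact homology literature (\cite{Chek,SFT,EES}) adapted to the anchored setting; your sketch reproduces exactly the standard argument those references use, so the two approaches coincide. Two small points worth flagging in a more careful write-up: first, since the differential counts curves anchored in the filling $X$, the claimed ``Legendrian isotopy invariance'' really is invariance of the pair $(\Lambda,X)$ under Legendrian isotopy inside the fixed contact boundary $Y$ of $X$ — your trace-cobordism $L\subset[0,1]\times Y$ must therefore be taken inside the completion of the product cobordism sitting over the \emph{same} $X$, so that the anchors on both levels of the SFT-broken configurations live in the same Liouville manifold; second, when stacking $L$ and $\bar L$ to produce the chain homotopy, the redistribution of anchor planes between levels has to respect the same multinomial bookkeeping as \eqref{eq:anchorcount} (this is what makes the signed boundary count of the $1$-parameter moduli space actually vanish), which you mention for $d^{2}=0$ but should also invoke in the invariance step. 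Neither point is a gap, just bookkeeping your outline elides; the underlying argument is the intended one.
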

In the following sections, we associate several complexes with the differential graded  algebra $(LHA(\Lambda),d_{LHA})$.

Suppose now that we are in the framework of Sections \ref{sec:mLL} and  \ref{sec:cobord}, i.e.
\begin{itemize}
\item $(\oX,\om,Z)$
is a Liouville domain and $\oX_0\subset\Int\oX$ is a subdomain such that the
Liouville vector field $Z$ is outward transverse to $Y_0=\p X_0$;
\item
$\oW=\oX\setminus \Int \oX_0$ is  a Liouville cobordism with $\p_-\oW=Y_0$ and $\p_+\oW=Y=\p\oX$;
\item $W$, $X_0$, and $X$  are the completions of $\oW$, $\oX_0$, and $\oX$;
\item  $L\subset W$  is an exact
Lagrangian cobordism between  Legendrian submanifolds $\Lambda_-\subset \p_-W=Y_0$ and $\Lambda_+ \subset \p_+W=Y$.
\end{itemize}
Then we can define a homomorphism
$F^W_L\colon LHA(\Lambda_+)\to LHA(\Lambda_-)$ by defining it on the generators by the formula
$$
F^W_L(c)=\sum\limits_{|c|=\sum|b_j|} m_{c;b_1\dots b_m} \, b_1\dots b_m,
$$
where $m_{c;b_1\dots b_m}$ is the algebraic number of  elements in the $0$-dimensional moduli space $\MM^W_L(c;b_1,\dots,b_m)$, $c\in\CC(\Lambda_+)$ and $b_1,\dots,b_m\in\CC(\Lambda_-)$, see Section \ref{sec:mLL}.

\begin{prp}\label{prop:leg-homology-hom}
$\quad$
\begin{enumerate}
\item $F^W_L$ is a homomorphism of differential graded algebras which is independent, up to chain  homotopy, of all auxiliary choices.

\item In the special case when $ L$ consists of components $ L_0,\ L_1,\dots, \ L_k$  such that    $L_j\cap \p_+\oW=\varnothing$ for $j \neq 0$ we have
$F^W_L(LHA(\Lambda_+))\subset LHA(\Lambda_{0-};\Lambda_-)$. In particular,  $F^W_L$ induces a homomorphism
$$\left(F^W_L\right)^*:L\bbH A(\Lambda_+)\to L\bbH A(\Lambda_{0-};\Lambda_-), $$
where $\Lambda_{j-}:=L_j\cap\p_-W$ and $\Lambda_-:= \bigcup_{j=0}^k \Lambda_{j-}$.
\end{enumerate}
\end{prp}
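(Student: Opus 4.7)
The plan is to prove part (1) by the standard SFT boundary-of-moduli-space argument and part (2) by a connectedness observation about boundaries of holomorphic disks.

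\textbf{Part (1), algebraic structure.} By construction $F^W_L$ is defined on generators $c\in\CC(\Lambda_+)$ and extended to the free tensor algebra multiplicatively, so it is automatically a unital algebra homomorphism of $R$-bimodule algebras (one checks that if $c\in\CC_{ij}(\Lambda_+)$ then every monomial $b_1\dots b_m$ appearing in $F^W_L(c)$ is linearly composable from $\Lambda_{i-}$ to $\Lambda_{j-}$, because the boundary of the defining disk $f\colon D\to(W,L)$ traces a continuous path along $L$ from a point on $\Lambda_{j-}$ back to $\Lambda_{i-}$).

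\textbf{Part (1), chain map property.} To prove $F^W_L\circ d_{LHA}=d_{LHA}\circ F^W_L$, I would analyze the compactification of the 1-dimensional components of $\MM^W_L(c;b_1,\dots,b_m)$, where $|c|=\sum|b_j|+1$. By the SFT compactness theorem of \cite{BEHWZ}, together with the finiteness remark \ref{rmk:finiteness}, the codimension-one boundary strata are of exactly two kinds:
\begin{itemize}
\item a holomorphic disk in $(\R\times Y,\R\times\Lambda_+)$ anchored in $X$ breaks off at the positive end, attached at one of the negative punctures of a rigid disk in $(W,L)$ anchored in $X_0$; these strata are counted by $(F^W_L\circ d_{LHA})(c)$;
\item a holomorphic disk in $(\R\times Y_0,\R\times\Lambda_-)$ anchored in $X_0$ breaks off at the negative end at one of the punctures $z_i$; these are counted by $(d_{LHA}\circ F^W_L)(c)$.
\end{itemize}
Note that additional breakings corresponding to planes anchoring at interior punctures are already incorporated into the definition of ``anchored in $X_0$'', so they appear with multiplicity on both sides and do not yield extra boundary strata. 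Coherent orientations induced by the spin structures (Remark \ref{rmk:spin}) give opposite signs to the two strata of the oriented boundary of a 1-dimensional moduli space, and so the signed count of the full boundary vanishes, yielding $F^W_L\circ d_{LHA}-d_{LHA}\circ F^W_L=0$.

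\textbf{Part (1), homotopy invariance.} Given two choices of adjusted almost complex structure and/or compatible Lagrangian cobordism data, I would connect them by a generic path and count rigid elements in the parametrized 1-dimensional moduli spaces to define a chain homotopy $K\colon LHA(\Lambda_+)\to LHA(\Lambda_-)$ (extended as a $(F^W_L,F^W_{L'})$-derivation from its values on generators). The relation $F^W_L-F^W_{L'}=K\circ d_{LHA}+d_{LHA}\circ K$ again follows from boundary analysis of the parametrized 1-dimensional spaces, combined with the degenerations at the endpoints of the parameter interval.

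\textbf{Part (2).} Assume $L_j\cap\p_+\oW=\varnothing$ for $j\ne 0$, so $\Lambda_+\subset L_0$. Let $f\colon(D,\p D\setminus\{z^+,z_1,\dots,z_m\})\to(W,L)$ be any disk representing a monomial in $F^W_L(c)$ with $c\in\CC(\Lambda_+)$. Near the positive puncture $z^+$ the image of $\p D$ lies in the cylindrical end $[0,\infty)\times\Lambda_+\subset[0,\infty)\times\Lambda_{0}$, hence on the component $L_0$. Since the components $L_0,\dots,L_k$ of $L$ are pairwise disjoint and the image of $\p D\setminus\{z^+,z_1,\dots,z_m\}$ is connected, the whole boundary must lie on $L_0$. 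In particular every negative asymptotic chord $b_j$ has both endpoints on $\Lambda_{0-}=L_0\cap\p_-W$, i.e.~$b_j\in\CC(\Lambda_{0-})$. Therefore $F^W_L(c)\in LHA(\Lambda_{0-};\Lambda_-)$, and the induced map on homology is well defined by the chain map property established in part (1).
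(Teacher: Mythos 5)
Your part~(1) is the standard SFT argument (boundary analysis of $1$-dimensional components of $\MM^W_L(c;b_1,\dots,b_m)$ for the chain map relation, and parametrized moduli spaces for homotopy invariance); the paper does not spell this out either, so there is nothing to compare beyond noting you have the right idea.

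Part~(2) contains a genuine error. You argue that ``the image of $\p D\setminus\{z^+,z_1,\dots,z_m\}$ is connected, \emph{hence} the whole boundary must lie on $L_0$, so every $b_j\in\CC(\Lambda_{0-})$.'' But $\p D\setminus\{z^+,z_1,\dots,z_m\}$ is a disjoint union of $m+1$ open arcs — it is \emph{not} connected as soon as $m\ge 1$ — so the inference fails. Worse, the conclusion you derive is false in general: the negative punctures $b_j$ can perfectly well be chords between $\Lambda_{0-}$ and other components $\Lambda_{j-}$, $j\ne 0$, so that intermediate boundary arcs lie on $L_j$, $j\ne 0$. Your ``proof'' would place $F^W_L(c)$ in $LHA(\Lambda_{0-})$ rather than $LHA(\Lambda_{0-};\Lambda_-)$, which is strictly more than the proposition asserts and is not true.

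The correct argument keeps your idea but localizes it: since $\Lambda_+=L\cap\p_+\oW\subset L_0$, only the \emph{two} arcs of $\p D$ adjacent to the positive puncture $z^+$ are forced to map to $L_0$. These are the arcs from $z_m$ to $z^+$ and from $z^+$ to $z_1$, so the negative-end monomial $b_1\cdots b_m$ begins and ends on $\Lambda_{0-}$ (while the intermediate arcs may visit other components). That is exactly the defining condition for $b_1\cdots b_m\in LHA(\Lambda_{0-};\Lambda_-)$. The induced map on homology then follows from part~(1) since $LHA(\Lambda_{0-};\Lambda_-)$ is a differential subalgebra of $LHA(\Lambda_-)$.
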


{
\subsection{\texorpdfstring{The algebra $L\bbH A(\Lambda;\bq)$}{The algebra LHA(Lambda;q)}}
We define deformations of the algebra $L\bbH A(\Lambda)$ in the spirit of Symplectic Field Theory, as follows.

Choose a cycle $q$ representing a homology class ${\mathbf q}\in H_{k}(\Lambda)$, where $0\le k\le n-2$. We will assume that no point in $q$ is the endpoint a Reeb chord of $\Lambda$.
\footnote{
Though this could be difficult to arrange for all end-points, it can be done  by a small perturbation for all endpoints of Reeb chords below any given finite action. This problem, as well as the problem of general position of the evaluation maps $\ev_k$, belong to the class of transversality problems mentioned in the beginning of Section \ref{sec:invariants} and will not be discussed further here.}
Consider the algebra
\begin{align*}
LHA(\Lambda;\bq)=&\bigoplus_{k=0}^{\infty}\,\,\K\la\CC\cup\{q\}\ra^{\otimes_R^{k}}\,\,
=\cr
&R\,\,\oplus\,\,\K\la\CC\cup\{q\}\ra\oplus\,\, \K\la\CC\cup\{q\}\ra\otimes_{R}\K\la\CC\cup\{q\}\ra\,\,\oplus\dots\;,
\end{align*}
where $q$ is an element of degree $n-k-2$.
The differential  $d_{LHA;q}$
satisfies the
graded Leibniz rule, acts trivially on $q$, and acts on the generators $c\in\CC$ according to the formula
$$
d_{LHA;q}c=\sum\limits_{\stackrel{|c|=\sum|b_j|+1+k(n-k+2)}{k=k_0 + \ldots + k_m}} n_{c;b_1\dots b_m;k_0,\dots,k_m} \, q^{k_0}b_1q^{k_1}\dots q^{k_{m-1}}b_mq^{k_m},
$$
where $n_{c;b_1\dots b_m;k_0,\dots,k_m}$ is the algebraic number of $1$-dimensional components of the moduli space $\MM^Y_\Lambda(c;b_1,\dots,
b_m;k_0,\dots,k_m)\cap \ev_k^{-1}(\underbrace{q\times\dots\times q}_k)$, see \eqref{eq:deform}.
\begin{prp}\label{prop:leg-homology-def}
We have  $d_{LHA;q}^2=0.$ The homology
$$
L\bbH A(\Lambda;\bq)=H_*(LHA(\Lambda;q), d_{LHA;q})
$$
is
independent of all choices, including the choice of cycle $q$ in the homology class $\bq$,  and it  is a Legendrian isotopy  invariant of $\Lambda$. The  special element $q$ is preserved under the corresponding isomorphisms.
\end{prp}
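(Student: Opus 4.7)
The plan is to run the standard SFT-type boundary analysis used to prove $d_{LHA}^{2}=0$ and invariance of the undeformed Legendrian homology algebra (cf.~\cite{Chek,SFT,EES,BEE}), now keeping track of the additional boundary marked points constrained to lie on the cycle $q$.

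First I would verify $d_{LHA;q}^{2} c = 0$ on generators $c\in\CC$ by classifying the codimension-one strata of the $2$-dimensional components of $\MM^{Y}_{\Lambda}(c;b_{1},\dots,b_{m};k_{0},\dots,k_{m})\cap \ev_{k}^{-1}(q\times\dots\times q)$. Three types appear: (i) SFT-type level splittings along an internal Reeb chord, whose signed total is exactly the coefficient of $d_{LHA;q}^{2}c$; (ii) collisions of adjacent boundary marked points that bubble off a disk or sphere carrying those marked points, which either cancel in pairs or are excluded by dimension once $q$ has been perturbed generically; (iii) a marked point running into a boundary puncture, i.e.\ approaching a Reeb chord endpoint. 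Since $\dim q = k\le n-2 < n-1 = \dim\Lambda$ and the chord endpoints form a $0$-dimensional subset of $\Lambda$, a generic representative of $\bq$ may be chosen disjoint from all chord endpoints below any prescribed action (the finiteness afforded by Remark~\ref{rmk:finiteness} makes this genericity achievable), ruling out (iii). The coherent orientations from the spin structures of Remark~\ref{rmk:spin} produce the required signs, and the Leibniz rule then propagates $d_{LHA;q}^{2}=0$ to all of $LHA(\Lambda;\bq)$.

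Next I would prove invariance of $L\bbH A(\Lambda;\bq)$ under the choice of representative cycle. Given two cycles $q_{0},q_{1}$ of class $\bq$, pick a chain $P$ in $\Lambda$ with $\p P = q_{1}-q_{0}$, perturbed to avoid chord endpoints. Define a DGA chain map $\Phi\colon (LHA(\Lambda;\bq),d_{LHA;q_{0}})\to (LHA(\Lambda;\bq),d_{LHA;q_{1}})$ which fixes the formal variable $q$ and acts on each Reeb chord generator by a signed count of $0$-dimensional moduli spaces $\MM^{Y}_{\Lambda}(c;b_{1},\dots,b_{m};k_{0},\dots,k_{m})$ with one distinguished marked point evaluated into $P$ and the remaining marked points evaluated into $q_{1}$. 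Boundary analysis of the corresponding $1$-dimensional moduli spaces features two additional strata: the distinguished marked point colliding with an ordinary marked point (absorbed into the composition $\Phi\circ d_{LHA;q_{0}}$ and $d_{LHA;q_{1}}\circ\Phi$) and the distinguished marked point reaching $\p P = q_{1}-q_{0}$ (giving precisely the discrepancy $d_{LHA;q_{1}}\Phi - \Phi\, d_{LHA;q_{0}}$). Independence from the remaining auxiliary data and Legendrian isotopy invariance then follow by the same template as in Proposition~\ref{prop:leg-homology-hom}, using a Lagrangian cylinder with $\bq$ propagated through the cobordism; the element $q$ is manifestly preserved by these cobordism maps, since they are defined to act trivially on it.

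The main obstacle will be the simultaneous transversality of the evaluation maps $\ev_{k}$ against $q$ (and against $P$ in the invariance argument), carried out coherently across all moduli spaces appearing in the codimension-one boundary analyses above. This is precisely the issue acknowledged in the statement's footnote and at the start of Section~\ref{sec:invariants}, and it will be handled by the polyfold technology of Section~\ref{Sec:mdli}; once it is in place, the rest of the argument is a signed bookkeeping of moduli space boundaries of the same kind already carried out in \cite{Chek,SFT,EES} for the undeformed $d_{LHA}^{2}=0$.
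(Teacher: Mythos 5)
The paper itself does not prove this proposition: like most statements in this ``proofs are sketched'' article it is deferred to \cite{BEE}, and Section \ref{sec:proofs} sketches arguments only for the surgery theorems, not for Proposition \ref{prop:leg-homology-def}. So there is no paper proof to compare against; what can be said is whether your sketch follows the approach the authors plausibly have in mind, and I believe it does. The overall template---codimension-one boundary strata of the compactified moduli spaces give $d^{2}=0$, a chain $P$ with $\partial P = q_1-q_0$ gives independence of the representative cycle via a cobordism-type map in the trivial symplectization, and Legendrian isotopy invariance follows by the same cobordism-map machinery as in Proposition \ref{prop:leg-homology-hom}---is exactly the standard SFT argument and is what the text's footnote (``this problem \dots\ belongs to the class of transversality problems \dots\ and will not be discussed further here'') implicitly invokes.

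The one place your sketch is thinner than it should be is item (ii), collisions of constrained boundary marked points. Your phrase ``either cancel in pairs or are excluded by dimension'' is correct, but the two cases are not interchangeable and deserve to be separated: a ghost-disk stratum where two boundary marked points constrained to $q$ collide has expected dimension exceeding that of the interior by exactly $n-2-k$ (the ghost bubble replaces two codimension-$(n-1-k)$ constraints by one), so when $k<n-2$ it is excluded for generic $q$ by transversality, whereas in the extremal case $k=n-2$ (where $q$ is a divisor in $\Lambda$) the collision stratum is genuinely codimension one and its contributions must cancel against the boundary strata where a tangency of $u|_{\partial D}$ with $\R\times q$ creates or destroys a pair of nearby marked points. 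Pinning down that cancellation (with signs) is the real work hidden behind this statement; everything else you wrote is routine SFT bookkeeping consistent with how the paper treats its other $d^{2}=0$ statements. In the invariance argument, note also that what you call ``the discrepancy $d_{LHA;q_1}\Phi - \Phi\,d_{LHA;q_0}$'' should in fact vanish, with the $\partial P = q_1-q_0$ boundary events supplying exactly the terms needed to make $\Phi$ a genuine DGA chain map; the cleanest formulation is to define $\Phi$ as the cobordism map associated to a time-dependent constraint chain in $\R\times Y$ interpolating between $q_1$ at $+\infty$ and $q_0$ at $-\infty$, whose chain-map property is then automatic by the same boundary analysis used for cobordism maps elsewhere in the paper.
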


Given a Legendrian submanifold $\Lambda\subset Y$, consider a small Legendrian unknot $\Lambda_{\rm f}$ linked (with linking number $\pm 1$) with $\Lambda$. Let $q$ be a point generating $H_0(\Lambda)$. Consider the unital subalgebra of the differential algebra    $(LHA(\Lambda;q),d_{\Lambda,q})$ that is generated by words which end with $q$ but do not start with $q$. Take the quotient of this subalgebra by the relation $q^2=0$ and denote the resulting quotient algebra by $B$. The differential $d_{\Lambda, q}$ descends to a differential $d_B$ on $B$.

It turns out that the Legendrian homology algebra of $\Lambda_{\rm f}\cup\Lambda$ is related to $(B,d_B)$. More precisely, we have the following result.
\begin{prp}\label{prop:rel-q}
The algebra $L\bbH A(\Lambda_{\rm f};\Lambda\cup\Lambda_{\rm f})$ is isomorphic to the algebra
$H_*(B,d_B)$, i.e.~to the homology of $(B,d_B)$.
\end{prp}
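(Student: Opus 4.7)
The plan is to construct an algebra map $\Phi\colon(B,d_B)\to(LHA(\Lambda_{\rm f};\Lambda\cup\Lambda_{\rm f}),d_{LHA})$ reflecting a neck-stretching correspondence between holomorphic disks on $\Lambda\cup\Lambda_{\rm f}$ and $q$-constrained disks on $\Lambda$, and then verify it is a quasi-isomorphism.

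First I would pin down the Reeb chord structure near the linking region. Placing $\Lambda_{\rm f}$ as a small ``flying saucer'' unknot in a Darboux chart around the cycle $q\in\Lambda$, the only new short Reeb chords beyond $\CC(\Lambda)$ are one internal chord $\eta$ of $\Lambda_{\rm f}$ of degree $n-1$ and two short mixed chords $a\colon\Lambda_{\rm f}\to\Lambda$, $b\colon\Lambda\to\Lambda_{\rm f}$ with $|a|+|b|=n-2=|q|$. A direct classification of rigid holomorphic disks inside the Darboux chart shows that, modulo disks anchored in $\Lambda$, $d_{LHA}\eta=e_{\rm f}\pm ab$, so that $ab$ is cohomologous to $\mp e_{\rm f}$ in the $\Lambda_{\rm f}$-to-$\Lambda_{\rm f}$ part of $L\bbH A(\Lambda\cup\Lambda_{\rm f})$.

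Next I define $\Phi$ on the generators $wq$ of $B/(q^2)$ (where $w$ is a non-empty word in chords of $\Lambda$) by $\Phi(wq)=a\cdot w\cdot b$, and extend as a unital algebra map, so that on a basis element
\[
\Phi(w_1 q\, w_2 q\cdots w_n q)=(aw_1 b)(aw_2 b)\cdots(aw_n b).
\]
Each interior $q$-insertion in $B$ thus corresponds to a concatenation $b\cdot a$ through $\Lambda_{\rm f}$; the outer $a$ and $b$ place the image in the correct subalgebra. Well-definedness past the quotient $q^2=0$ follows once the chain-map identity of the next step holds: any configuration producing adjacent $q$'s would be matched by one producing an interior $ab\cdot ab$, which collapses modulo $d_{LHA}\eta$ and idempotent absorption.

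The key analytic step, and the principal obstacle, is to verify that $\Phi$ intertwines the differentials. Stretching the neck along a small contact hypersurface enclosing $\Lambda_{\rm f}$, any rigid holomorphic disk in $(\mathbb{R}\times Y,\mathbb{R}\times(\Lambda\cup\Lambda_{\rm f}))$ with boundary decomposition matching the image of $\Phi$ must split into (i) rigid cap disks in the Darboux chart with asymptotics among $\{\eta,a,b\}$, classified in the first step, and (ii) a holomorphic disk in $(\mathbb{R}\times Y,\mathbb{R}\times\Lambda)$ with interior marked points constrained to lie on $q$, that is, an element of $\MM^Y_\Lambda(c;b_1,\dots,b_m;k_0,\dots,k_m)\cap\ev_k^{-1}(q^{\times k})$ as in the definition of $d_{LHA;q}$. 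The combinatorics of neck crossings matches the combinatorics of $q$-insertions, yielding $\Phi\circ d_B=d_{LHA}\circ\Phi$ on generators. Once this analytic input and the accompanying orientation bookkeeping are established, a word-length filtration argument identifies the induced spectral sequences and promotes $\Phi$ to the desired quasi-isomorphism.
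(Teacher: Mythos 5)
Your overall plan matches the paper's: place $\Lambda_{\rm f}$ as a small linking unknot near a point of $\Lambda$, identify the new short Reeb chords as one pure chord $\eta$ of degree $n-1$ and two mixed chords $a,b$ with $|a|+|b|=n-2$ (the paper's $a$, $x_-$, $x_+$), and send $wq\mapsto awb$, which is exactly the paper's $\Phi(wq)=x_-wx_+$. The neck-stretching picture you describe in the chain-map step is also the right geometric mechanism.

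However, the stated differential $d_{LHA}\eta=e_{\rm f}\pm ab$ contains a grading error that then propagates through the argument. Since $|\eta|=n-1$, the element $d\eta$ has degree $n-2$. The term $ab$ indeed has degree $|a|+|b|=n-2$, but $e_{\rm f}$ has degree $0$, which equals $n-2$ only when $n=2$. For $n>2$ the constant term cannot appear, and the paper explicitly states the formula without it: $da=x_-x_+$. Consequently your conclusion that ``$ab$ is cohomologous to $\mp e_{\rm f}$'' is false in general; the correct statement is that $ab$ is \emph{exact}, hence zero in homology. This is not a cosmetic sign issue: it changes $ab$ from an invertible element to a nilpotent one, and the two possibilities lead to very different homological pictures. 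The error then undermines your well-definedness step: the claim that ``an interior $ab\cdot ab$ collapses modulo $d\eta$ and idempotent absorption'' rests on $ab\sim\mp e_{\rm f}$, which does not hold. With the correct $d\eta=\pm ab$ one does get $abab=d(\eta\,ab)$ exact, but well-definedness of $\Phi$ on $B=(\text{subalgebra})/(q^2)$ is a chain-level statement, not a statement up to exactness, so this observation alone does not close the gap. The paper avoids this issue entirely: it identifies $LHA(\Lambda_{\rm f};\Lambda\cup\Lambda_{\rm f})$ as the free differential graded algebra on the elements $x_-wx_+$ together with the single generator $a=\eta$ satisfying $da=x_-x_+$ (no unit term), and $\Phi$ is then a DGA map on the nose without any need to identify $ab$ with a unit. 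You should correct the formula for $d\eta$, redo the structure identification of $LHA(\Lambda_{\rm f};\Lambda\cup\Lambda_{\rm f})$ accordingly, and verify the chain-map identity for $\Phi$ at chain level rather than invoking cohomological collapsing.
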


Let us comment on the above isomorphism.  For an appropriate choice of representative of $\Lambda_{\rm f}$ (as a small sphere concentrated near one point of $\Lambda$), the algebra $LHA(\Lambda_{\rm f};\Lambda_{\rm f}\cup\Lambda)$ is the free differential associative graded algebra generated by words of the form $x_-wx_+$, where $w\in LHA(\Lambda)$, of grading $|x_-wx_+|=|w|+(n-2)$, and one additional element $a$ with $|a|=n-1$ and with $da=x_-x_+$.  Here $x_-\in\Cc(\Lambda,\Lambda_{\rm f})$ and $x_+\in\Cc(\Lambda_{\rm f},\Lambda)$.
The homology isomorphism in Proposition \ref{prop:rel-q} is induced by the homomorphism
$$\Phi\colon (B,d_B)\to (LHA(\Lambda_{\rm f};\Lambda\cup\Lambda_{\rm f}), d_{LHA})$$ of differential graded algebras which is given on generators $wq, w\in LHA(\Lambda)$, by the formula
$\Phi(wq)=x_-wx_+$.
}
\subsection{\texorpdfstring{The cyclic complex $LH^{\rm cyc}(\Lambda)$}{The cyclic complex LHcyc(Lambda)}}
Consider the subalgebra  $LHO(\Lambda) \subset LHA(\Lambda)$ of {\em cyclically composable} monomials, i.e.~linearly composable monomials $c_1\dots c_m$ such that the end point of $c_m$ lies on the same Legendrian sphere as the origin of $c_1$. We denote the restriction of $d_{LHA}$ to $LHO(\Lambda)$ by $d_{LHO}\colon LHO(\Lambda)\to LHO(\Lambda)$.

Since $d_{LHO}$ acts trivially on all units $e_j \in LHO(\Lambda)$, it induces a differential $d_{LHO^{+}}$ on the
reduced complex $LHO^{+}(\Lambda) = LHO(\Lambda)/R$. (Note that $LHO^{+}(\Lambda)$ is the algebra generated by non-trivial cyclically composable monomials of non-empty Reeb chords.)

Let $P : LHO^{+}(\Lambda) \to LHO^{+}(\Lambda)$ be the linear map induced by
graded cyclic permutation:
$$
P(c_1 c_2\ldots c_l) = (-1)^{|c_1|(|c_2|+\ldots +|c_l|)} c_2\ldots c_l c_1
$$
for any monomial $c_1\ldots c_l \in LHO^+(\Lambda)$. Then $\im(1 - P)$ is a subcomplex
of $LHO^{+}(\Lambda)$. Let $LH^{\rm cyc}(\Lambda)$ be the quotient complex
$LHO^{+}(\Lambda)/\im(1-P)$. We denote by $d_{\rm cyc}$ the differential induced by
$d_{LHO^+}$. Note that $LH^{\rm cyc}(\Lambda)$ is not an algebra. It is a $\K$-module generated by equivalence classes of cyclically composable monomials.

If $w=c_1\dots c_m$ is a monomial in $LHO^{+}(\Lambda)$ we will denote its image in
$LH^{\rm cyc}(\Lambda)$ by $(w)$ and define the {\em multiplicity} of $(w)$ as the largest integer $k$ such that $(w)=(v^k)$ for some monomial $v$, where $v^k$ is the monomial which is the $k$-fold product of $v$. We denote the multiplicity of $(w)$, by $\kappa((w))$. We say that a monomial $w \in LHO^{+}(\Lambda)$ is
bad if, after acting on it with a power of $P$, it is the product of an even number of copies of
an odd-graded monomial $w'$. Non-bad monomials are called good. Then for any monomial $w \in LHO^{+}(\Lambda)$, we have $(w)=0$ if and only if $w$ is bad. Moreover, $LH^{\rm cyc}(\Lambda)$ is generated by
the elements $(w)$, where $w$ is a good word in $LHO^{+}(\Lambda)$.

\begin{prp}\label{prop:cyc-homology}
We have  $d_{\rm cyc}^2=0.$ The homology  $$L\bbH^{\rm cyc}(\Lambda)=H_*(LH^{\rm cyc}(\Lambda), d_{\rm cyc})$$ is independent of all choices, and is a Legendrian isotopy  invariant of $\Lambda$.
\end{prp}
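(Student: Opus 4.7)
The plan is to derive Proposition \ref{prop:cyc-homology} from Propositions \ref{prop:leg-homology} and \ref{prop:leg-homology-hom} by verifying that the passage from $LHA(\Lambda)$ to $LH^{\rm cyc}(\Lambda)$ is algebraically compatible with the differential and with the cobordism maps. First I would check that the subspace $LHO(\Lambda)\subset LHA(\Lambda)$ of cyclically composable monomials is preserved by $d_{LHA}$: for $c\in\CC_{ij}$, every monomial $b_1\dots b_l$ appearing in $d_{LHA}c$ inherits the endpoints of $c$, so $b_1\in\CC_{\ast j}$ and $b_l\in\CC_{i\ast}$. Applying the graded Leibniz rule inside each letter of a cyclically composable word then preserves cyclic composability. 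Since $d_{LHA}(e_i)=0$, the differential descends to $LHO^+(\Lambda)=LHO(\Lambda)/R$ with $d_{LHO^+}^2=0$.

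Next I would verify that the graded cyclic permutation $P$ commutes strictly with $d_{LHO^+}$. This is a direct Koszul sign calculation: for a monomial $w=c_1\dots c_m$, both $P(d_{LHO^+}w)$ and $d_{LHO^+}(Pw)$ expand via the Leibniz rule into the same sum of shifted terms, and the Koszul signs arising from $P$ and from the odd derivation $d$ agree. Consequently $d_{LHO^+}$ maps $\im(1-P)$ into itself, descends to a well-defined differential $d_{\rm cyc}$ on $LH^{\rm cyc}(\Lambda)$, and satisfies $d_{\rm cyc}^2=0$.

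For invariance under all choices and under Legendrian isotopy, I would use the cobordism formalism. An isotopy of $\Lambda$ produces an exact Lagrangian cylinder in the symplectization and, by Proposition \ref{prop:leg-homology-hom}, a DGA map $F\colon LHA(\Lambda')\to LHA(\Lambda)$ inducing an isomorphism on $L\bbH A$. The same endpoint argument shows $F$ preserves cyclic composability; since $F$ is an algebra map, $P\circ F = F\circ P$ on $LHO^+$ holds strictly by the analogous Koszul-sign check, so $F$ descends to a chain map $F^{\rm cyc}\colon LH^{\rm cyc}(\Lambda')\to LH^{\rm cyc}(\Lambda)$. The reverse cylinder gives $G^{\rm cyc}$ in the opposite direction, and one then needs to check that the standard DGA chain homotopy between the compositions $F\circ G$, $G\circ F$ and the identity descends to a chain homotopy on $LH^{\rm cyc}$.

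The main obstacle is precisely this descent of the chain homotopy. A chain homotopy between DGA maps $\phi,\psi\colon A\to A'$ is a $(\phi,\psi)$-derivation $K$ satisfying $K(xy)=K(x)\psi(y)+(-1)^{|x|}\phi(x)K(y)$, and the asymmetric appearance of $\phi$ and $\psi$ means $K$ does not commute with $P$ on the nose. I expect to handle this by cyclically symmetrizing $K$ over its insertion position within each monomial and using the fact that on the cyclic quotient the contributions from $\phi$ and $\psi$ differ only by elements lying in $\im(1-P)$. The geometric input would come from parametrized moduli spaces of disks in the concordance cobordism with one marked interior point, whose boundary combinatorics supply the identities needed after symmetrization.
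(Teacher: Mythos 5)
Your overall scheme — descend $d_{LHA}$ and the cobordism maps to the cyclic quotient, then handle chain homotopies — is reasonable, but two of your intermediate claims are false as stated. $P$ does \emph{not} commute strictly with $d_{LHO^{+}}$. Take $w=c_1c_2$ with $d_{LHA}c_1 = b_1b_2$ and $d_{LHA}c_2=0$; then $P\bigl(d_{LHO^{+}}w\bigr)$ contains $(\pm)\,b_2 c_2 b_1$ (cyclically shifting the single letter $b_1$), whereas $d_{LHO^{+}}(Pw)$ contains $(\pm)\,c_2 b_1 b_2$ (shifting the whole block that replaced $c_1$); these are distinct monomials in $LHO^{+}(\Lambda)$, equivalent only modulo $\im(1-P)$. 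The correct reason $d_{LHO^{+}}$ preserves $\im(1-P)$ is that $(1-P)(c_1\dots c_m)=[c_1,\,c_2\dots c_m]$ exhibits $\im(1-P)$ as the span of graded commutators of cyclically composable monomials, which any graded derivation preserves via $d[u,v]=[du,v]\pm[u,dv]$. The same one-letter-versus-block confusion makes your claim $P\circ F = F\circ P$ on $LHO^{+}$ false as well (take any chord $c$ with $F(c)$ a monomial of length $\geq 2$); the correct statement is that an algebra homomorphism carries commutators to commutators, hence preserves $\im(1-P)$.

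The third step you rightly flag as the real obstacle, but cyclic symmetrization is here a plan rather than an argument. One must check that the operator $\overline K(w)=\sum_j(\pm)\bigl(K(c_j)\,\psi(c_{j+1})\dots\psi(c_m)\,\phi(c_1)\dots\phi(c_{j-1})\bigr)$ is well defined modulo $\im(1-P)$ and satisfies $d_{\rm cyc}\overline K + \overline K\, d_{\rm cyc} = \phi_{\rm cyc}-\psi_{\rm cyc}$; this uses the $(\phi,\psi)$-derivation identity in conjunction with cancellations that occur only modulo $\im(1-P)$, and it needs to be written out rather than asserted. Bear also in mind that $LH^{\rm cyc}(\Lambda)$ is a characteristic-$0$ construction (the paper fixes $\operatorname{char}\K = 0$ and divides by multiplicities $\kappa((w))$ in the neighboring formula~\eqref{eq:dmixed}), so the argument should make clear where rationality enters. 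The paper does not spell out a proof of Proposition~\ref{prop:cyc-homology}, so there is no proof to compare against directly, but the two strict-commutation claims above are genuine mathematical errors, not merely presentational choices.
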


\subsection{\texorpdfstring{The complex $LH^{\Ho+}(\Lambda)$}{The complex LHHo+(Lambda)}}
Consider the complex
$$
LH^{\Ho+}(\Lambda)=\widecheck{LHO}^{+}(\Lambda)\oplus\widehat{LHO}^{+}(\Lambda),
$$
where $\widecheck{LHO}^{+}(\Lambda)=LHO^+(\Lambda)$ and $\widehat{LHO}^+(\Lambda)=LHO^{+}(\Lambda)[1]$.
Given a monomial $w=c_1\dots c_l \in LHO^{+}(\Lambda)$ we denote by $\widecheck w=\widecheck{c_1} c_2\dots c_l$ and $\widehat w=\widehat{c_1} c_2\dots c_l$  the corresponding elements of
$\widecheck{LHO}^{+}(\Lambda)$ and $\widehat{LHO}^{+}(\Lambda)$ and use the same notation for the linear maps defined in this way on generators.
We will also sometimes view  elements of  $\widecheck{LHO}^{+}(\Lambda)$ and
$\widehat{LHO}^{+}(\Lambda)$ as monomials with the ``check'' or ``hat'' mark on the variable which is not necessarily the first one. Such a monomial is meant to be identified with the word  obtained by the graded cyclic permutation which put the marked letter in the first position.

Let $S\colon LHO^{+}(\Lambda) \to\widehat{LHO}^{+}(\Lambda)$ denote the linear operator defined by the formula
\begin{equation}\label{eq:defS}
S(c_1\dots c_l)=\widehat{c_1}c_2\dots c_l + (-1)^{|c_1|} c_1\widehat{c_2}\dots c_l
+\dots+ (-1)^{|c_1\dots c_{l-1}|} c_1 c_2 \dots\widehat{c_l}.
\end{equation}
The differential $d_{\Ho+} : LH^{\Ho+}(\Lambda) \to LH^{\Ho+}(\Lambda)$ is given by the matrix
\begin{equation}\label{eq:d-Hoh}
d_{\Ho+}=\left(\begin{matrix}
\widecheck d_{LHO^{+}} &d_{M \, Ho+} \cr
0&  \widehat  d_{LHO^{+}}\cr
\end{matrix}
\right).
\end{equation}
were the maps in the matrix are defined as follows on generators.
\begin{itemize}
\item If $w\in LHO^{+}(\Lambda)$ is any monomial then
\begin{equation}
\widecheck d_{LHO^{+}} (\widecheck w) :=\sum_{j=1}^{r} \widecheck{v_j},
\end{equation}
where each $v_j$ is a monomial and $d_{LHO^{+}}(w)=\sum_{j=1}^{r} v_j$.
\item
If $c$ is a chord and $w' \in LHA(\Lambda)$ is a monomial such that $c w' \in LHO^{+}(\Lambda)$ then
\begin{equation}
\widehat d_{LHO^{+}} (\widehat c w')=S(d_{LHO^{+}} c)\, w'+(-1)^{|c|+1}\widehat c\, (d_{LHO^{+}}w').
\end{equation}
Compare Section 5 in particular Corollary 5.6 in~\cite{EK}. 
\item
If $w=c_1\dots c_l\in LHO^+(\Lambda)$ is any monomial then
\begin{equation}
d_{M \, Ho+}(\widehat w)=d_{M \, Ho+}(\widehat{c_1}\dots c_l) = \widecheck{c_1}\dots c_l - c_1\dots\widecheck{c_l}.
\end{equation}
\end{itemize}
\begin{prp}\label{prop:hoh-homology}
We have  $d_{\Ho+}^2=0.$ The homology  $$L\bbH^{\Ho+}(\Lambda)=H_*(LH^{\Ho+}(\Lambda), d_{\Ho+})$$ is independent of all choices, and is a Legendrian isotopy invariant of $\Lambda$.
\end{prp}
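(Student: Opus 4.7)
The plan is to reduce both parts of the proposition to Proposition~\ref{prop:leg-homology} (which gives $d_{LHA}^{2}=0$ and invariance of $L\bbH A$). The construction of $LH^{\Ho+}(\Lambda)$ from $(LHA(\Lambda),d_{LHA})$ uses only the tensor structure, cyclic composability, and the formal marking operators $S$, $\widecheck{\phantom{c}}$, and $\widehat{\phantom{c}}$, so $d_{\Ho+}^{2}=0$ should be a purely algebraic consequence of $d_{LHO^{+}}^{2}=0$, while invariance is inherited through the DGA cobordism maps of Proposition~\ref{prop:leg-homology-hom}.

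First I would verify $d_{\Ho+}^{2}=0$ by expanding the upper-triangular block matrix and checking the three resulting identities. The upper-left entry $\widecheck d_{LHO^{+}}^{2}=0$ follows because $d_{LHA}$ preserves the subalgebra $LHO(\Lambda)$ of cyclically composable monomials (for $c\in\CC_{ij}$, every monomial in $d_{LHA}c$ starts on $\Lambda_{i}$ and ends on $\Lambda_{j}$) and descends to $LHO^{+}=LHO/R$ because $d_{LHA}$ annihilates the idempotents $e_{j}$. For the lower-right entry $\widehat d_{LHO^{+}}^{2}=0$, I would first extend $\widehat d_{LHO^{+}}$ unambiguously to hat-marked monomials with the hat in arbitrary position via the graded cyclic identification described just after \eqref{eq:defS}, then establish the intermediate identity $S\circ d_{LHO^{+}} = (d_{LHO^{+}})^{\mathrm{mark}}\circ S$, where $(d_{LHO^{+}})^{\mathrm{mark}}$ applies $d_{LHO^{+}}$ by Leibniz while leaving the marked letter untouched. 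Iterating $\widehat d_{LHO^{+}}$ and applying this identity together with the graded Leibniz rule, the square collapses to terms of the form $S(d_{LHO^{+}}^{2}c)\,w'$ and $\widehat c\,(d_{LHO^{+}}^{2}w')$, both of which vanish; this is the standard Hochschild-type calculation referenced through \cite{EK}. The off-diagonal identity $\widecheck d_{LHO^{+}}\circ d_{M\,\Ho+}+d_{M\,\Ho+}\circ\widehat d_{LHO^{+}}=0$ is then a direct check: the telescoping shape $\widecheck{c_{1}}c_{2}\ldots c_{l}-c_{1}\ldots\widecheck{c_{l}}$ of $d_{M\,\Ho+}$ was engineered precisely so that its Leibniz-expansion with $\widecheck d_{LHO^{+}}$ cancels the corresponding contribution from $d_{M\,\Ho+}\circ \widehat d_{LHO^{+}}$, the signs in~\eqref{eq:defS} being tuned to exactly this purpose.

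For invariance, I would apply the functorial machinery of Proposition~\ref{prop:leg-homology-hom}. The trace of a Legendrian isotopy (or of a deformation of contact form or adjusted almost complex structure) is an exact Lagrangian cylinder whose induced DGA map $F^{W}_{L}\colon LHA(\Lambda_{+})\to LHA(\Lambda_{-})$ preserves cyclic composability and units, hence restricts to $LHO$ and descends to $LHO^{+}$. Because $S$, $\widecheck{\phantom{c}}$, $\widehat{\phantom{c}}$ and $d_{M\,\Ho+}$ are defined purely from the underlying tensor and cyclic structure, $F^{W}_{L}$ extends formally to a chain map of $\Ho+$-complexes; a one-parameter family of cobordisms combined with the same formal extension provides the required chain homotopy between the maps arising from two different sets of choices, and Legendrian isotopy invariance follows.

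The main obstacle will be the sign-bookkeeping in the lower-right block. The subtlety is that the statement specifies $\widehat d_{LHO^{+}}$ only on monomials with the hat on the leading letter, while its output $S(d_{LHO^{+}}c)\,w'$ contains monomials with the hat in arbitrary positions; before one can iterate, $\widehat d_{LHO^{+}}$ must be extended via the graded cyclic identification, and every sign from the resulting rearrangements must be tracked. Isolating the calculation via the intermediate identity $S\circ d_{LHO^{+}}=(d_{LHO^{+}})^{\mathrm{mark}}\circ S$ keeps this bookkeeping in one clean place; once it is in hand, the remainder mirrors the familiar proof that the Hochschild differential squares to zero and the whole proposition drops out.
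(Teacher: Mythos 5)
The paper states Proposition~\ref{prop:hoh-homology} without proof (the whole paper only sketches arguments), so there is nothing to compare against line by line; your proposal is the natural argument, and its overall architecture — block-triangular decomposition of $d_{\Ho+}^2$, reduction to $d_{LHO^+}^2=0$ via a Hochschild-type computation for the hat block, and algebraic functoriality for invariance — is the right one, consistent with the reference to~\cite{EK} in the text.

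One step as written is not quite right, though: the intermediate identity $S\circ d_{LHO^+}=(d_{LHO^+})^{\mathrm{mark}}\circ S$, with $(d_{LHO^+})^{\mathrm{mark}}$ \emph{leaving the marked letter untouched}, is false. If you expand $S(d_{LHO^+}w)$ for $w=c_1\cdots c_l$, you get, for each $j$, a hatted letter sitting either \emph{inside} the block produced by $d_{LHO^+}c_j$ or on some other $c_i$ with $i\neq j$. The second family of terms is exactly $(d_{LHO^+})^{\mathrm{mark}}(Sw)$, but the first family — hat and $d$ landing on the same slot — is missing from your right-hand side. The correct tool is the chain-map identity $\widehat{d}_{LHO^+}\circ S = \pm\,S\circ d_{LHO^+}$, where $\widehat{d}_{LHO^+}$ also differentiates the marked letter and redistributes the hat over its output via $S$; this is precisely what the formula $\widehat{d}_{LHO^+}(\widehat{c}w')=S(d_{LHO^+}c)w'+(-1)^{|c|+1}\widehat{c}(d_{LHO^+}w')$ encodes, and what Corollary~5.6 of~\cite{EK} provides. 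With that identity in hand, $\widehat{d}_{LHO^+}^2=0$ and the off-diagonal cancellation $\widecheck{d}_{LHO^+}\circ d_{M\,\Ho+}+d_{M\,\Ho+}\circ\widehat{d}_{LHO^+}=0$ reduce to $d_{LHO^+}^2=0$ and a telescoping sign check, exactly as you indicate.

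For invariance, the claim that $F^W_L$ "extends formally" needs a small caveat: the induced map on the hat summand is not $F^W_L$ itself but the $S$-twisted version $\widehat{F}(\widehat{c}\,w')=S(F^W_L(c))\,F^W_L(w')$, by analogy with $\widehat{d}_{LHO^+}$, and one must then check this intertwines the differentials (again via the chain-map property of $S$). More efficiently, since $LH^{\Ho+}(\Lambda)$ is a purely algebraic functor of the DGA $(LHO^+(\Lambda),d_{LHO^+})$ — of the same general type as (reduced, cyclic) Hochschild complexes — and stable tame isomorphisms of $LHA(\Lambda)$ induce quasi-isomorphisms on such functorial constructions, Legendrian isotopy invariance follows from Proposition~\ref{prop:leg-homology} without invoking geometric cobordism maps at all; the paper explicitly acknowledges this style of argument in the discussion around Lemma~\ref{lma:E-N}. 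So your strategy is sound, but the single identity you isolated as the ``clean place'' for the bookkeeping should be replaced by $\widehat{d}_{LHO^+}\circ S=\pm\,S\circ d_{LHO^+}$.
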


The following result is the Legendrian analogue of the exact homology triangle~\eqref{eq:CH-SH}. It is obtained by considering the second page of the spectral sequence associated to the filtration of
$LH^{\Ho+}(\Lambda)$ by grading of the underlying unmarked words in $LHO^{+}(\Lambda)$.

\begin{prp}
There exists an exact homology triangle
\begin{equation} \label{eq:Cyclic-Hoh}
\xymatrix{ L\bbH^{\rm cyc}(\Lambda) \ar[rr] & &L\bbH^{\rm cyc}(\Lambda)\ar[ld]\\
&L\bbH^{\Ho+}(\Lambda)\ar[lu]&}
\end{equation}
\end{prp}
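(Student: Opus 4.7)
The plan is to introduce the filtration $F^{p}LH^{\Ho+}(\Lambda)$ consisting of elements all of whose monomial summands have underlying (unmarked) word of grading $\ge p$ in $LHO^+(\Lambda)$, and to analyze the resulting spectral sequence. From the formulas for the differentials, both $\widecheck d_{LHO^+}$ and $\widehat d_{LHO^+}$ strictly decrease this underlying word grading by $1$, while $d_{M\,\Ho+}(\widehat{c_1}\cdots c_l)=\widecheck{c_1}\cdots c_l-c_1\cdots\widecheck{c_l}$ preserves it, since by the convention for moving markers via graded cyclic permutation one has $c_1\cdots\widecheck{c_l}=\widecheck{P^{-1}(c_1\cdots c_l)}$. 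Consequently $d_{\Ho+}$ is filtered and the $E_0$ differential equals $d_{M\,\Ho+}$.

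Because $d_{M\,\Ho+}$ preserves the cyclic class of the underlying word, the $E_0$ page decomposes as a direct sum indexed by cyclic classes. For a class of multiplicity $\kappa$, length $l$, with $l'=l/\kappa$ distinct cyclic rotations $w_0,\dots,w_{l'-1}$, the corresponding subcomplex is spanned by $\{\widecheck{w_i},\widehat{w_i}\}$ and $d^0(\widehat{w_i}) = \widecheck w_i - \widecheck w_{i-1}$, where the indices are read modulo $l'$ with an extra sign $\epsilon$ on the wrap-around, $\epsilon = +1$ for good classes and $\epsilon = -1$ for bad ones. On the $l'$-dimensional space spanned by the $w_i$ the characteristic polynomial of $P^{-1}$ is $\lambda^{l'} - \epsilon$, so in characteristic zero both $\ker(1 - P^{-1})$ and $\operatorname{coker}(1 - P^{-1})$ are one-dimensional for $\epsilon = 1$ (spanned by the norm $\sum_i w_i$ and any $w_i$ respectively) and vanish for $\epsilon = -1$. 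Summing over all cyclic classes identifies $E_1 \cong LH^{\rm cyc}(\Lambda) \oplus LH^{\rm cyc}(\Lambda)[1]$, the hat copy being spanned by the norm elements $\sum_i \widehat{w_i}$. The induced $d^1$ differential then equals $d_{\rm cyc}$ on each summand: on the check row this is tautological from the definition of $d_{\rm cyc}$, while on the hat row one checks, using the formula for $\widehat d_{LHO^+}$ and the operator $S$, that its action on the norm representative $\sum_i \widehat{w_i}$ descends to $d_{\rm cyc}$ after reducing modulo $\operatorname{im}(1-P)$, via a Connes-type identity (compare Corollary~5.6 of~\cite{EK}). Hence $E_2 \cong L\bbH^{\rm cyc}(\Lambda) \oplus L\bbH^{\rm cyc}(\Lambda)[1]$.

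Since the spectral sequence is concentrated in the two rows corresponding to check and hat, the differentials $d^r$ vanish for $r \ge 3$ for dimensional reasons and $E_\infty = E_3$. The sole surviving higher differential $d^2 \colon L\bbH^{\rm cyc}(\Lambda) \to L\bbH^{\rm cyc}(\Lambda)[1]$, together with the two-step extension it governs on $L\bbH^{\Ho+}(\Lambda)$, produces precisely the long exact sequence whose triangle form is~\eqref{eq:Cyclic-Hoh}. The main technical obstacle lies in the $E_1$ computation: the sign conventions for the graded cyclic action must be tracked carefully (notably to produce the $\lambda^{l'}+1$ factor that kills the bad-class contributions), and the identification of $\widehat d_{LHO^+}$ modulo $\operatorname{im}(1-P)$ with $d_{\rm cyc}$ on the norm representatives is a Connes-style argument whose form here is dictated by the non-commutative graded structure of $LHO^+(\Lambda)$.
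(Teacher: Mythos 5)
Your proof takes exactly the route the paper indicates: the paper's entire argument is the one sentence that the triangle ``is obtained by considering the second page of the spectral sequence associated to the filtration of $LH^{\Ho+}(\Lambda)$ by grading of the underlying unmarked words in $LHO^{+}(\Lambda)$,'' and you have filled in the $E_0$, $E_1$, $E_2$ computation and the two-row collapse correctly, including the observation that $d_{M\,\Ho+}$ is the grading-preserving piece while the two $d_{LHO^+}$ operators decrease word grading by one, and the reduction to the eigenvalue count $\lambda^{l'}=\epsilon$ that kills the bad cyclic classes. The only slips are cosmetic: your filtration written as $\{\,\text{grading}\ge p\,\}$ is decreasing but satisfies $d(F^p)\subset F^{p-1}$ rather than $d(F^p)\subset F^p$ (so one should either use $\{\,\le p\,\}$ increasing or reindex), and the identification of the hat-row $E_1$ with $LH^{\rm cyc}(\Lambda)[1]$ via the norm elements carries a multiplicity factor $\kappa((w))$ which is harmless over a field of characteristic zero but worth recording when matching $d^1$ with $d_{\rm cyc}$.
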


\subsection{\texorpdfstring{The full complex $LH^{\Ho}(\Lambda)$}{The full complex LHHo(Lambda)}}\label{ssec:wtHoh}
Define  $LH^{\Ho}(\Lambda)= LH^{\rm Ho+}(\Lambda) \oplus C(\Lambda)$, where $C(\Lambda)$ is the vector space
generated by elements $\tau_1,\dots \tau_k$ of grading $0$, in bijective correspondence with the  Legendrian spheres $\Lambda_1,\dots, \Lambda_k$. Note that after identification of $\tau_j$ with $e_j$ for $j=1,\dots,k$, we have $C(\Lambda)=R=\K\la e_1,\dots,e_k\ra$ and we may think of $LH^{\Ho}(\Lambda)$ as
\[
LH^{\Ho}(\Lambda)=\widecheck{LHO}(\Lambda)\oplus \widehat{LHO}^{+}(\Lambda),
\]
where $\widecheck{LHO}(\Lambda)=\widecheck{LHO}^{+}(\Lambda)\oplus R=LHO(\Lambda)$ (as vector spaces).

The differential
$$
d_{\Ho}\colon LH^{\Ho}(\Lambda)=LH^{\rm Ho+}(\Lambda) \oplus C(\Lambda)
\to LH^{\rm Ho+}(\Lambda) \oplus C(\Lambda)= LH^{\Ho}(\Lambda)
$$
is given by the matrix
\begin{equation}\label{eq:d-tHoh}
d_{\Ho}=\left(\begin{matrix}
d_{\Ho+} &0\cr
\delta_{\Ho} & 0\cr
\end{matrix}
\right),
\end{equation}
where
$\delta_{\Ho}(\widecheck c) = \sum_{i=1}^k n_{ci} \tau_i$ for any chord $c \in \CC_i$, $\delta_{\Ho}(\widecheck w) = 0$ for any non-linear monomial $w \in LHO^{+}(\Lambda)$ and $\delta_{\Ho}(\widehat w)=0$ for any $w\in LHO(\Lambda)$. The coefficient $n_{ci}$ is the algebraic number of components of the $1$-dimensional moduli space $\MM^Y_\Lambda(c)$, see Section \ref{sec:mYLambda}, of holomorphic disks with one positive and no negative boundary punctures.

\begin{prp}\label{prop:thoh-homology}
We have  $d_{\Ho}^2=0.$ The homology
$$
L\bbH^{\Ho}(\Lambda)=H_*(LH^{\Ho}(\Lambda), d_{\Ho})
$$
is independent of all choices, and is a Legendrian isotopy  invariant of $\Lambda$.
\end{prp}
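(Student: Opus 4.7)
The plan is to verify $d_{\Ho}^{\,2}=0$ by a direct computation and then obtain invariance by the usual cobordism-and-five-lemma argument. The block form \eqref{eq:d-tHoh} reduces $d_{\Ho}^{\,2}=0$ to $\delta_{\Ho}\circ d_{\Ho+}=0$, since the diagonal block $d_{\Ho+}^{\,2}$ vanishes by Proposition \ref{prop:hoh-homology}. Because $\delta_{\Ho}$ is supported on checked generators whose underlying word is a single chord, I would go through the generators of $LH^{\Ho+}(\Lambda)$ and keep only the pieces of $d_{\Ho+}$ that land in that subspace. On a hatted generator $\widehat{w}$ with $w=c_1\dots c_l$ the only check-valued output is $d_{M\,\Ho+}(\widehat{w})=\widecheck{c_1}c_2\dots c_l-c_1\dots\widecheck{c_l}$, which vanishes when $l=1$ and has word-length $l\ge 2$ otherwise; on a checked monomial $\widecheck{w}$ of length $m\ge 2$ every term in $\widecheck{d_{LHO^+}(w)}$ has length at least $m$ by the graded Leibniz rule. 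In both situations $\delta_{\Ho}$ annihilates the output.

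The remaining case is $\widecheck{c}$ for a single chord $c\in\CC_i$ with $|c|=2$, where
\[
\delta_{\Ho}\circ d_{\Ho+}(\widecheck{c})=\Bigl(\sum_{b\in\CC_i,\,|b|=1} n_{c;b}\,n_{bi}\Bigr)\tau_i.
\]
I would identify the parenthesized sum with the signed count of boundary points of the one-dimensional quotient $\MM^Y_\Lambda(c)/\R$. By the SFT compactness theorem \cite{BEHWZ} this boundary consists of two-level broken configurations, and a dimension count forces the top level into $\MM^Y_\Lambda(c;b)/\R$ with $|b|=1$ and the bottom level into the rigid part of $\MM^Y_\Lambda(b)/\R$, with $b\in\CC_i$ because of the Legendrian boundary condition. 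Standard SFT gluing identifies the count of these broken curves with the stated sum, and the vanishing then follows from the fact that it is the signed count of boundary points of a compact oriented $1$-manifold. This completes $d_{\Ho}^{\,2}=0$.

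For invariance I would realize $LH^{\Ho}(\Lambda)$ through the short exact sequence of complexes
\[
0\to C(\Lambda)\to LH^{\Ho}(\Lambda)\to LH^{\Ho+}(\Lambda)\to 0,
\]
in which $C(\Lambda)$ sits as a subcomplex with zero differential and $\delta_{\Ho}$ induces the connecting map of the associated long exact sequence. A Legendrian isotopy, or a change of contact form or of adjusted almost complex structure, is encoded by an exact Lagrangian cobordism $L$ between the two Legendrian configurations, and the DGA chain map $F^W_L$ of Proposition \ref{prop:leg-homology-hom} restricts to $LHO$, descends to $LHO^+$, and extends via the operator $S$ of \eqref{eq:defS} to a quasi-isomorphism on $LH^{\Ho+}$; this is the invariance already encoded in Proposition \ref{prop:hoh-homology}. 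On $C(\Lambda)$ the cobordism acts as the obvious identification of connected components. The remaining step is the construction of a chain homotopy making the square formed by the two maps and the $\delta_{\Ho}$'s commute up to homotopy, defined by counting rigid anchored disks in $(W,L)$ with one positive puncture at a chord of $\Lambda_+$ and no negative punctures; the required identity emerges from the boundary analysis of the corresponding $1$-dimensional moduli spaces. The five-lemma applied to the induced map of long exact sequences then delivers the desired isomorphism on $L\bbH^{\Ho}(\Lambda)$.

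The hardest step is the SFT compactness-and-gluing analysis for moduli spaces of disks with a single positive puncture and no negative punctures, used twice: to verify $\delta_{\Ho}\circ d_{\Ho+}=0$ and to produce the chain homotopy above. Neither application involves anchor data, but the transversality assumptions of Section \ref{Sec:mdli} and the orientation conventions of Remark \ref{rmk:spin} must hold in order that the signed boundary counts match the stated algebraic identities.
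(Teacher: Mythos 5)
Your reduction of $d_{\Ho}^{2}=0$ to $\delta_{\Ho}\circ d_{\Ho+}=0$ is correct, as is your treatment of the hat-decorated column and the SFT compactness-and-gluing argument for $\widecheck{c}$ with $c$ a single chord of degree $2$. The gap is in the length argument you use for checked monomials $\widecheck{w}$ with $w=c_1\cdots c_m$, $m\geq 2$: the graded Leibniz rule does \emph{not} force every term of $d_{LHO^+}(w)$ to have length at least $m$, since $d_{LHA}c_i$ may carry a nonzero constant part $n_{c_i j}\,e_j$ (the count of rigid disks in $\MM^Y_{\Lambda}(c_i)$ with a single positive boundary puncture and no negative ones, possible exactly when $|c_i|=1$), producing a summand of length $m-1$. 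For $m\geq 3$ such a term still has length $\geq 2$ and is killed by $\delta_{\Ho}$, but for $m=2$ it is a single chord and $\delta_{\Ho}$ need not vanish on it. The vanishing in that case is a sign cancellation rather than a length argument: for $w=c_1c_2$ with $|c_1|=|c_2|=1$ and both pure chords of some $\Lambda_j$, the two surviving contributions to $\delta_{\Ho}\circ\widecheck{d}_{LHO^+}(\widecheck{c_1c_2})$ combine to $n_{c_1j}n_{c_2j}\bigl(1+(-1)^{|c_1|}\bigr)\tau_j=0$, using that $|c_1|$ is odd.

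A cleaner route, which also subsumes your separate gluing argument for the $m=1$ case, is the following. On $LHO(\Lambda)=LHO^+(\Lambda)\oplus R$ the unreduced differential splits as $d_{LHO}=d_{LHO^+}\oplus\delta_0$, where $\delta_0\colon LHO^+(\Lambda)\to R$ is the constant part; $\delta_0$ vanishes except on single chords, where it coincides with $\delta_{\Ho}$ under the identification $\tau_j\leftrightarrow e_j$. Since $d_{LHO}$ annihilates $R$, the relation $d_{LHO}^2=0$ (the restriction of $d_{LHA}^2=0$ from Proposition~\ref{prop:leg-homology} to the cyclic subalgebra) decomposes into $d_{LHO^+}^2=0$ and $\delta_0\circ d_{LHO^+}=0$; the latter, transported by the checking decoration, is exactly the identity you need on the checked column. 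Your invariance sketch, through cobordism maps compatible with the short exact sequence $0\to C(\Lambda)\to LH^{\Ho}(\Lambda)\to LH^{\Ho+}(\Lambda)\to 0$, a chain homotopy $G$ counting rigid anchored disks with one positive boundary puncture and no negatives, and the five lemma, is reasonable and in the spirit of the paper's treatment of the analogous symplectic homology statements.
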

\subsection{Linearized Legendrian homology}\label{ssec:LH}
Let $X$ be a Liouville manifold with $\pa \oX = Y$ and let $L\subset X$ be an exact Lagrangian submanifold of $X$ which bounds a Legendrian submanifold $\Lambda = L \cap Y$. Assume that $L$ is equipped with a spin structure which is used to define a system of coherent orientations of moduli spaces. Let us assume that the restriction $H|_L$ is Morse and denote by   $\Morse(-H|_L)$   the Morse complex of the function $-H|_L$.
Consider the complex
\[
LH(\Lambda;L)=\K\langle\CC(\Lambda)\rangle\oplus \Morse(-H|_L)
\]
with differential $d_{LH}\colon LH(\Lambda;L)\to LH(\Lambda;L)$,
\[
d_{LH}=\left(
\begin{matrix}
d_{\CC} & 0\\
\delta_{LH}& d_{\rm Mo}
\end{matrix}
\right).
\]
Here $d_{\rm Mo}\colon\Morse(-H|_L)\to\Morse(-H|_L)$ is the Morse differential,
\[
d_{\CC}(c)=\sum\limits_{|b|=|c|-1}n_{cb}\,b,
\]
where $n_{cb}$ is the algebraic number of components of the 1-dimensional moduli space $\MM^Y_{\Lambda,L}(c;b)$ of holomorphic strips connecting  $c$ and $b$ and anchored in
$(X,L)$, see Section \ref{sec:mYLambda}, and
\[
\delta_{LH}(c)=\sum n_{cp}\,p,
\]
where $n_{cp}$ is the algebraic number of components of the 1-dimensional moduli space $\MM^W_L(c;p)$, see Section \ref{sec:mLL}.
\begin{prp}\label{prop:LH-homology}
We have  $d_{LH}^2=0.$ The homology
$$
L\bbH (\Lambda;L)=H_*(LH  (\Lambda;L), d_{LH})
$$
is independent of all choices, and is invariant under continuous deformations of $L$ through exact Lagrangian submanifolds with Legendrian boundaries.
\end{prp}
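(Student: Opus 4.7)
The plan is to follow the scheme used for the analogous invariance results (Propositions~\ref{prop:cont-homology}, \ref{prop:leg-homology}) in the paper, breaking the assertion into three parts. Expanding $d_{LH}^{2}=0$ in block form yields three identities: $d_{\CC}^{2}=0$, $d_{\rm Mo}^{2}=0$, and $\delta_{LH}\circ d_{\CC}+d_{\rm Mo}\circ\delta_{LH}=0$. The middle identity is classical Morse theory. The first is obtained by a standard SFT boundary analysis of the compactification of the moduli space $\MM^{Y}_{\Lambda,L}(c;b)/\R$ of one dimension higher than rigid: by Remark~\ref{rmk:finiteness} and the compactness theorem of \cite{BEHWZ}, the codimension-one boundary consists of two-level broken anchored strips in the symplectization joined at an intermediate chord $e$, so the signed count yields exactly the matrix coefficient $\sum_{e} n_{ce}n_{eb}$ of $d_{\CC}^{2}$. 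Disk and sphere bubbles are excluded by exactness of $L$ and triviality of the relative Maslov class, while breakings occurring inside a rigid anchor appear only in adjacent codimension.

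The mixed identity $\delta_{LH}d_{\CC}+d_{\rm Mo}\delta_{LH}=0$ will be proved by analyzing the codimension-one boundary of the compactified one-dimension-higher moduli space associated to $\MM^{W}_{L}(c;p)$ with evaluation at the boundary point opposite to the positive puncture. Two types of limits appear: either the disk breaks at the positive puncture into a symplectization strip from $c$ to an intermediate chord $e$ together with an anchored disk in $(X,L)$ from $e$ with evaluation in $V_{p}$, contributing $\delta_{LH}d_{\CC}$; or the evaluation point flows to a critical point $q$ along a trajectory of $\nabla(-H|_{L})$ which is then continued by a Morse trajectory from $q$ to $p$, contributing $d_{\rm Mo}\delta_{LH}$. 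The spin structure on $L$ (Remarks~\ref{rmk:spin}, \ref{rmk:spin2}) fixes a coherent orientation system making the two contributions cancel with opposite signs.

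Independence of the auxiliary data (adjusted almost complex structure, Morse function $H|_{L}$, and gradient-like vector field) follows by a standard continuation argument: join two choices by a generic path, count rigid solutions of the parameter-dependent Cauchy--Riemann equation together with parameter-dependent Morse trajectories to obtain a chain map, and use the reverse path together with a homotopy-of-homotopies argument to produce a chain-homotopy inverse. Invariance under a continuous deformation of $L$ through exact Lagrangians with Legendrian boundaries can be proved by first extending the deformation to an ambient exact-symplectic isotopy of $X$ by a Moser-type argument in the spirit of Lemma~\ref{l:exactsympl}, reducing to change-of-data; alternatively, the trace Lagrangian cobordism $\widetilde L\subset[0,1]\times X$ associated to the isotopy induces a chain map along the lines of Proposition~\ref{prop:leg-homology-hom} (adapted to include the Morse sector), and composing with the trace of the reverse isotopy establishes the quasi-isomorphism property via stacking of cobordisms.

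The main obstacle is the mixed identity of the first step. Verifying that the codimension-one boundary of the compactified moduli space underlying $\MM^{W}_{L}(c;p)$ is exhausted by the two strata described above, and that the Morse-flow degeneration at the evaluation point meshes cleanly with the Reeb-chord breaking at the positive puncture in the presence of non-trivial anchors, requires a careful gluing and transversality analysis. This is where the compactness theorem of \cite{BEHWZ}, the exactness hypothesis on $L$, and the transversality of the evaluation map to stable manifolds of $-H|_{L}$ must be combined, and where the technical content of making the whole complex well defined is concentrated.
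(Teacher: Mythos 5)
The paper does not give a detailed proof of this proposition; it indicates (beginning of Section \ref{sec:invariants} and Section \ref{sec:filtration}) that such statements follow from a boundary analysis of compactified moduli spaces combined with Lemma \ref{l:exactsympl} and the cobordism maps of Section \ref{sec:cobord}, which is exactly the scheme you carry out. Your block decomposition of $d_{LH}^2$, the identification of the two boundary strata of $\MM^W_L(c;p)$ giving the mixed identity, and the continuation/ambient-isotopy argument for invariance are all consistent with the approach the authors have in mind, so the proposal is correct and essentially the same as the paper's.
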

We call $L\bbH (\Lambda;L)$ the {\it linearized Legendrian homology} of $\Lambda$. It coincides with wrapped Lagrangian Floer homology of $L$ in the sense of \cite{AbS, FSS}.
\begin{rmk}\label{rmk:spin2}
Below we will consider the linearized Legendrian homology of co-core disks. In this case $L$ is diffeomorphic to $\R^{n}$ and hence has a unique spin structure. 
\end{rmk}
\section{Legendrian surgery exact triangles}\label{sec:triangles}
Let us return to the situation of Section \ref{sec:mW}.
Let $(\oX,\om,Z)$ be a Liouville domain and $\oX_0\subset\Int\oX$ be a subdomain such that the Liouville vector field $Z$ is outward transverse to $Y_0=\p X_0$. Then $\oW=\oX\setminus \Int \oX_0$ is a Liouville cobordism with $\p_-\oW=Y_0$ and $\p_+\oW=Y=\p\oX$. Let $W$ and $X_0$  be   completions of $\oW$ and $\oX_0$.
We fix   adjusted   almost complex  structures on $W$ and $X_0$ which agree on the common part.

Let us assume that  $\oW$ is  a Weinstein cobordism, i.e.~that there exists a Morse   function $H:\oW\to\R$  which is constant on the boundary components and  which is Lyapunov for the Liouville vector field $Z$. We assume that $H$ has critical points $p_1,\dots, p_k\in\Int W$ of index $n$ and no other critical points.
We extend $H$ to $X_0$ as any Morse function (or as a Lyapunov function for the Liouville field $Z$ if $X_0$ is Weinstein as well).

We denote the stable manifolds of the critical points of $H$ by $L_1,\dots, L_k$ and write $\Lambda_j=L_j\cap Y$, $j=1,\dots,k$ for the attaching Legendrian spheres, and let $\Lambda = \bigcup_{j=1}^k \Lambda_j$

The   Weinstein cobordism structure on
$W$ is determined by the Legendrian embeddings $\Lambda_j\colon S^{n-1} \to Y_0$,\footnote{Although the manifold $W$ may depend on the parametrization of the Legendrian spheres, the holomorphic curves invariants considered in this paper are independent thereof.} $j=1,\dots,k$, up to homotopy in the class of Weinstein cobordism structures with equivalent  Lyapunov functions. In particular, the contact manifold $Y$ and the symplectic manifold $X$ are determined by these data.
We will say, that $Y$ is obtained from $Y_0$ by a {\em Legendrian surgery } along $\Lambda_1,\dots, \Lambda_k$, and that $X$ is obtained from $X_0$ by {\em attaching Weinstein handles}
along these spheres. We will refer to the spheres $\Lambda_1,\dots, \Lambda_k \subset Y_0$ as a {\em Legendrian surgery basis}.

Let $LHA(\Lambda)$,  $LH^{\rm cyc}(\Lambda)$, $LH^{\Ho+}(\Lambda)$, and $LH^{\Ho}(\Lambda)$
be be the Legendrian homology algebra  and the three complexes associated to it, as described in Section~\ref{sec:algebra}.

\subsection{Linearized contact homology}
The next theorem  describes a complex which computes the linearized contact homology $CH(X)$.
\begin{thm}\label{thm:d-ch}
Consider the complex
$LCH(X_0,\Lambda)=CH(X_0)\oplus LH^{\rm cyc}(\Lambda)$. Let $d_{LCH}$ be the map given by
$$
d_{LCH} =
\left(\begin{matrix}
d_{CH} &0\cr
\delta_{LCH} &  d_{\rm cyc}\cr
\end{matrix}
\right),
$$
where
\begin{equation}\label{eq:dmixed}
\delta_{LCH}(\gamma)=\sum\limits_{|\gamma|-|w|=1} \frac{n_{\gamma(w)}}{\kappa((w))} \, (w).
\end{equation}
Here $\kappa((w))$ is the multiplicity of the cyclic word $(w)$ and $n_{\gamma(w)}$ is the algebraic number of components of the $1$-dimensional moduli space
$\MM^{Y_0}_\Lambda(\gamma;b_1,\dots, b_k)$, see Section \ref{sec:mLambda}, for any monomial $w=b_1\dots b_k$ which represents $(w)\in LH^{\rm cyc}(\Lambda)$. Then $d_{LCH}^2=0$.
\end{thm}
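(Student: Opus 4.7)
The plan is to expand $(d_{LCH})^2$ as a block matrix product. Since $d_{CH}^2=0$ and $d_{\rm cyc}^2=0$ by Propositions \ref{prop:cont-homology} and \ref{prop:cyc-homology} respectively, the only nontrivial identity to verify is the off-diagonal mixed term
\[
\delta_{LCH}\circ d_{CH}\;+\;d_{\rm cyc}\circ\delta_{LCH}\;=\;0
\]
as a map $CH(X_0)\to LH^{\rm cyc}(\Lambda)$. The strategy is the standard one from symplectic field theory: identify both summands with signed boundary counts of a suitable compactified one-dimensional moduli space, and conclude cancellation from the fact that a compact oriented $1$-manifold has signed boundary zero.

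First I will fix a good orbit $\gamma\in\PP_{\good}(Y_0)$ together with a linearly composable monomial $w=b_1\cdots b_m$ representing a good cyclic class $(w)$ with $|\gamma|-|w|=2$, and consider the one-dimensional moduli space $\MM^{Y_0}_\Lambda(\gamma;b_1,\dots,b_m)/\R$ of disks anchored in $X_0$, with interior puncture at $\gamma$ and boundary punctures at $b_1,\dots,b_m$. Applying the SFT compactness theorem from \cite{BEHWZ} and the finiteness observation in Remark \ref{rmk:finiteness}, the codimension-one boundary strata fall into two relevant types (together with anchor breakings that cancel internally). Type (A) consists of breakings at the interior puncture: a rigid anchored cylinder in $\MM^{Y_0}(\gamma;\gamma')/\R$ stacked above a rigid anchored disk in $\MM^{Y_0}_\Lambda(\gamma';b_1,\dots,b_m)/\R$ with $|\gamma'|=|\gamma|-1$. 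Type (B) consists of breakings at a boundary puncture $z_i$: a rigid anchored disk in $\MM^{Y_0}_\Lambda(\gamma;b_1,\dots,b_{i-1},b'_i,b_{i+1},\dots,b_m)/\R$ above a rigid disk in $\MM^{Y_0}_\Lambda(b'_i;b_{i,1},\dots,b_{i,k_i})/\R$ in the symplectization of $Y_0$.

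Strata of type (A) recover $\delta_{LCH}\circ d_{CH}$: the count of the upper cylinder is the coefficient of $\gamma'$ in $d_{CH}(\gamma)$ (with its $1/\kappa(\gamma')$ factor), and the count of the lower disks with boundary word $w$ is the coefficient of $(w)$ in $\delta_{LCH}(\gamma')$ (with its $1/\kappa((w))$ factor). Strata of type (B) recover $d_{\rm cyc}\circ\delta_{LCH}$: a disk bubble at $b'_i$ contributes a term in $d_{LHA}(b'_i)$, and summing over $i=1,\dots,m$ produces the graded derivation $d_{LHO^+}$ applied to $b_1\cdots b_m$, which by construction descends to $d_{\rm cyc}(w)$ on the cyclic class. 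Anchor-level breakings are absorbed in the anchored count, cancelling in pairs because the $d_{CH}$-closedness of the anchor planes in $X_0$ is already built into the anchor formula \eqref{eq:anchorcount}. The coherent orientations induced by the chosen spin structures on $\Lambda$ (Remark \ref{rmk:spin}) give opposite signs to the (A) and (B) contributions, producing the desired cancellation.

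The main obstacle will be the bookkeeping of the multiplicity factors $\kappa(\gamma')$ and $\kappa((w))$. When $\gamma'$ is multiply covered, its $\Z/\kappa(\gamma')$-symmetry on the asymptotic marker at the breaking level must be matched against the cyclic symmetry of the boundary word of the lower disk. The marker convention, which fixes the parameterization relative to the base point $p_{\gamma'}$, is precisely what makes this matching work; passing to the cyclic quotient $LH^{\rm cyc}(\Lambda)=LHO^+(\Lambda)/\im(1-P)$ is its algebraic shadow. In particular, the bad-orbit and bad-cyclic-word conventions align: the only summands contributing are those for which both $\gamma'$ is good and $(w)$ is good, while bad configurations kill themselves by sign. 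Verifying these combinatorial factors requires the same care as in the standard SFT proofs of $d^2=0$, and is the only step that is not formal; once it is in place, the argument closes.
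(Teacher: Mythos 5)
Your proposal is correct and follows the same approach as the paper: Section~\ref{sec:filtration} reduces $d_{LCH}^2=0$ to analyzing codimension-one boundary strata of the compactified one-dimensional moduli spaces $\MM^{Y_0}_\Lambda(\gamma;b_1,\dots,b_m)/\R$ with $|\gamma|-|w|=2$, with Figure~\ref{fig:d2CH} illustrating exactly the two-level breakings you call types (A) and (B). Your expansion is more explicit than the paper's sketch, and you correctly flag the multiplicity/marker bookkeeping as the nontrivial combinatorial step.
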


\begin{figure}
\centering
\includegraphics[width=.6\linewidth]{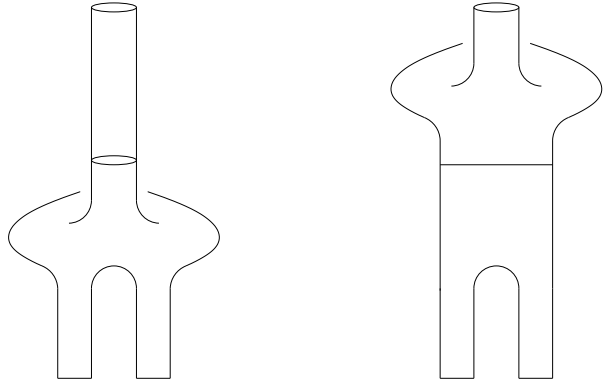}
\caption{Two level holomorphic disks at the boundary of a $1$-dimensional moduli space of holomorphic disks (up to translation) which cancel in the equation $d_{LCH}^2=0$.}
\label{fig:d2CH}
\end{figure}

Define the homomorphism
$$F^W_{LCH}=F^W_{CH}\oplus F^W_{\rm cyc}\colon CH (X)\to
LCH(X_0, \Lambda)=CH(X_0)\oplus LH^{\rm cyc}(\Lambda),
$$
where
$F^W_{CH}\colon CH(X)\to CH(X_0)$ is the homomorphism induced by the cobordism $W$, which was defined in Section \ref {sec:cobord},
and where $F^W_{\rm cyc}\colon CH(X) \to LH^{\rm cyc}(\Lambda)$ is defined by the formula
\begin{equation}\label{eq:WCyclic}
F^W_{\rm cyc}(\gamma)=\sum\limits_{|\gamma|= |w|} \frac{m_{\gamma(w)}}{\kappa((w))}  \,(w).
\end{equation}
Here $\kappa((w))$ is the multiplicity of $(w)$ and $m_{\gamma(w)}$ is the algebraic count of  the number of elements of the $0$-dimensional
moduli space
$\MM^W_L(\gamma;b_1,\dots, b_k)$  for any monomial $w=b_1\dots b_k$ representing the cyclic class $(w)$, see Section \ref{sec:mWL}.

\begin{thm}\label{thm:f-ch} The homomorphism
$F^W_{LCH}\colon CH(X) \to LCH(X_0, \Lambda)$  is  a   chain  quasi-isomorphism.
In particular, there exists an exact triangle
\begin{equation} \label{eq:CH-surgery1}
\xymatrix{C\bbH( X)\ar[rr]& &C\bbH(X_0)\ar[ld]\\
&L\bbH^{\rm cyc}(\Lambda) \ar[lu]&}.
\end{equation}
\end{thm}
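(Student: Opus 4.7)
The strategy has three stages: verify the chain-map identity for $F^W_{LCH}$, identify the generators of $CH(X)$ with those of $LCH(X_0,\Lambda)$ using a carefully chosen contact form on $Y$, and conclude the quasi-isomorphism by comparing the differentials via an action filtration.

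First, I would check that $F^W_{LCH}$ is a chain map by analyzing the boundaries of the $1$-dimensional components of the moduli spaces $\MM^W(\gamma;\beta)$ and $\MM^W_L(\gamma;b_1,\dots,b_m)$ entering its definition. For $\MM^W(\gamma;\beta)$, SFT compactness yields the identity $d_{CH}F^W_{CH}=F^W_{CH}d_{CH}$ already recorded in Section~\ref{sec:cobord}. For the punctured disks in $\MM^W_L(\gamma;b_1,\dots,b_m)$ with one interior positive puncture, the boundary strata split into three types of broken curves: a cylinder in $\R\times Y$ breaking off at the interior puncture, producing $F^W_{\rm cyc}\circ d_{CH}$; a disk in $\R\times Y_0$ breaking off along a boundary chord puncture, producing $d_{\rm cyc}\circ F^W_{\rm cyc}$; and a mixed configuration where a cobordism cylinder in $W$ ending at an orbit $\delta\in\PP(Y_0)$ sits above a disk in $\R\times Y_0$ with positive interior puncture at $\delta$ and negative chord ends, producing $\delta_{LCH}\circ F^W_{CH}$ (see Figure~\ref{fig:d2CH}). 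The cyclic quotient in $LH^{\rm cyc}$ absorbs the rotational ambiguity of the interior puncture marker, and the multiplicity factors $\kappa(\beta)$ and $\kappa((w))$ compensate for multiple covers.

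Second, to establish the quasi-isomorphism, I would choose a contact form $\alpha$ on $Y$ adapted to the Weinstein surgery: it coincides with $\alpha_0$ outside a small neighborhood $N$ of the attaching spheres $\Lambda$ (viewed in $Y$), and inside the handle model is arranged so that every closed Reeb orbit meeting $N$ is in bijection with a cyclically composable monomial $(w)=(b_1\cdots b_m)$ of Reeb chords of $\Lambda$, with bad orbits corresponding to bad monomials and with the multiplicity of the orbit equal to $\kappa((w))$. Heuristically, a new orbit follows a sequence of chords $b_1,\dots,b_m$ on $Y_0$ closed up by short arcs through the handle that carry negligible action. This splits the generators of $CH(X)$ as old orbits (disjoint from $N$, bijecting with generators of $CH(X_0)$) and new orbits (meeting $N$, bijecting with generators of $LH^{\rm cyc}(\Lambda)$).

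Third, with this identification in place, I would introduce an action filtration under which new orbits have action bounded below by the word-length of the associated cyclic word, while orbits on $Y_0$ of bounded action are unaffected by the surgery. Neck-stretching along $Y_0$ inside $X$ decomposes the holomorphic cylinders counted by $d_{CH}$ on $CH(X)$ into broken configurations in $X_0\cup(\R\times Y_0)\cup W$; summing over these yields an identification between $(CH(X),d_{CH})$ and $(LCH(X_0,\Lambda),d_{LCH})$ that intertwines with $F^W_{LCH}$. The diagonal blocks of $F^W_{LCH}$ with respect to the old/new splitting are computed, via a model calculation inside the handle, to be identity-like, so a standard spectral-sequence argument upgrades the identification to a quasi-isomorphism and the exact triangle follows from the mapping-cone description of $LCH(X_0,\Lambda)$.

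The main obstacle is the model computation inside the Weinstein handle. One must verify that the handle-crossing Reeb orbits of $\alpha$ are in bijection with cyclic words of chords of $\Lambda$ at the generator level, and that each such pair is linked by a unique rigid holomorphic disk in $W$ anchored in $X_0$ contributing $\pm1$ to $F^W_{\rm cyc}$. This requires an explicit, integrable-like almost complex structure on the handle, a rigidity and gluing theorem for curves with boundary on the core disks $L_j$, and careful sign bookkeeping using the spin structures on $L_j$ fixed in Remark~\ref{rmk:spin2}.
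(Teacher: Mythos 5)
Your overall strategy matches the paper's: verify the chain map property by analyzing boundaries of the moduli spaces, identify the new Reeb orbits after surgery with cyclic words of chords via a carefully chosen contact form (with multiplicity $\kappa((w))$ matching), discard non-essential orbits by an action-limit argument, and then exploit the lower-triangular structure of $F^W_{LCH}$ with respect to the action filtration, reducing the quasi-isomorphism claim to computing the diagonal blocks. This is precisely the scheme of Sections~\ref{sec:Reeb-dynamics}--\ref{sec:diagonal}, and in fact you do not even need a spectral sequence at the end: the paper observes that once the diagonal entries are known to be $\pm 1$, the chain map is already an isomorphism of complexes.

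Where your sketch is materially vaguer than the paper is the ``model calculation inside the handle.'' You propose to directly show, for each cyclic word $w$, a unique rigid anchored disk in $W$ with boundary on the core $L$ connecting the new orbit $\gamma_w$ to $w$, and you hint at a gluing argument with boundary on $L_j$ only. The paper's actual argument runs differently and crucially introduces a second Lagrangian: the co-core disks $C=\bigcup C_j$ and their Reeb chords $a'$ on $\Gamma=C\cap Y$. Step $(\mathrm{A})$ proves, by an explicit model plus an action/monotonicity argument, that for each chord $a$ of $\Lambda$ there is exactly $\pm 1$ rigid strip in $W$ with boundary on $L\cup C$ joining $a'$ to $a$. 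Step $(\mathrm{B})$ then proceeds inductively by gluing these strips to one another at the Lagrangian intersection points $L\cap C$ (producing annuli or longer disks), and using the small action gap between $a'$, $\gamma_w$, and $w$ to force the resulting $1$-dimensional families to break into the pair (symplectization piece from $a'$ to $\gamma_w$) $\cup$ (cobordism piece from $\gamma_w$ to $w$). Your direct approach, with boundary only on the core $L_j$, does not obviously give an inductive handle on arbitrary word length; the co-core strips are what make the induction clean. If you want to complete your write-up along your own lines, you would have to substitute some other device that controls the count of disks from $\gamma_w$ to $w$ for all $m$, or else adopt the paper's co-core trick.
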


\subsection{Reduced symplectic homology}
The next theorem  describes a complex which computes the reduced symplectic  homology $S\bbH^+( X)$. We use notation as in Section \ref{sec:algebra}.  

\begin{thm}\label{thm:d-sh1}
Consider the complex
$SLH^+(X_0, \Lambda)=SH^+(X_0)\oplus LH^{\Ho+}(\Lambda)$. Let $d_{SLH^{+}}$ be the map given by
$$d_{SLH^+} =
\left(\begin{matrix}
d_{SH^+} &0\cr
\delta_{SLH^+} &  d_{\Ho+}\cr
\end{matrix}
\right).
$$
Here
\[
\delta_{SLH^+}(\widehat\gamma)=
\sum\limits_{|\gamma|-|w|=1}
\frac{n_{\gamma(w)}}{\kappa(\gamma)}\, S(w),
\]
where  $\kappa(\gamma)$ is the multiplicity of $\gamma$, $n_{\gamma(w)}$ is the algebraic count of components of
the moduli space $\MM^{Y_0}_\Lambda(\gamma;b_1,\dots, b_m)$ for any $w=b_1\dots b_m$ representing $(w)$, see Section \ref{sec:mLambda}, and
\[
\delta_{SLH^+} (\widecheck\gamma)=\sum\limits_{|\gamma|-|w|=1}
\widecheck n_{\gamma w}\, \widecheck w+\sum\limits_{|\gamma|-|w|=2}\widehat n_{\gamma w} \widehat w,
\]
where $\widecheck n_{\gamma w}$ (respectively $ \widehat n_{\gamma w}$)  is the algebraic count of the number of elements
of the moduli space $\widehat\MM^{Y_0}_\Lambda(\gamma;b_1,\dots, b_m)$
(respectively $\widecheck\MM{}^{Y_0}_\Lambda(\gamma;b_1,\dots, b_m)$), where $w=b_1\dots b_m$, see Section \ref{sec:mLambda}.
Then $d_{SLH^+}^2=0$.
\end{thm}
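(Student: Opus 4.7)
The identity $d_{SLH^+}^2=0$ takes block form
$$
d_{SLH^+}^2=\begin{pmatrix} d_{SH^+}^2 & 0 \\ \delta_{SLH^+}\circ d_{SH^+}+d_{\Ho+}\circ\delta_{SLH^+} & d_{\Ho+}^2 \end{pmatrix},
$$
whose diagonal blocks vanish by Propositions \ref{prop:symp-homology-reduced} and \ref{prop:hoh-homology}. So the plan is to reduce the theorem to the mixed identity
\begin{equation}
\delta_{SLH^+}\circ d_{SH^+}+d_{\Ho+}\circ\delta_{SLH^+}=0,
\label{eq:mixedProposal}
\end{equation}
verified separately on the subspaces $\widehat{CH}(X_0)$ and $\widecheck{CH}(X_0)$ of $SH^+(X_0)$. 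As usual in SFT-type arguments, each coefficient on the left of \eqref{eq:mixedProposal} will be realized as the signed count of points in the codimension-one boundary of a suitable compactified moduli space of anchored holomorphic disks from Section \ref{sec:mLambda}, and hence vanishes by the cobordism argument.

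For an input $\widehat\gamma$, I would study one-parameter families inside $\MM^{Y_0}_\Lambda(\gamma;b_1,\dots,b_m)$ after imposing the appropriate marker condition so that the family is one-dimensional. The SFT/Morse--Bott compactification of \cite{BEHWZ} has boundary strata of three geometric types: (i) orbit breakings at the positive puncture, yielding a cylinder from $\MM^{Y_0}(\gamma;\gamma_0)/\R$ attached to a disk in $\widehat\MM^{Y_0}_\Lambda(\gamma_0;\cdot)$, recovering $\delta_{SLH^+}\circ d_{\widehat{CH}}$; (ii) chord breakings at a negative boundary puncture, matching $\widehat d_{LHO^+}\circ\delta_{SLH^+}$ via the algebra differential $d_{LHA}$ applied to the factor of the outgoing word in which the chord sits; and (iii) coincidences of the asymptotic marker at the interior puncture $\gamma$ with one of the boundary punctures $z_i$, whose contribution is precisely $(-1)^{|c_1\cdots c_{i-1}|}c_1\cdots\widehat{c_i}\cdots c_m$ and which, summed over $i$, reproduces the $S$-pattern of \eqref{eq:defS}. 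Strata of type (iii) are matched to the $d_{M\,Ho+}$-component in $d_{\Ho+}\circ\delta_{SLH^+}$ by identifying the limit configuration with the corresponding boundary-marked disk.

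For input $\widecheck\gamma$, the same scheme applies but the orbit breakings at the top puncture must be enumerated according to the Morse--Bott conventions of Section \ref{sec:holcyl+MBcyl}, i.e., as elements of $\widecheck\MM{}^{\prime Y_0}(\gamma;\gamma_0)$ (one holomorphic level) and $\widecheck\MM{}^{\prime\prime Y_0}(\gamma;\gamma_0)$ (two holomorphic levels joined by a Morse flow segment on the $S^1$-family of orbits over $\gamma_0$). Together these realize, respectively, the $d_{\widecheck{CH}}$ and $\delta_{SH^+}$ entries of the first column of $d_{SH^+}$. Chord breakings at negative punctures produce the $\widecheck d_{LHO^+}$ contributions to $d_{\Ho+}$, while marker-collision strata transfer the check-decoration from the interior puncture onto a boundary puncture, supplying the $\widehat n_{\gamma w}$ coefficients from $\widecheck\MM^{Y_0}_\Lambda$ and matching the $d_{M\,Ho+}$ term.

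The main obstacle is the combinatorial and orientation bookkeeping. I expect the majority of the work to consist of: verifying that the coherent orientations induced by the spin structures of Remarks \ref{rmk:spin} and \ref{rmk:spin2} reproduce exactly the alternating signs in the definition \eqref{eq:defS} of $S$ and the signs in \eqref{eq:d-Hoh}; confirming that the multiplicity factors $\kappa(\gamma)$ appearing in $\delta_{SLH^+}(\widehat\gamma)$ and in $d_{\widehat{CH}}$ align correctly with the orbifold structure of moduli spaces at multiply covered orbits; and checking that bad Reeb orbit contributions cancel precisely against the $d_{\rm M}$ piece of $d_{SH^+}$ in the spirit of Remark \ref{rmk:good-bad}. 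Once these sign and multiplicity compatibilities are in place, compactness \cite{BEHWZ} together with the polyfold transversality framework of \cite{HWZ,HWZ2,HWZ3} yields \eqref{eq:mixedProposal} and hence $d_{SLH^+}^2=0$, with full details deferred to \cite{BEE}.
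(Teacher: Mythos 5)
The paper does not actually give a proof of this theorem: Section~\ref{sec:filtration} disposes of all the $d^2=0$ claims, including Theorem~\ref{thm:d-sh1}, in a single sentence --- ``[they] follow from a detailed analysis of boundaries in the compactification of the moduli spaces involved'' --- and defers the full argument to \cite{BEE}. Your proposal takes exactly this approach and fleshes it out, so at the level of strategy (reduce to the mixed block identity, identify boundary strata of compactified moduli spaces with the algebraic terms, invoke \cite{BEHWZ} for compactness) you are following the paper's intended route.

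That said, your sketch has an internal inconsistency worth flagging before the write-up goes further. For input $\widehat\gamma$, the map $\delta_{SLH^+}(\widehat\gamma)$ is \emph{defined} using the unconstrained moduli space $\MM^{Y_0}_\Lambda(\gamma;b_1,\dots,b_m)$ (no marker constraint), and the operator $S$ is built in by fiat --- it records that the free interior marker sweeps out an $S^1$-family. So the $S$-pattern cannot simultaneously be ``reproduced'' as a boundary stratum of type (iii) \emph{and} matched to the $d_{M\,\Ho+}$-component, as you assert in consecutive sentences; those are two different things. The marker-collision strata should account only for $d_{M\,\Ho+}\circ\delta_{SLH^+}(\widehat\gamma)$, and the $S$-distribution is not a boundary effect. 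Relatedly, in your stratum (i) you place the bottom level in $\widehat\MM{}^{Y_0}_\Lambda(\gamma_0;\cdot)$, but $\delta_{SLH^+}\circ d_{\widehat{CH}}$ again calls for the \emph{unconstrained} space $\MM^{Y_0}_\Lambda(\gamma_0;\cdot)$; the decorated spaces $\widecheck\MM$ and $\widehat\MM$ from Section~\ref{sec:mLambda} only enter when the input is $\widecheck\gamma$. Neither issue changes the overall plan, but in a compiled version of this boundary matching these mismatches would propagate into sign/orientation errors. Since the source paper itself leaves all of this to \cite{BEE}, no further comparison is possible.
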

Consider the map  
$$
F^W_{SLH^+}=F^W_{SH^+}\oplus F^W_{\Ho+}:
SH^+(X)\to  SLH^+(X_0, \Lambda)=SH^+(X_0)\oplus LH^{\Ho+}(\Lambda),
$$
where
$F^W_{SH^+}\colon SH^+(X)\to SH^+(X_0)$ is the map induced by the cobordism $W$ that was defined above in Section \ref{sec:cobord} and
$$
F^W_{\Ho+}\colon SH^+(X)=\widecheck{CH}(X)\oplus\widehat{CH}(X)\to LH^{\Ho+}(\Lambda)
$$
is defined by the formula
\begin{align*}
F^W_{\Ho+}(\widehat\gamma) &= \sum\limits_{|\gamma|-|(w)|=0} \frac{m_{\gamma (w)}}{\kappa(\gamma)}\, S(w),\\
F^W_{\Ho+}(\widecheck\gamma)&=  \sum\limits_{|\gamma|-|w|=0} \widecheck m_{\gamma w}\widecheck w+
\sum\limits_{|\gamma|-|w|=1}\widehat m_{\gamma w}\widehat w
\end{align*}
where  $m_{\gamma(w)}$ is the algebraic number of elements of
 $\MM^{W}_L(\gamma;b_1,\dots, b_m)$ for any $w=b_1\dots b_m$ representing $(w)$, and
where $\widecheck m_{\gamma w}$ (resp.~$\widehat m_{\gamma w}$)  is the algebraic number of elements
of the  space $\widehat\MM^W_L(\gamma;b_1,\dots, b_m)$
(resp.~$\widecheck\MM{}^W_L(\gamma;b_1,\dots, b_m)$), see Section \ref{sec:mWL}.

\begin{thm}\label{thm:f-sh1}
The homomorphism  $F^W_{SLH^+}\colon SH^+(X)\to SLH^+(X_0, \Lambda)$ is a chain map and a
quasi-isomorphism of the corresponding complexes.
In particular, there exists an exact triangle
\begin{equation} \label{eq:CH-surgery2}
\xymatrix{S\bbH^+(X)\ar[rr]& &S\bbH^+(X_0)\ar[ld]\\
&L\bbH^{\Ho+}(\Lambda) \ar[lu]&}.
\end{equation}
\end{thm}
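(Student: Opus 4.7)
The argument follows the same template as Theorem~\ref{thm:f-ch}, with the Morse-Bott bookkeeping of $SH^+$ and $LH^{\Ho+}$ replacing the cyclic set-up for $CH$ and $LH^{\rm cyc}$.

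First I would verify that $F^W_{SLH^+}$ is a chain map by checking, for each pair of generators $\alpha,\beta$, that the coefficient of $\beta$ in
\[
\bigl(d_{SLH^+}\circ F^W_{SLH^+} - F^W_{SLH^+}\circ d_{SH^+}\bigr)(\alpha)
\]
equals the algebraic count of boundary points of a suitable $1$-dimensional moduli space of holomorphic curves in $W$ (with markers, anchored in $X_0$), and hence vanishes. Compactness (Remark~\ref{rmk:finiteness}) together with SFT-type gluing shows that the boundaries split into three classes: (i) standard level breakings with an extra level in $\R\times Y$ above $W$ or in $\R\times Y_0$ below $W$; (ii) Morse-Bott degenerations in which the $S^{1}$-parameter recorded by an asymptotic marker collides with the base point $p_\gamma$ or an adjacent boundary puncture; and (iii) factor-of-two corrections at bad orbits, which on the $SH^+$ side are implemented by $d_{\rm M}$. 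The operator $S$ of \eqref{eq:defS} appears precisely because, in the Morse-Bott picture, the marker on the positive interior puncture of a disk in $\MM^W_L(\gamma;b_1,\dots,b_l)$ may sweep past any of the boundary chords $b_i$, producing a $\widehat{b_i}$ at each position with the appropriate Koszul sign.

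Next, to prove quasi-isomorphism I would use a neck-stretching argument along $Y_0\subset X$. Filter both $SH^+(X)$ and $SLH^+(X_0,\Lambda)$ by the action of the positive Reeb asymptote; the map $F^W_{SLH^+}$ preserves this filtration. Insert a neck $[-T,T]\times Y_0$ in $X$ and let $T\to\infty$. SFT compactness implies that every holomorphic curve in $X$ contributing to $d_{SH^+}$ splits into a top-level piece in $W$ (anchored in $X_0$) matched at Reeb orbits in $Y_0$ and Reeb chords on $\Lambda$ with bottom-level pieces in $X_0$ --- exactly the configurations counted by $F^W_{SLH^+}$. Consequently the map induced on the $E^1$-page of the associated spectral sequence is an isomorphism, and the usual comparison yields the quasi-isomorphism of filtered complexes.

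The exact triangle then follows formally. The triangular form of $d_{SLH^+}$ gives a short exact sequence of complexes
\[
0 \to SH^+(X_0) \to SLH^+(X_0,\Lambda) \to LH^{\Ho+}(\Lambda) \to 0,
\]
whose associated long exact sequence, composed with $F^W_{SLH^+}\colon SH^+(X)\xrightarrow{\sim} SLH^+(X_0,\Lambda)$, produces \eqref{eq:CH-surgery2}. The main obstacle will be the Morse-Bott sign bookkeeping in the chain-map verification: many boundary contributions involve cyclic permutations of markers across chords, and pinning down their exact match with the pieces $\widecheck{d_{LHO^+}}$, $\widehat{d_{LHO^+}}$, $d_{M\,Ho+}$ and with the operator $S$ is delicate. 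The neck-stretching step is conceptually routine within the SFT framework but relies on the same transversality assumptions made throughout the paper.
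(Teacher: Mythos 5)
Your chain-map verification is essentially the paper's approach (boundary analysis of the compactified $1$-dimensional moduli spaces in $W$ anchored in $X_0$, with the Morse-Bott marker bookkeeping feeding the operator $S$), and your derivation of the exact triangle from the block-triangular form of $d_{SLH^+}$ matches the paper.

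The gap is in the quasi-isomorphism step. After observing that $F^W_{SLH^+}$ preserves the action filtration and setting up a spectral sequence, you simply assert that "the map induced on the $E^1$-page\ldots\ is an isomorphism." Your neck-stretching picture explains how curves counted by $d_{SH^+(X)}$ degenerate when the neck along $Y_0$ is stretched, but it does not by itself identify the leading (filtration-preserving) part of $F^W_{SLH^+}$, which is where the actual content lies. The paper's argument is genuinely different here and requires two geometric inputs that are absent from your sketch. First, the Reeb-dynamics analysis of Section~\ref{sec:Reeb-dynamics}: for a suitable contact form with a small handle parameter $\eps$, the periodic orbits of $Y$ below any action bound are in canonical bijection with the periodic orbits of $Y_0$ together with (decorated) cyclic words in $\CC(\Lambda)$, so the two filtered complexes have the \emph{same} generating set in each action window; without this, the identification of associated-graded groups is unjustified. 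Second, Section~\ref{sec:diagonal} shows that the matrix of $F^W_{SLH^+}$ with respect to the action ordering is lower triangular with \emph{diagonal entries $\pm 1$}; this is proved by exhibiting the basic strip in the handle from each chord $a'$ of the co-core $\Gamma$ to the corresponding chord $a$ of $\Lambda$, verifying it is rigid and unique via a local model plus an action/monotonicity estimate, and then propagating this by gluing and action arguments to all essential orbits. Once this is in place, the map is an isomorphism already at the chain level (for the chosen $J$), so no spectral-sequence comparison is needed; but without it your $E^1$-isomorphism claim is unsupported. In short, the missing idea is the explicit construction and counting of the ``diagonal'' moduli elements, and you should either supply it or replace your spectral-sequence appeal with the paper's direct lower-triangular argument.
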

\subsection{Symplectic homology}\label{sec:SHcomput}
The complex described below computes the (full) symplectic homology
$S\bbH(X)$.
\begin{thm}\label{thm:d-sh2}
Consider the complex
$SLH(X_0, \Lambda)=SH(X_0)\oplus LH^{\Ho}(\Lambda)$. Let $d_{SLH}$ be the map given by
$$ d_{SLH}= \left(\begin{matrix}
d_{S H } &0 \cr
\Delta_{SLH} &  d_{\Ho}\cr
\end{matrix}
\right),
$$
where
$$\Delta_{SLH}\colon SH (X_0)=SH^+(X_0)\oplus\Morse(-H|_{X_0})\to LH^{\Ho}(\Lambda)=LH^{\Ho+}(\Lambda)\oplus C(\Lambda)$$
is given by the matrix
$$ \Delta_{SLH}=\left(\begin{matrix}
\theta&0 \cr
\wt\Delta_{SLH}&  d_{\Morse\tau}\cr
\end{matrix}\right).
$$
Here $\wt\Delta_{SLH}\colon SH^+(X_0)=\widecheck{CH}(X_0)\oplus \widehat{CH}(X_0)\to C(\Lambda)$ is equal to $0$ on $\widehat{CH}(X_0)$, and on
$\widecheck{CH}(X_0)$ it is defined by the formula $\wt\Delta_{SLH}(\widecheck\gamma)=\sum\limits_1^kn_{\gamma j}\,\tau_j$, where
the coefficient $n_{\gamma j}$ is the algebraic number of components of the $1$-dimensional moduli space
$\MM^{Y_0}_{\Lambda_j}(\gamma)$, see Section \ref{sec:mLambda}, and the homomorphism $d_{\Morse\tau}\colon\Morse(-H|_{X_0})\to C(\Lambda)$ is the boundary operator in $\Morse(-H)$ followed by projection to $C(\Lambda)$. Then $d_{SLH}^2=0$.
\end{thm}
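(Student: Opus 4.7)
The identity $d_{SLH}^2=0$ decomposes in block form into four identities. Two of these, the diagonal ones $d_{SH}^2=0$ and $d_{\Ho}^2=0$, are Propositions~\ref{prop:sympl-homology} and~\ref{prop:thoh-homology} respectively. The top-left off-diagonal identity $d_{\Ho+}\,\theta+\theta\,d_{SH^+}=0$ is the chain-map property of $\theta$ that is already established in the proof of Theorem~\ref{thm:d-sh1} (once $\theta$ is identified with the map $\delta_{SLH^+}$ appearing there). The bottom-right Morse identity $d_{\Morse\tau}\circ d_{\Morse}=0$ follows from $(d^X_{\Morse})^2=0$ on the full Morse complex of $-H$ over $X=X_0\cup(\text{handles})$ combined with the observation that the handle critical points $p_1,\ldots,p_k$ all have Morse index $n$, so the $C(\Lambda)\to C(\Lambda)$ block of $d^X_{\Morse}$ is zero. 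The only genuinely new content is thus the bottom-left identity
\[
\delta_{\Ho}\,\theta \;+\; \wt\Delta_{SLH}\,d_{SH^+} \;+\; d_{\Morse\tau}\,\delta_{SH} \;=\; 0,
\]
which on degree grounds is nontrivial only on $\widecheck\gamma$ with $|\gamma|=2$.

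To prove this identity, my plan is to realize all three summands as the distinct codimension-one boundary strata of a single one-dimensional moduli space. For each Legendrian component $\Lambda_j$, I would consider the cut moduli space $\widecheck\MM{}^{Y_0}_{\Lambda_j}(\gamma)$ of holomorphic planes in $\R\times Y_0$ with boundary on $\R\times\Lambda_j$, anchored in $X_0$, asymptotic to $\gamma$ at the interior puncture, with the asymptotic marker constrained to a fixed direction. This space has dimension $|\gamma|-1=1$. Its SFT/Morse--Bott compactification admits three types of codimension-one boundary strata, one for each summand: (a) an upper-level orbit splitting into a cylinder in $\MM^{Y_0}(\gamma;\beta)/\R$ and a plane in $\MM^{Y_0}_{\Lambda_j}(\beta)$ with $|\beta|=1$, corresponding to $\wt\Delta_{SLH}\circ d_{\widecheck{CH}}$; (b) a boundary bubble at a chord $c\in\CC(\Lambda_j)$ with $|c|=1$, giving a configuration in $\widecheck\MM{}^{Y_0}_{\Lambda}(\gamma;c)\times\MM^{Y_0}_{\Lambda_j}(c)$, corresponding to $\delta_{\Ho}\circ\theta$; (c) a Morse--Bott degeneration in which the anchor detaches as a plane in $X_0$ asymptotic to $\gamma$ whose evaluation at the interior marked point meets the stable manifold $L_p$ of a critical point $p\in X_0$ of $-H|_{X_0}$, completed by a trajectory of $-\nabla H$ running from $p$ into the handle and terminating at the critical point $p_j$, corresponding to $d_{\Morse\tau}\circ\delta_{SH}$. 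The signed count of all boundary points of the $1$-dimensional moduli space vanishes, and after the multiplicity factors $\kappa$ arising from the symmetries of broken orbits are accounted for, the identity follows.

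The principal technical obstacle is the rigorous treatment of the type-(c) degeneration, which mixes SFT holomorphic levels with Morse-theoretic gradient trajectories in the presence of the Lagrangian boundary condition $\R\times\Lambda_j$. This is a PSS-type gluing in a relative setting, which can be carried out either within the polyfold framework alluded to at the beginning of Section~\ref{Sec:mdli} or by an explicit gluing construction along the lines of~\cite{BO}. Signs are tracked via the coherent orientations induced by the canonical spin structure on each co-core $L_j\cong\R^n$ (Remark~\ref{rmk:spin2}) together with the null-cobordant spin structure on the Legendrian spheres $\Lambda_j$ when $n=2$ (Remark~\ref{rmk:spin}); the good/bad dichotomy of orbits enters only through the multiplicity coefficients, exactly as in the analysis of $d_{SH^+}$.
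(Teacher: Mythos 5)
Your overall strategy coincides with what the paper sketches in Section~\ref{sec:filtration} and Figure~\ref{fig:d2SH}: decompose $d_{SLH}^2=0$ into block identities, peel off the pieces already established (Propositions~\ref{prop:sympl-homology} and~\ref{prop:thoh-homology} for the diagonal blocks, Theorem~\ref{thm:d-sh1} for the top--left off--diagonal block once one identifies $\theta$ with $\delta_{SLH^+}$), and prove the remaining identity $\delta_{\Ho}\theta+\wt\Delta_{SLH}d_{SH^+}+d_{\Morse\tau}\delta_{SH}=0$ by a boundary analysis of a one--dimensional moduli space. That decomposition is correct, and the identification of the three types of degenerations with the three summands is the right idea.

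However, the set--up of the boundary analysis is off in two respects. First, the moduli space $\widecheck\MM{}^{Y_0}_{\Lambda_j}(\gamma)$ does not exist in the paper's framework for $m=0$: the $\widecheck$-decoration of Section~\ref{sec:mLambda} is defined via an evaluation map to $\partial D = S^1$ built from a boundary puncture $z_1$, and $\MM^{Y_0}_{\Lambda_j}(\gamma)$ has no boundary punctures. The asymptotic marker is already pinned by the asymptotic parametrization (the direction $\ell_+$ is mapped to $p_{\gamma}$), so there is no extra marker constraint to impose; the relevant one--dimensional space is $\MM^{Y_0}_{\Lambda_j}(\gamma)/\R$ with $|\gamma|=2$. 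A related slip in stratum (b): the correct moduli space there is $\widehat\MM^{Y_0}_\Lambda(\gamma;c)$, not $\widecheck\MM{}^{Y_0}_\Lambda(\gamma;c)$, owing to the $\widehat\leftrightarrow\widecheck$ swap in the definition of $\widecheck n_{\gamma w}$ in Theorem~\ref{thm:d-sh1}. Second, and more substantially, the boundary stratum (c) as you describe it is not what the compactification actually produces. The codimension--one boundary--shrinking degeneration, which is what Figure~\ref{fig:d2SH} depicts, is a sphere in $\R\times Y_0$, anchored in $X_0$, with a marked point constrained to $\Lambda_j\times\R$ --- not directly a plane in $X_0$. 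In particular, for ``the anchor [to detach] as a plane in $X_0$ asymptotic to $\gamma$'' the symplectization--level component would have to be the trivial cylinder over $\gamma$, which is not a generic boundary configuration. Identifying the count of such anchored spheres with $d_{\Morse\tau}\circ\delta_{SH}$, i.e.\ with the count of a plane in $X_0$ carrying a marked point on $L_p$ together with a $\nabla H$-trajectory from $p$ to $p_j$, is a separate Morse--Bott/PSS--type matching statement that still has to be proved; you correctly flag this as the principal technical obstacle, but the description of (c) should be stated as the sphere with the $\Lambda_j\times\R$-incidence rather than conflating the two pictures. Finally, two smaller omissions: the degree argument also covers $\widehat\gamma$ with $|\gamma|=1$ (same total degree $2$), where the identity holds because $\MM^{Y_0}_{\Lambda_j}$ is only defined for good orbits so $\wt\Delta_{SLH}\circ d_{\rm M}$ vanishes on bad $\widehat\gamma$; and the argument for $d_{\Morse\tau}\circ d_{\Morse}=0$ implicitly uses that the $\Morse(-H|_{X_0})\to\Morse(-H|_{X_0})$ block of the full Morse differential of $-H$ on $X$ coincides with $d_{\Morse}$, which holds because $\nabla H$ points strictly out of $X_0$ along $Y_0$ and hence once a trajectory exits it cannot return --- worth stating explicitly.
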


\begin{figure}
\labellist
\small\hair 2pt
\pinlabel $\Lambda\times\R$   at 270 38
\pinlabel $\Lambda\times\R$   at 655 130
\endlabellist
\centering
\includegraphics[width=.6\linewidth]{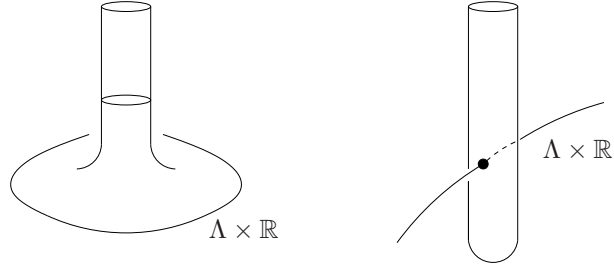}
\caption{On the left, a two level disk with boundary on $\Lambda\times\R$ at one boundary point of a $1$-dimensional moduli space of holomorphic disks (up to translation). As the moduli space is traversed, the boundary of the disk shrinks. On the right, a one level sphere which intersects $\Lambda\times\R$ in a point, which is the configuration at the other boundary point of the moduli space. These two configurations cancel in the equation $d_{SLH}^2=0$.}
\label{fig:d2SH}
\end{figure}

Note that  the complex $SLH(X_0, \Lambda)$  can be rewritten as
\begin{eqnarray*}
SLH(X_0, \Lambda)&=&(SH^+(X_0)\oplus LH^{\Ho+}(\Lambda))\oplus(\Morse(-H|_{X_0})\oplus C(\Lambda)) \\
&=&
SLH^+(X_0, \Lambda)\oplus \Morse(-H)
\end{eqnarray*}
and the differential takes the form
$$
d_{SLH}=
\left(\begin{matrix}
d_{SLH^+}&0\cr
\delta_{SLH} &d_{\Morse}\cr
\end{matrix}
\right).
$$
Using this observation we define
a homomorphism
\begin{align*}
F^W_{SLH} :SH(X)=SH^+(  X)&\oplus\Morse(- H)
\to SLH(X_0, \Lambda)\cr
& =SLH^+(X_0, \Lambda)\oplus\Morse(-H)
\end{align*}
by the block matrix
$$F^W_{SLH}= \left(\begin{matrix}
F^W_{SLH^+} &0 \cr
\Psi^W_{SLH} &\Id\cr
\end{matrix}
\right),
$$
where the homomorphism  $\Psi^W_{SLH}\colon SH^+(X)=\widecheck{CH}(X)\oplus\widehat{CH}(X)\to\Morse(-H)$ is equal to $0$ on $\widehat{CH}(X)$, and is defined by the formula
$\Psi^W_{SLH}(\widecheck\gamma)=\sum\limits_{|\gamma|=1} n_{\gamma p_i}\tau_i$, where the coefficient
$n_{\gamma p_i}$  is the algebraic number of elements of the $0$-dimensional moduli space
$\MM^W_{L_i}(\gamma)$, see Section \ref{sec:mWL}.

\begin{thm}\label{thm:SH} The map $F^W_{SLH}:SH(X)\to SLH(X_0, \Lambda)$ is a chain map and a quasi-isomorphism.
In particular, there exists an exact triangle
\begin{equation} \label{eq:CH-surgery3}
\xymatrix{S\bbH( X)\ar[rr]& &S\bbH(X_0)\ar[ld]\\
&L\bbH^{\Ho}(\Lambda) \ar[lu]&}.
\end{equation}
\end{thm}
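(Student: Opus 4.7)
The plan is to verify the two assertions of Theorem \ref{thm:SH} in turn: that $F^W_{SLH}$ is a chain map (which also subsumes the sub-claim $d_{SLH}^2=0$), and then to deduce the quasi-isomorphism by a standard comparison of long exact sequences using Theorem \ref{thm:f-sh1}. Both the source and the target split in the form $SH^+\oplus \mathrm{Morse}(-H)$, with lower-triangular differentials $\begin{pmatrix} d_{SH^+}&0\\ \delta_{SH}&d_{\mathrm{Morse}}\end{pmatrix}$ respectively $\begin{pmatrix} d_{SLH^+}&0\\ \delta_{SLH}&d_{\mathrm{Morse}}\end{pmatrix}$, and the map $F^W_{SLH}$ is lower triangular with diagonal $(F^W_{SLH^+},\mathrm{Id})$. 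This already points to the architecture: everything reduces to the corresponding statement for $SH^+$ once we verify a single mixed compatibility identity.

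First, I would establish $d_{SLH}^2=0$. After multiplying out the block matrix, three equations have to be checked. The ones involving only $d_{SH}$ and $d_{\Ho}$ already hold by Propositions~\ref{prop:sympl-homology} and \ref{prop:thoh-homology} and Theorem~\ref{thm:d-sh1}. The remaining identity is $d_{\Ho}\,\Delta_{SLH}+\Delta_{SLH}\,d_{SH}=0$, which splits further into a piece relating $\theta$ and $\delta_{SH^+}$ (already in Theorem~\ref{thm:d-sh1}) and a new piece for the off-diagonal block $\widetilde\Delta_{SLH}+d_{\Morse\tau}$. This new piece is proved by analyzing the compactification of the $1$-dimensional moduli space $\MM^{Y_0}_{\Lambda_j}(\gamma)$ of holomorphic planes with boundary on $\R\times\Lambda_j$ asymptotic to $\gamma$: the boundary strata consist either of an SFT-type breaking at $-\infty$ (contributing $\widetilde\Delta_{SLH}\circ\delta_{SH^+}$ plus lower-triangular anchor terms), or of a nodal degeneration where the boundary circle shrinks to a point on $L_j$ through which an anchor plane passes (this is the compactness phenomenon illustrated in Figure~\ref{fig:d2SH} but applied inside $X_0$: the shrinking boundary forces an interior intersection with a stable manifold $L_p$ of a critical point, producing the term $d_{\Morse\tau}\circ\delta_{SH}$). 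A separate calculation with the evaluation at the puncture verifies that bad orbits indeed contribute zero.

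Next I would verify that $F^W_{SLH}$ intertwines differentials. Expanding both compositions the only non-trivial equation is
\[
\delta_{SLH}\,F^W_{SLH^+} + d_{\Morse}\,\Psi^W_{SLH} \;=\; \Psi^W_{SLH}\,d_{SH^+} + F^W_{SLH^+}\,\delta_{SH}.
\]
Each of these terms corresponds to a type of boundary configuration of the $1$-dimensional moduli spaces $\MM^W_{L_j}(\gamma)$ of holomorphic planes in $W$ with boundary on the core $L_j$, anchored in $X_0$ and asymptotic to $\gamma$ at $+\infty$. As one traverses such a family, breakings may occur at the positive puncture (producing $\Psi^W_{SLH}\circ d_{SH^+}$), at the negative anchors (producing $\delta_{SLH}\circ F^W_{SLH^+}$ via $\widetilde\Delta_{SLH}$), by a Morse trajectory in $X_0$ terminating at a critical point whose stable manifold meets an anchor (producing $F^W_{SLH^+}\circ\delta_{SH}$, up to projection), or by the boundary of the plane collapsing to a point $p\in L_j$ which lies on the trajectory to the index-$n$ critical point (producing the $d_{\Morse}\circ\Psi^W_{SLH}$ term through $d_{\Morse\tau}$). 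This is the technical heart of the argument; the detailed signs and coherent orientations are the main obstacle, together with the bookkeeping needed to exclude bad orbits from the boundary strata that contain markers.

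Finally, the quasi-isomorphism is immediate from what has been proved. The chain map $F^W_{SLH}$ fits into a morphism of short exact sequences of complexes
\[
\begin{CD}
0 @>>> \mathrm{Morse}(-H) @>>> SH(X) @>>> SH^+(X) @>>> 0 \\
@. @V{\mathrm{Id}}VV @VV{F^W_{SLH}}V @VV{F^W_{SLH^+}}V @.\\
0 @>>> \mathrm{Morse}(-H) @>>> SLH(X_0,\Lambda) @>>> SLH^+(X_0,\Lambda) @>>> 0,
\end{CD}
\]
with identity on the sub and $F^W_{SLH^+}$ on the quotient. The left vertical map is trivially a quasi-isomorphism and the right one is a quasi-isomorphism by Theorem~\ref{thm:f-sh1}, so by the five-lemma applied to the induced long exact sequences the middle map is a quasi-isomorphism as well. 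The exact triangle \eqref{eq:CH-surgery3} is then obtained from the mapping cone of $F^W_{SLH^+}$ together with the identification of $L\bbH^{\Ho}(\Lambda)$ as the cone of the surgery-cobordism map, exactly as in the derivation of \eqref{eq:CH-surgery2} from Theorem~\ref{thm:f-sh1}.
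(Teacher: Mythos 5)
Your overall architecture is correct, and for the chain-map statement your approach (boundary analysis of $1$-dimensional moduli spaces) coincides with the paper's. For the quasi-isomorphism, you take a genuinely different route: the paper applies a single action-filtration argument uniformly to Theorems~\ref{thm:f-ch}, \ref{thm:f-sh1}, \ref{thm:SH}, \ref{thm:LH} (ordering generators by action, showing the matrix of $F^W$ is lower triangular with $\pm1$ diagonal via the diagonal-element analysis of Section~\ref{sec:diagonal}), whereas you replace the full action filtration by the coarse two-step filtration given by the Morse subcomplex, compare the two short exact sequences, and invoke the five lemma plus Theorem~\ref{thm:f-sh1}. This is a valid reduction and is arguably cleaner, since the Morse block of $F^W_{SLH}$ is literally the identity; it buys you the quasi-isomorphism without re-running the diagonal analysis for the full $SH$. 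What it costs is that it is not self-contained: it presupposes Theorem~\ref{thm:f-sh1}, which the paper proves by the same filtration technique it would use directly here.

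Two imprecisions should be fixed. First, the off-diagonal chain-map identity you wrote,
\[
\delta_{SLH}\,F^W_{SLH^+} + d_{\Morse}\,\Psi^W_{SLH} = \Psi^W_{SLH}\,d_{SH^+} + F^W_{SLH^+}\,\delta_{SH},
\]
has a type error in the last term: $\delta_{SH}$ maps $SH^+(X)$ to $\Morse(-H)$, and $F^W_{SLH^+}$ has domain $SH^+(X)$, so $F^W_{SLH^+}\circ\delta_{SH}$ is not defined. Since the lower-right block of $F^W_{SLH}$ is the identity, the correct last term is simply $\Id\circ\delta_{SH}=\delta_{SH}$. Second, the term $d_{\Morse}\circ\Psi^W_{SLH}$ is in fact identically zero, because $\Psi^W_{SLH}$ lands in $C(\Lambda)$, i.e.\ in the span of the index-$n$ critical points $p_j$, and the Morse differential of $-H$ vanishes on these (there are no critical points of $H$ of index $n+1$ in a Weinstein manifold). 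So the boundary configuration you attribute to that term does not occur, and the accounting of boundary strata should be revisited accordingly; the remaining identification of configurations with terms is geometrically reasonable but needs the same care.
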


\begin{cor}\label{cor:SH-subcrit}
Suppose that $X_0$ is a subcritical Weinstein manifold.
Then $S\bbH(X)=L\bbH^{\Ho}(\Lambda)$.
\end{cor}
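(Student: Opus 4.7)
The plan is to deduce the corollary directly from the exact triangle \eqref{eq:CH-surgery3} of Theorem \ref{thm:SH} by invoking the vanishing of symplectic homology on subcritical Weinstein manifolds. Thus the whole argument reduces to establishing that $S\bbH(X_0)=0$ when $X_0$ is subcritical.

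For the vanishing, I would appeal to the theorem of K.~Cieliebak cited in Section \ref{sec:Weinstein}: a subcritical Weinstein manifold of dimension $2n$ is symplectomorphic to a product $X'\times\R^{2}$ with $X'$ Weinstein of dimension $2n-2$. Using the Lemma \ref{l:exactsympl} invariance statement, one may assume this symplectomorphism is exact at infinity, so that our $S\bbH$ is preserved. Then I would invoke the Künneth-type vanishing result for symplectic homology of a split factor with a contractible exact symplectic factor: $S\bbH(X'\times\R^{2})=0$. Concretely, one chooses a Hamiltonian of the form $H(x,z)=H_{1}(x)+H_{2}(z)$ on $X'\times\C$ so that the Floer complex splits, with the $\C$-factor contributing an acyclic piece; iterating the stretching argument along $\{x\}\times S^{1}$-orbits shows all generators cancel in pairs. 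In the Morse-Bott formalism used here, the equivalent statement is that both the reduced symplectic homology $S\bbH^{+}(X')$ of the factor $X'$ and the Morse part of $X'\times\R^{2}$ pair up against the trivial $\R^{2}$-direction under the continuation.

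With $S\bbH(X_0)=0$ established, the exact triangle \eqref{eq:CH-surgery3}
\[
\xymatrix{S\bbH(X)\ar[rr]& & S\bbH(X_0)\ar[ld]\\
& L\bbH^{\Ho}(\Lambda)\ar[lu] &}
\]
collapses: the map $S\bbH(X)\to S\bbH(X_0)$ is zero, hence $L\bbH^{\Ho}(\Lambda)\to S\bbH(X)$ is surjective, while the subsequent map $S\bbH(X_0)\to L\bbH^{\Ho}(\Lambda)$ is zero and so $L\bbH^{\Ho}(\Lambda)\to S\bbH(X)$ is also injective. Therefore it is an isomorphism.

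The main obstacle is not the collapsing step, which is a formal consequence of long exact sequences, but justifying $S\bbH(X_0)=0$ within our Morse-Bott/SFT-type complex for $SH(X_0)$. One has to verify that the Cieliebak splitting, originally stated in the Hamiltonian Floer-theoretic framework of \cite{FH}, transports to the SFT-style model of $SH$ used here. This can be handled either by appealing to the isomorphism between the two models (noted after Proposition \ref{prop:sympl-homology}) and then quoting Cieliebak's theorem, or by giving a direct cancellation argument in the complex $SH(X_0)=SH^{+}(X_0)\oplus\Morse(-H)[-n]$ after deforming the contact form on $\pa\oX_0$ so that all Reeb orbits lie in a single $S^{1}$-family generated by the $\C$-factor, in which case the Morse-Bott differential pairs each checked orbit with its hatted partner and kills the Morse part against the shifted fundamental class. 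Either way, this is the only place where the subcriticality assumption is used.
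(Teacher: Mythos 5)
Your proof follows exactly the route the paper takes: establish $S\bbH(X_0)=0$ for $X_0$ subcritical and then read off the isomorphism from the exact triangle \eqref{eq:CH-surgery3}. The only difference is that you try to rederive the vanishing via the product splitting of \cite{Ciel1} plus a K\"unneth-style argument, whereas the paper simply cites Cieliebak's direct vanishing theorem \cite{Ciel2} for subcritical Weinstein manifolds, which makes the detour (and the somewhat heuristic Morse--Bott cancellation sketch) unnecessary.
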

Indeed, according to a theorem of K.~Cieliebak, see \cite{Ciel2}, we have $S\bbH(X_0)=0$, and hence the exact triangle
\eqref{eq:CH-surgery3} implies the claim. Note that any Weinstein manifold  $X$ with cylindrical end can be obtained by attaching Lagrangian handles to a subcritical $X_0$.

\subsection{Linearized Legendrian homology of co-core spheres}\label{sec:LH-surgery}
Let $(W,L,C)$ be as in Section \ref{sec:relmWCL}. We use notation as there: $\Lambda=L\cap Y_0$,   $\Gamma=C\cap Y$. Furthermore, $L$ and $C$ are decomposed as $L=L_1\cup\dots\cup L_k$ and $C=C_1\cup\dots\cup C_k$ with corresponding subdivisions of $\Lambda$ and $\Gamma$.

Let us notice that the linearized Legendrian homology $L\bbH(\Gamma,C)$ in this case is the homology of the complex
\[
LH(\Gamma,C)=\K\langle\CC(\Gamma)\rangle\oplus\K\langle f_1,\dots, f_k\rangle.
\]
Here $f_j$ corresponds to the unique critical point (the minimum) $m_j$ of the function $H|_{C_j}$,  has grading 
$|f_j|=n-2-\ind(m_j) = n-2$, and satisfies $d_{LH}(f_j)=0$, $j=1,\dots,k$.

We introduce a product operation $\star$ on the complex
$LH(\Gamma,C)$.  If $c_1$ and $c_2$ are Reeb chords then define their product as 
$$
c_2\star c_1=\sum\limits_{c\in \K\langle\CC(\Gamma)\rangle}N_{c_1,c_2;b}\;b + \sum_{j=1}^{k} N_{c_1,c_2; f_j}\;f_j. 
$$
Here $N_{c_1,c_2;b}$  and  $N_{c_1,c_2; f_j}$  are the algebraic number of components of the $1$-dimensional moduli spaces $\MM^Y_{\Gamma,C}(c_1, c_2; b)$ and  $\MM^W_C(c_1,c_2; m_j)$, respectively, see Sections \ref{sec:mYLambda} and \ref{sec:mLL}. Let us also  define
\[
f_j\star c =
\begin{cases}
c &\text{ if $c\in\Cc_{ji}$ for some $i$},\\
0 &\text{ otherwise},
\end{cases}
\quad\text{ and }\quad
c\star f_j =
\begin{cases}
c &\text{ if $c\in\Cc_{ij}$ for some $i$},\\
0 &\text{ otherwise},
\end{cases}
\]
where $c\in \CC_{ij}$, where $\CC_{ij}$ is the set of chords connecting $\Gamma_i$ with $\Gamma_j$, $j=1,\dots,k$.

\medskip

Consider the map
$$
F^W_{(L,C)}\colon LH(\Gamma,C)\to LHA(\Lambda)
$$
defined as follows. For generators $c\in\CC(\Gamma)$,
$$
F^W_{(L,C)}(c)=\sum\limits_{|c|= \sum_{j=1}^{m}|b_j|+(n-2)}k_{c;b_1,\dots,b_m}b_1\dots b_m,
$$
where $c\in\CC(\Gamma), b_1,\dots,b_m\in\CC(\Lambda)$, $j=1,\dots,k$, and where the coefficient $k_{c;b_1,\dots,b_m}$ is equal to the algebraic number of elements of the $0$-dimensional moduli space $\MM^W_{(L,C)}(c;b_1,\dots, b_m)$, see Section \ref{sec:relmWCL}. (If $m=0$ then $\MM^{W}_{(L,C)}(c)$ can be non-empty only if $c$ is a chord with both endpoints on one component $\Gamma_j$ and the algebraic number of elements in a moduli spaces $\MM^{W}_{(L,C)}(c)$ of rigid disks with boundary on $C_j\cup L_j$ and no negative punctures equals the coefficient of $e_j$ in the expansion of $F^{W}_{(L,C)}(c)$.) For generators $f_1,\dots,f_k$,
$$
F^W_{(L,C)}(f_j)=e_j,\quad j=1,\dots, k.
$$
  We then have the following
\begin{thm}\label{thm:LH}
The operation $\star$ descends to an associative product on $L\bbH(\Gamma,C)$. The map $F^W_{(L,C)}\colon LH(\Gamma,C)\to LHA(\Lambda)$ is  a chain map 
which induces an  algebra isomorphism on homology. (Here we view $LHA(\Lambda)$ as a module generated by monomials and with product induced by the standard (concatenation) product on $LHA(\Lambda)$.) 
\end{thm}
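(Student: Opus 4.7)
The proof breaks into four steps: (i) associativity of $\star$ on homology; (ii) the chain map property of $F^W_{(L,C)}$; (iii) the algebra homomorphism property on homology; and (iv) the quasi-isomorphism. Throughout I will use the SFT compactness and gluing scheme applied to the moduli spaces introduced in Sections \ref{sec:mYLambda}, \ref{sec:mLL}, and \ref{sec:relmWCL}.

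For (i), I would analyze the boundaries of the $1$-dimensional versions of the moduli spaces used to define $\star$. The Leibniz identity $d_{LH}(c_2\star c_1)=(d_{LH}c_2)\star c_1\pm c_2\star(d_{LH}c_1)$ follows from counting the boundary strata of $1$-dimensional components of $\MM^Y_{\Gamma,C}(c_1,c_2;b)$ and $\MM^W_C(c_1,c_2;m_j)$: each boundary configuration breaks off a rigid disk either at one of the positive punctures (producing a term of the form $d_{LH}c_i\,\star\,c_j$) or at the negative puncture (producing a $\star\circ d_{LH}$ term). Associativity on homology is then proved by the standard $A_\infty$-type argument, examining the boundary of the $1$-dimensional moduli space of disks in $(W,L\cup C)$ with three positive punctures and one negative puncture; its two types of degenerations recover $(c_3\star c_2)\star c_1$ and $c_3\star(c_2\star c_1)$ modulo boundaries.

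For (ii), I would describe the boundary of $1$-dimensional components of $\MM^W_{(L,C)}(c;b_1,\dots,b_m)$. Two types of boundary configurations occur: either a rigid disk in $\MM^Y_{\Gamma,C}(c,c';b)$ or in $\MM^W_C(c,c';m_j)$ breaks off at the top puncture (matching the summands $d_{\CC}$ and $\delta_{LH}$ of $d_{LH}$, respectively), or a rigid disk in $\MM^Y_\Lambda(b_i;b'_1,\dots,b'_r)$ breaks off at one of the negative punctures (matching $d_{LHA}$ applied to the output monomial $b_1\dots b_m$). Equating the total signed count to zero yields $F^W_{(L,C)}\circ d_{LH}=d_{LHA}\circ F^W_{(L,C)}$. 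Step (iii) is a neck-stretching argument: for a rigid disk $u$ contributing to $c_2\star c_1$, gluing copies of $F^W_{(L,C)}$-anchor disks at the two positive punctures of $u$ and stretching the cylindrical neck along $Y$ produces, by standard gluing, a rigid disk in $\MM^W_{(L,C)}$ whose boundary traces out the concatenated word $F^W_{(L,C)}(c_2)\cdot F^W_{(L,C)}(c_1)$ in $LHA(\Lambda)$; the two switching punctures at $L\cap C$ are precisely what enables this concatenation.

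The hardest step is (iv), the quasi-isomorphism, and my plan is to exploit a combined action filtration and neck-stretching argument. The geometric input is that Reeb chords of the co-core sphere $\Gamma\subset Y$ are in natural correspondence with linearly composable words in $\CC(\Lambda)$: each time such a chord winds around one of the attached handles it picks up a letter from $\CC(\Lambda)$. Stretching the neck around a level separating the handle region from the subcritical piece $X_0$, the disks contributing to $F^W_{(L,C)}(c)$ decompose into a ``model'' disk in the handle region, which reads off precisely the word associated to the winding of $c$, together with ``lower order'' correction disks (involving multiple handle traversals of strictly shorter total action). Filtering both sides by total chord action then makes $F^W_{(L,C)}$ a filtered chain map whose associated graded map is the tautological identification of chords of $\Gamma$ with composable words in $\CC(\Lambda)$, hence an isomorphism. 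The five lemma, applied inductively along the filtration, upgrades this to a quasi-isomorphism. The main obstacle will be carrying out the neck-stretching compactness and gluing analysis rigorously enough to identify the leading-order disks and to rule out spurious low-action contributions; this is the most delicate technical point and relies on the transversality package alluded to in the beginning of Section \ref{Sec:mdli}.
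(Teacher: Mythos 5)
Your proposal follows essentially the same strategy as the paper's sketch in Section \ref{sec:proofs}: boundary analysis of the relevant moduli spaces for the chain map and Leibniz identities, gluing of the two-story building $\MM^Y_{\Gamma,C}(c_1,c_2;c)\times\MM^W_{(L,C)}(c;b_1,\dots,b_m)$ together with the same-level splitting at an $L\cap C$ intersection point for the multiplicativity, and an action filtration for the quasi-isomorphism. Your treatment of associativity via the $A_\infty$ structure also matches what the paper alludes to in the remark following Theorem \ref{thm:LH}.

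One point where the paper is more precise than your write-up is the analysis of the diagonal entries of the filtered chain map (Section \ref{sec:diagonal}): the paper first establishes that for each chord $a$ of $\Lambda$ there is algebraically $\pm 1$ holomorphic strip in $W$ with boundary on $L\cup C$ from the corresponding chord $a'$ of $\Gamma$ to $a$, by an explicit model in the handle plus a uniqueness argument via normal-bundle projection and action bounds, and then bootstraps inductively by gluing at Lagrangian intersection punctures to treat longer words. Your phrase about a ``model disk that reads off the word'' gestures at the same content but compresses the inductive gluing step and the compactness/action argument used to rule out other contributions, which is exactly what the paper identifies as the delicate part.
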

\begin{rmk} Similarly to the product $m_2=\star$, one can define on $LH(\Gamma, C)$ operations $m_l, l>2$, using anchored in $(X,C)$ holomorphic discs in $\R\times Y$ with $l$ positive and $1$ negative punctures. These operations define an $A_\infty$-structure
on   $LH(\Gamma, C)$. The homomorphism $F^W_{(L,C)}$ can be included  into an $A_\infty$-quasi-isomorphism between the $A_\infty$-algebra $LH(\Gamma, C)$  and $LHA(\Lambda)$ with the trivial $A_\infty$-structure, where the product is just the tensor product and all the higher operations are equal to $0$.
\end{rmk}

\subsection{Legendrian homology algebra}\label{sec:LHA-surgery}
Let $\Lambda^-_0\subset Y_0$ be a Legendrian submanifold disjoint from $\Lambda=\bigcup_1^k \Lambda_j$.
Trajectories of $Z$ in $W$ which begin at points of $\Lambda^-_0$ form a Lagrangian cylinder $L_0$ which intersects $Y$ along a Legendrian submanifold $\Lambda^+_0\subset Y$. Then
$\oL'=\bigcup_0^k\oL_j$, where $\ol L_j=L_j\cap W$ is a Lagrangian cobordism between
$\Lambda'=\Lambda_0^-\cup\Lambda\subset Y_0$ and $\Lambda^+_0\subset Y$. According to  the second part of Proposition \ref{prop:leg-homology-hom} there is defined a homomorphism
$$
\left(F^W_{L'}\right)^*:L\bbH A(\Lambda^+_0)\to L\bbH A(\Lambda^-_0;\Lambda').
$$

\begin{thm}\label{thm:LHA}
$\left(F^W_{L'}\right)^*:L\bbH A(\Lambda^+_0)\to L\bbH A(\Lambda^-_0;\Lambda')$ is an isomorphism.
\end{thm}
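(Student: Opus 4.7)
My plan is to establish the isomorphism via an action-filtration argument which reduces to an explicit bijection between generators of the two algebras. First I would homotope the Weinstein structure on $W$ so that each attaching handle is ``thin'': the Reeb flow of $Y$ then agrees with the Reeb flow of $Y_0$ away from a small neighborhood of each attaching sphere $\Lambda_j$, and is modified only inside an arbitrarily thin handle region. Since this is a deformation of Weinstein structures, it changes none of the invariants in question. In this thin-handle model, every Reeb chord $c$ of $\Lambda^+_0$ in $Y$ of bounded action admits a canonical decomposition as an alternating concatenation
\[
c \;\longleftrightarrow\; c_0\, b_1\, c_1\, b_2\, \cdots\, b_m\, c_m,
\]
where each $c_i$ is an arc of the Reeb flow of $Y_0$ whose endpoints lie on $\Lambda' = \Lambda_0^- \cup \Lambda$, and each $b_i$ is a short Reeb jump across a handle, canonically identified with a chord of $\Lambda$ in $Y_0$. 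The composability of consecutive arcs matches the linear composability in $LHA(\Lambda')$, and the condition that $c$ begins and ends on the trace $\Lambda_0^+$ of $\Lambda_0^-$ forces the monomial to begin and end on $\Lambda_0^-$. This yields a canonical bijection between generators of $LHA(\Lambda^+_0)$ and those of $LHA(\Lambda_0^-;\Lambda')$, and equips both complexes with an action filtration in which corresponding generators sit at the same level.

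Second, I would show that $F^W_{L'}$ respects this action filtration and that on the associated graded it realizes precisely the bijection above. Concretely, for $c \in \CC(\Lambda^+_0)$ with associated word $c_0 b_1 \cdots b_m c_m$, the claim is that the $0$-dimensional moduli space $\MM^W_{L'}(c; c_0, b_1, \ldots, b_m, c_m)$ contains, modulo contributions of strictly smaller total action at the negative ends, a unique rigid element; and that all other contributions to $F^W_{L'}(c)$ lie in strictly lower filtration level. In the thin-handle limit this rigid disk degenerates into a trivial strip in $\R \times Y$ over $c$ together with one standard disk in each handle traversed, each carrying a single positive puncture at the relevant $b_i$ and no negative punctures; the existence and uniqueness of such local handle disks follows from a direct index computation in the explicit local model, and a gluing argument reassembles these pieces into the required rigid element in $W$.

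Finally, a standard spectral sequence argument then upgrades the bijection on the associated graded to a quasi-isomorphism of $F^W_{L'}$, and hence shows that the induced map $(F^W_{L'})^*$ is an isomorphism; this is the statement of the theorem. The main obstacle will be the second step: one must rule out unexpected rigid configurations contributing to the leading-order part of $F^W_{L'}$, and carry out the gluing to assemble trivial strips and local handle disks into genuine rigid holomorphic disks with boundary on $L'$. This requires SFT compactness for a neck-stretching along a hypersurface separating the thin handle region from the rest of $W$, combined with an explicit analysis of holomorphic disks in a Weinstein handle, in the spirit of the Morse flow tree correspondence of \cite{EES}; the generalized anchoring in $X_0$ enters only at strictly lower filtration level and so does not affect the leading-order count.
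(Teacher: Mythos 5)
Your strategy — thin-handle bijection of generators, action filtration making $F^W_{L'}$ lower-triangular, and reduction to showing the diagonal entries are $\pm 1$ via a local handle model plus gluing — is the paper's own (Sections~\ref{sec:Reeb-dynamics}--\ref{sec:diagonal}, where it is stated that the proof of Theorem~\ref{thm:LHA} ``follows similar lines of arguments'' to the diagonal-element analysis of Section~\ref{sec:diagonal}).

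Two details of your sketch are wrong as stated, though. You describe the pieces $b_i$ of the chord $c$ as ``short Reeb jumps across a handle, canonically identified with chords of $\Lambda$ in $Y_0$''; in the thin-handle model of Section~\ref{sec:Reeb-dynamics} it is the Reeb arcs of $Y_0$ \emph{outside} the handle — your $c_i$ — that converge to Reeb chords of $\Lambda'$ as $\eps\to 0$, while the traversals of the handle shrink away. Your proposed degeneration — ``a trivial strip in $\R\times Y$ over $c$ together with one standard disk in each handle traversed, each carrying a single positive puncture at the relevant $b_i$ and no negative punctures'' — is also not a consistent SFT building: $b_i$ is a chord of $\Lambda$ at the \emph{negative} boundary $Y_0$ of $W$, so a puncture asymptotic to it is necessarily a negative puncture, and a trivial strip over $c$ has the same asymptote at both ends and so offers no gluing interface with the handle pieces. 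The paper's diagonal analysis instead proceeds in two stages: (A) an explicit local model (the disk sits in a flat complex line in a suitable handle model, with surjective linearized operator and with uniqueness forced by a projection/action estimate) establishes that there is algebraically exactly one rigid ``basic'' strip for each single Reeb chord of $\Lambda$; and (B) diagonal disks for longer words are constructed inductively by gluing basic strips onto previously constructed disks, with action bounds ruling out unwanted breaking. Replacing your one-shot degeneration with this inductive gluing, and fixing the labeling of arcs versus traversals, recovers the paper's argument.
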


Note that  Theorem \ref{thm:LHA} allows us to  compute the full Legendrian homology
 algebra of the co-core sphere $\Gamma$.

{
Restricting to the case $k=1$ we note that
it is possible to disjoin  $\Gamma$  from itself by a small Legendrian isotopy. The push-off $\Gamma'$ of $\Gamma$ can then be pushed further down to $Y_0$ using the Liouville flow of $Z$. Different choices of perturbation of $\Gamma$ give different Legendrian spheres in $Y_0$ which are generally not Legendrian isotopic. It is, in particular, possible to choose perturbations either so that the push-down of $\Gamma'$  is a small Legendrian sphere $\Lambda_{\rm f}\subset Y_0$ linking the attaching sphere $\Lambda_{\rm a}$ once, or so that it is a parallel copy $\Lambda_{\rm p}$ of $\Lambda_{\rm a}$. Choosing $\Lambda_{\rm f}\subset Y_0$ as $\Lambda_0^-$ and the corresponding push-off $\Gamma'$ of $\Gamma$ as $\Lambda_0^+$, we are in a situation where Proposition \ref{prop:rel-q} applies, and it provides an explicit description of the algebra $LHA(\Lambda_{\rm f};\Lambda_{\rm f}\cup\Lambda)$ in terms of the  deformed algebra $LHA(\Lambda; q)$, where $q$ is a generator of $H_0(\Lambda)$.
According to Theorem  \ref{thm:LHA}  the homology $L\bbH A(\Lambda_{\rm f};\Lambda_{\rm f}\cup\Lambda)$ is isomorphic to $L\bbH A(\Gamma')=L\bbH A(\Gamma)$.

  Note that the complex in Theorem \ref{thm:LH} which is quasi-isomorphic to $LH(\Gamma,C)$ looks like the linearization of the differential algebra $LH A(\Lambda_{\rm f};\Lambda_{\rm f}\cup\Lambda)$ which corresponds to the trivial augmentation. Nevertheless, it is not clear that the formula in Theorem \ref{thm:LH} follows from Theorem \ref{thm:LHA} and Proposition \ref{prop:rel-q} because the algebra $LH A(\Lambda_{\rm f};\Lambda_{\rm f}\cup\Lambda)$ may have other augmentations with isomorphic linearizations.}

\section{About the proofs}\label{sec:proofs}
We sketch in this section the main ideas which enter the proofs of the results discussed in this paper.
 
\subsection{Dynamics of the Reeb flow after Legendrian surgery}\label{sec:Reeb-dynamics}
We first explain the mechanism of creation of new periodic Reeb orbits as a result of Legendrian surgery.
To clarify the picture we begin with the case when $Y_0$ is just $\R^{2n-1}$ with the standard contact  form $\lambda_0=dz-\sum\limits_1^{n-1}y_jdx_i$ and when the surgery locus $\Lambda$ consists of one Legendrian sphere. In particular, the Reeb vector field $R_0=\frac{\p}{\p z}$ has no periodic orbits. We assume further that the Reeb flow isotopy $\Phi_{R_0}^t(\Lambda)$ intersects $\Lambda$ transversely. In particular, the set $\CC(\Lambda)$ consists of finitely many Reeb chords $c_1,\dots, c_m$ in general position.

Consider the contact manifold $(Y,\lambda)$ which results from surgery on $\Lambda$. The Reeb vector field $R$ on $Y$ satisfies $R=R_0$ outside of a tubular neighborhood $U$ of $\Lambda$. Under surgery, $U$ is replaced by the unit cotangent bundle of the $n$-disk which is the Lagrangian core disk of the attached handle. One can choose the contact form $\lambda$ so that away from a small neighborhood  of $\p U$ the Reeb dynamics inside the handle is the geodesic flow of the flat disk. Moreover, the perturbation necessary for the boundary adjustment can be made arbitrarily small.
  
The dynamics of the flow of $R$ on $Y$ can then be described as follows. 
Consider the $1$-jet space $J^1(S^{n-1})\approx T^{\ast}S^{n-1}\times\R$ of the $(n-1)$-sphere endowed with its standard contact form $\alpha=dz+p\,dq$, where $(q,p)$ are canonical coordinates on $T^{\ast}S^{n-1}$ and $z$ is a coordinate in the $\R$-factor. Let $S\subset J^{1}(S^{n-1})$ denote the $0$-section and define $U_\eps=\{|z|\leq\eps, \|p\|\leq\eps\}$, $\eps>0$, where we view of $S$ as the unit sphere in $\R^{n}$ in order to define $\|p\|$. Note that $U_{\eps}$ is a neighborhood of $S$ of size $\eps$. Let $V_{\pm} =\{z=\pm\eps\}$. The   form $\omega_\pm=d\alpha|_{V+\pm}$ is symplectic and  $(V_\pm,\omega_\pm)$ is symplectomorphic to the cotangent  disk bundle of $S^{n-1}$ of radius $\eps$. For $\eps>0$ sufficiently small, there exists an embedding $j\colon U_\eps\to Y_0$ with
$j^\ast\lambda_0=\alpha$ and $j(S)=\Lambda$, and which hence allows us to identify $U_\eps$ with a neighborhood of $\Lambda\subset Y_0$. We will keep the notations $U_\eps$ and $V_\pm$ for the images of $U_\eps$ and $V_\pm$, respectively, under the contact embedding $j$.  The Reeb flow of $\lambda$ defines a symplectomorphism $i\colon (V_-,d\lambda|_{V_-})\to (V_+,d\lambda|_{V_+})$. Now the effect of the surgery on Reeb dynamics can  be described as follows.  Outside $U_\eps$ we have $R=R_0$, all the trajectories which enter $U_\eps$ through $V_-$ exit through $V_+$ and thus there is defined a holonomy symplectomorphism $\wt\tau\colon (V_-,d\lambda|_{V_-})\to (V_+,d\lambda|_{V_+})$. One can show that $\wt\tau=\tau\circ i$, where $\tau:(V_+,d\lambda|_{V_+})\to ( V_+,d\lambda|_{V_+})$ is a {\em generalized symplectic Dehn twist}, see \cite{Arnold}, where we identify $(V_+,d\lambda|_{V_+})$ with  the cotangent disk bundle of the sphere $S^{n-1}$ of radius $\eps$ .

In the case under consideration ($Y_0=\R^{2n-1}$) there are no closed orbits of $R$ in $Y$ outside $U_\eps$. Hence, if $\gamma$ is a closed orbit then it has to first exit $U_{\eps}$ and then reenter it again.  We claim that if $\eps>0$ is chosen sufficiently small then the outside portion of any closed trajectory $\gamma$   has to be  contained in a neighborhood of a union of Reeb chords  from $\CC(\Lambda)$. Indeed, the assumption that the isotopy $\Phi_{R_0}^t(\Lambda)$ intersects $\Lambda$ transversely implies that for any $\delta>0$ there exists an $\eps>0$ such that a Reeb trajectory at distance at least $\delta$ from the union of the chords in $\CC(\Lambda)$ which starts on $V_{+}$ never intersects $U_\eps$ again.

Conversely, we claim that if $w=(c_{i_1} \dots c_{i_N})\in LH^{\rm cyc}(\Lambda)$ is a cyclic word, then
for all sufficiently small $\delta>0$ there is $\eps>0$ a unique periodic orbit $\gamma$ which passes through the $\delta$-neighborhoods of chords $c_{i_1},\dots,c_{i_N}$ in the given cyclic order and exactly once through the flow tube around $c$ for each index $i_j$ such that $c_{i_j}=c$. We will consider here only the case $N=1$. The general case is similar.
Thus, assume $w=(c)$ for a chord $c\in\CC(\Lambda)$. Let $a,b\in\Lambda$ be the beginning and the end  points of the chord $c$. Let $T$ denote the action of the chord $c$, $T=\int_c\lambda$. Then $\Phi_{R_0}^T(a)=b$. Write $a_\pm=\Phi_{R_0}^{\pm\eps}(a)\in V_{\pm}$. The isotopy $\Phi_{R_0}^t(\Lambda)$, $t\geq 0$ intersects $\Lambda$ transversely at $t=T$ in the point $b$.  Fix $\sigma>0$ such that $3\sigma$  is smaller than the action of any chord in $\CC(\Lambda)$. Call a path $\gamma\colon [0,1]\to V_+$ with $\gamma(0)=
a_+$  {\em  small} if  for any $t\in[0,1]$ there exists $s\in [T-\sigma, T+\sigma]$ such that $\Phi_{R}^s(\gamma(t))\in V_-$.  Denote by $A$ the set of all end-points of small paths.
The holonomy along leaves of the Reeb flow of $R$ defines an embedding $f\colon A\to V_-$. We need to show  that   there exists a  unique point
$x\in  A$ such that $\wt\tau(f(x))=x$. If a point $x\in  A$ satisfies this equation
then
$$
x\in  Z:=\bigcap\limits_{-\infty}^{\infty}  (\wt \tau\circ f)^k(A).
$$ 
Note that  we have $Z=\bigcap\limits_1^{\infty} Z_j$, where $Z_j:= \bigcap\limits_{-j}^j (\wt\tau\circ f)^k(A_1)$,  $Z_1\supset Z_2\supset\dots \supset Z$.
 One can check that the closed sets $Z_j$ are non-empty and that ${\rm diam}(Z_j)\to 0$ as $j\to\infty$. Hence $Z$ consists of one point $x\in Z$, but if  $x\in Z$ then $\wt\tau(f(x))\in Z$. Thus,  $x=\wt\tau(f(x))$, i.e.~$x$ belongs to a unique periodic orbit of $R$  which intersects $A$ and passes once in a neighborhood of $c$.

In the case of a general contact manifold $Y_0$ we can similarly  show that if the contact form $\lambda_0$ on $Y_0$ is chosen in such a way that $\Lambda$ does not intersect any periodic orbits of its Reeb field, then
given any $C>0$ one can find a sufficiently small $\eps>0$ such that all  orbits of $R$ of period smaller than $C$ survive the surgery and all newly created orbits of  period smaller $C$  are in 1-1 correspondence with cyclic words from  $LH^{\rm cyc}(\Lambda)$.  We call these orbits {\em essential}.
Thus  essential  periodic orbits in $Y$ are in 1-1 correspondence with the generators of  the contact homology  complex
$LCH(X_0,\Lambda)=CH(X_0)\oplus LH^{\rm cyc}(\Lambda)$. On the other hand, as we discuss next, passing to the limit  $C\to\infty$ the  contribution of  other, non-essential, orbits can be discarded.

Indeed, consider monotonically decreasing and increasing sequences $\eps_n\to 0$ and $C_k\to\infty$, respectively, and let $\lambda_n$ be the corresponding sequence of contact forms on $Y$. Let $CH^{\le C_k}(\Lambda_n)$ be the subcomplex of $CH(\lambda_n)$ generated by closed Reeb orbits $\gamma$ 
with $\int_\gamma \lambda_n \le C_k$. We denote the corresponding homology by $C\bbH^{\leq C_k}(\lambda_n)$. We can assume that $\lambda_n\geq\lambda_{n+1}$, $n=1,2,\dots,$ and hence we have the following commutative diagram
  \[
  \xymatrix{
 &  \ar[d]                                               &  \ar[d]                                     &       &
  \ar[d]\\
 \ar[r]&C\bbH^{\leq C_k}(\lambda_n)\ar[r]\ar[d]&C\bbH^{\leq C_{k+1}}(\lambda_n)\ar[r]\ar[d]&\quad\dots\quad  &
 C\bbH (\lambda_n)\ar[d]\\
 \ar[r]&C\bbH^{\leq C_k}(\lambda_{n+1})\ar[r]\ar[d]&C\bbH^{\leq C_{k+1}}(\lambda_{n+1})\ar[r]\ar[d]&\quad\dots\quad &
 C\bbH (\lambda_{n+1})\ar[d]\\ 
  \vdots &\vdots &\vdots &\vdots &\vdots \\
    \ar[r]& C\bbH^{\leq C_k}(\lambda_{\infty})\ar[r]&C\bbH^{\leq C_{k+1}}(\lambda_{\infty})\ar[r]&\quad\dots\quad &\quad\quad
  }
  \]
 
Here the vertical arrows  are monotonicity homomorphisms, the horizontal arrows are action window extension homomorphisms,  and 
$$
C\bbH^{\leq C_k}(\lambda_{\infty}):={\lim_{\longrightarrow\,n}} C\bbH^{\leq C_k}(\lambda_{n}).
$$
We define an {\em essential complex} $CH^{ess}(\{\lambda_n\}) := \lim\limits_{\longrightarrow\,k}
\lim\limits_{\longrightarrow\,n} CH^{\leq C_k}(\lambda_{n})$.
Note that the vertical arrows in the last column are all isomorphisms and hence
$\lim\limits_{\longrightarrow\,n} C\bbH(\lambda_n)=C\bbH(Y)$. On the other hand,
it is straightforward to check that the vertical and the horizontal limits commute, so that
$\lim\limits_{\longrightarrow\,k} C\bbH^{\leq C_k}(\lambda_{\infty})=C\bbH(Y)$.
But $\lim\limits_{\longrightarrow\,k} C\bbH^{\leq C_k}(\lambda_{\infty})$ 
is equal to the homology of the essential complex, which is generated by essential orbits only.

 In symplectic homology orbits are parametrized, and hence every generator  of  $LH^{\rm cyc}(\Lambda)$ corresponds to a circle of orbits    in symplectic homology. To connect this Morse-Bott case to the complex $LH^{\rm Ho+}(\Lambda)$, we choose on every orbit an auxiliary Morse function with one minimum and one maximum on every of the above chords generating the orbit. The minimum and maximum  marked orbits correspond, respectively,  to the generators of the complexes $\widecheck{LHO}^{+}(\Lambda)$
and $\widehat{LHO}^{+}(\Lambda)$, which together generate $LH^{\rm Ho+}(\Lambda)$.

Similarly, the generators of  $LHA(\Lambda_0^{-};\Lambda)$ (resp.~of the vector space $LHA(\Lambda)$) correspond to (essential) chords connecting points on the Legendrian manifolds $\Lambda^{+}_{0}\subset Y$ (resp.~$\Gamma\subset Y$) after a Legendrian surgery with a basis $\Lambda$ disjoint from $\Lambda_0^{-}$.
\subsection{Filtration argument}\label{sec:filtration}

The statements that $d^2=0$ in Theorems \ref{thm:d-ch}, \ref{thm:d-sh1} and \ref{thm:d-sh2}, as well as the chain map statements in Theorems \ref{thm:f-ch}, \ref{thm:f-sh1}, \ref{thm:SH}   and \ref{thm:LH}  follow from
a detailed analysis of boundaries in the compactification of the moduli spaces involved. In order to prove the isomorphism part in Theorems \ref{thm:f-ch}, \ref{thm:f-sh1}, \ref{thm:SH},  and \ref{thm:LH} we use a filtration argument. Observe first that all the maps preserve the natural action filtrations. Consider, for instance, the map $F^W_{LCH}\colon CH(X)\to LCH(X,\Lambda)=CH(X_0)\oplus LH^{\rm cyc}(\Lambda)$. As was pointed out above, for an appropriate choice of contact form on $Y$, the new orbits in $Y$ are
in 1-1 correspondence with the generators of the complex $LH^{\rm cyc}(\Lambda)$, while the old orbits correspond to generators of $CH(X_0)$. If we order these generators according to their action values, then the fact that the homomorphism  $F^W_{LCH}$ is action decreasing translates into the fact that the matrix of $F^W_{LCH}$ is lower triangular with respect to action. The most delicate part of the proof is the analysis of  the diagonal elements of this matrix.
In the next section we sketch an argument that for an appropriate regular choice of the almost complex structure, all the diagonal elements are equal to $\pm 1$, and hence for such a choice of extra data, the map  $F^W_{LCH}$  is an isomorphism already on the chain level. The proofs of Theorems   \ref{thm:LH} and  \ref{thm:LHA} follow similar lines of arguments.

The claim that the product is preserved in Theorem \ref{thm:LH} follows from analyzing the boundary of the 1-dimensional   moduli spaces $\MM^W_{(L,C)}(c_1,c_2;b_1,\dots,b_m)$ obtained by gluing moduli spaces  $\MM^Y_{\Lambda, L}(c_1,c_2;c)$  and $ \MM^W_{(L,C)}(c;b_1,\dots,b_m)$. This two-story   holomorphic building   represents the push-forward $F^W_{(L,C)}(c)\in LHA(\Lambda)$ of the product $c=c_2\star c_1$ by the isomorphism  $F^W_{(L,C)}$. Besides splittings which contribute to boundary terms, the only other possible splitting is a configuration of  two curves from $ \MM^W_{(L,C)}(c_2;b_1,\dots,b_l)$ and  $ \MM^W_{(L,C)}(c_1 ;b_{l+1},\dots,b_m)$,
 $l=1,\dots, m-1$, on the same level  connected at an intersection point from  $L\cap C$. This 
configuration can  be interpreted as the push-forward of the pair $(c_2,c_1)$ to the tensor product
$F^W_{(L,C)}(c_2)F^W_{(L,C)}(c_1)$ of the images $F^W_{(L,C)}(c_2)$ and $F^W_{(L,C)}(c_1)$ in the algebra $LHA(\Lambda).$
 \subsection{Analysis of diagonal elements}\label{sec:diagonal}
In this subsection we sketch an argument that shows that the lower triangular chain map $F^{W}_{LCH}$ has all diagonal elements equal to $\pm 1$. As explained above in Section
\ref{sec:Reeb-dynamics}, it is sufficient to deal only with essential orbits. For the old orbits this is obvious: the diagonal elements correspond to trivial holomorphic cylinders in the symplectization of $Y_0$. For new orbits corresponding to words of Reeb chords we will prove this as follows.
\begin{itemize}
\item[$(\mathrm{A})$] First, we establish the following: for each Reeb chord $a$ of $\Lambda$ and  the Reeb chord $a'$ of the co-core sphere $\Gamma$ that corresponds to it there is algebraically $\pm 1$ holomorphic strip in $W$ with boundary on $L\cup C$ positive puncture at $a'$ and negative puncture at $a$. 
\item[$(\mathrm{B})$] Second, we construct holomorphic disks counted in the map $F^{W}_{LCH}$ from   these disks by gluing, and arguing by compactness and small action we find that the count of these disks must equal $\pm 1$ as well.
\end{itemize}    

To verify  $(\mathrm{A})$  take  a Reeb chord $a$ of $\Lambda$. Choose a parameterization of the handle so that the endpoints of $a$ on $\Lambda$ correspond to antipodal points of the  attaching sphere. In a suitable model for the handle the existence of one disk $D$ with required properties is then obvious: the disk lies in a flat complex line in this model. Furthermore, in this model it is immediate that the linearization of the $\bar\pa$-operator at this disk is surjective. In order to show uniqueness of the disk one thus need to show that any other disk with the given properties lies in a small neighborhood of $D$. To see that this is the case we use a local projection to the normal bundle of $D$ in combination with an action argument: if the disk is not contained in the neighborhood of $D$ its area is bigger than the action difference between $a'$ and $a$. Thus we conclude that for this choice of attaching map there is exactly one geometric disk from $a'$ to $a$. Deforming the attaching map we get a parameterized moduli space of disks connecting $a'$ to $a$. By compactness this parameterized moduli space is a $1$-manifold up to splittings. However, in this case there can   be no splittings due to the small action difference between $a'$ and $a$. We conclude that $(\mathrm{A})$ holds.  

In order to show $(\mathrm{B})$ let us first verify that $(\mathrm{A})$ implies $(\mathrm{B})$ for orbits corresponding to $1$-letter words of Reeb chords and also that it implies that for every Reeb chord $w'$ of $\Gamma$ corresponding to a $2$-letter word $w=a_1a_2$ of Reeb chords there is algebraically $\pm 1$ disk connecting $w'$ to $w$. In the orbit case we consider the orbit $\tilde a$ corresponding to a Reeb chord $a$ on $\Lambda$ and the gluing of the disk connecting $a'$ to $a$ to itself at the Lagrangian intersection punctures. This gives a $1$-dimensional space of holomorphic annuli with one boundary puncture on each component mapping to $a'$ and $a$ respectively. The other end of such a moduli space must correspond to a broken curve. Since the orbit $\tilde a$ is the only orbit or chord with action between $a'$ and $a$ we find that the annulus must break into two disks, one in the symplectization of $Y$ connecting $a'$ to $\tilde a$ and one in $W$ connecting $\tilde a$ to $a$. Counting curves in the boundary shows that $(\mathrm{B})$ holds for $1$-word orbits $\tilde a$. 

The claim for $2$-letter word chords $w$ of $\Gamma$can be  proved in a similar way by gluing the strip connecting $a_1'$ to $a_1$ to that connecting $a_2'$ to $a_2$ at a Lagrangian intersection puncture. Studying the boundary of the resulting $1$-dimensional moduli space one finds one disk in the symplectization of $Y$ connecting $a_1'a_2'$ to $w$ and one disk in $W$ connecting $w$ to $a_1a_2$. 

The general case then follows by applying this argument inductively: self-gluing of a disk  from a chord $w'$ of $\Gamma$ to the corresponding word of chords $w$ gives a disk from the orbit $\tilde w$ to $w$, and gluing the basic strip connecting a chord $c'$ of $\Gamma$ to the corresponding chord $c$ of $\Lambda$ to the disk connecting $w'$ to $w$ gives a disk connecting the chord of $\Gamma$ corresponding to the word $cw$ to the word $cw$.       

\section{Examples and applications}\label{sec:examples}
\subsection{Legendrian unknots}
Assume that $\Lambda\subset\pa X_0$ is a Legendrian sphere. Then $LHA(\Lambda)$ is a unital algebra with unit $e$ corresponding to the empty word of Reeb chords. For simplicity, we write $1$ instead of $e$ for the empty word of Reeb chords in this case.

The Legendrian unknot $\Lambda_{U}\subset \R^{2n-1}=J^{1}(\R^{n-1})$ is depicted in Figure \ref{fig:unknot}.
It has one Reeb chord $a$ with grading $|a|-1$, see \cite{EES2}. Using an adapted almost complex structure $J_0$ on $J^{1}(\R^{n-1})\times\R=T^{\ast}\R^{n-1}\times\C$ induced from the standard complex structure on $\C^{n-1}=T^{\ast}\R^{n-1}$, the moduli space $\MM^{\R^{2n+1}}_{\Lambda_U}(a;\varnothing)$ of $J_0$-holomorphic disks with positive puncture at $a$ and boundary on $\Lambda_{U}\times\R$ is $C^1$-diffeomorphic to $S^{n-2}\times\R$ and the boundary evaluation map to $\Lambda_U$ has degree $1$. (For $n=2$, there is an orientation issue here: the degree is one provided we use the orientation on the moduli space induced by the null-cobordant spin structure on $\Lambda_{U}\approx S^{1}$.)

The algebra $LHA(\Lambda_{U})$ is then the algebra on one generator
\[
LHA(\Lambda_{U})=\Q[a]
\]
with trivial differential $d a=0$, where we write $d=d_{LHA}$. We next describe the computation of $LH^{\rm cyc}(\Lambda_U)$ and $LH^{\Ho}(\Lambda_U)$. Note that $LHO^{+}(\Lambda_U)$ can be identified with the vector
space generated by the set of monomials $\{a^{k}\}_{k=1}^{\infty}$.
\begin{figure}
\labellist
\small\hair 2pt
\pinlabel $x_1$   at 5 24
\pinlabel $x_{n-1}$   at 324 102
\pinlabel $z$ at 123 210
\pinlabel $a$ at 160 102
\endlabellist
\centering
\includegraphics[width=.6\linewidth]{unknot}
\caption{The front of the Legendrian unknot, i.e.~the projection of $\Lambda_U$ to $J^{0}(\R^{n-1})$, $(x_1,\dots,x_{n-1})$ are coordinates on $\R^{n-1}$ and $z$ is a coordinate for the function values of $0$-jets.}
\label{fig:unknot}
\end{figure}

Since $|a|=n-1$ we find that $(a^{2j})=0$ in $LH^{\rm cyc}(\Lambda_U)$ if and only if $n$ is even. The grading of an element $(w)$ in $LH^{\rm cyc}(\Lambda_U)$ is given by $|(w)|=|w|$. Thus, if $n$ is odd then $LH^{\rm cyc}(\Lambda_U)$ is as in Table \ref{tab:cycunkotodd}, and if $n$ is even then it is as in Table \ref{tab:cycunknoteven}.

\begin{table}[htp]
\begin{center}
{\renewcommand{\arraystretch}{1.5}
\renewcommand{\tabcolsep}{0.2cm}
\begin{tabular}{|c|c|}
\hline
{Degree}& Generator\\
\hline
$n-1$ & $(a)$\\
\hline
$2n-2$ & $(a^{2})$\\
\hline
\vdots & \vdots\\
\hline
$k(n-1)$ & $(a^{k})$\\
\hline
\vdots & \vdots\\
\end{tabular}}
\end{center}
\caption{The complex $LH^{\rm cyc}(\Lambda_U)$ for $n$ odd. The differential on the complex is trivial.}\label{tab:cycunkotodd}
\end{table}

\begin{table}[htp]
\begin{center}
{\renewcommand{\arraystretch}{1.5}
\renewcommand{\tabcolsep}{0.2cm}
\begin{tabular}{|c|c|}
\hline
{Degree}& Generator\\
\hline
$n-1$ & $(a)$\\
\hline
$3n-3$ & $(a^{3})$\\
\hline
\vdots & \vdots\\
\hline
$(2k+1)(n-1)$ & $(a^{2k+1})$\\
\hline
\vdots & \vdots\\
\end{tabular}}
\end{center}
\caption{The complex $LH^{\rm cyc}(\Lambda_U)$ for $n$ even. The differential on the complex is trivial.}\label{tab:cycunknoteven}
\end{table}

Decorated cyclically composable monomials are in 1-1 correspondence with decorated monomials. We write ${\widehat a}^{k}$ and ${\widecheck a}^{k}$ for the class of the monomial $a \stackrel{k}{\dots} a$ decorated with a ``hat'' and with a ``check'' respectively. If $w$ is a monomial of Reeb chords and $\widehat w$ ($\widecheck w$) denotes $w$ with one variable decorated with ``hat'' (``check'') then  the grading on the complex $LH^{\Ho}(\Lambda_U)$ is $|\widehat w|=|w|+1$ and $|\widecheck w|=|w|$ and the complex $LH^{\Ho}(\Lambda_U)$ is as in Tables \ref{tab:hohunknotodd} and \ref{tab:hohunknoteven} when $n$ is odd and even respectively.

\begin{table}[htp]
\begin{center}
{\renewcommand{\arraystretch}{1.5}
\renewcommand{\tabcolsep}{0.2cm}
\begin{tabular}{|c|c|}
\hline
{Degree}& Generator\\
\hline
$0$  & $\tau$\\
\hline
$n-1$ & $\widecheck a$\\
\hline
$n$  & $\widehat a$\\
\hline
$2n-2$ & ${\widecheck a}^{2}$\\
\hline
$2n-1$ & ${\widehat a}^{2}$\\
\hline
\vdots & \vdots\\
\hline
$k(n-1)$ & $\widecheck a^{k}$\\
\hline
$k(n-1)+1$ & $\widehat a^{k}$\\
\hline
\vdots & \vdots\\
\end{tabular}}
\end{center}
\caption{The complex $LH^{\Ho}(\Lambda_U)$ for $n$ odd. The differential on the complex is trivial.}\label{tab:hohunknotodd}
\end{table}

\begin{table}[htp]
\begin{center}
{\renewcommand{\arraystretch}{1.5}
\renewcommand{\tabcolsep}{0.2cm}
\begin{tabular}{|c|c|}
\hline
{Degree}& Generator\\
\hline
$0$  & $\tau$\\
\hline
\hline
$n-1$ & $\widecheck a$\\
\hline
$n$  & $\widehat a$\\
\hline
\hline
$2n-2$ & ${\widecheck a}^{2}$\\
\hline
{} & $\uparrow$\\
\hline
$2n-1$ & ${\widehat a}^{2}$\\
\hline
\hline
\vdots & \vdots\\
\hline
\hline
$(2k-1)(n-1)$ & $\widecheck a^{2k-1}$\\
\hline
$(2k-1)(n-1)+1$ & $\widehat a^{2k-1}$\\
\hline
\hline
$2k(n-1)$ & ${\widecheck a}^{2k}$\\
\hline
{} & $\uparrow$\\
\hline
$2k(n-1)+1$ & ${\widehat a}^{2k}$\\
\hline\hline
\vdots & \vdots\\
\end{tabular}}
\end{center}
\caption{The complex $LH^{\Ho}(\Lambda_U)$ for $n$ even. The differential on the complex is as indicated by the arrows.}\label{tab:hohunknoteven}
\end{table}

We next consider $\Lambda_{U}\subset\R^{2n-1}\subset S^{2n-1}=\pa B^{2n}$ where we include $\Lambda_{U}$ by embedding it in a (small) Darboux chart. Consider the contact form on $S^{2n-1}$ induced by looking at it as the ellipsoid
\[
E=\left\{(z_1,\dots,z_n)\in\C^{n}\colon \sum_{j=1}^{n}\frac{|z_j|^{2}}{a_j}=1\right\},
\]
where $a_j$ are real numbers with $a_1>a_2>\dots>a_n>0$ which are linearly independent over $\Q$. To facilitate calculations we take $a_1=1$ and $a_j\gg 1$ for $j\ne 1$. Reeb orbits in $E$ correspond to intersections of $E$ with the complex coordinate lines.   Furthermore, the Conley-Zehnder index increases linearly with the length of Reeb orbits and thus for homology computations we may take $a_j$, $j>1$ sufficiently large and consider input only from the shortest Reeb orbit in the $z_1$-line and its iterates. If $\gamma^{k}$ denotes the $k^{\rm th}$ multiple of the shortest orbit then its Conley-Zehnder index satisfies
\[
|\gamma^{k}|=n-1+2k.
\]
Consider next the inclusion of $\Lambda_{U}$ into a small Darboux chart located near the shortest Reeb orbit. Straightforward calculation shows that the Reeb flow on $S^{2n-1}$ outside the Reeb orbit preserve the $n$-tori which are products of circles in the coordinate planes and that the Reeb flow in such an $n$-torus is translation along a vector with components which are independent over $\Q$.   It follows that if we choose the Darboux chart sufficiently small then the only Reeb chords of $\Lambda_{U}$ which have action (and hence index) below a given constant is the one completely contained in the chart. As above we denote it $a$.

We first note that for grading reasons the augmentation induced by the filling of $S^{2n-1}$ is trivial. Furthermore, a straightforward monotonicity argument shows that any holomorphic disk which contributes to the contact homology differential of $\Lambda_{U}\subset S^{2n-1}$ must be entirely contained in the Darboux chart. In particular it follows that the complex $LH^{\Ho}(\Lambda_U)$ above computes the symplectic homology of the manifold which results from attaching a handle to $B^{2n}$ along $\Lambda_{U}$. Attaching a Lagrangian handle to $B^{2n}$ along $\Lambda_{U}$ gives the manifold $T^{\ast} S^{n}$. Using Table \ref{tab:hohunknotodd} we get
\[
S\bbH_{j}(T^{\ast}S^{2m+1})=
\begin{cases}
\Q &\text{ for } j=0,\\
\Q &\text{ for } j=2rm,\quad r=1,2,\dots,\\
\Q &\text{ for } j=2rm+1,\quad r=1,2,\dots,\\
0 &\text{ otherwise. }
\end{cases}
\]
and similarly using Table \ref{tab:hohunknoteven} we find
\[
S\bbH_{j}(T^{\ast}S^{2m})=
\begin{cases}
\Q &\text{ for } j=0,\\
\Q &\text{ for } j=r(2m-1),\quad r=1,3,5\dots,\\
\Q &\text{ for } j=r(2m-1)+1,\quad r=1,3,5,\dots,\\
0 &\text{ otherwise. }
\end{cases}
\]

We consider next the linearized contact homology of $\pa (T^{\ast} S^{n})$. The complex which gives that homology is $CH(B^{2n})\oplus LH^{\rm cyc}(\Lambda_U)$. The grading of the generators of $CH(B^{2n})$ is
\[
|\gamma^{k}|=n-1+2k
\]
and of the generators of $LH^{\rm cyc}(\Lambda_U)$
\[
|(a^{j})|=j(n-1)
\]
Thus, if $n=2m+1$ is odd then all generators have even degree and the differential on the complex vanishes. We find that
\[
\{\gamma^{k}, (a^{j})\}_{j,k=1}^{\infty}
\]
gives a basis of
\[
C\bbH(T^{\ast} S^{2m+1}).
\]

If on the other hand $n=2m$ is even then $(a^{2j})=0\in LH^{\rm cyc}(\Lambda_U)$ since $|a|=n-1$ is odd. A non-trivial differential on $CH(B^{2n})\oplus LH^{\rm cyc}(\Lambda_U)$ would have to take an orbit $\gamma^{k}$ to a cyclic word $(a^{l})$ and we would have
\[
n-1+2k -l(n-1)=1.
\]
Hence $l$ is even and $(a^{l})=0$. We conclude that there is no non-trivial differential in this case either and that
\[
\{\gamma^{k}, (a^{2j-1})\}_{j,k=1}^{\infty}
\]
gives a basis of
\[
C\bbH(T^{\ast} S^{2m}).
\]

Finally, we consider the reduced symplectic homology. The corresponding complex is $SH^{+}(B^{2n})\oplus LH^{\Ho+}(\Lambda_U)$. The complex $SH^{+}(B^{2n})$ is generated by Reeb orbits with critical points on them. We write ${\widehat\gamma}^{k}$ and $\widecheck{\gamma}^{k}$ for $\gamma^{k}$ with a maximum and minimum, respectively. Then
\begin{align*}
|\widecheck{\gamma}^{k}|&=n-1+2k,\\
|\widehat{\gamma}^{k}|&=n-1+2k+1.
\end{align*}
Furthermore, there is a holomorphic curve with marker connecting $\widecheck\gamma^{k+1}$ to $\widehat\gamma^{k}$ and we find that the reduced symplectic homology is given by
\[
S\bbH_j^{+}(T^{\ast}S^{n})=S\bbH_j(T^{\ast}S^{n})
\]
if $j\ne 0,n+1$ and that
\[
S\bbH_{n+1}^{+}(T^{\ast}S^{n})=\Q,\quad S\bbH_0^{+}(T^{\ast}S^{n})=0.
\]

\begin{rmk}
Note that $S\bbH(T^{\ast}S^{n})$ is a ring with unit where the grading of the unit is $n$. This implies that the element in $LH^{\Ho}(\Lambda_U)$ which represents the unit is $(\widehat a)$. Here it can be noted that $a$ represents the fundamental class of the linearized Legendrian contact homology of $\Lambda_{U}$, see \cite{EESa}. Generally, the fundamental class of Legendrian contact homology seems intimately related to the multiplicative unit in symplectic homology.
\end{rmk}

\begin{rmk}
More generally, the structure of the complex $SH(X)\oplus LH^{\Ho}(\Lambda)$ can be used to prove the chord conjecture for Legendrian spheres $\Lambda$ in the boundary of a symplectic manifold $X_0$ with vanishing symplectic homology: $SH(X_0)=0$. If the manifold resulting from handle attachment along $\Lambda$ has non-vanishing symplectic homology, then there must exists some element in $LH^{\Ho}(\Lambda)$ which is homologous to the fundamental class of the manifold (the minimum of the Morse function). For grading reasons it is not $\tau$. Thus there is a chord in this case. (As well as a holomorphic curve with marker connecting an orbit decorated by a minimum to a cyclic word decorated by a maximum. For the unknot it can be found explicitly.)
If on the other hand $1=0$ then something in $LH^{\Ho}(\Lambda)$ must be mapped by the differential to $\tau$ and there is again a chord.
\end{rmk}

\subsection{Vanishing results}\label{sec:vanish}
We show that if the equation $d c=1$ holds for some Reeb chord $c$ in the Legendrian homology algebra $LHA(\Lambda)$, where we write $d=d_{LHA}$, then it follows that surgery along $\Lambda$ does not change symplectic homology: $S\bbH(X)=S\bbH(X_0)$, where $X$ is the result of attaching a handle along $\Lambda$. To this end we use a left-right module over $LHA(\Lambda)$ defined as follows. Let $\CC$ denote the set of Reeb chords of $\Lambda$ and let  $M(\Lambda)$ denote the left-right $LHA(\Lambda)$-module generated by
\begin{itemize}
\item hat-decorated Reeb chords $\widehat c$ for $c\in\CC$, where $|\widehat c|=|c|+1$,
\item and an auxiliary variable $x$ of grading $|x|=0$.
\end{itemize}
Define $d_{M}\colon M(\Lambda)\to M(\Lambda)$, where
\begin{itemize}
\item $d_M$ acts as the algebra differential on coefficients: $d_{M}c= d_{LHA} c$, where $d_{LHA}$ is the contact homology differential,
\item $d_{M}\widehat c=xc-cx-S(d_Mc)$, where $S$ is as in the differential on $LH^{\Ho+}(\Lambda)$, see \eqref{eq:defS}, 
\item and $d_{M} x=0$.
\end{itemize}
Consider the quotient $M^{\rm cyc}(\Lambda)=M(\Lambda)/\!\!\sim$ where
\[
c_1\dots c_m\, u\, b_1\dots b_l \,\sim\, (-1)^{|\mathbf{c}|(|u\mathbf{b}|)}u\,b_1\dots b_lc_1\dots c_m.
\]
 Here $c_j\in\CC$, $j=1,\dots,m$, $b_j\in\CC$, $j=1,\dots,l$, $u=\widehat a$, $a\in\CC$ or $u=x$, and $\mathbf{c}=c_1\dots c_m$, $\mathbf{b}=b_1\dots b_l$. Then $M^{\rm cyc}$ is a $\K$-module and the map $d_M$ descends to a map on $M^{\rm cyc}(\Lambda)$ which we still denote $d_M$.

\begin{lma}[Ekholm-Ng]\label{lma:E-N}
The map $d_{M}$ is a differential. The homology of  $M^{\rm cyc}(\Lambda)$ is isomorphic to the homology of $LH^{\Ho}(\Lambda)$. The isomorphism takes $x$ to $\tau$, takes words of the form $c_1\dots c_jxc_{j+1}\dots c_m$ to $c_1\dots\widecheck{c_j} c_{j+1}\dots c_m$, and takes $c_1\dots c_j\widehat c_{j+1} c_{j+2}\dots c_m$ to the corresponding hat-decorated monomial word.
\end{lma}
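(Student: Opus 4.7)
The plan is to establish the lemma in three steps: (i) verify $d_M^2=0$ on $M(\Lambda)$, (ii) show that $d_M$ descends to the cyclic quotient $M^{\rm cyc}(\Lambda)$, and (iii) define the map $\Phi$ explicitly, check that it is a chain map, and observe that it is a bijection on generating sets.

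For (i), view $M(\Lambda)$ as the free $LHA(\Lambda)$-bimodule on the generators $x$ and $\{\widehat c\}_{c\in\CC}$, with $d_M$ extended from its values on generators by the graded Leibniz rule. It vanishes on $c\in\CC$ by $d_{LHA}^2=0$ and trivially on $x$, so the only real check is $d_M^2\widehat c=0$. Using $d_M x=0$ and the Leibniz rule,
\[
d_M^2 \widehat c \;=\; d_M(xc)-d_M(cx) - d_M S(d_M c) \;=\; [x, d_M c] - d_M S(d_M c).
\]
The key algebraic identity is $\{d_M, S\} = [x,\,\cdot\,]$ as a degree-$0$ graded derivation from $LHA(\Lambda)$ to $M(\Lambda)$. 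Both sides are graded derivations: $S$ has degree $+1$, as one sees directly from~\eqref{eq:defS}; $d_M$ has degree $-1$, so their anticommutator has degree $0$; and $[x,\,\cdot\,]$ is a degree-$0$ derivation since $|x|=0$. On the generator $c\in\CC$ the identity reads $d_M\widehat c + S(d_M c) = [x,c]$, which is literally the defining relation for $d_M\widehat c$; derivations agreeing on generators coincide everywhere. Applying the identity with argument $d_M c$ then gives $d_M S(d_M c)=[x, d_M c]-S(d_M^2 c)=[x, d_M c]$, so $d_M^2\widehat c = 0$.

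Step (ii) is a compatibility check: because $|x|=0$, the cyclic equivalence relation yields $xc\sim cx$, so on $M^{\rm cyc}(\Lambda)$ the induced differential simplifies to $d_M\widehat c = -S(d_M c)$, and more generally moving the basepoint in a cyclically composable word is compatible with $d_M$ with the signs prescribed in the definition of $M^{\rm cyc}(\Lambda)$. For (iii), the prescribed rule is a bijection on generators: lone $x$ on the $i$-th component pairs with $\tau_i$; a cyclic word containing exactly one $x$ pairs with a check-decorated cyclic monomial via $c_1\dots c_j\,x\,c_{j+1}\dots c_m \leftrightarrow c_1\dots\widecheck c_j c_{j+1}\dots c_m$ (using that a check on a non-initial letter in $\widecheck{LHO}^+(\Lambda)$ is defined by graded cyclic permutation); and a cyclic word containing one $\widehat c$ pairs in the same manner with a hat-decorated cyclic monomial. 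To prove $\Phi$ intertwines the differentials, match $d_M$ against $d_\Ho$ piece by piece. For a word $xw$ corresponding to a check-decoration, the contribution $x\cdot d_{LHA}(w)$ splits as (a) terms where $d_{LHA}$ outputs an idempotent $e_i$, which correspond via $xe_i\mapsto\tau_i$ to $\delta_\Ho$, and (b) terms with chord output, which correspond to $\widecheck d_{LHO^+}$. For a word with one $\widehat c_j$, the piece $-S(d_M c_j)$ matches $\widehat d_{LHO^+}$ (recalling $S$ vanishes on the idempotent terms in $d_{LHA}c_j$), while the contribution from $xc_j-c_jx$ surrounded by the remaining chords reduces cyclically to $\widecheck c_1\cdots c_l - c_1\cdots\widecheck c_l$, which is precisely $d_{M\,\Ho+}$.

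The main obstacle is the sign bookkeeping in this last verification: the Koszul signs in the bimodule derivation $d_M$, the signs in the cyclic equivalence on $M$, the signs inside $S$, and the signs produced by graded cyclic permutation of marks in $\widecheck{LHO}^+$ and $\widehat{LHO}^+$ must all be consistent. This can be arranged by a uniform sign convention on $\Phi$ (for instance a sign $-1$ on each hat generator), after which $\Phi$ is an isomorphism of chain complexes by construction (being a bijection of generating sets) and hence induces the claimed isomorphism on homology. A closely related bookkeeping in the setting of Hochschild/cyclic bicomplexes is carried out in~\cite{EK}.
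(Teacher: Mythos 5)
The paper itself does not supply a proof of this lemma; it is attributed to Ekholm--Ng, and the only in-paper support is the geometric interpretation in Remark~\ref{rmk:parallel}. So there is no ``paper's proof'' to compare against, and your argument has to stand on its own.

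Step~(i) is correct and nicely done. The observation that $\{d_M,S\}=d_M\circ S + S\circ d_{LHA}$ and $[x,\cdot]$ are both degree-$0$ derivations from $LHA(\Lambda)$ to $M(\Lambda)$ (here one really needs the Koszul signs for the module derivation $d_M$ and the odd-degree derivation $S$ to produce the cancellation of middle terms), and that they agree on generators $c$ precisely because of the defining relation $d_M\widehat c = [x,c]-S(d_{LHA}c)$, immediately gives $d_M S(d_{LHA}c)=[x,d_{LHA}c]$ and hence $d_M^2\widehat c=0$. This is a clean way to get $d_M^2=0$.

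Steps~(ii) and~(iii) are sketches, and you are right that the signs need care; but I want to flag a more structural issue that you do not mention and that your ``uniform sign on hat generators'' will not cure. When you match $\Phi(d_M(\cdot))$ against $\widecheck d_{LHO^+}$, consider the elementary case $\widecheck{c_1}\leftrightarrow c_1x$. Then $d_M(c_1x)=(d_{LHA}c_1)x$, and if $d_{LHA}c_1$ contributes a monomial $b_1\cdots b_l$ with $l\geq 2$, the lemma's rule puts the check on $b_l$ (the letter before $x$), giving the element of $\widecheck{LHO}^+$ represented by the monomial $b_lb_1\cdots b_{l-1}$ after the graded cyclic shift that puts the checked letter first. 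On the other hand, $\widecheck d_{LHO^+}(\widecheck{c_1})=\sum\widecheck{v_j}$ with each $\widecheck{v_j}$ checked on the \emph{first} letter of the monomial $v_j$, so the corresponding term is $\widecheck{b_1}b_2\cdots b_l$, i.e.~the monomial $b_1\cdots b_l$. Since $\widecheck{LHO}^+(\Lambda)=LHO^+(\Lambda)$ is spanned by honest (not cyclic) monomials, these are distinct basis vectors for $l\geq 2$, so this piece of your chain-map verification does not close as stated. (By contrast, this piece of the computation does close if one declares the check to go on the letter \emph{after} $x$, but then the matching with $d_{M\,\Ho+}$ that you do via $xc-cx$ fails in the same way.) You need to sort out whether the formula in the statement of the lemma, the convention defining $\widecheck d_{LHO^+}$, or your reading of the ``check on any letter'' identification should be adjusted; until then the chain-map claim in step (iii) is open, and with it the isomorphism on homology.

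Finally, a small point about step~(ii): the cyclic identification in $M^{\rm cyc}(\Lambda)$ carries a sign $(-1)^{|\mathbf{c}||u\mathbf{b}|}$, and since $d_M$ shifts degree by $1$, descent of $d_M$ to the quotient is not automatic; this is the usual Hochschild-type compatibility and should be stated as such (and verified with the same sign bookkeeping you already flag as the main obstacle).
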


It is straightforward to check that stable tame isomorphisms of $LHA(\Lambda)$ induces isomorphisms on the homology of $M(\Lambda)$. Consequently, Lemma \ref{lma:E-N} implies that Legendrian spheres with stable tame isomorphic DGAs give rise to symplectic manifolds with isomorphic symplectic homologies.
\begin{rmk}\label{rmk:parallel}
The left-right module $M(\Lambda)$ also has a geometric interpretation as follows. Consider the Legendrian sphere $\Lambda'$ obtained by pushing $\Lambda$ slightly off of itself using the Reeb flow. Then we introduce a Morse-Bott situation with one short Reeb chord connecting $\Lambda$ to $\Lambda'$ starting at each point of $\Lambda$. Perturbing out of this degenerate situation using a Morse function on $\Lambda$ with one maximum and one minimum, we find that the Reeb chords connecting $\Lambda$ to $\Lambda'$ are the following: one chord $\widehat c$ near each chord $c$ of $\Lambda$ and two short chords which we denote by $x$ and $y$ where $x$ corresponds to the minimum of the Morse function and $y$ to the maximum. 

Consider next the Lagrangian $n$-plane $L$ in the cobordism $W$ which looks like $\Lambda\times(-\infty,0]$ in the negative end of $W$. In order to continue the shift of $\Lambda$ along all of $L$ we extend the Morse function on $\Lambda$ to a Morse function on $L$ with exactly one maximum. Letting $L'$ denote the shift of $L$ determined by this function we find that $L\cap L'$ consists of one transverse double point which we take to lie near the critical point $\tau$ in the added handle and which we denote by $z$. 

Consider the module $FH(L,L')$ generated by Reeb chords starting on $\Lambda$ and ending on $\Lambda'$ and by the intersection point $z=L'\cap L$ which is a left $LHA(\Lambda)$-module
and right $LHA(\Lambda')$-module. This complex has a differential $d_{FH}$ which when acting on a mixed chord $\widehat c$ counts holomorphic disks with boundary on $\Lambda\cup\Lambda'$, positive puncture at $\widehat c$ and exactly one mixed negative puncture, which when acting on $z$ counts holomorphic disks in $W$ with one puncture at $z$ and exactly one mixed negative puncture, and which acts as $d_{LHA}$ on the coefficients. We call this complex the
{\em Floer homology of $L$ with coefficients in $LHA(\Lambda)$}. (Similar constructions were considered in \cite{E2} and the invariance of the homology this complex under Legendrian isotopies can be deduced from there.)

It is straightforward to check that
\[
d_{FH}(z)=y.
\]
 Indeed, the holomorphic strip corresponds to a flow line from the interior maximum to the maximum on the boundary and an action argument shows this strip is unique. Observing that there is a canonical isomorphism $LHA(\Lambda)\approx LHA(\Lambda')$, this implies that $FH(L,L')$ is quasi-isomorphic to $M(\Lambda)$. In particular, if the homology or $FH(L,L')$ vanishes so does the homology of $M(\Lambda)$. It follows that the homology of $M^{\rm cyc}(\Lambda)$ vanishes as well.
\end{rmk}

If there exists $c\in LHA(\Lambda)$ with $dc=1$ then the homology of $FH(L,L')$ vanishes: if $w$ is any cycle then $d_{FH}c w=w$. It thus follows from Remark \ref{rmk:parallel} that the homology of $M(\Lambda)$ vanishes and hence by Lemma \ref{lma:E-N} that $LH^{\Ho}(\Lambda)=0$.
The surgery exact triangle for symplectic homology then shows that if
a Legendrian sphere $\Lambda$ in the boundary of a Weinstein domain  has a Reeb chord $c$ with $dc=1$, then attaching a handle along it  does not change the symplectic homology.

\medskip

We use this result to construct exotic Weinstein symplectic structures on $T^{\ast} S^{n}$.\footnote{The first examples of exotic symplectic  Weinstein structures on $T^{\ast}S^n$ were constructed by  M. McLean in \cite{McL} and also  by M.~Maydanskiy and P.~Seidel in
\cite{MS}. We refer the reader to these papers and also to \cite{AbS2} as a guide on how this result can be also used for construction of other exotic symplectic objects.}  

Note that attaching a Weinstein handle  to $B^{2n}$ along a Legendrian sphere $\Lambda$  in $S^{2n-1}$ produces
a Weinstein manifold diffeomorphic to $T^{\ast}S^n$  and with the standard homotopy class of its almost complex structure, provided that the knot is topologically trivial, its Thurston-Bennequin invariant is equal  to $\tb(\Lambda_U)=(-1)^{n-1}$, and it is  Legendrian regularly homotopic to the Legendrian unknot $\Lambda_{U}$. The Legendrian spheres described below have these properties, while they have chords $c$ with $dc=1$, and  hence the corresponding symplectic homology vanishes. Thus this construction produces exotic Weinstein structures on $T^*S^n$.

\begin{figure}[ht!]
\labellist
\small\hair 2pt
\pinlabel $x_\theta$   at 673 19
\pinlabel $z$ at 380 300
\pinlabel $z$ at 19 498
\pinlabel $y$ at 416 498
\pinlabel $x$ at 310 375
\pinlabel $x$ at 735 375
\pinlabel $c$ at 621 432
\pinlabel $e_1$ at 608 407
\pinlabel $e_2$ at 543 405
\pinlabel $e_3$ at 455 400
\pinlabel $b_1$ at 563 473
\pinlabel $b_2$ at 489 412
\endlabellist
\centering
\includegraphics[width=\linewidth]{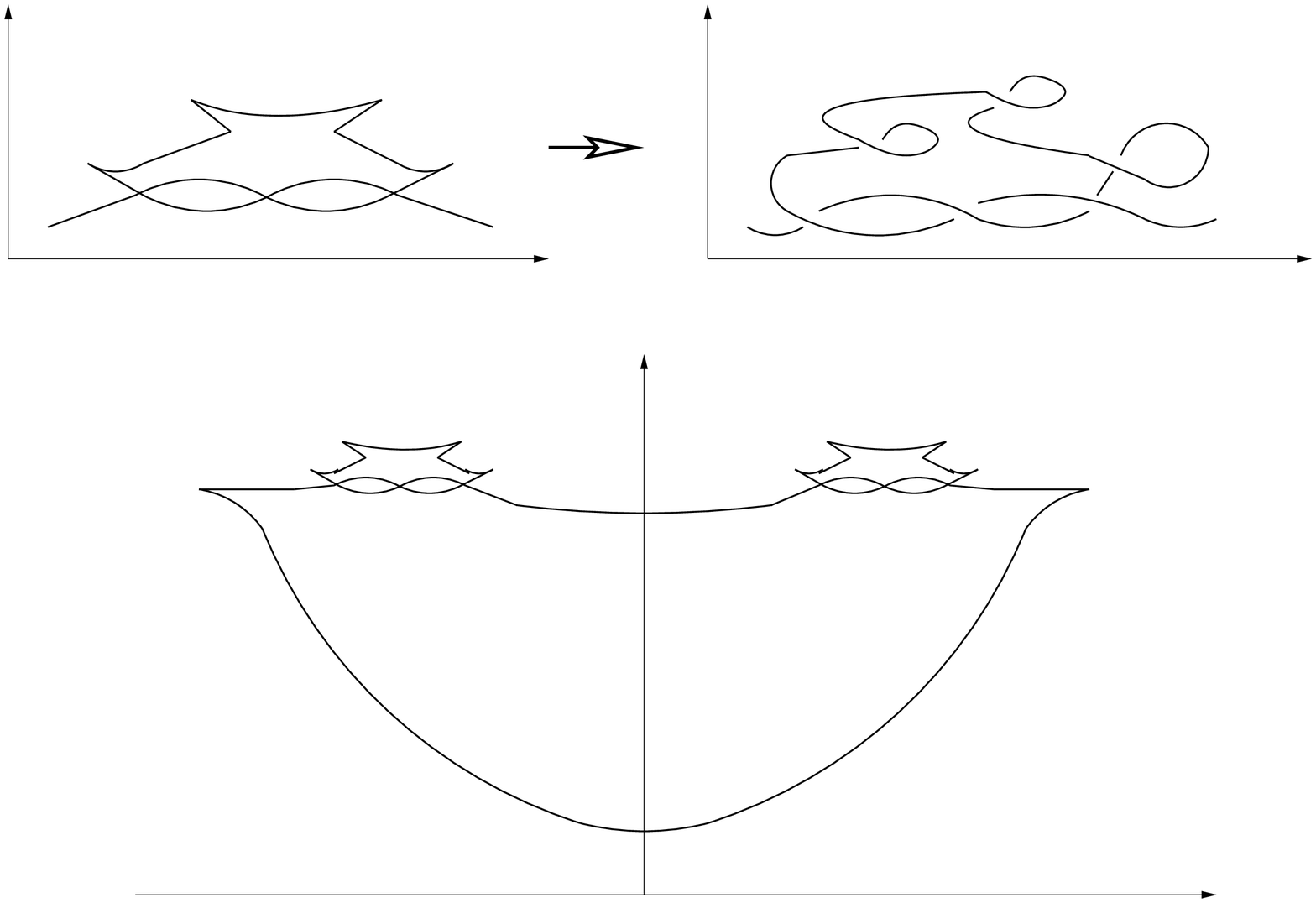}
\caption{The Legendrian sphere $\Lambda_T$. The lower picture shows the front of $\Lambda_{T}$ by showing its intersection with any $2$-plane spanned by a unit vector $\theta\in\R^{n-1}$ and a unit vector in the $z$-direction. The upper two pictures indicates how Reeb chords arise.}
\label{fig:thubenn}
\end{figure}

Consider the Legendrian sphere $\Lambda_{T}$ depicted in Figure \ref{fig:thubenn}. Note that each of the shown Reeb chords   gives rise to a Morse-Bott manifold $S^{n-2}$ of Reeb chords for $\Lambda_{T}$. After perturbation with a function with only one max and one min, each Morse-Bott sphere gives two Reeb chords. Thus, using self-evident notation, the chords of $\Lambda_{T}$ are $a$ (similar to the Reeb chord of $\Lambda_{U}$), $c^{\rm min},\,\,c^{\rm max}$, $e_j^{\rm min},\,\, e_j^{\rm max}$, $j=1,2,3$, and $b_k^{\rm min},\,\,b_{k}^{\rm max}$, $k=1,2$. Their gradings are as follows
\begin{align*}
|a|&=|c^{\rm max}|=|b_1^{\rm max}|=|b_2^{\rm max}|-1,\\
|e_j^{\rm max}|&= n-2,\\
|c^{\rm min}|&=|b_1^{\rm min}|=|b_2^{\rm min}|=1,\\
|e_j^{\rm min}|&=0.
\end{align*}
In particular,
\[
\sum_{c\in\CC(\Lambda_{T})}(-1)^{|c|}=(-1)^{|a|}
\]
and we conclude that the self linking number of $\Lambda_{T}$ agrees with that of $\Lambda_{U}$.

Let us verify that $\Lambda_{T}$ is Legendrian regularly homotopic to $\Lambda_{U}$. Figure \ref{fig:reghom} shows the fronts of a Legendrian regular homotopy connecting the knot used to construct $\Lambda_{T}$ to a straight line. Using the instances of this regular homotopy in the same way that the original knot was used to construct $\Lambda_{T}$ gives a Legendrian regular homotopy connecting $\Lambda_{T}$ to $\Lambda_{U}$.

\begin{figure}
\centering
\includegraphics[width=.4\linewidth]{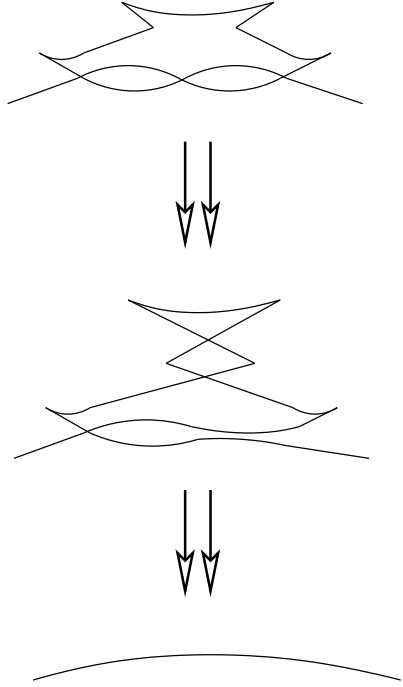}
\caption{A Legendrian regular homotopy. The first step involves crossing changes (corresponding to self tangencies of the front), the second step is an isotopy which consists of three consecutive first Reidemeister moves.}
\label{fig:reghom}
\end{figure}

Finally, we show that $d b_{k}^{\rm min}=1$, $k=1,2$. To this end, we use the characterization of rigid disks in terms of Morse flow trees, see \cite{E}. The flow trees in question start at $b_{k}^{\rm min}$, $k=1,2$. Since these Reeb chords correspond to local minima of the height between sheets no flow line goes out, $b^{\rm min}_{k}$ splits into two flow lines each of which ends in the cusp-edge, see Figure \ref{fig:stabil}.

\begin{figure}
\labellist
\small\hair 2pt
\pinlabel $x_\theta$   at 580 17
\pinlabel $z$ at 304 149
\pinlabel $b_1$ at 195 83
\pinlabel $b_2$ at 389 83
\endlabellist
\centering
\includegraphics[width=.8\linewidth]{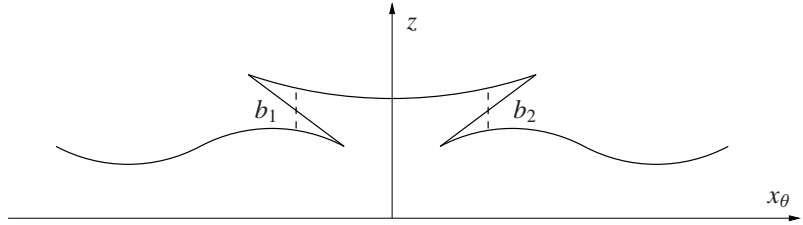}
\caption{Rigid flow trees giving $d b_{k}^{\rm min}=1$.}
\label{fig:stabil}
\end{figure}

We conclude that surgery on $\Lambda_{T}$ gives exotic symplectic structures on $T^{\ast}S^{n}$ for $n>3$.  For $n=2$, the Legendrian knot in Figure \ref{fig:thubenn} is knotted and the surgery does not give the same manifold as the surgery on the unknot.

\subsection{Surgery on the Chekanov  knots give different contact manifolds}
Consider the two Legendrian knots  (see \cite{Chek}) $\Lambda_a$ and $\Lambda_c$ in $\R^{3}=J^{1}(\R)$ depicted in Figure \ref{fig:chek}. As in Section \ref{sec:vanish} we write $1$ for the empty word of Reeb chords in $LHA(\Lambda_\ast)$, $\ast\in\{a,c\}$.

Their Legendrian homology DGAs are as follows: The algebra $LH(\Lambda_a)$ is generated by $a_1,\dots,a_9$ of gradings
\[
|a_j|=1,\, 1\le j\le 4;\quad |a_5|=2;\quad |a_6|=-2;\quad |a_j|=0,\, j\ge 7.
\]
The differential is (up to signs which are of no importance for our argument below) given by
\begin{align*}
d a_1 &= 1 + a_7 + a_7a_6a_5,\\
d a_2 &= 1 + a_9 + a_5a_6a_9,\\
d a_3 &= 1 + a_8a_7,\\
d a_4 &= 1 + a_8a_9,\\
d a_5&=\pa a_6=\pa a_7 =\pa a_8 =\pa a_9=0.
\end{align*}
The algebra $LH(\Lambda_c)$ is generated by $c_1,\dots, c_9$ of gradings
\[
|c_j|=1,\, 1\le j\le 4;\quad |c_j|=0,\, j\ge 5.
\]

\begin{figure}
\labellist
\small\hair 2pt
\pinlabel $a_2$ at 218 235
\pinlabel $a_1$ at 75 231
\pinlabel $a_4$ at 219 54
\pinlabel $a_3$ at 66 55
\pinlabel $a_5$ at 128 261
\pinlabel $a_6$ at 152 186
\pinlabel $a_9$ at 195 162
\pinlabel $a_8$ at 144 10
\pinlabel $a_7$ at 95 142
\pinlabel $c_2$ at 643 235
\pinlabel $c_1$ at 500 231
\pinlabel $c_4$ at 640 110
\pinlabel $c_3$ at 491 55
\pinlabel $c_5$ at 553 261
\pinlabel $c_6$ at 577 186
\pinlabel $c_9$ at 620 172
\pinlabel $c_8$ at 569 10
\pinlabel $c_7$ at 520 142
\endlabellist
\centering
\includegraphics[width=\linewidth]{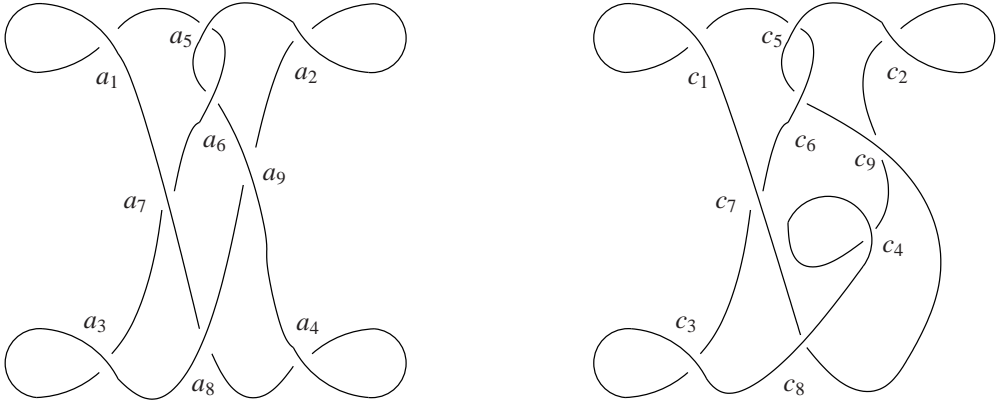}
\caption{Projections of the Chekanov knots into $T^{\ast}\R=\R^{2}$, $\Lambda_{a}$ left and $\Lambda_{c}$ right.}
\label{fig:chek}
\end{figure}

Consider now inclusions of $\Lambda_a$ and $\Lambda_c$ respectively in the sphere $S^{3}$ in a small coordinate chart as above. The complex which computes the linearized contact homology of the manifold which results from Legendrian surgery on $\Lambda_{\ast}$ is then $CH(B^{4})\oplus LH^{\rm cyc}(\Lambda_\ast)$. In particular, the gradings of the generators of $CH(B^{4})\oplus LH^{\rm cyc}(\Lambda_c)$ is non-negative. On the other hand, the generator $(a_6)$ of $LH^{\rm cyc}(\Lambda_a)$ has degree $-2$. We show that it survives in homology. To see this, we use an argument suggested by L.~Ng.  Consider the algebra $A=\Q\la a_6\ra$ and the map of differential graded algebras $\phi\colon LHA(\Lambda_a)\to A$ given by
\[
\phi(a_6)=a_6,\quad \phi(a_7)=\phi(a_9)=1,\quad\phi(a_8)=-1,\quad \phi(a_j)=0,\,\,j<6.
\]
This map induces a map $LH^{\rm cyc}(A)\to A^{\rm cyc}$ which is non-trivial on homology.

Recall from the remark on linearizations in Section~\ref{sec:lin-contact} that the collection of linearized Legendrian homologies for all augmentations is a contact invariant. The above argument shows that,
for some augmentation, the linearized contact homology of the result of $\Lambda_a$-surgery on $S^3$ has a generator in degree $-3$ and that, for any augmentation, all generators of the linearized contact homology of the result of $\Lambda_c$-surgery on $S^3$ have degree at least $-1$. Since these properties are independent of the choice of an augmentation, we conclude that the contact manifolds which arise, respectively, through surgery on $\Lambda_a$ and $\Lambda_c$ are different.

\section{Case of a  Lefschetz fibration}\label{sec:Lefschetz}
Symplectic Lefschetz presentation of a Weinstein manifold can be viewed as a special case of  Weinstein handlebody presentation.
\subsection{Lefschetz fibrations and Lefschetz type Legendrian surgeries}
\label{ssec:LefschetzFibr} 

Given two Liouville manifolds,
$(X_1,\lambda_1)$ and $(X_2,\lambda_2)$, the product Liouville structure is $(X_1\times X_2,\lambda_1\oplus\lambda_2)$. Consider $\R^{2}=\C$ with coordinates $u+iv$. The standard Liouville structure on $\R^{2}$ is given by the Liouville form $\frac12(u\,dv-v\,du)$.
The  {\em stabilization} $(X^{\st},\lambda^{\st})$ of a  $(2n-2)$-dimensional Liouville manifold $(X,\lambda)$ is the product of $X$ and $\R^{2}$ with the standard Liouville structure:
$$( X^\st, \lambda^\st)=(X,\lambda)\times \left(\R^2, \tfrac12(u\,dv -v\,du)\right).$$

Let  $Y=\p X$ be the ideal contact boundary  of $(X,\lambda)$.
We make a special choice of contact form on the ideal boundary $Y^\st $ of $ X^\st$ as follows.
Let $H:X\to [0,\infty)$ be a smooth exhausting function which is equal to $ 0$ on $\oX$, and which, in the end $E$, is of the form $h(s)$, where $s$ is the parameter of the flow $\Phi_{Z}$ of the Liouville vector field $Z$, and $h'(s)>0$. Consider the function $H^\st:X\times\R^2\to\R$ defined
by
$$
H^\st(x,w)=  H(x) +|w|^2,\quad x\in X,\,\, w=u+iv\in\C.
$$
Let $\oX^\st=\oX^\st_\eps :=\{(x,w)\colon H^\st(x,w)\leq \eps\}$.  Then  $\oX^\st $ is star-shaped with respect to the flow $\Phi_{Z^\st}$ of the Liouville vector field $Z^{\st}$ corresponding to the Liouville form $\lambda^\st$. Hence $X^\st$ is the completion of the Liouville domain $\oX^\st $ and we can identify $Y^\st$ with $\p \oX^\st$.
Note that the projection $X\times\R^2\to X$ maps $\oX^\st$ onto $\oX_\eps=\{H\leq\eps\}\subset X$.
 Consider the decomposition $Y^\st = Y^\st_1\cup Y_2^\st$, where $Y^\st_1=Y^\st \cap (\oX\times\C)$ and $Y^\st_2=Y^\st\setminus\Int Y_1^\st$. Note that $Y_1^\st$ splits as $\oX\times S^1$ and that the induced contact form  $\alpha=\lambda^\st|_{Y_1^\st}$ takes the form $\lambda+\eps dt$ where $t\in\R/{2\pi\Z}$.

We say that a collection of parameterized Legendrian spheres $\Lambda_1,\dots, \Lambda_k$ in $Y^\st_1=\oX\times S^{1}$ form a {\em basis for Lefschetz type Legendrian surgery} if they satisfy the following conditions.
\begin{itemize}
\item For $j=1,\dots,k$, the projection of $\Lambda_j$ to the $\oX$-factor is an embedded sphere.
\item For $j=1,\dots,k$, the projection of $\Lambda_j$ to the $S^{1}$-factor is contained in an arc $\Delta_j\subset S^{1}$ and $\Delta_{i}\cap \Delta_{j}=\varnothing$ if $i\ne j$.
For convenient notation, we choose numbering so that $\Delta_1,\dots,\Delta_{k}$ is the order in which the arcs appear if we start at $\Delta_{1}$ and traverse the circle in the counter-clockwise direction.
\end{itemize}

The connection of this construction with Lefschetz fibration presentations is as follows.
\begin{prp}\label{prop:Wein-Lef}
${\quad}$
\begin{enumerate}
\item   Let
$\Lambda_1,\dots,  \Lambda_k\subset Y^\st_1$
be a basis for Lefschetz type Legendrian surgery. Let $L_1,\dots, L_k$ denote the Lagrangian  spheres obtained by projecting
$\Lambda_1,\dots, \Lambda_k$  to $\oX\subset X$. Then the result $\wh X$ of attaching Weinstein handles along  $\Lambda_1,\dots, \Lambda_k$ is symplectomorphic to the Lefschetz fibration with fiber $X$ determined by the vanishing cycles $L_1,\dots, L_k$.
\item Conversely, if $L_1,\dots, L_k$ are exact Lagrangian spheres in $\oX$ which are the vanishing cycles of a Lefschetz fibration, then $L_1,\dots, L_k$ can be lifted to $Y^\st$ as a basis $\Lambda_1,\dots,\Lambda_k$ of Lefschetz type Legendrian surgery. Moreover, $\eps>0$ in the definition of $\oX^\st_\eps$ can be chosen arbitrarily small.
\end{enumerate}
\end{prp}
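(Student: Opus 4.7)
The strategy is to view $(X^{\st},\lambda^{\st})$ as (the completion of) a trivial symplectic fibration over the disk $\{|w|\le\sqrt\eps\}\subset\C$ with fiber $(\oX,\lambda)$, and to recognize each Lefschetz type handle attachment as the insertion of a standard Lefschetz critical fiber. More precisely, I would first note that the decomposition $Y^{\st}=Y_1^{\st}\cup Y_2^{\st}$ makes $\oX^{\st}_\eps$ look, away from a small neighborhood of the core $\oX\times\{0\}$, like $\oX\times D^2$ with a contact boundary portion $\oX\times S^1$ carrying the form $\lambda+\eps\,dt$. The $S^1$-coordinate $t$ is the natural argument on the base $\C$, so a Legendrian $\Lambda_j\subset\oX\times S^1$ whose projection to $\oX$ is the Lagrangian sphere $L_j$, and whose projection to $S^1$ is contained in the arc $\Delta_j$, sits over a point $t_j\in\Delta_j$ as a Lagrangian sphere in a fiber.

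For part~(1), the plan is then to check, working in a Weinstein chart of each $\Lambda_j$, that attaching a standard Weinstein $n$-handle along $\Lambda_j$ produces, on the nose, the local Lefschetz model $\{z_1^2+\dots+z_n^2=w\}$ over a small disk around $t_j$, with vanishing cycle the image of $\Lambda_j$ in the fiber, namely $L_j$. This uses the fact that the cotangent bundle neighborhood of $\Lambda_j\cong S^{n-1}$ in $Y^{\st}_1$ is symplectomorphic to a neighborhood of $S^{n-1}\subset T^{\ast}S^{n-1}$ times an interval in the $t$-direction; the handle attachment across this interval coincides with the standard Lefschetz singularity, because the Weinstein handle model and the Lefschetz local model have identical Liouville structures on their boundaries. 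Since the arcs $\Delta_1,\dots,\Delta_k$ are disjoint and cyclically ordered, the global result is the Lefschetz fibration over $\C$ whose critical values sit on the unit circle in the order $\Delta_1,\dots,\Delta_k$ and whose vanishing cycles are $L_1,\dots,L_k$; this is exactly the Lefschetz fibration determined by that ordered tuple.

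For part~(2), given vanishing cycles $L_1,\dots,L_k$ in $\oX$, I would lift each $L_j$ to $Y^{\st}_1=\oX\times S^1$ by choosing a primitive $f_j\colon L_j\to\R$ of $\lambda|_{L_j}$ (which exists since $L_j$ is exact) and setting
\[
\Lambda_j=\bigl\{(x,\,[\delta f_j(x)+t_j])\colon x\in L_j\bigr\}\subset \oX\times S^1,
\]
for suitable small $\delta>0$ and well-separated base points $t_j\in S^1$ ordered in the prescribed cyclic order. The equation $\lambda+\eps\,dt=0$ on $\Lambda_j$ holds after a rescaling of $\eps$, and by shrinking $\delta$ and $\eps$ one makes the $S^1$-projections contained in arbitrarily small disjoint arcs $\Delta_j$. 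This shows that $\eps>0$ can indeed be chosen arbitrarily small.

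The main obstacle I anticipate is the identification in part~(1) between the Weinstein handle attached in the stabilized total space and the standard Lefschetz local model: one has to verify that in a neighborhood of $\Lambda_j$, the pair (ambient Liouville structure, handle) is isomorphic to (Liouville structure on $T^{\ast}S^{n-1}\times\C$, Lefschetz handle), and that the gluing of this local model to the complement is compatible with the projection to $\C$. This is essentially a Weinstein/Moser type uniqueness statement for Liouville structures near an isotropic sphere carrying a prescribed Lagrangian projection, together with the observation that two Weinstein structures with the same critical-point data and the same handle attaching spheres are Weinstein homotopic, as in \cite{CE}.
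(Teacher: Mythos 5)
Your overall plan is aligned with the paper: for part (2) the paper likewise lifts each vanishing cycle $L_j$ to $Y_1^{\st}=\oX\times S^1$ using a primitive of $\lambda|_{L_j}$, and your part (1) sketch (local identification of the Weinstein $n$-handle with the standard Lefschetz model $\{z_1^2+\cdots+z_n^2=w\}$) is a reasonable outline. However, there is a genuine gap in your treatment of the ``$\eps$ arbitrarily small'' claim in part (2), and a related sign/scaling slip in the lift formula.

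First, the sign: with $\alpha=\lambda+\eps\,dt$ and a lift of the form $t=\delta f_j(x)+t_j$, the pullback is $(1+\eps\delta)\,df_j$, which never vanishes for $\delta>0$; the Legendrian condition forces $t=-f_j(x)/\eps+C_j$. This is not merely cosmetic, because it pins down the scaling and removes $\delta$ as a free parameter: the $S^1$-extent of the arc $\Delta_j$ is $\|f_j\|/\eps$, which \emph{grows} as $\eps\to 0$ for a fixed primitive. So your statement that ``by shrinking $\delta$ and $\eps$ one makes the $S^1$-projections contained in arbitrarily small disjoint arcs'' is circular --- the Legendrian condition already determines the scale factor, and shrinking $\eps$ makes the arcs larger, not smaller. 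The missing idea is how to shrink $\|f_j\|$ itself. The paper does this by replacing $L_j$ with $\Phi_Z^{-s}(L_j)$ (the negative-time Liouville flow), using $(\Phi_Z^{-s})^*\lambda=e^{-s}\lambda$ to scale the primitive by $e^{-s}$; since the $L_j$ are exact Lagrangians, this replacement is a Hamiltonian isotopy and therefore leaves the vanishing cycle data unchanged. Without some such argument, one cannot satisfy the requisite bound $\|f_j\|<2\pi\eps/k$ for small $\eps$, and the ``Moreover'' part of the statement is not established.

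One further caution for part (1): identifying the Weinstein handle attachment with the insertion of a Lefschetz critical fiber requires not just a pointwise local model near $\Lambda_j$, but a statement that the two Liouville structures on the glued domain are Weinstein homotopic relative to the complement; invoking the uniqueness results of \cite{CE} is the right instinct here, but the proof must also check that the resulting Liouville vector field is compatible with the projection to $\C$ away from the handle, since that is what makes $\widehat X$ a Lefschetz fibration globally rather than merely a Weinstein domain with the same handle decomposition.
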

Let us comment here on the second statement. The Liouville form $\lambda$ is exact on the vanishing cycles $L_1,\dots, L_k$: $\lambda|_{L_j}=dF_j$, $j=1,\dots, k$.  If
\begin{equation}\label{eq:norm-F}
||F_j||=\max\, F_j-\min\, F_j<\frac{2\pi\eps}k
\end{equation}
then the vanishing cycle $L_j$ lifts to the Legendrian spheres
$$
\Lambda_j=\{(x,t)\in X\times\R/2\pi\Z\colon x\in L_j,\,\,t=-F_j(x)+C_j\},
$$
where $C_j$ is a constant, and for an appropriate choice of constants $C_1,\dots,C_k$, the Legendrian spheres $\Lambda_1,\dots,\Lambda_k$ form a basis of Lefschetz type Legendrian surgery.

As above, let $Z$ be the Liouville vector field on $X$ dual to the Liouville form $\lambda$ and let $\Phi^{s}_Z:X\to X$ denote the corresponding time $s$ flow. Then $(\Phi_Z^{-s})^*\lambda=e^{-s}\lambda$,
and hence, replacing the vanishing cycles
$L_j$ by $\Phi^{-s}_{Z}(L_j)$, which in view of the exactness condition  are Hamiltonian isotopic  to $L_j$, we can  make the $C^0$-norms $||F_j||$, $j=1,\dots, k$ sufficiently small so that the bound \eqref{eq:norm-F} holds for any given $\eps>0$.

\subsection{Legendrian homology algebra and Fukaya-Seidel categories}
\label{ssec:LefschetzLHA} 

Let $\Lambda_1,\dots, \Lambda_k\subset Y^\st_1$ be a basis for Lefschetz type Legendrian surgery and let $L_1,\dots, L_k\subset X$ be their Lagrangian projections. We assume that for $i\ne j$, $L_i$ and $L_j$ intersect transversely. We also can assume, without loss of generality, that the intervals $\Delta_1, \dots, \Delta_k$ which contain projections of the Legendrian spheres to the $S^1$-factor of $Y_{1}^{\st}$ are all contained in an arc $[0,\frac\pi2]\subset S^1$ (see (2) in Proposition \ref{prop:Wein-Lef} above).

\subsubsection{Generators of the Legendrian algebra}
The Reeb vector field of the form $\alpha=\lambda^\st|_{Y_1^\st}=\lambda+\eps dt$  is proportional to $\frac\p{\p t}$. Therefore, all Reeb orbits in $Y_{1}^{\st}$ are closed and we are in a Morse-Bott situation. J.~Sabloff  has developed, see \cite{Sabloff}, a formalism for describing Legendrian homology algebras in $S^1$-bundles which is appropriate for this situation and which we discuss below\footnote{The authors thank J.~Sabloff for adjusting his formalism specially for our purposes.}.

The generators of the Legendrian homology algebra are the Reeb chords of the Legendrian link $\Lambda=\Lambda_{1}\cup\dots\cup \Lambda_k$, which can be described as follows. For convenience we choose a trivialization of the canonical bundle on $X^{\st}$ which on $\oX\times\C$ is induced by the projection $\oX\times\C\to \oX$ from a trivialization of the canonical bundle of $\oX$. We first describe mixed chords with their endpoints on different components. Let $i<j$ and recall that we have $\Delta_j\subset [0,\tfrac{\pi}{2}]$ for $j=1,\dots,k$. Each intersection point $a\in L_i\cap L_j$ corresponds to infinitely many Reeb chords connecting
$\Lambda_i$ to $\Lambda_j$ and $\Lambda_j$ to $\Lambda_i$. We use the following notation:
\begin{equation}\label{eq:intchord1}
\raq_{a}^{(p)},\quad p=0,1,2,\dots,
\end{equation}
denotes the Reeb chord corresponding to $a$ which connects $\Lambda_i$ to $\Lambda_j$ and which passes $p$ times through the point $\pi\in S^1$, and similarly,
\begin{equation}\label{eq:intchord2}
\laq_{a}^{(p)},\quad  p=1,2,3,\dots,
\end{equation}
denotes the Reeb chord corresponding to $a$ which connects $\Lambda_j$ to $\Lambda_i$ and which passes $p$ times through the point $\pi\in S^1$. We will refer to the integer $p$ as the {\em multiplicity} of the chord. Increasing the multiplicity of a chord by $1$ increases its grading by $2$. It is convenient to organize all the chords corresponding to $a$ into two power series. We write
$$
\Raq_{a}=\sum_{p=0}^\infty \raq_{a}^{(p)}T^p\quad\text{and}\quad \Laq_{a}=\sum_{p=1}^\infty \laq_{a}^{(p)}T^p,
$$
where $T$ is a formal variable. If $|a|$ denotes the grading of the shortest Reeb chord corresponding to $a$ then the grading of these generators are as follows:
\[
|\raq_a^{(p)}|=|a|+2p,\quad |\laq_a^{(p)}|=(n-3)-|a|+2p.
\] 

Second, we describe pure chords which connect a component to itself. For each $\Lambda_i$, there is a sphere's worth of chords of given multiplicity connecting $\Lambda_i$ to itself. In the spirit of Morse-Bott formalism, see the sketched proof of Proposition \ref{prop:Lefschetz-differential} below for a more detailed description, we fix two points $m_{i\pm}$ on $L_i$ and associate with them two series of chords,
$$
\q_{i\pm}=\sum\limits_{p=1}^\infty q_{i\pm}^{(p)}\,T^{p}.
$$
Together with idempotents $e_1,\dots, e_k$ corresponding to the empty words of pure chords, the mixed and pure chords described above are all the generators of the algebra $LHA(\Lambda)$. The gradings of the pure generators are the following:
$$
|q^{(p)}_{i-}|=2p-1,\; |q^{(p)}_{i+}|=2p-1+(n-1),\;  |e_i|=0, \quad i=1,\dots,k.
$$

Let  $\II=\bigcup_{i<j}\,L_i\cap L_j$ be the set of all intersection points between distinct components  $L_1,\dots, L_k$. Write
\[
\II_{+\infty}:= \II\cup\{m_{1-},\dots,m_{k-}\},\qquad
\II_{-\infty}:= \II\cup\{m_{1+},\dots,m_{k+}\}.
\]
\begin{rmk} In our ``Morsification'' scheme explained below  in the sketched proof 
 of Proposition \ref{prop:Lefschetz-differential},  $m_{i-}$ and $m_{i+}$ will correspond, respectively  to the minimum and the maximum of an auxiliary Morse function on $L_i$.
 The sign $\pm$ in $\II_{\pm\infty}$ refers  to conditions at the positive and negative ends
 of the symplectization.  In the Morse-Bott formalism prescribing an image of a point at $+\infty$
means constraining it to a minimum, while at $-\infty$ to a maximum.
This is the reason for the apparent discrepancy in the above notation. 
\end{rmk}

\subsubsection{Admissible words and monomials}
We say that a word $w=a_0(L_{i_1})a_1(L_{i_2})a_2\dots a_r(L_{i_{r+1}})$, with $a_0\in\II_{+\infty}$ and $a_1,\dots, a_r\in\II_{-\infty}$ is {\em admissible} if the following conditions hold:
\begin{itemize}\item if $i_{r+1}\neq i_1$ then $a_0\in L_{i_1}\cap L_{i_{r+1}}$, otherwise
$a_0=m_{i_1-}$;
\item if  $i_j\neq i_{j+1}$, $j=1,\dots, r$, then $a_j\in L_{i_j}\cap L_{i_{j+1}}$, otherwise
$a_j=m_{i_j+}$.
\end{itemize}
We associate two monomials $\bQ_{w}$ and $\q_w$ to an admissible word $w$ as follows. Let $w=a_0(L_{i_1})a_1(L_{i_2})a_2\dots a_r(L_{i_{r+1}})$. The monomial $\bQ_w$ is the monomial in the variables
\[
\left\{\q_{i+},\Laq_{a},\Raq_{a}\right\}_{1\le i\le k;\,\, a\in\II}
\]
obtained from $w$ by removing $a_0$ and all $(L_{i_j})$
and replacing each $a_j$, $j>0$ by $\Raq_{a_j}$ if $i_{j+1}<i_j$, by $\Laq_{a_j}$ if $i_{j+1}>i_j$, or by $\q_{i_j+}$ if $i_{j+1}=i_j$. We write
\[
\bQ_{w}=\sum_{p=0}^{\infty} Q_{w}^{(p)}\,T^{p},
\]
and note that each $Q_{w}^{(p)}$ is a sum of monomials in the variables
\[
\left\{q_{i+}^{(p)},\laq_{a}^{(p)},\raq_{a}^{(p)}\right\}_{1\le i\le k;\,\, a\in\II;\,\, p\ge 0}.
\]
The monomial $\q_w$ is defined as
$$
\q_w=\begin{cases}\q_{j-},&\hbox{if}\;\;a_0=m_{j-}\in\{m_{1-},\dots,m_{k-}\}\cr
\Raq_a,&\hbox{if}\;\; a_0=a\in \II\;\;\hbox{and}\;\; i_{r+1}<i_1,\cr
\Laq_a,&\hbox{if}\;\; a_0=a\in \II\;\;\hbox{and} \;\;i_{r+1}>i_1.
\end{cases}
$$
We write
\[
\q_{w}=\sum_{p=0}^{\infty} q_{w}^{(p)}\,T^{p}.
\]

Our next goal is to define the moduli spaces which enter in the expression for the differential $d_{LHA}$ on $LHA(\Lambda)$. 
\subsubsection{Generalized holomorphic disks}\label{ssec:genholdisk}
We first define the notion of generalized holomorphic disks. Let $T$ be a {\it  fat rooted tree}, where the word ``fat'' means that the edges adjacent to a given vertex are cyclically ordered.
Let us  orient the  edges away from the root, so that    any vertex in $T$ which is not the root has one incoming edge and several outgoing edges.  If $v$ is a vertex of a fat rooted tree we write $o(v)$ for the number of outgoing edges at $v$. Fix Morse functions 
\begin{equation}\label{eq:chordfunctions}
f_i\colon L_i\to\R,\quad i=1,\dots,k,
\end{equation}
with exactly two critical points: maximum at $m_{i+}$ and minimum at $m_{i-}$. We write $f$ for the function on the disjoint union of the components of $L$ such that $f|_{L_j}=f_j$, $j=1,\dots,k$ and call it the {\em chord function}. 
 
A \emph{generalized holomorphic disk} with underlying fat rooted tree $T$ consists of the following data:
\begin{itemize}
\item[$(\mathrm{v})$] for each vertex $v\in T$ there is a holomorphic disk $u_v\colon (D,\pa D)\to (X,L)$ with boundary punctures of two types, \emph{external and internal}; 
 if $v$ is the root then $u_v$ has one distinguished external boundary puncture $\zeta_+$;
\item[$(\mathrm{e})$] for each edge $e$ connecting vertices $v_1$ and   $v_2$ there are   interior punctures $\zeta_{v_1,e}$ and $\zeta_{v_2,e}$ of the holomorphic disks $u_{v_1}$ and $u_{v_2}$ and a component $L_j$ of $L$ such that $u_{v_1}(\zeta_{v_1,e})$ and $u_{v_2}(\zeta_{v_2,e})$ both lie in $L_j$, and an orientation preserving diffeomorphism of the edge $e$ onto a non-zero length segment of a gradient trajectory of $f_j$ connecting $u_{v_1}(\zeta_{v_1,e})$ and  $u_{v_2}(\zeta_{v_2,e})$. Each internal puncture corresponds to the endpoint of an edge in $T$ and the internal punctures corresponding to different edges are disjoint.
\end{itemize}
We think of a generalized holomorphic map as a map of a collection of disks joined by line segments according to the underlying fat rooted tree and write $u\colon\Delta\to(X,L)$.
Note that there is a canonical ``counter-clockwise'' order of the external boundary punctures of any generalized holomorphic disk  $u\colon \Delta\to (X,L)$, where the distinguished puncture on the root holomorphic disk is chosen as the first one. See Figure \ref{fig:generalizeddisk}.

\begin{figure}
\labellist
\small\hair 2pt
\pinlabel $+$   at 157 344
\pinlabel $-\nabla f$ at 356 312
\pinlabel $-\nabla f$ at 338 225
\pinlabel $-\nabla f$ at 225 170
\pinlabel $-\nabla f$ at 470 101
\endlabellist
\centering
\includegraphics[width=.6\linewidth]{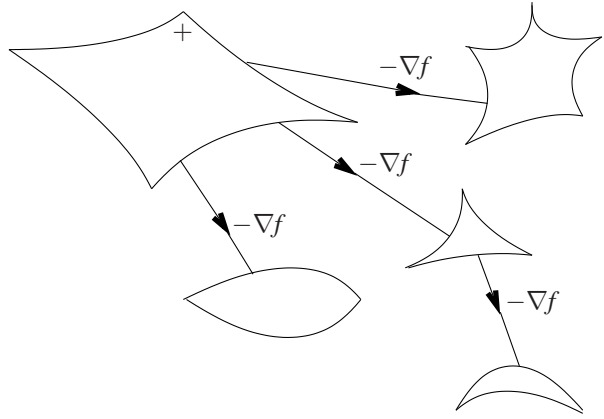}
\caption{A generalized holomorphic disk. The $+$ sign indicates the distinguished puncture in the root disk.}
\label{fig:generalizeddisk}
\end{figure}
\subsubsection{Moduli spaces of generalized holomorphic disks for the differential}\label{sec:mod-for-diff}
Consider an admissible word $w=a_0(L_{i_1})a_1(L_{i_2})a_2\dots a_r(L_{i_{r+1}})$. A maximal sub-word of $w$ of the form $m_{j+}(L_j)m_{j+}(L_j)\dots m_{j+}(L_{j})$ will be called an \emph{$m_{j+}$-block}. We will associate with $w$ generalized holomorphic disks with distinguished external (positive) puncture corresponding to $a_0$ and external (negative) punctures corresponding to $a_j$, $1\le j\le r$. The description of the differential requires moduli spaces where punctures corresponding to letters in the same block may collide. This is the reason for the jet conditions that appear below. 

Associate to $w$ the half open interval $(0,r]$.  Let $\Sigma(w)$ denote the set of all partitions 
\begin{equation}\label{eq:intervalpart}
(0,r]=\bigcup_{t=1}^{N(\sigma)}(s_{t-1},s_{t}]
\end{equation}
of $(0,r]$ into half open intervals with the following properties:
\begin{itemize}
\item all interval endpoints $s_t$, $t=0,\dots,N(\sigma)$ are integers;
\item if $a_j\in\II$ then $j=s_t$ for some $t$ and $s_{t-1}=s_t-1$ (i.e.~if $a_j$ corresponds to an intersection point then it has a corresponding interval in $\sigma$ of length equal to $1$);
\item if the length  of the interval $(s_{t-1},s_t]$ in $\sigma$ is $>1$ then there exists $j\in\{1,\dots,k\}$ such that for any two integers $s',s''\in(s_{t-1},s_t]$, the letters $a_{s'}$ and $a_{s''}$ belong to the same $m_{j+}$-block of $w$. 
\end{itemize}
If $\sigma\in\Sigma(w)$ then, as indicated above, we write $N(\sigma)$ for the number of intervals in $\sigma$.  We write $\ell(t)$ for the length of the interval $(s_{t-1},s_t]$, $1\le t\le N(\sigma)$.  
Thus $\ell(t)\geq 1$, $t=1,\dots, N(\sigma)$, and $\sum_{t=1}^{N(\sigma)}\ell(t)=r.$

Choose for each $j=1,\dots, k$ a germ at the point $m_{j+}$ of a smooth oriented $1$-dimensional submanifold $\lambda_j\subset L_j$. Given  $\sigma\in\Sigma(w)$ we define the (non-compactified) moduli space $\MM_{w,\sigma}$ of generalized 
holomorphic disks in $X$ with $N(\sigma)+1$ external boundary punctures denoted by $z_+,z_{s_1},\dots,z_{s_{N(\sigma)}}$,
\[
u\colon (\Delta,\p \Delta)\to  (X, L= \cup_{i=1}^k L_i),
\]
which satisfies the following conditions:
\begin{enumerate}
\item  $u$ maps the distinguished
puncture $z_+$ to the intersection point $a_0$ if $a_{0}\in\II$ or to the marked point $a_0=m_{i-}$ if $a_0\in\{m_{1-},\dots,m_{k-}\}$;
\item  if $a_{s_j}\in\II$, $j\in\{1,\dots,N(\sigma)\}$ then  the map $u$ maps the
puncture $z_{s_j}$ to the intersection point $a_{s_j}$; 
\item  $u$ maps the boundary arc bounded by $z_+$ and $z_{s_1}$ to $L_{i_{s_1}}$, that bounded by $z_{s_j}$ and $z_{s_{j+1}}$ to $L_{s_{j+1}}$, and that bounded by $z_{s_N(\sigma)}=z_r$ and $z_+$ to $L_{i_{r+1}}$;
\item  if $a_{s_j}=m_{i+}$ then $u(z_{s_j})=m_{i_+}$ and if, in addition $\ell(j)=s_j-s_{j-1}>1$ then the derivative $u'(z_{s_j})$ does not vanish and defines the given orientation of $\lambda_i$, and furthermore, the germ of the curve $u|_{\p D}$ is $(\ell(j)-1)$-tangent to $\lambda_i$ at $u(z_{s_j})=m_{i+}$ (i.e.~fixing a $1$-submanifold $\ol\lambda_i\subset L_i$ which represents the germ $\lambda_i$ at $m_{i+}$, the curve $u|_{\p D}$ is contained in $\ol\lambda_i$ up to order $\ell(j)-1$ at $m_{i+}$).
\end{enumerate}

 Write
\[
\M_w:=\mathop{\bigcup}\limits_{\sigma\in\Sigma(w)}\M_{w,\sigma}.
\]
The moduli space $\M_w$ can be compactified. The boundary strata include, besides the standard Deligne-Mumford-Gromov type strata corresponding to nodal disks also strata, where the derivative $u'(z_{s_j})$ vanishes at some of the punctures and strata where the length of gradient segments degenerates to $0$.  We denote the union of these strata by $\M_w^{\rm sing}$. Details about $\M_{w}^{\rm sing}$ will not be important below, what will be important is that generically one has $\dim(\M_w^{\rm sing})<\dim\M_w$, and hence if $\dim(\M_w)=0$ then, under our transversality assumption, we have $\M_w^{\rm sing}=\varnothing$ and $\M_w$ is compact.
 
Lemma \ref{lm:implicit-function} below, which follows from the implicit function theorem, clarifies the meaning of the moduli space $\M_w$.
 For each $i=1,\dots, k$  let us consider a parameterization $\wt\lambda_i:(-1,1)\to L_i$  of the curve $\lambda_i$
 such that $\wt\lambda_i(0)=m_{i+}$ and the vector $\wt\lambda_i'(0)$ defines the given orientation of $\lambda_i$.
 Let $N_i(w)$ be the maximal length of an $m_{i+}$-block in $w$. Take $N_i(w) $ points
 $0\leq t_i^1<\dots<t^{N_i(w)}_i<1$ and for a   $\delta\in(0,1)$ consider the points
 $m_{i+}^{j,\delta}=\wt\lambda_i(\delta t^j_i), j=1,\dots, N_i(w)$.


We say that the boundary punctures $\zeta_1,\dots,\zeta_r$ corresponding to an $m_{i+}$-block constitutes an {\em $m_{i+}$-cluster}. Define the moduli space  $\M_{w,\sigma}^{\delta}$ by replacing condition (4) above by
 \begin{description}
 \item{$(4^{\delta})$} If $\zeta_j$ is the $j$-th point in an $m_{i+}$-cluster then $u(\zeta_j)=m_{i+}^{j,\delta}$,
 \end{description}
see Figure \ref{fig:cluster}.
Note that the formal dimensions satisfies $\dim(\M^{\delta}_{w,\sigma})=\dim(\M_{w,\sigma})$ for any $\sigma\in\Sigma(w)$.
\begin{figure}
\labellist
\small\hair 2pt
\pinlabel $\zeta_1$  at 100 265
\pinlabel $\zeta_2$  at 121 265
\pinlabel $\zeta_3$  at 145 265
\pinlabel $\zeta_4$  at 372 265
\pinlabel $\zeta_5$  at 398 265
\pinlabel $m_{i+}^{1,\delta}$ at 158 30
\pinlabel $m_{i+}^{2,\delta}$ at 170 80
\pinlabel $m_{i+}^{3,\delta}$ at 230 50
\endlabellist
\centering
\includegraphics[width=.8\linewidth]{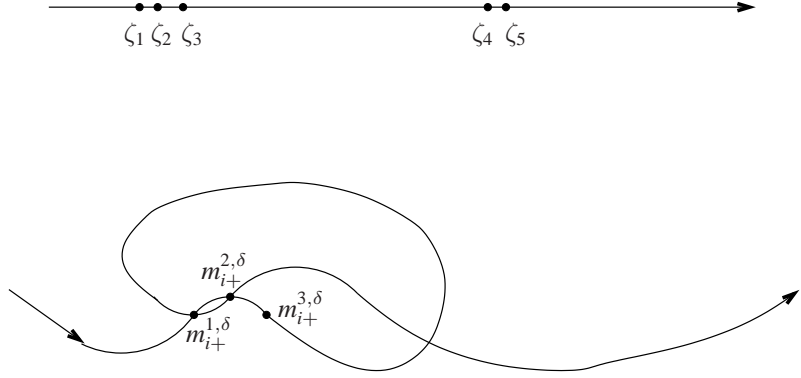}
\caption{The top picture shows the boundary punctures of two clusters in the source and the bottom picture shows the corresponding picture in the target.}
\label{fig:cluster}
\end{figure}

\begin{lma}\label{lm:implicit-function}
Suppose   that $\M_w$ is $0$-dimensional, regular and  $\M_w^{\rm sing}=\varnothing$.   Then for any $\sigma \in \Sigma(w)$ and for all sufficiently small $\delta>0$  the moduli space  $\M_{w,\sigma}^{\delta}$ 
is  regular and diffeomorphic  to $\M_{w,\sigma}$. 
\end{lma}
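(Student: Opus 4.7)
The plan is to realize both moduli spaces as the zero locus of Fredholm sections of the same Banach bundle, differing only by a bounded perturbation of the boundary constraint at each $m_{i+}$-cluster, and then to deduce the result from an implicit function theorem together with a compactness argument.

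First I would set up the standard Banach manifold $\mathcal{B}_{w,\sigma}$ of $W^{1,p}$-maps $u\colon(\Delta,\p\Delta)\to(X,L)$ with the prescribed source topology and punctures, matching the distinguished external puncture to $a_{0}$ and the external punctures labelled by intersection points to the corresponding points of $\II$, and glued along internal punctures and gradient segments as in the definition of generalized disks. On top of the usual $\bar\partial$-section, the condition at an $m_{i+}$-cluster of length $\ell$ defines a section $E^{0}_{w,\sigma}$ of an auxiliary finite-rank bundle whose fibre models the $(\ell-1)$-jet of the boundary curve at $m_{i+}$ relative to $\lambda_{i}$. The condition defining $\M_{w,\sigma}^{\delta}$, where the boundary is required to pass through the $\ell$ prescribed points $m_{i+}^{j,\delta}=\wt\lambda_{i}(\delta t_{i}^{j})$, defines a nearby section $E^{\delta}_{w,\sigma}$ of the same bundle. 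The key algebraic observation is that, at any fixed solution $u$ with $u'(\zeta_{j})\neq 0$ pointing in the direction of $\lambda_{i}$, the rescaled coordinates $\delta\mapsto\delta^{-1}\circ\mathrm{pr}_{\lambda_{i}}\circ u|_{\p D}$ convert the point constraints $u(\zeta_{j})=m_{i+}^{j,\delta}$ into an $\ell\times\ell$ Vandermonde-type condition on the first $\ell$ boundary jets of $u$ at $\zeta_{j}$, whose $\delta\to 0$ limit is precisely the $(\ell-1)$-tangency condition that defines $\M_{w,\sigma}$; consequently $E^{\delta}_{w,\sigma}\to E^{0}_{w,\sigma}$ smoothly as $\delta\to 0$, with identical linearisation along the direction transverse to the cluster.

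Given this, I would first argue that since $\M_{w,\sigma}\subset\M_{w}$ is $0$-dimensional and regular, and $\M_{w}^{\mathrm{sing}}=\varnothing$ implies $\M_{w,\sigma}$ is compact, the linearisation of the $\delta=0$ Fredholm section at each of its finitely many zeros is an isomorphism. By the convergence of sections established above, the linearisation of $E^{\delta}_{w,\sigma}$ at the same point remains an isomorphism for all $\delta$ in some interval $(0,\delta_{0}(u))$, so the implicit function theorem produces, for each $u\in\M_{w,\sigma}$, a unique nearby solution $u^{\delta}\in\M_{w,\sigma}^{\delta}$; this gives a smooth injection $\M_{w,\sigma}\hookrightarrow\M_{w,\sigma}^{\delta}$ for all $\delta\in(0,\delta_{0})$ with $\delta_{0}:=\min_{u}\delta_{0}(u)>0$, and simultaneously shows $\M_{w,\sigma}^{\delta}$ is regular at every point in its image.

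The remaining step is surjectivity: for small enough $\delta$, every element of $\M_{w,\sigma}^{\delta}$ arises from one of $\M_{w,\sigma}$. Here the hard part is to exclude the possibility of a sequence $u_{n}\in\M_{w,\sigma}^{\delta_{n}}$ with $\delta_{n}\to 0$ that does not come from the implicit function theorem. By the uniform action bound coming from the fixed asymptotics and by SFT-type compactness, a subsequence of $u_{n}$ converges, modulo bubbling and breaking, to a limit object. Ruling out bubbling/breaking is the main obstacle: potential limits would land in $\M_{w}^{\mathrm{sing}}$ (gradient segments shrinking to zero length, derivative vanishing at some $\zeta_{j}$, or nodal splittings), which is empty by hypothesis; in particular, $u'(\zeta_{j})\neq 0$ is preserved in the limit because the point constraints $u_{n}(\zeta_{j})=m_{i+}^{j,\delta_{n}}$ spread along $\lambda_{i}$ at the prescribed rate $\delta_{n}$, forcing the derivative at $m_{i+}$ to remain non-degenerate in the direction of $\lambda_{i}$. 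Hence any such limit lies in $\M_{w,\sigma}$, to which the implicit function theorem applies, contradicting the assumption that $u_{n}$ is not produced by it. This gives the desired bijection, which is a diffeomorphism of $0$-manifolds.
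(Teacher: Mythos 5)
Your proposal implements exactly the strategy the paper only names — the lemma ``follows from the implicit function theorem'' — and the three ingredients you identify (realizing the jet condition as the $\delta\to 0$ limit of the point constraints, applying IFT near each of the finitely many zeros, and closing the surjectivity gap by SFT compactness together with $\M_w^{\rm sing}=\varnothing$) are the right ones; since the paper provides no further detail, this is a faithful fleshing-out rather than a different route.

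One place that needs more care than your wording suggests: you describe $E^0_{w,\sigma}$ and $E^\delta_{w,\sigma}$ as sections of \emph{the same} Banach bundle, but the underlying source domains differ — in $\M_{w,\sigma}$ an $m_{i+}$-block of length $\ell$ contributes a single boundary puncture carrying a jet condition, whereas in $\M^\delta_{w,\sigma}$ it contributes $\ell$ separate boundary punctures carrying point constraints. To literally compare the two sections one must first enlarge the Banach manifold by adding the $\ell-1$ extra boundary marked points as free parameters (this is consistent with the dimension count, since the $\ell-1$ extra constraints are balanced by the $\ell-1$ extra marked points) and then exhibit the $(\ell-1)$-jet condition as the limit of the $\ell$ point-evaluation constraints as those marked points collide. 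Your ``Vandermonde-type'' observation is the right mechanism for this collision (the Vandermonde matrix in the separations $\delta t_i^j$ relates the point evaluations to the Taylor coefficients in the direction transverse to $\lambda_i$), but as stated the jets should be taken at the cluster limit point $z_{s_t}$, not at the individual $\zeta_j$. These are expository fixes rather than mathematical gaps; the core argument is sound.
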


The following proposition describes the differential $d=d_{LHA}$. 


\begin{rmk}\label{r:n=2extra}
As mentioned above, the perturbation scheme for $\dim(L)=n-1$ used to compute the differential involves Morse theory on the $(n-1)$-dimensional sphere $S^{n-1}$. Here the lowest dimensional case $n=2$ is spacial since Morse flow lines have codimension $0$ in $S^{1}$. For this reason we us the following constructions when $n=2$. Consider $L=L_1\cup\dots L_k$. The orientation of $L_j$ together with the choice of the point $m_{j+}$ on $L_j$ induces an order on the intersection points $a\in\II$  which lie in $L_j$. If $a$ and $b$ are intersection points in $L_j$ we write $a>_j b$ if $a$ precedes $b$ in this order. We next introduce a global order on $\II$ as follows
for $a,b\in\II$: Let $a\in L_{i}\cap L_{j}$, $i<j$ and $b\in L_{s}\cap L_{t}$, $s<t$ then  
\[
a>b\quad\text{ if }\quad
\begin{cases}
&j>t\text{ or,}\\
&j=t\text{ and }i>s,\text{ or,}\\
&j=t,\;i=s,\text{ and }a>_jb.
\end{cases}
\]
Furthermore we will use the following subsets of $\II$: for $1\le i\le k$, let 
\begin{align*}
\II_{<i}&=\{a\in \II\colon a\in L_i\cap L_j,\; j<i\},\\
\II_{>i}&=\{a\in \II\colon a\in L_i\cap L_j,\; j>i\},\\
\II_i&=\II_{<i}\cup\II_{>i}.
\end{align*}
\end{rmk}

\begin{prp}\label{prop:Lefschetz-differential}
The differential $d$ can be written as $d=d_{\const}+d_{{\rm MB}}+d_{{\rm h}}$, where the summands satisfy the following.
\begin{itemize}
\item[${\rm (i)}$] If $a\in\II$ then
\[
d_{\const}\Raq_{a}=d_{\const}\Laq_{a}=0.
\]
For $1\le i\le k$,
\[
d_\const q_{i-}^{(1)}=e_i,\quad d_\const q_{i+}^{(1)}= d_{\const} q_{i\pm}^{(p)}=0, \text{ for all } p>1.
\]

\item[${\rm (ii)}$] For $1\le i\le k$,
\[
d_{{\rm h}}\q_{i+}=0,
\]
and on other generators $d_{h}$ is determined by the moduli spaces $\MM_{w}$, $w$ admissible, with $\dim(\MM_{w})=0$ as follows: If the moduli space $\MM_w$ is $0$-dimensional then every element in it contributes the term $\pm \bQ_w$ to $d_{{\rm h}}\q_w$.

\item[${\rm (iii)}$]  For $1\le i\le k$, let  us denote

\begin{enumerate}\item Suppose $n>2$. Then
\begin{align*}
d_{{\rm MB}}\q_{i-}&=\q_{i-}\q_{i-},\cr
d_{{\rm MB}}\q_{i+}&=
\q_{i-}\q_{i+}+(-1)^{n-1}\q_{i+}\q_{i-}
+\sum_{a\in\II_{<i}}\Raq_a\Laq_a 
+\sum_{a\in\II_{>i}}\Laq_a\Raq_a.
\end{align*}
 If $a\in\II$ is an intersection point in $L_i\cap L_j$, $i<j$,  then  
\begin{align*}
d_{{\rm MB}}(\Raq_a)&=\q_{j-}{\Raq}_a +(-1)^{|a|-1}\Raq_a\q_{i-},\\
d_{{\rm MB}}(\Laq_a)&=\q_{i-}{\Laq}_a +(-1)^{(n-2)-|a|}\Laq_a\q_{j-},
\end{align*}

\item Suppose $n=2$. Then
\begin{align*}
d_{{\rm MB}}\q_{i-}&=\q_{i-}\q_{i-},\cr
d_{{\rm MB}}\q_{i+}&=
 \q_{i-}\q_{i+}-\q_{i+}\q_{i-}
+\sum_{a\in\II_{<i}}\Raq_a\Laq_a 
+\sum_{a\in\II_{>i}}\Laq_a\Raq_a \cr
&+\sum_{a\in \II_{<i}}\q_{i+}\Raq_a\Laq_a
+\sum_{a\in \II_{>i}}\q_{i+}\Laq_a\Raq_a,
\end{align*}
 If $a\in\II$ is an intersection point in $L_i\cap L_j$, $i<j$,  then
 \begin{align*}
d_{{\rm MB}}\Raq_a&=\q_{j-}{\Raq}_a +(-1)^{|a|-1}\Raq_a\q_{i-}\\
&+(-1)^{|a|-1}\left(\sum_{b\in\II_{<i},\;b<a}\Raq_a\Raq_b\Laq_b + \sum_{b\in\II_{>i},\;b<a}\Raq_a\Laq_b\Raq_b\right)\\
&-\left(\sum_{c\in\II_{<j},\;c<a}\Raq_c\Laq_c\Raq_a +\sum_{c\in\II_{>j},\;c<a}\Laq_c\Raq_c\Raq_a\right),\\
d_{{\rm MB}}(\Laq_a)&=\q_{i-}{\Laq}_a +(-1)^{|a|}\Laq_a\q_{j-}\\
&-\left(\sum_{b\in\II_{<i},\;b<a}\Raq_b\Laq_b\Laq_a + \sum_{b\in\II_{>i},\;b<a}\Laq_b\Raq_b\Laq_a\right)\\
&+(-1)^{|a|}\left(\sum_{c\in\II_{<j},\;c<a}\Laq_a\Raq_c\Laq_c +\sum_{c\in\II_{>j},\;c<a}\Laq_a\Laq_c\Raq_c\right).
\end{align*}
\end{enumerate}
\end{itemize}
\end{prp}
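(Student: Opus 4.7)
The plan is to exploit the fact that on the hypersurface $Y_1^\st = \oX\times S^1$ the contact form $\alpha=\lambda+\eps\, dt$ has Reeb field proportional to $\partial_t$, so the Reeb flow is completely integrable and Morse--Bott degenerate, with critical manifolds labeled by points in $\oX$. Accordingly, each Reeb chord of $\Lambda=\bigcup_j\Lambda_j$ is either a mixed chord, coming in two $S^1$-families indexed by an intersection point $a\in L_i\cap L_j$ and a winding number $p\ge 0$ (producing the generators $\raq_a^{(p)}$ and $\laq_a^{(p)}$), or a pure chord, coming from a point on some $L_i$ together with winding $p\ge 1$, which forms an entire $L_i$-worth of chords. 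We use the auxiliary Morse function $f_i\colon L_i\to\R$ with only two critical points $m_{i\pm}$ to break the Morse--Bott degeneracy of pure chords, producing the two generators $q_{i-}^{(p)}$ and $q_{i+}^{(p)}$ for each $p\ge 1$. We choose a product-type adjusted almost complex structure so that holomorphic disks in the symplectization of $Y^\st$ with boundary on $\R\times\Lambda$ project to holomorphic disks in $X$ with boundary on $L=\bigcup_j L_j$, while the $\C$-component is determined up to finite freedom by the winding data.

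The differential then splits as $d=d_{\const}+d_{\rm MB}+d_{\rm h}$ according to whether the $X$-projection of the rigid disk counted is (respectively) a constant map at a critical point of some $f_i$, a trivial strip supported on a Morse--Bott family, or a genuine nonconstant generalized holomorphic disk. For $d_{\const}$, the only contribution is the Morse differential of $f_i$ viewed as a piece of a trivial strip over $m_{i-}$ of winding $1$; this sends $q_{i-}^{(1)}$ to the empty chord $e_i$, and all other terms vanish for reasons of grading and winding. For $d_{\rm h}$, one identifies the rigid moduli spaces of honest holomorphic strips in $\R\times Y^\st$ with positive puncture $\q_w$ and negative punctures encoded by $\bQ_w$ with the generalized moduli spaces $\M_w$, via SFT-type gluing along Morse gradient segments of the chord functions $f_i$. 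The jet/cluster conditions in the definition of $\M_{w,\sigma}$ are forced on us by the local analysis around clusters of negative punctures asymptotic to the same Morse critical point $m_{j+}$; the precise relation between genuine rigid strips with such colliding asymptotics and punctured disks satisfying the tangency condition $(4^\delta)$ is exactly Lemma \ref{lm:implicit-function}, applied one cluster at a time.

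For $d_{\rm MB}$, one analyzes one-dimensional moduli spaces of rigid strips whose $X$-projection collapses to a point, so that all geometric content is carried by ``thin strips'' along a single Morse--Bott family. Their boundary configurations decompose as broken Morse flow lines of $f_i$ together with intersection-point boundary events with neighboring $L_j$. The formulas for $d_{\rm MB}q_{i\pm}$, $d_{\rm MB}\Raq_a$ and $d_{\rm MB}\Laq_a$ in item (iii) then follow from standard bookkeeping of such flow lines together with the winding accounting that controls the extra factor of $T$: terms like $\q_{i-}\Raq_a\pm\Raq_a\q_{i-}$ and $\sum_a\Raq_a\Laq_a$ arise from a gradient trajectory breaking off at the minimum $m_{i-}$ or at an intersection point of $L_i$ with another component, with signs fixed by the coherent orientation scheme induced from the spin structures on the $L_j$. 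For $n>2$ this is the standard Morse--Bott SFT calculation on $S^{n-1}$, in which Morse flow lines have positive codimension and meet extra critical data only transversely.

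The main obstacle will be the case $n=2$, where flow lines of $f_i$ on $L_i\approx S^1$ have codimension zero, so that a trajectory starting at $m_{i+}$ sweeps an entire arc and can encounter several intersection points of $L_i$ with other components before closing up. This forces the additional cubic correction terms in the formulas for $d_{\rm MB}\q_{i+}$, $d_{\rm MB}\Raq_a$ and $d_{\rm MB}\Laq_a$, indexed by the global ordering $<$ on $\II$ introduced in Remark \ref{r:n=2extra}, and the correct identification of which intersection points are ``seen'' by a broken flow line from $m_{i+}$ (respectively from $a$) together with the orientation computation using the null-cobordant spin structure on the circle components of $L$ is the delicate point. Once this combinatorics is settled, the splittings of the relevant one-dimensional moduli spaces organize themselves exactly into the stated formulas, and $d=d_{\const}+d_{\rm MB}+d_{\rm h}$ as claimed, under the transversality hypotheses standing throughout the paper.
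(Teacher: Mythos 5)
Your overall plan for $d_{\rm MB}$ and $d_{\rm h}$ tracks the paper reasonably well: the Morse--Bott picture on $Y_1^{\st}=\oX\times S^1$, the use of chord functions $f_i$ to perturb the degenerate pure chords, the identification of rigid Bott curves projecting to $X$ with generalized holomorphic disks and the role of Lemma~\ref{lm:implicit-function} for the cluster/jet conditions, and the extra cubic complications in the $n=2$ case are all on target (the paper handles $n=2$ via an explicit Sabloff-type perturbation and a careful choice of the values of the chord functions, which you should make explicit).

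However, your treatment of $d_{\const}$ contains a genuine gap, and in fact rests on a misidentification of what that summand counts. You describe $d_{\const}$ as arising from disks whose $X$-projection is ``a constant map at a critical point of some $f_i$,'' and you attribute $d_{\const}q_{i-}^{(1)}=e_i$ to ``the Morse differential of $f_i$ viewed as a piece of a trivial strip over $m_{i-}$ of winding~$1$.'' This is not what happens: constant projections at critical points of $f_i$ are already part of the Morsified gradient-tree picture and contribute to $d_{\rm MB}$, not to $d_{\const}$. In the paper, $d_{\const}$ is precisely the contribution of disks which are \emph{not} contained in the symplectization of $Y_1^{\st}$, i.e.\ which escape into the binding region $Y_2^{\st}\times\R$. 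The relevant configuration contributing to $d_{\const}q_{i-}^{(1)}$ is a multi-level Bott curve with (a) a disk having one boundary puncture at $q_{i-}^{(1)}$ and one interior puncture asymptotic to a small Reeb orbit, (b) a horizontal cylindrical part over a gradient arc of the \emph{orbit} function $\phi$ (which you never introduce) connecting $m_{i-}$ to a short orbit $\gamma_y$ over a point $y\in Y$, and (c) a holomorphic plane in $Y_2^{\st}\times\R$ asymptotic at $+\infty$ to $\gamma_y$. The nontrivial claim is that the algebraic count of planes in (c) is $\pm 1$, which in the paper is established via Lemma~\ref{lm:h1-invariant}: one shows the SFT invariant $\bh_1(Y^{\st})$ is represented by $\pm\gamma_{a_0}^1$, and this in turn uses the vanishing $S\bbH(X\times\C)=0$ (subcriticality of $X\times\C$) to pin down the coefficient. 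None of this appears in your argument, and ``grading and winding'' reasons do not by themselves force a nonzero, let alone $\pm1$, count. Without this input your proof does not establish item~${\rm (i)}$, and indeed your splitting of $d$ into three summands is not the paper's splitting. You should also include the monotonicity argument (relying on Proposition~\ref{prop:Wein-Lef}, part~(2), to make $\eps$ small) showing that generalized holomorphic disks contributing to $d_{\rm h}$ for chords of bounded multiplicity stay away from $\pa\oX$, as this is what makes the dichotomy between $d_{\rm h}$ and $d_{\const}$ clean.
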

Here $d_\const$ is the contribution of disks which are not entirely contained in the symplectization of the $S^1$-invariant part $Y^{\st}_1$,  $d_{{\rm MB}}$ is responsible  for the contribution
of gradient trees of auxiliary Morse functions on the spheres $L_j$  which are used for the ``Morsification'' of the Morse-Bott picture, and $d_{{\rm h}}$ counts the contribution of generalized holomorphic disks in 
$Y^{\st}_1$.
We sketch a proof of  Proposition \ref{prop:Lefschetz-differential} in the next section.

\subsection
{Sketch of proof of Proposition \ref{prop:Lefschetz-differential}}\label{sec:Lefschetz-sketch}

To compute the  differential one needs to  count holomorphic disks in the symplectization $Y^{\rm st}\times\R$. We first consider disks that are entirely contained in $Y^{\rm st}_1\times\R\subset Y^{\rm st}\times\R$.
Recall that each intersection point $a\in \II$ corresponds to two series $\raq_{a}^{(p)}$, $p=0,1,2,\dots$ and  $ \laq_{a}^{(p)}$, $p=1,2,3,\dots$ of mixed Reeb chords between distinct components of $\Lambda$, see \eqref{eq:intchord1} and \eqref{eq:intchord2}, and that each component $L_i$ of the Lagrangian $L$ contributes with Morse-Bott families  $\{q_{x,i}^{(p)}\}_{x\in L_i}$, $i=1,\dots, k$, $p=1,2,\dots$ of pure chords connecting a component of $\Lambda$ to itself. In fact, we are in a ``double Morse-Bott'' situation since $\ol X$ is a Bott-family of periodic orbits.

Note that in this Bott degenerate situation, holomorphic disks in $Y_1^{\rm st}\times\R\approx \ol X\times\C^{\ast}$ with boundary on $\Lambda\times\R$ project to holomorphic disks in $\ol X$ with boundary on $L$. Conversely, a holomorphic disk in $\ol X$ with boundary on $L$ and a meromorphic section over it determines a holomorphic disk in $Y^{\rm st}_1\times\R$ with boundary on $\Lambda\times\R$ and with punctures mapping to Reeb chords and Reeb orbits as determined by the meromorphic section. In line with the Morse-Bott philosophy one expects that holomorphic disks after small perturbation can be described in terms of holomorphic disks in the Bott degenerate case in combination with Morse theory on the chord and orbit spaces. As we shall see below, this is indeed the case, and rigid holomorphic disks in $(Y^{\st}_1\times\R,\Lambda\times\R)$  can be described in terms of  rigid generalized holomorphic disks in $(\ol X,L)$. 

In order to establish this relation between rigid disks in the symplectization and rigid generalized holomorphic disks in the base of the fibration, we begin with a brief description of a perturbation scheme that achieves transversality for $\MM_{w,\sigma}$. We consider the source of a generalized holomorphic disk as a collection of disks with boundary punctures connected by finite length edges according to the underlying fat rooted tree. The generalized disk maps the disks holomorphically to $(\ol X,L)$   and the edges to gradient flow segments. The space of generalized disk sources has a natural compactification consisting of broken configurations. Here there are two types of breaking, either two disks can be joined at a puncture mapping to an intersection point in  $L_i\cap L_j$, $i\ne j$ or the length of some edge can go to infinity. We call this space {\em extended Deligne-Mumford space}. 

We next consider transversality for generalized disks. Note that any holomorphic component of  generalized holomorphic disk must have at least two punctures. Thus disks with $\le 3$ punctures cannot be multiply covered and it is possible to achieve transversality by perturbing the almost complex structure. In order to make the whole moduli space regular we use a slightly modified version of Gromov's transversality scheme \cite{Gromov} and consider an inhomogeneous $\overline{\partial}$-equation, where the right-hand side depends not only on points of the source disk, but also of its domain viewed as a point of the extended Deligne-Mumford space. The perturbation required here is constructed inductively over strata of the extended Deligne-Mumford space. Standard arguments show that we can achieve transversality for the holomorphic parts in the generalized disk using such perturbations that are supported in the interiors of the disks in the source, also including transversality with respect to jet conditions.

 We also require transversality with respect to the Morse flow as follows. Consider the unstable sphere $S_i$ of the maximum $m_{i+}$ of $f_i$ and note that the Morse flow determines a projection $\pr_i\colon L_{i}-\{m_{i+},m_{i-}\}\to S_i$. We require that the following transversality condition holds: the evaluation maps at additional boundary punctures on any moduli space of holomorphic disks forming  a generalized tree with additional internal boundary punctures are transverse to $\{m_{i+},m_{i-}\}$ and in general position with respect to the projections $\pr_i$ outside neighborhoods of the inverse images under the evaluation maps of small neighborhoods of $m_{i+}$ and $m_{i-}$. We also require transversality for the products of evaluation maps.  In this set up, $\MM_{w,\sigma}$ is related to intersection points of the evaluation maps of the disks components of a generalized holomorphic disk composed with the projections $\pr_i$, and it is possible to include transversality of this Morse data in the Gromov scheme.   

Note that in general some of the disks forming a generalized holomorphic disk could be {\em small}, i.e.~perturbations of constant holomorphic disks.
However, we show that in the situation under study here it is possible to organize the perturbation scheme in such a way that generalized disks with constant components and at least one nonconstant component do not contribute to the differential.  Indeed, let us note that each small disk must have at least three internal punctures. By choosing a sequence of perturbations which converge to $0$ on small disks we find gradient lines that intersect more than two non-small disks, and a dimension count shows that configurations with $r$ constant disks $c_1,\dots,c_r$, have codimension $d=\sum_{j=1}^{r}(i(c_j)-2)$, where $i(c_j)$ is the number of internal punctures in $c_j$. In particular, since $d\ge 1$ for configurations with at least one constant disk, our transversality assumptions implies that configurations with constant disks and at least one nonconstant disk are impossible in $0$-dimensional moduli spaces.
          
Consider the dynamics of the Reeb vector field of the contact form $\alpha^\st=\lambda^\st|_{Y^\st}$ on $Y^\st$. The part $Y^\st_1=\oX\times S^1$ is foliated by closed Reeb trajectories in the class of the factor $S^1$. The action of these orbits is equal to $2\pi\eps$. The action of all other orbits is bounded below by the action of the minimal closed orbit of the contact form $\lambda|_Y$. We refer to the former orbits as {\em short} and to the latter ones as {\em long}. Thus the  moduli space of  short orbits is diffeomorphic to $\oX$.      
To deal with the double Morse-Bott situation, we choose in addition to the chord functions $f_i : L_i \to \R$  as in \eqref{eq:chordfunctions}, an auxiliary Morse function $\phi\colon\oX\to\R$ which is constant on the boundary $\p \oX$ and attains its minimum there. We call $\phi$ the {\em orbit} function. The differential of  the algebra $LHA(\Lambda)$ can then be computed using moduli spaces of {\em generalized
holomorphic curves} in the sense of \cite{Bourgeois-thesis}. (We will call them \emph{Bott curves} here in order not to confuse them with generalized holomorphic disks.)  
 According to \cite{Bourgeois-thesis} (see also \cite{FOOO,FO,E}), Bott curves are multilevel objects,  which have a certain number of genuine holomorphic levels, as well as cylinders swept by orbits of $R$ which project to gradient arcs of the orbit function $\phi$ and strips swept by flow segments which project to gradient arcs of the chord function $f$. We call the latter {\em cylindrical} and {\em striplike} horizontal levels, respectively.  

We first argue that if a Bott disk $D$ which contributes to the differential has a non-empty generalized holomorphic part, then it does not contain horizontal cylindrical parts over flow lines of $\phi$. Indeed, suppose that a holomorphic level  $C$ is connected to a horizontal cylindrical part at at least one of its negative ends. Then the multiplicity of the chord at the positive end of $C$ is strictly bigger than the sum of the multiplicities of chords at the negative end of $C$. Hence, one can decrease the multiplicity of the chord at the positive end and still find a lift to the symplectization. But increasing the multiplicity of a chord increases its grading. It follows that the dimension of the moduli space containing the Bott disk $D$ is $>0$ and thus $D$ does not contribute to the differential. We also claim that given any integer $N>0$ we can arrange that any holomorphic part of a Bott disk entering into the differential of a chord of multiplicity $\leq N$  projects to a generalized holomorphic disk which does not intersect $Y=\pa\oX$. Indeed, the symplectic area of the projection of the holomorphic part of such a generalized curve is bounded above by the action of the chord at its positive puncture, which is $\leq2\pi\eps N$. According to Proposition \ref{prop:Wein-Lef} the constant $\eps$ can be made arbitrarily small, and hence the claim follows from a monotonicity argument. We conclude that the differential can be described in terms of generalized holomorphic disks.


We conclude that any disk which contributes to the differential limits to a Bott disk which project to a generalized holomorphic disk. Conversely, any rigid generalized holomorphic disk in $\M_{w,\sigma}$ lifts to a unique Bott disk  in the symplectization of $Y_1^\st$
if we assign appropriate multiplicities to chords  which project to images of boundary punctures of the generalized disk. We conclude that the contribution ${\rm (ii)}$ to the differential is as claimed.

We show below that the descriptions of the other summands of the differential are as claimed in ${\rm (i)}$ and ${\rm (iii)}$.  

We first study Bott curves without non-constant holomorphic part and without horizontal cylindrical parts over flow lines of $\phi$. Such curves are disks projecting to gradient trees and give rise to the summand $d_{{\rm MB}}$. 
The disks giving
  \[
d_{{\rm MB}}\Raq_a=\Raq_a\q_{i-}+\q_{j-}\Raq_a; \;\;d_{{\rm MB}}\Laq_a=\Laq_a\q_{j-}+\q_{i-}\Laq_a.
\]
correspond to gradient trajectories connecting the intersection point $a$ in $L_i \cap L_j$ to the 
minimum of the chord function in $L_i$ or in $L_j$. 
The disk giving
\[
d_{{\rm MB}}\q_{i-}=\q_{i-}\q_{i-}
\]
corresponds to a small disk with three punctures, including a positive puncture at the minimum of the chord function $f_i$ where the disk is asymptotic to $q_{i-}^{(p)}$. To the negative punctures are attached two flow lines which end at the minimum of $f_i$ where the disk is asymptotic to $q_{i-}^{(s)}$ respectively 
$q_{i-}^{(t)}$, $p=s+t$. 
The terms $\q_{i-}\q_{i+}+(-1)^{n-1}\q_{i+}\q_{i-}$ in  the expression for $d_{{\rm MB}}\q_{i+}$ correspond to 
small disks with three punctures, including a negative puncture at the maximum of the chord function $f_i$ where the disk is asymptotic to $q_{i+}^{(s)}$. A flow line emanating from the maximum of $f_i$ is attached
to the positive puncture of the small disk, where the disk is asymptotic to $q_{i+}^{(p)}$. To the second
negative puncture is attached a flow line which ends at the minimum of $f_i$ where the disk is asymptotic to $q_{i-}^{(t)}$, $p=s+t$.  
The terms   $\sum_{a\in\II_{<i}}\Raq_a\Laq_a$ and 
$\sum_{a\in\II_{>i}}\Laq_a\Raq_a$    in  the expression for $d_{{\rm MB}}\q_{i+}$
correspond to gradient trajectories connecting the maximum of the chord function with intersection points of $L_i$ with other components.
 Using grading arguments   one can check that these are the only rigid configurations if $n>2$.
  
   If $n=2$  there are several more  gradient tree configurations contributing to $d_{\rm MB}$.
The easiest way to account for all of them is probably via an explicit perturbation scheme, as in \cite{Sabloff}.  The additional cubic terms correspond to narrow quadrangles formed by the mesh of perturbed $1$-dimensional Lagrangian submanifolds. The exact expression for the differential, for example the order of the intersection points, is related to a specific choice of perturbation. To get the differential as stated it is important to choose chord functions so that the minima $m_{i-}$ lies very close to the maxima $m_{i+}$, $i=1,\dots,k$, and so that the following hold: if $a\in L_s\cap L_t$ then $f_i(m_{i+})> f_s(a)+f_t(a)> f_i(m_{i-})$ for all $i$, and if $b\in\II_i$ is such that $b\le_i c$ for all $c\in\II_i$ then $f_i(b)>f_s(a)$ for all $a\in\II_{s}$, $s<i$ such that $a\notin\II_k$ for some $k\ge i$.

   It remains to show that all other contributions are covered by $({\rm i})$.
Since rigid holomorphic curves the projection of which contains a non-constant polygon does not contain any horizontal cylindrical parts over gradient flow lines of $\phi$, any Bott curve which contributes to the differential and which
contains such a flow line must consist of that flow line only, i.e.~it must project to a  gradient trajectory of the orbit function $\phi$. Our assumption that $\phi$ is constant and achieves its minimum on $Y=\p\oX$ implies that through every point of $L$ there is a unique (negative) gradient trajectory of $\phi$ going to the boundary. Disks projecting to gradient trajectories are generalized holomorphic curve which consists of the following parts:
\begin{itemize}
\item A  holomorphic disk with a boundary puncture and an interior puncture which is mapped   
onto the fiber of the projection $Y^\st_1\times\R\to X$.
At the interior puncture it is asymptotic at $-\infty$  to a small (possibly multiply-covered) orbit, and at the boundary puncture  to a pure chord $q^{(p)}_{i\pm}$. 
\item  A horizontal part 
which projects to the gradient arc  of $\phi$ in $\oX$ that connects $m_{i-}$ to a (multiple of a) short orbit $\gamma_y$  over a  point $y\in Y=\p X$.
\item A plane in $Y^\st_2\times\R$ asymptotic at $+\infty$ to the orbit $\gamma_y$.
\end{itemize}
Such a curve contributes only $\pm e_i$ to the differential. Hence, for grading reasons, we have $|q^{(p)}_{i\pm}|=1$, which implies that $q^{(p)}_{i\pm}=q^{(1)}_{i-}$.

To establish $(\mathrm{i})$, we prove that for $q^{(1)}_{i-}$ such Bott disks indeed exist and that the algebraic count of them equals $\pm 1$. To this end, recall, see \cite{SFT}, that  given   a general contact manifold $Y$ for which one can define the    linearized  contact homology $C\bbH(Y)$,
    there is an invariantly defined element $\bh_1=\bh_1(Y)\in C\bbH(Y)$ which in the 
    corresponding  contact homology complex 
    $CH(Y)=(\K\langle\PP_\good(Y)\rangle, d_{CH})$  is represented by  
$$ \sum\limits_{\gamma\in \PP_\good, |\gamma|=n+1}n_{\gamma}\gamma,$$
where $n_\gamma$ is an algebraic count of components of the $1$-dimensional moduli space $\MM^Y(\gamma,p)$
of augmented holomorphic planes in the symplectization of $Y$ asymptotic to $\gamma$ and having one marked point mapped to $\R\times p\subset \R \times Y$  for a generic point $p\in Y$.  To simplify the language we will use the term ``holomorphic curve'' also  for  the projection of a  holomorphic curve in $\R\times Y$ to $Y$, and hence  equivalently  we can say that $n_\gamma$ counts the algebraic  number of holomorphic planes in $Y$ asymptotic to $\gamma$ and passing through a fixed point $p$.

Denote the critical points of the orbit function $\phi$ by $a_0,\dots, a_K$, where $a_0$ is its unique maximum. Choose an almost complex structure $J_0$  on $X$ adjusted to the Liouville form $\lambda$ and an almost complex structure $J$ on the symplectization 
 of $Y^\st$ which is adjusted to the contact form $\alpha^\st$ and which is such that the projection $\R\times Y^\st_1\to \oX$ is $(J,J_0)$-holomorphic.
 
 We compute the invariant $\bh_1(Y^\st)$.
 According to \cite{MLYau} and  \cite{Bourgeois-thesis} (see also \cite{SFT}), and taking into account an   argument  from Section  \ref{sec:Reeb-dynamics} to discard the contribution of long orbits,  we come to the following algorithm for computing  the linearized contact homology $C\bbH(Y^\st)$.  Associate with each critical point
  $a_j$ a sequence of variables  $\gamma^k_{a_j}$, $k=1,2,\dots$ corresponding to multiples of the orbit over the point $a_j$, and assign to  $\gamma^k_{a_j}$ the grading 
   $|\gamma^k_{a_j}|=\ind(a_j)-n+2k+1$, where $\ind(a_j)$ is the Morse index of the critical point $a_j$. In particular, $|\gamma^1_{a_0}|=n+1$.
Let $d_{\Morse}(a_j)=\sum\limits_i m_j^ia_i$ be the Morse differential on the Morse complex for the relative homology $H_*(\oX,Y)$ generated by critical points $a_j,\; j=0,\dots, K$.
Then the complex   $$C=CH(Y^\st):=(\K\langle\{\gamma_{a_j^k},\; j=0,\dots, K, k=1,\dots\}\rangle, d_M),$$ where $d_M(\gamma_{a_j}^k)=\sum\limits_i m_j^i\gamma_{a_i}^k$, computes the homology $C\bbH(Y^\st)$.
   \begin{lma}\label{lm:h1-invariant}
   The invariant $\bh_1(Y^\st)\in C\bbH(Y^\st)$ is represented  by the element $\pm\gamma_{a_0}^1\in C=CH(Y^\st)$. 
   \end{lma}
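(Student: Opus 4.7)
My plan is a direct Morse--Bott computation, exploiting the special geometry of the symplectization of $Y^\st_1$. The key observation is that under the identification $\R \times Y^\st_1 \hookrightarrow \oX \times \C^{\ast}$ given by the Liouville flow, the projection $\pi \colon \oX \times \C \to \oX$ is $(J, J_0)$-holomorphic by our choice of almost complex structures, so composition with $\pi$ sends any holomorphic plane to a holomorphic disk in $\oX$.

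The first step is to show that every rigid holomorphic plane asymptotic to a short orbit $\gamma^k_x$ is a $k$-fold fiber disk $z \mapsto (x, z^k)$ over some point $x \in \oX$. Since the grading constraint $|\gamma| = n+1$ forces $\ind(a_j) = 2n-2k \geq 0$, hence $k \le n-1$, the area of such a plane is uniformly bounded above by $2\pi(n-1)\eps$; by Proposition \ref{prop:Wein-Lef} we may choose $\eps$ as small as we please, so a monotonicity argument forces the projected disk to be constant and confines the plane to $\R \times Y^\st_1$. The second step is the marked point constraint: writing $p = (y, w_p)$ in the fiber identification, the requirement that the marked point map to $\R \times \{p\}$ (a radial ray in the fiber $\{y\} \times \C^{\ast}$) forces $x = y$ and pins down the marked point on the cover uniquely modulo the $\Z/k$ deck symmetry. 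Hence for each pair $(k, a_j)$ there is at most one geometric candidate modulo reparameterization.

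The third step invokes Bourgeois's Morse--Bott description of $CH(Y^\st)$: an augmented configuration contributing to $n_{\gamma^k_{a_j}}$ consists of a fiber disk over $x$ together with a flow line of $-\nabla\phi$ from $a_j$ to $x$, equivalently $x \in W^u(a_j) \subset \oX$, the unstable manifold of $a_j$ for the descending gradient, of dimension $\ind(a_j)$. The grading identity $|\gamma^k_{a_j}| = \ind(a_j) - n + 2k + 1 = n+1$ gives $\ind(a_j) = 2n - 2k$. For $k \ge 2$ we have $\dim W^u(a_j) = 2n - 2k < \dim\oX$, so the generic point $y = \pi(p)$ misses $W^u(a_j)$ and the moduli space is empty. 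For $k = 1$ one is forced to $a_j = a_0$, the unique maximum of $\phi$, and $W^u(a_0) = \oX$, so the constraint is vacuous; the unique fiber disk over $y$ survives, and coherent orientations give it sign $\pm 1$.

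The main technical obstacle is the rigorous execution of the Morse--Bott correspondence in this Bott degenerate setup. Regularity of the fiber disks is essentially automatic, since the $(J, J_0)$-holomorphy of the projection $\pi$ ensures that the linearized $\bar\partial$-operator splits into a surjective operator in the fiber direction and a $\bar\partial$-operator on the base which can be made regular by a small perturbation supported near the critical points of $\phi$. What requires care is matching the perturbation of $J$ with the Morse data for $\phi$, excluding broken configurations of the right formal dimension by action and multiplicity arguments, and checking the sign; all of this proceeds along the standard lines of \cite{Bourgeois-thesis} and the references cited in the paper.
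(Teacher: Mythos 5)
Your proof of the first half (that $h_1$ is supported on $\gamma_{a_0}^1$ only) coincides with the paper's: both use the Morse-theoretic fact that for $j\neq 0$ the constraint point projects generically off the relevant invariant manifold of $a_j$, which has dimension $\ind(a_j)=2n-2k<2n-2$.

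For the key second step — that the coefficient is exactly $\pm 1$ — your route is genuinely different from the paper's. The paper deduces the nonvanishing of the coefficient indirectly, via Cieliebak's theorem that $S\bbH(W)=0$ for the subcritical manifold $W=X\times\C$ together with the Morse--Bott description of symplectic homology: since the minimum of the Morse function on $W$ must be a boundary, there is (through any point, in particular through $\R\times p$) a plane with positive puncture at $\gamma_{a_0}^1$. By contrast, you \emph{exhibit} the curve directly: using the $(J,J_0)$-holomorphy of $\pi$, the small action of short orbits (monotonicity to confine the plane over $\Int\oX$), and the absence of nonconstant holomorphic spheres in the exact manifold $\oX$, you force every anchored plane asymptotic to $\gamma_x^1$ to be the fiber disk $\{x\}\times\C$, and then the marked-point constraint and the flow-line to $a_0$ leave a single rigid configuration. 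Your argument buys concreteness and avoids the black-box vanishing theorem, and it pins the count to $\pm 1$ directly; the paper's argument is shorter and more robust (no need to control $\eps$ or rule out escape into $Y^{\rm st}_2$) but is less explicit about the curve being counted. The weak spot in your write-up is regularity of the Bott configuration (fiber disk plus half flow line), which you defer to the splitting of the linearized operator and to \cite{Bourgeois-thesis}; this is plausible but is precisely the part the paper sidesteps by its global argument, so it deserves the care you promise. One small slip: the projection $\pi\circ u$ of a plane extends over the puncture to a holomorphic \emph{sphere} in $\oX$, not a disk, and it is exactness of $\oX$ (no nonconstant spheres) rather than monotonicity that forces the projection to be constant; monotonicity is needed only to keep the curve over $\oX$ in the first place.
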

\begin{pf}[Sketch of proof]
Our proof uses the fact that since $X$ is a Weinstein manifold, $Y^\st$ bounds the Weinstein manifold $W=X\times\C$ with $SH(W)=0$ (in fact $W$ is even subcritical).

 Note that the index formula implies that  $\bh_1\in C\bbH_{n+1}(Y^\st)$, and hence it is represented by a linear combination $h_1\in C$ of classes $\gamma_{a_0}^1$ and  $\gamma_{a_j}^{k}$, where $\ind(a_j)=2n-2k$. We claim that the  coefficients with $\gamma_{a_j}^{k}$ is $0$ unless $j=0, k=1$. Indeed any Bott curve which is a 
plane asymptotic to $\gamma^k_{a_j}$  must contain a flow line part that converges to $a_j$. 
 Hence, if $j\neq 0$ and one chooses the point $p$ not over the stable manifold of $a_j$ then there  will be no generalized holomorphic curves converging to $\gamma^k_{a_j}$. Thus, $h_1=N\gamma^1_{a_0}$ for some integer $N$.

It remains to prove that the coefficient $N$  with $\gamma_{a_0}^1$ is equal to $\pm 1$. 
Here we use $SH(W)=0$ and the Morse-Bott description of symplectic homology. The minimum of the Morse function on $W$ is a cycle in $SH(W)$ which must then be a boundary. Thus there is a holomorphic sphere with positive puncture at $\gamma_{a_0}^{1}$ through every point in $W$ and in particular through every point in $p\times\R$.
\end{pf}

We can now prove that there is algebraically one  holomorphic plane  in $Y^\st_2\times\R$
  asymptotic to a small  simple  orbit  $\gamma_y$ over a fixed point  $y\in Y$.
  Note that generically the gradient trajectory from $y$ ends at the maximum point 
  $a_0$. Let us choose a point $p\in Y^\st_1$ which projects to a point on the arc $\delta$ of this trajectory connecting $a_0$ and $y$.  
  Then the moduli space $\MM^Y(\gamma_{a_0}^1,p)$ consists of generalized 2-level curves with a flow line part which projects to $\delta$ and a genuine holomorphic plane in the symplectization of $Y_2^\st$ asymptotic to $\gamma_y$. But
  according to  Lemma \ref{lm:h1-invariant} the algebraic count of such planes is $\pm 1$, and hence the  algebraic count 
  of planes   in $Y^\st_2\times\R$ asymptotic  to   $\gamma_y$ is $\pm 1$ as well.

This implies that ${\rm (i)}$ gives a description of the remaining part of the differential and concludes the sketch of the proof of Proposition \ref{prop:Lefschetz-differential}.

\bigskip

In the $4$-dimensional case the above description of the differential algebra $LHA(\Lambda)$, together with Theorem \ref{thm:SH} and Corollary \ref{cor:SH-subcrit} provides a purely combinatorial recipe for computing the complex $SH(X)$ for any  symplectic Lefschetz fibration.

\begin{rmk}\label{rmk:killing-handle}
${\quad}$
\begin{enumerate}
\item  An argument, similar to one used in the above proof  also shows
that {\it if there exists an embedded Lagrangian disk $\Delta\subset \oX$ with Legendrian boundary $\Gamma\subset Y$, and  which transversely intersects $L_i$  in exactly one point and does not intersect $L_j$ for $j\neq i$, then $dq^{(1)}_{i-}=e_i$.}
\item More generally, {\it
let $W$ be a $2n$-dimensional Liouville manifold with cylindrical end, and $\overline {W}$ the corresponding compact Liouville domain. Let $C$  be  the coisotropic unstable manifold of a zero  of index $n-1$ of the Liouville field $Z$ on $W$.
Suppose that $C$ intersects $V=\p\overline{W}$ along an $n$-sphere $S\subset V$.
Let  $\Lambda\subset  V$ be a Legendrian sphere which intersects $S$ transversely at one point.
Then for an appropriate choice  of a contact form on $Y$  and an almost complex structure, there exists a  Reeb chord $c$ of $\Lambda$, such that $dc= 1$ in the algebra $LHA(\Lambda)$.}
\end{enumerate}
\end{rmk}

Indeed, to verify Remark \ref{rmk:killing-handle} $(1)$ consider a small neighborhood $U$ of the Lagrangian disk $\Delta$. Then $U$ intersects $L_i$ in a disk $O_i$ and we can view $O_i$ and $\Delta$ as the core and co-core disks, respectively, of a Lagrangian handle attachment yielding $\oX$. It follows in particular that there is a Liouville vector field $Z$ on $\oX$ which vanishes at $p=L_i\cap\Delta$, with $L_i$ contained in the stable manifold of $p$, with $\Delta$ equal to the unstable manifold of $p$, and with a Morse function $H$ for which $Z$ is gradient like such that $H(p)>H(q)$ for every critical point $q\ne p$ of $H$. The maximum principle then implies that there is no non-constant holomorphic disk in $\oX$ with boundary on $L$ which passes through $p$. Thus, for a chord function $f_i$ with minimum $m_{i-}=p$, Proposition \ref{prop:Lefschetz-differential} implies that $d q^{(1)}_{i-}=d_{\rm const}q^{(1)}_{i-}=e_i$. We also note, for future reference, that we can choose the orbit function so that the gradient flow line connecting $m_{i-}$ to $Y$ lies near $\Delta$.

To show Remark \ref{rmk:killing-handle} $(2)$ we observe that  the contact structure in a  neighborhood $N$ of the coisotropic sphere $S\subset V$ is contactomorphic  to a neighborhood of the sphere $T=\pi^{-1}(\Delta)\cap Y^\st_\eps\subset Y^\st_\eps$. Moreover, one can arrange that this contactomorphism sends the part $\Lambda\cap N$ of the Legendrian sphere $\Lambda\subset V$ to  the part of the Legendrian lift $\Lambda_i\subset Y^{\st}_\eps$ of the Lagrangian sphere $L_i$ (we recall that we denote by $\pi$ the projection $X^\st=X\times\C\to X$) which lies in the neighborhood of $T$. Furthermore, one can arrange that the Reeb chord corresponding to $q^{(1)}_{i-}$ is arbitrarily short. In particular any holomorphic disk with positive puncture at this Reeb chord must stay inside $N$ and the argument we used for Remark \ref{rmk:killing-handle} $(1)$ also applies to prove Remark \ref{rmk:killing-handle} $(2)$.

Remark  \ref{rmk:killing-handle} $(2)$  is consistent with the fact that attaching handles of index $(n-1)$ and $n$   canceling each other does not change the  symplectic manifold, and hence its symplectic homology.

\newpage
\appendix
\centerline{\Large {\bf Appendix} }\medskip
\centerline{\Large Legendrian surgery formula and P.~Seidel's conjecture}
\medskip

\centerline{\large Sheel Ganatra \qquad   Maksim Maydanskiy}

\section{Introduction}
The goal of this text is to provide a dictionary between the language used
in the current paper and the formalism of \cite{shhh} and \cite{ainfnat}, in
particular establishing that Conjectures 6.3 and 6.4 in \cite{ainfnat}
follow from the results of the current paper. In an attempt to assist
readability, some of the material is repeated from these references. In this
preliminary version we ignore all signs and gradings, although some degree
shifts appear in the notation in anticipation of a latter version with
gradings.  

\section{The categories}
We start with an exact Lefschetz fibration $\pi:E \mapsto \D$, with fiber
$F$ and vanishing cycles $L_1, \ldots L_m$.
We recall the construction of a curved $\ainf$ category $\D$
which proceeds in several steps.  The material describing this construction
is taken from \cite{shhh}. Throughout we adopt the perspective from Section
3 of \cite{shhh}  and Remark 2.2 in \cite{ainfnat} that identifies $\ainf$
categories with $m$ objects $L_1, \ldots L_m$ with $\ainf$ algebras over
the semisimple ring $R=\K^m=\K e_1\op \ldots \op \K e_m$, by taking
category $\ZZ$ to the algebra $\op_{i,j}  \hom_{\ZZ} (L_i, L_j)$ (see also
Section 4.1 above).
To begin, we have the category $\B$ which is the full subcategory of the
Fukaya category of the fiber with vanishing cycles $L_i$ as objects.
Then we have its directed version $\A$, with same objects and morphisms
\[  
\hom_{\A}(L_i, L_j) = \left\{ \begin{array}{ll}
       \hom_{\B}(L_i, L_j)  & \mbox{if $j>i$};\\
      \K e_i & \mbox{if $i=j$};\\
      0 & \mbox{if $j<i$}  \end{array}. \right. 
\]
Here $\K e_i$ is a one dimensional vector space over our base field $\K$
spanned by the formal symbol $e_i$.  The $\ainf$ operations $\mu^k$ when
none of the inputs are $e_i$ are inherited from $\B$. When one of the
inputs is $e_i$ we declare $\mu^2(e_j, c)= c$ and $\mu^2(c, e_i)=c$ (here
we use the \textit{reverse} of Seidel's convention for composition so $c
\in \hom_{\A} (L_j, L_i)$).        All other $\mu^k$'s with $e_i$ inputs are
set to zero. This is what it means for $\A$ to be a strictly unital
category with strict identity morphisms $e_i$. Thinking of $\A$ as
an $\ainf$ algebra over $R$, we have an obvious projection to $R$ and the
kernel of it we call $\A_+$. This is the non-unital version,  with identity
morphisms taken out. Note that $\A_+$ injects into $\B$. Assuming $B$ is strictly unital (by, say passing to a quasi-isomorphic $\ainf$ structure as in \cite{book}, more on this in section \ref{sec:Reconstruction}), we can extend this embedding to all of $\A$.
Next,  the category $\CC= \A \op t \B [[t]]$ has the same objects as $\A$
and $\B$, and morphisms given by formal power series $x=x_0+ tx_1+\ldots $
with $x_0$ a morphism of $\A$ and the rest of $x_i$'s - morphisms of $\B$.
The $\ainf$ structure is extended from $\B$ by t-linearity (using the
inclusion $\A \hookrightarrow \B$; the strict unitality of $e_i$'s implies that
$\ainf$ relations still hold in $\CC$).  To get from $\CC$ to $\D$ we add a
curvature term $\mu^0_{\D}= t e_i \in \hom_{\D} (L_i, L_i)$. We need to verify
the curved $A_\infty$ relations for $\D$ and this again relies on strict
unitality of the $t e_i$'s.  Finally, the corresponding (non-unital) reduced
version of $\D$ is  denoted $\D_+$.

We should say a few words about the perturbation scheme arising in the
construction of the Fukaya category. In \cite{book}, the maps $\mu^k$ are
determined by counting perturbed pseudo-holomorphic discs with appropriate
Lagrangian boundary conditions, i.e. solutions to an inhomogenous
Cauchy-Riemann equation with inhomogeneous term a disc-dependent Hamiltonian.
Such a count depends on a choice of {\it perturbation datum}, which, roughly,
assigns to each representative of the Stasheff moduli space a choice of
(disc-dependent) perturbing Hamiltonian term and time-shifting maps. That this
can be done in such a way as to be smoothly compatible with the Deligne-Mumford
compactification of the moduli space is shown in \cite{book}.

There is a version of this construction that is Morse-Bott in flavor, which
uses a variant of the Cornea-Lalonde ``cluster'' technology \cite{CL}. This was
discussed in \cite{Se-g2} and rigorously constructed in \cite{Sheridan}. In
this scheme, morphisms are given by counting trees of holomorphic discs joined
by gradient flow lines of some choices of Morse function on each Lagrangian
submanifold. The resulting category is quasi-isomorphic to the version in
\cite{book}; see \cite{Sheridan} for more details.

For what follows, we will implicitly use the Morse-Bott variant of $\D$. See
also Section \ref{sec:Reconstruction} for more on this issue.
      
We will use the following notation for individual coefficients of $\mu_{\D}$:
\[ 
\mu^k_{\D}(a_1, \ldots, a_k) = \sum_{a}n^{\D}(a;a_1, \ldots, a_k) a.
\]
Note that we are still composing our morphisms from left to right - the
target of $a_1$ is the source of $a_2$ etc.

\section{The Legendrian homology algebra $LHA$ is dual to $T(\D_+ [1])$.}
\subsection{The vector spaces.}
As in Section \ref{ssec:LefschetzLHA} the $LHA$ is generated by $q_a^{(k)}$ for
intersection points $a \in L_i \cap L_j$ with  $k
\geq 0$ if $j>i$ (these are $\raq_{a}^{(p)}$ in notation of
\ref{ssec:LefschetzLHA}) and  $k \geq 1$ if $j<i$ ($\laq_{a}^{(p)}$
correspondingly), together with points $q_{i\pm}^{(k)}$ for each $i \geq 1$
(Morse-Bott case).    Specifically, $q_a^{(k)}$ is the Reeb chord between
$\L_i,\L_j$ corresponding to the intersection point $a \in L_i \cap L_j$
passing $k$ times above the reference point $e^{i \pi} \in S^1$. Given a chord
$c$ between $\L_i$ and $\L_j$, we denote by $p(c)$ the corresponding
intersection point of $L_i$ and $L_j$.  Note that the morphisms of
$\mathcal{A_+}$ are exactly the intersection points $a$ for $j>i$ and the
morphisms of $\mathcal{B}$ are all intersection points of
$L_i$ and $L_j$ (and critical points of a Morse function on $L_i$ if $i=j$).
This allows us to identify the Reeb chords with morphisms in $\mathcal{D}_+
=\mathcal{A}_+ \oplus t\mathcal{B}[[t]]$:  the Reeb chord $q_a^{(k)}$ for
$k\geq 1$ corresponds to the morphism $t^k a \in t^k \B$ and similarly the
chord $q_a^{(0)}$ corresponds  to the morphism in $a = t^0 a \in \A_+$.
Note that the morphisms in $\D_+$ are precisely the t-adic completion of
the $\K$ vector space spanned by Reeb chords, and so are naturally identified
with the dual of that vector space.
The algebra $LHA$ is the free algebra generated by the Reeb chords and the
underlying vector space is the span of words $c_1 \ldots c_k$. In our
dictionary this is dual to the tensor algebra $T(\D_+[1])$; note that $LHA$
includes empty words $e_i$ based at each component of $\Lambda$.
Correspondingly, the zeroth tensor power piece in $T(\D_+[1])$ is a copy or
$R$. Indeed, as a vector space, the tensor algebra is the t-adic completion of
the same span, and so is naturally identified with its dual.
We remark on a technical note, that to go back from morphism of $\D_+$ to Reeb
chords and from $T(\D_+[1])$ to $LHA$, respectively, one needs to take
the continuous dual with respect to the t-adic topology.  In what follows we
will use $c$ to denote both chords $q_a^{(k)}$ and the corresponding morphisms
$t^k a$.

\subsection{The differentials.}
We now proceed to establish the duality of differentials.  In the general setup
of \ref{sec:Leg-algebra}, the differential on $LHA$ is given on generators by:
\[ dc = \sum_{m}\sum_{c_1, \ldots, c_m} n_{c;c_1, \ldots, c_m} c_1\cdots c_m \]
where $n_{c;c_1, \ldots, c_m}$ the algebraic number of 1-dimensional components
of the moduli space $\mathcal{M}_{\Lambda}^Y(c;c_1, \ldots, c_m)$.
In the present case $Y$  is the boundary of a stabilization $X^{st}$, and the Morse-Bott version of $\mathcal{M}_{\Lambda}^Y(c;c_1, \ldots,
c_m)$ should be used.  We see that $Y$ is split into the
vertical (or mapping torus) part $Y^v$ ($Y^{st}_1$ in the notation of
Section \ref{ssec:LefschetzFibr})  and horizontal  (or binding) part $Y^h$
(correspondingly $Y^{st}_2$), with $\Lambda \subset Y^v$.  Correspondingly $n$
contains contributions from discs contained in $Y^v \times \R$, $n^v$, and the
ones not contained in $Y^v \times \R$, $n^{bind}$ (the contributions from
$d_{const}$ in Section \ref{sec:Lefschetz}).

\subsection{Reconstruction.}\label{sec:Reconstruction}

The basic idea for relating differentials is that holomorphic discs contained in
$Y^v \times \R = F \times S^1 \times \R$ and can be projected
to $F$. Conversely, given a collection of chords $c_1, \dots, c_m$, and a holomorphic disc $u$ with punctures asymptotic to $p(c_1), \dots, \p(c_m)$ with appropriate Lagrangian boundary conditions, one can uniquely lift $u$ to a disc in
$Y^v \times \R = F \times S^1 \times \R$, provided that the total power of t
for the negative chords is equal to the power of t of the positive chord. This
last requirement is the condition for lifting $u$ as a continuous map and
corresponds in the ``dictionary'' to t-linearity of $A_\infty$ operations in
$\D$.  Furthermore, the algebraic counts from the moduli spaces with and
without being anchored (see Section \ref{sec:anchored}) agree.

If structure constants $n^v_{c;c_1, \ldots,
c_m}$ and $n^{\D} (c,c_1, \ldots, c_m)$ were given simply by counts of such holomorphic curves, this would imply

\label{v} \[  n^v_{c;c_1, \ldots,
c_m} = n^{\D} (c,c_1, \ldots, c_m)\]
whenever the set of $c_i$'s is not empty.

However, the present case of a Lefschetz fibration does not fall into the
general scheme of Section \ref{sec:algebra}. Rather, it's a ``double
Morse-Bott'' situation as explained in Section \ref{sec:Lefschetz-sketch}. As a
result the moduli spaces involved are more complicated, and part ii of
Proposition \ref{prop:Lefschetz-differential} establishes a more refined
version of the correspondence. The result is that the relevant moduli spaces in
$F$ are the generalised holomorphic discs of Section \ref{ssec:genholdisk}, or
holomorphic pearly trees from Section 4.3 in \cite{Sheridan}. This is why we
work with Morse-Bott versions of categories $\A$, $\B$, $\CC$ and $\D$.

We will now discuss perturbations and unitality. As discussed above, while
constructing the Morse-Bott Fukaya category of the fiber $F$, in order to
define the self-hom space $\hom(L,L)$ one picks a Floer datum for the pair
$(L,L)$, one part of which is a Morse function $H$ supported on $L$. On a
sphere, such a function then can be chosen to have just two critical points,
one maximum and one minimum, producing a 2-dimensional $\hom(L,L)$. The
generator $e_L$ corresponding to the minimum then descends to a homological
level identity morphism. However, picking all additional data in such a way
that the Fukaya category of the fiber is strictly unital on the chain level is
a non-trivial matter. That this can be done for (the Morse-Bott version of) the
category $\B$ follows calculations made in Section \ref{sec:Lefschetz-sketch}
and the above mentioned correspondence, in a manner we will now explain.

Proposition \ref{prop:Lefschetz-differential} computes the $LHA$ differential
in the Lefschetz case. Families of multi-level Bott curves in $Y_1^{\rm st}$
(with different powers of $t$, and including ones without any actual
holomorphic components) contributing to the $LHA$ differential correspond to
the generalized holomorphic discs computing Morse-Bott version of $\B$ (again,
including those without actual holomorphic components). The computation of
Section \ref{sec:Lefschetz-sketch} therefore implicitly contains part of the
computation of the structure of $\B$.

Let's check that it gives strict unitality of $e_i$'s. The algebra $LHA$ is
dual to $T(\D_+ [1])$, which involves the reduced version of $\D$, and hence
has no generators directly corresponding to the minima on $L_i$. It has only
the generators corresponding to the $t$-multiples of them, $q_{i-}^{k}$, $k>0$.
It is enough to check that those satisfy a suitable $t$-linear version of
unitality.
Determining the value of $\mu^r(\ldots, t^k e_i, \ldots)$ is dual to involves
finding all $q$'s with $dq$ containing $q_{i-}^{k}$.  By Proposition
\ref{prop:Lefschetz-differential} these and the corresponding $\ainf$ equations
are as follows. First there are 
\begin{eqnarray} 
    \nonumber d_{{\rm
    MB}}\q_{i-}&=&\q_{i-}\q_{i-}\\ 
    \nonumber d_{{\rm
    MB}}\q_{i+}&=&\q_{i-}\q_{i+}+(-1)^{n-1}\q_{i+}\q_{i-}, 
\end{eqnarray}
and correspondingly
\begin{eqnarray}
\mu_2(t^k e_{L_i},t^l e_{L_i})&=&t^{l+k} e_{L_i},\\
\mu_2(t^k e_{L_i},t^l m_{L_i})&=&t^{k+l} m_{L_i},\\
\mu_2(t^l m_{L_i},t^k e_{L_i})&=&t^{k+l} m_{L_i},
\end{eqnarray}
where $m_{L_i} \in Hom(L_i, L_i)$ is the top dimensional generator corresponding to the maximum of a Morse function. There are also
\begin{eqnarray}
\nonumber d_{{\rm MB}}(T\Raq_a)&=&\q_{j-}{\Raq}_a +(-1)^{|q_a|}\Raq_a\q_{i-} \\
\nonumber d_{{\rm MB}}(T\Laq_a)&=&\q_{i-}{\Laq}_a +(-1)^{|q_a|}\Laq_a\q_{j-}
\end{eqnarray}
corresponding to
\begin{eqnarray}
\mu_2(t^k e_{L_i}, t^l a)&=& t^{l+k} \\
\mu_2(t^l a,t^k e_{L_j})&=& t^{k+l} a.
\end{eqnarray}  

Moreover, no differential has a degree three or greater term containing $\q_{i-}$, which is dual to the fact that
\begin{equation}
\mu^k(\ldots, t^l e_i, \ldots) = 0\mathrm{\ for\ }k\geq 3.
\end{equation}

From Section \ref{sec:Lefschetz-sketch} we see that all parts of $d$ except
$d_{const} q_{i-}^{1}$ are $T$-linear, which corresponds to $t$-linearity of
$\mu^k$'s in
$\CC$.

\subsection{Contributions from the binding.}
The additional contribution from $n^{bind}$ has been calculated in Proposition \ref{prop:Lefschetz-differential} ($d_{const}$ in that notation)
to be: $n^{bind}_{q_{i-}^{(1)};1} = 1$ and $n^{bind}_{c;c_1, \ldots, c_m}=
0$ in
all other cases. This means that
\label{bind} \[{n^{bind}_{c;}}^{\D}(c;)\]
\subsection{Total differential.}
We have
\label{full}  \[n_{c;c_1, \ldots,
c_m} =  n^v_{c;c_1, \ldots,
c_m}+{n^{bind}_{c;c_1, \ldots,
c_m}}   ^{\D}(c_1, \ldots, c_m).\]
On the other side, the $\ainf$ structure on $\D_+$ viewed as a map $\mu_\D:
T(\D_+[1])
\rightarrow \D_+[1]$ uniquely extends by the co-Leibniz rule to a
coderivation $\hat{\mu}_\D$ on
$T(\D_+[1])$, or, equivalently, its dual extends to a derivation on
$T(\D_+[1])^{*}= LHA$. This last is given by
\[d_{\D} = (\mu)^*= d_{\D}^0 + d_{\D}^1+ d_{\D}^2 +\ldots , \]
where $d_{\D}^l: \D_+[1] \rightarrow T(\D_+[1])^{\ot l}$, $d_\D$ is
extended by the Leibniz rule, and
\begin{eqnarray*}
d_{\D}^0 (c) &=&  (\mu^0_\D)^*(c) = \sum_c n^{\D}(c;) \\
d_{\D}^1 (c) &=&  (\mu^1_\D)^*(c) = \sum_{a_1} n^{\D}(c;a_1) a_1\\
d_{\D}^2 (c) &=&  (\mu^2_\D)^*(c) = \sum_{a_1,a_2} n^{\D}(c;a_1, a_2)
a_1\otimes a_2
\end{eqnarray*}
The equalities  in \ref{v}-\ref{full} mean that the differential on $LHA$
which only includes the contributions of $n^v$s comes from the $\ainf$
structure
of $\CC$, and that the full differential is the one coming from the full
curved $\ainf$
structure of $\D$, namely $d_{\D}$.
\section{$\lhwt$}
\subsection{Generators}
On the level of generators, $\lhwt$ consists of
\[ \lhwt = C(\L) \op \lhcc \op \lhch. \]
Here, $\lhcc$ and $\lhcc$ are simply two copies of $\lhc$, the subcomplex
of
words that are cyclically composable divided by empty words. Under the
identification described previously, this is dual to $\td_+^{diag}$
Words in
$\lhcc$ will be denoted $\check{c_1}\cdots c_k$ and words in $\lhch$ are
denoted $\hat{c_1}\cdots c_k$. $C(\L)$ is the vector-space generated by
elements $\tau_1, \ldots, \tau_k$, in one-to-one correspondence with
$\L_1,\ldots,\L_k$.
To identify this on the level of generators with $CC(\D) = \ccd$, we note
that
\begin{align}
\ccd &= ((R \op \D_+)\ot \td)^{diag}\\
&= (\td \op \td_+[-1])^{diag} \\
\label{ccexpr}&= R \op (R \ot\td_+)^{diag} \op (\td_+[-1])^{diag}\\
&= R \op (\td_+)^{diag} \op (\td_+[-1])^{diag}.
\end{align}
Here, recall that $TV = \K \oplus V \oplus V^{\otimes 2}\oplus \cdots$, and
$(TV)_+$ refers to the non-zero length part $V \oplus V^{\otimes 2} \oplus
\cdots$.
This last expression exactly gives us the correspondence to the generators
of
$\lhwt$.  We should remark however that to compute the Hochschild
differentials, one needs to remember (\ref{ccexpr}).
\begin{center}
\begin{table}
\centering
\begin{tabular}{|c|c|c|}
\hline
$C(\Lambda)$ & $\lhcc$ & $\lhch$ \\ \hline
$\tau_i$ & $\check{c}_1\cdots c_k$ & $\hat{c}_1\cdots c_k$\\
$\updownarrow$ & $\updownarrow$ & $\updownarrow $ \\
$e_i$ & $e_i \cdot c_1 \cdots c_k$ & $c_1\cdots c_k$\\ \hline
$R$ & $(R \ot \td_+)^{diag}$ & $\td_+[-1]^{diag}$\\
\hline
\end{tabular}
\caption{A dictionary between the generators of $\lhwt$ and $CC(\D,\D)$}
\end{table}
\end{center}
\subsection{Hochschild Differentials}
The Hochschild differential on $\delta$ has three primary constituents.
There
is the (acyclic) Hochschild differential (without any $\mu^0$):
\[ b_a : x_1 \cdots x_k \mapsto \sum_{i\geq 1,s\geq 1} x_1 \cdots
\mu(x_{i}, \ldots,
x_{i+s-1})\cdot x_{i+s}\cdots x_{k}.\]
The differential also includes cyclic terms
\[ b_c:  x_1 \cdots x_k \mapsto \sum_{i\geq 1,j\geq 1} \mu(x_{k-j}, \ldots,
x_k, x_1, \ldots x_i) \cdot x_{i+1}\cdots x_{k-j-1} \]
and terms obtained by inserting $\mu_0$ everywhere except for the beginning
\[ b_0: x_1\cdots x_k \mapsto \sum_{i\geq 1} x_1\cdots x_i \cdot \mu_0
\cdot x_{i+1} \cdots x_k.\]
Note that $b_a + b_0$ viewed as a map on $\td$ is not quite the differential
$\hat{\mu}$ induced by extending the $A_\infty$ maps $\mu :\td \rightarrow
\D$
to all of $\td$ via the co-Leibniz rule. One needs to add a final
contribution
$b_0^0$, where \[ b_0^0: x_1 \cdots x_k \mapsto \mu^0 \cdot x_1\cdots
x_k.\]
Now, look at the differential on various components of $\lhwt$ and dualize.
$\hat{d}_{LHO}: \lhch \rightarrow \lhch$ is defined as:
\[
\hat{d}_{LHO}(\hat{c}_1\cdots c_n) = S(d(\hat{c}_1))\cdot c_2\cdots c_n +
\hat{c}_1
d_{LHA}(c_2\cdots c_n),\]
where $S(w_1\cdots w_k) = \sum_{i}(-1)^*\prod w_1\cdots \hat{w}_i \cdots
w_k$
and $S(1) = 0$.
Write $\hat{d}_{LHO} = S^1\circ\d_{LHA} + \tilde{d}$, where $d_{LHA}$ is
simply the differential on $LHA(\L)$, and $S^1(c_1\cdots c_k) =
\hat{c}_1\cdots c_k$. Then, if
$d_{LHA}c_1$ does not contain 1 as a contribution (i.e. $c_1\neq
q_{i-}^{(1)}$), $\tilde{d}(c_1\cdots
c_k) = (S(dc_1) - S^1(dc_1))c_2\cdots c_k$. If $c_1 = q_{i-}^{(1)}$,
$\tilde{d}(c_1\cdots c_k) = -\hat{c}_2\cdots c_k + (S(dc_1 - 1) - S^1(dc_1
-
1))c_2\cdots c_k$ or $S(dc_1 - 1) - S^1(dc_1-1)$ if $k=1$.
%
In the previous section, we argued that $d_{LHA}^* = \hat{\mu} = b_a + b_0
+
b_0^0$, so it remains to see that the dual of the map $\tilde{d}$ is $b_c -
b_0^0$. But $\tilde{d}(q_{i-}^{(1)}\cdot c_2\cdots c_n)$ containing a term
equal to $-\hat{c}_2\cdots c_n$ is exactly dual to $-b_0^0$ so it remains
to
analyze the dual of terms in $\tilde{d}(c_1\cdots c_k)$ not containing
$-\hat{c}_2\cdots c_k$.
For $dc_1$ containing terms $\sum n(c_1;y_1, \ldots, y_l) y_1\cdots y_l$,
\[\tilde{d}(c_1\cdots c_k) = \sum n(c_1; y_1,\ldots,y_l)\left(\sum_{j\geq
2}
y_1\cdots \hat{y}_j \cdots y_l\right) c_2\cdots c_n.\]
Dually  the contribution of $b_c (\hat{y}_j\cdots y_l c_2\cdots c_n y_1\cdots y_{j-1})$  coming from\\
$\mu(y_1,\ldots y_l)\cdot c_2\cdots c_n$  is equal to  $n_F(c_1;y_1,\ldots, y_l)c_1\cdots c_n$. All of these together give us
exactly
the desired duality.
On the level of generators, $\left(\lhcc\right)^*$ is identified with
$(R\ot
\td)^{diag}$, meaning the subspace of the chain complex generated by words
that
start with $e_i$. Let's explicitly analyze the Hochschild differential
$\delta$
on such words. There are contributions where $\mu$ does not take the first
element $e_i$ as an input: \[e_i\cdot c_1\cdots c_k \rightarrow e_i\cdot
(b_a+b_0 + b_0^0)(c_1\cdots c_k)\] which  is exactly $\check{d}_{LHO}$ under the identification
$(R\otimes
\td)^{diag} \simeq \left(\lhcc\right)^*$ which sends $e_i\cdot c_1\ldots c_k$ to
$\check{c}_1\cdots c_k$.  The other
contributions are the
ones where $\mu^r$ has $e_i$ as an input, which is 0 unless $r=2$.
Explicitly,
those are:
\begin{eqnarray}
e_i\cdot c_1\cdots c_k \mapsto \mu^2(c_k,e_i)\cdot c_1\ldots
c_{k-1} + \mu^2(e_i,c_1)\cdot c_2\cdots c_k \\
= \mp c_1\cdots c_k \pm c_k\cdot
c_1\cdots c_{k-1}
\end{eqnarray}
Under the identification $$(R\otimes \td)^{diag} \lr \left(\lhcc \right)^*$$
and
$$(\td_+)^{diag} \lr \left(\lhch\right)^*,$$ this is precisely $\alpha^*.$
In $\lhwt$, the differential going from $C(\L)$ was 0, which dually
corresponds to
the fact that $\mu(x_1\ldots x_s)$ is never $e_i$. The final term in the
differential is the map $T$, which takes the single chord $c_i$ going from
$\L_i$ to $\L_i$ to $n_{c_i}\tau_i$, where $n_{c_i} = 1$ for $c_i =
q_{i-}^{(1)}$ and $0$ otherwise. This dualizes to $T^*(e_i) =
\hat{q}_{i-}^{(1)}$. Under the identification, this is $e_i \ot te_i$,
which is
exactly $\delta(e_i)$.
Conclusion: $\left( \left(\lhwt\right)^*,d_{\mathrm{Ho}}^* \right) =\left(
\left(\ccd\right), \delta \right)$ as chain complexes.

\bigskip

{\noindent The authors acknowledge the support   of:\\
{\it F. Bourgeois:}  The Fonds National de la Recherche Scientifique (Belgium)\\
 {\it T. Ekholm:} The G{\"o}ran Gustafsson Foundation for Research in Natural Sciences and Medicine\\
 {\it Y. Eliashberg:} NSF grants DMS-0707103 and DMS 0244663}
 \nocite{*}






\begin{thebibliography}{99}
\bibitem{Arnold}
V. I.~Arnold, Some remarks on symplectic monodromy of Milnor fibrations, The Floer
Memorial Volume, {\it Progress in Mathematics}, {\bf{133}}, Birkh\"auser, 1995,  99--104.

\bibitem{AS}
A.~Abbondandolo, M.~Schwarz,
On the Floer homology of cotangent bundles,
{\it Comm. Pure Appl. Math.} {\bf 59} (2006), no. 2, 254--316.

\bibitem{AbS}
M.~Abouzaid, P.~Seidel, An open-string analogue of Viterbo functoriality, 
{\it Geom. Topol.} {\bf 14} (2010), no. 2, 627--718.

\bibitem{AbS2} M.~Abouzaid, P.~Seidel, Altering symplectic manifolds by homologous recombination,
preprint (arXiv:1007.3281).
 
\bibitem{ASS} I.~Assem, D.~Simson, A.~Skowro\'nski, 
Elements of the representation theory of associative algebras. Vol. 1. 
Techniques of representation theory. {\it London Mathematical Society Student Texts}, {\bf 65}, 
Cambridge University Press, Cambridge, 2006.


\bibitem{Bourgeois-thesis} F.~Bourgeois,
A Morse-Bott approach to contact homology,
PhD dissertation, Stanford University, 2002.

\bibitem{BEE}
F.~Bourgeois, T.~Ekholm, Y.~Eliashberg,
Legendrian surgery triangles for contact and symplectic homology,
in preparation.

\bibitem{BEHWZ}
F.~Bourgeois,   Y.~Eliashberg, H.~Hofer, K.~Wysocki, E.~Zehnder, 
Compactness results in symplectic field theory, 
{\it Geom. Topol.} {\bf 7} (2003), 799--888. 


\bibitem{BO}
F.~Bourgeois, A.~Oancea,
An exact sequence for contact- and symplectic homology,
{\it Invent. Math.} {\bf 175} (2009), no. 3, 611--680.

\bibitem{Chek}
Yu.~V.~Chekanov,
Differential algebra of Legendrian links,
{\it Invent. Math.} {\bf 150} (2002), 441--483.

\bibitem{Ciel1}
K.~Cieliebak,
Subcritical Stein manifolds are split, preprint (arXiv:math/0204351).

\bibitem{Ciel2}
K. Cieliebak,
Handle attaching in symplectic homology and the Chord Conjecture,
{\em J. Eur. Math. Soc. (JEMS)} {\bf 4} (2002), 115--142.

\bibitem{CE}
K.~Cieliebak, Y.~Eliashberg,
Symplectic geometry of Stein manifolds,
book in preparation.
\bibitem{CL}
O.~Cornea, F.~Lalonde, Cluster Homology. Preprint arXiv:math/0508345.

\bibitem{E}
T.~Ekholm,
Morse flow trees and Legendrian contact homology in 1-jet spaces,
{\it Geom. Topol.} {\bf 11} (2007), 1083--1224.

\bibitem{E2}
T.~Ekholm,
Rational symplectic field theory over ${\mathbb Z}_2$ for exact Lagrangian cobordisms,
{\em J. Eur. Math. Soc. (JEMS)} {\bf 10} (2008), no. 3, 641--704.

\bibitem{EESa}
T.~Ekholm, J.~Etnyre, J.~Sabloff,
A Duality Exact Sequence for Legendrian Contact Homology,
{\it Duke Math. J.}  {\bf 150}  (2009) no. 1, 1--75.

\bibitem{EES}
T.~Ekholm, J.~Etnyre, M.~Sullivan,
The contact homology of Legendrian submanifolds in $\mathbb{R}^{2n+1}$,
{\it J. Differential Geom.} {\bf 71} (2005), no. 2, 177--305.

\bibitem{EES2}
T.~Ekholm, J.~Etnyre, M.~Sullivan,
Non-isotopic Legendrian submanifolds in ${\mathbb R}^{2n+1}$,
{\it J. Differential Geom.} {\bf 71} (2005), no. 1, 85--128.

\bibitem{EK}
T.~Ekholm, T.~K\'alm\'an, Isotopies of Legendrian $1$-knots and Legendrian $2$-tori, 
{\it J. Symplectic Geom.} {\bf 6} (2008), no. 4, 407--460.

\bibitem{Eli-pluri}
Y.~Eliashberg,
Symplectic geometry of pluri-subharmonic functions,
in the book {\it ``Gauge Theory and Symplectic Geometry"}, NATO ASI series, Kluwer Academic Publishers, 1997.

\bibitem{EliGrom-convex}
Y.~Eliashberg, M.~Gromov,
Convex symplectic manifolds,
in ``Several complex variables and complex geometry, Part 2'' (Santa Cruz, CA, 1989), 135--162, {\it Proc. Sympos. Pure Math.} {\bf 52}, Part 2, Amer. Math. Soc., Providence, RI,
1991.

\bibitem{SFT}
Y.~Eliashberg, A. Givental, H. Hofer,
Introduction to Symplectic Field Theory,
{\em Geom. Funct. Anal.}, (Special Volume ``GAFA 2000 (Tel Aviv, 1999)", Part II)(2000), 560--673.
 
\bibitem{FH}
A.~Floer, H.~Hofer,
Symplectic homology. I. Open sets in $\C\sp n$,
{\it Math. Z.} {\bf 215} (1994), no. 1, 37--88.

\bibitem{F}
U.~Frauenfelder, The Arnold-Givental conjecture and moment Floer homology,
{\it Int. Math. Res. Not.} {\bf 2004}, no. 42, 2179--2269.

\bibitem{FO}
K.~Fukaya, Y.-G.~Oh,
Zero-loop open strings in the cotangent bundle and Morse homotopy,
{\it Asian J. Math.} {\bf 1} (1997), no. 1, 96--180.

\bibitem{FOOO}
K.~Fukaya, Y.-G.~Oh, H.~Ohta, K.~Ono,
Lagrangian Intersection Floer Theory: Anomaly and Obstruction,
AMS/INP, 2009.

\bibitem{FSS}
K.~Fukaya, P.~Seidel, I.~Smith,
The symplectic geometry of cotangent bundles from a categorical viewpoint,
in {\em  Homological Mirror Symmetry}, Springer Lecture Notes in Physics 757, (2008)
(Kapustin, Kreuzer and Schlesinger, eds).

\bibitem{Gromov} M. Gromov, Pseudoholomorphic curves in symplectic manifolds,
{\it Invent. Math.} {\bf 82} (1985), no. 2, 307--347. 

\bibitem{HWZ}
H.~Hofer, K.~Wysocki, E.~Zehnder,
A General Fredholm Theory I: A Splicing-Based Differential
Geometry,
{\it J. Eur. Math. Soc. (JEMS)} {\bf 9} (2007), no. 4, 841--876.

\bibitem{HWZ2}
H.~Hofer, K.~Wysocki, E.~Zehnder,
A general Fredholm theory. II. Implicit function theorems.
{\it Geom. Funct. Anal.}  {\bf 19} (2009), no. 1, 206--293.

\bibitem{HWZ3}
H.~Hofer, K.~Wysocki, E.~Zehnder,
A general Fredholm theory. III. Fredholm functors and polyfolds.
{\it Geom. Topol.} {\bf 13} (2009), no. 4, 2279--2387.

\bibitem{McL}
M.~McLean, Lefschetz fibrations and symplectic homology, {\it Geom. Topol.}, {\bf 13} (2009), 1877--1944.

\bibitem{MS}
M.~Maydanskiy, P.~Seidel,
Lefschetz fibrations and exotic symplectic structures on cotangent bundles of spheres, 
{\it J. Topol.} {\bf 3} (2010), no. 1, 157--180.
 
\bibitem{Sabloff}
J.~M.~Sabloff,
Invariants of Legendrian knots in circle bundles,
{\it Commun. Contemp. Math.}, {\bf 4} (2003), 569--627.

\bibitem{Se-mut}
P.~Seidel,
Vanishing cycles and mutation,
{\em  European Congress of Mathematics}, Vol. II (Barcelona, 2000),  65--85, Progr. Math., 202, Birkh\"auser, Basel, 2001.

\bibitem{Se-more-mut}
P.~Seidel,
More about vanishing cycles and mutation,
{\it  Symplectic geometry and mirror symmetry} (Seoul, 2000), 429--465, World Sci. Publ., River Edge, NJ, 2001

\bibitem{Se-long}
P.~Seidel,
A long exact sequence for symplectic Floer cohomology,
{\em Topology}  {\bf 42} (2003),  1003--1063.

\bibitem{ainfnat}
P.~Seidel,
$\ainf$-subalgebras and natural
transformations, {\it Homology, Homotopy Appl.} {\bf 10} (2008), no. 2, 83--114.

\bibitem{book}
P.~Seidel,
Fukaya categories and Picard-Lefschetz theory,
European Math. Soc., 2008.

\bibitem{Se-g2}
P.~Seidel, Homological mirror symmetry for the genus two curve. Preprint arXiv:0812.1171, 2009.
 
\bibitem{shhh}
P.~Seidel,
Symplectic homology as Hochschild homology,
In: Proceedings of the AMS Summer Institute
in Algebraic Geometry (Seattle, 2005),  {\it Proc. Sympos. Pure Math.} {\bf 80}, Part 1, 415--434,
Amer. Math. Soc., 2009.
\bibitem{Sheridan}
N.~Sheridan, On the homological mirror symmetry conjecture for pairs of pants and affine Fermat hypersurfaces. Preprint arXiv:1012.3238, 2010.

\bibitem{V}
C.~Viterbo,
Functors and computations in Floer homology with applications I,
{\it Geom. Funct. Anal.} {\bf 9} (1999), no. 5, 985--1033.

\bibitem{We} A.~Weinstein,
Contact surgery and symplectic handlebodies,
{\it Hokkaido Math. J.},  {\bf 20}(1991),   241--251.

\bibitem{MLYau}
M.-L.~Yau,
Cylindrical contact homology of subcritical Stein-fillable contact manifolds,
{\it Geom. Topol.} {\bf 8} (2004), 1243--1280.


\end{thebibliography}
\end{document}